\newcommand{\R}{{\mathbb R}}
\newcommand{\N}{{\mathbb N}}
\newcommand{\EE}{{\mathbb E}}
\newcommand{\PP}{{\mathbb P}}
\newcommand{\eul}{{\widehat X}}
\newcommand{\euly}{{\widehat Y}}
\newcommand{\ind}{1}
\newcommand{\usn}{\underline {s}_n}
\newcommand{\utn}{\underline {t}_n}
\newcommand{\sgn}{\operatorname{sgn}}
\newcommand{\eps}{\varepsilon}
\newcommand{\reach}{\text{reach}}
\newcommand{\normal}{\mathfrak{n}}
\newcommand{\pr}{\text{pr}}
\newcommand{\unp}{\text{unp}}
\newcommand{\tr}{\text{tr}}
\newcommand{\cl}{\text{cl}}
\newcommand{\inter}{\text{int}}
\newcommand{\supp}{\text{supp}}
\theoremstyle{plain}
\newtheorem{Thm}{Theorem}
\newtheorem{Prop}{Proposition}
\newtheorem{Lem}{Lemma}
\theoremstyle{definition}
\newtheorem{Rem}{Remark}
\newtheorem{exs}{Example}
\newcommand{\nor}{\mathfrak n}
\DeclarePairedDelimiter{\norm}{\|}{\|}
\DeclarePairedDelimiter{\abs}{\lvert}{\rvert}
\newcommand{\one}{\mathds{1}}
\begin{document}

\title[
On the Euler-Maruyama scheme for  SDEs with discontinuous drift coefficient]{
	On the performance of the Euler-Maruyama scheme for multidimensional SDEs with discontinuous drift coefficient}

\author[M\"uller-Gronbach]
{Thomas M\"uller-Gronbach}
\address{
Faculty of Computer Science and Mathematics\\
University of Passau\\
Innstrasse 33 \\
94032 Passau\\
Germany} \email{thomas.mueller-gronbach@uni-passau.de}

\author[Rauh\"ogger]
{Christopher Rauh\"ogger}
\address{
	Faculty of Computer Science and Mathematics\\
	University of Passau\\
	Innstrasse 33 \\
	94032 Passau\\
	Germany} \email{christopher.rauhoegger@uni-passau.de}

\author[Yaroslavtseva]
{Larisa Yaroslavtseva}
\address{
	Institute of Mathematics and Scientific Computing\\
	University of Graz\\
	Heinrichstra{\ss}e 36 \\
	8010 Graz\\
	Austria} \email{larisa.yaroslavtseva@uni-graz.at}
	
\begin{abstract} 
	We study strong approximation of $d$-dimensional stochastic differential equations (SDEs) with a discontinuous drift coefficient.
		More precisely, we 
		essentially 				assume that the drift coefficient is piecewise Lipschitz continuous with an exceptional set $\Theta\subset \R^d$
			that
			 is an orientable $C^4$-hypersurface of positive reach, the diffusion coefficient is  assumed to be 
 Lipschitz continuous and,    in a neighborhood of $\Theta$, both coefficients are bounded and the diffusion coefficient 
 has a non-degenerate portion  orthogonal to $\Theta$.
 
	In recent years, a number of results have been proven in the literature for
		strong approximation of such SDEs
		and,
				in particular, the performance of the Euler-Maruyama scheme was studied.
				For $d=1$ 
and finite $\Theta$
	it was shown in \cite{MGY20} that  the Euler-Maruyama scheme achieves an $L_p$-error rate of at least $1/2$ 
for all $p\geq 1$
as in the classical case of 
Lipschitz continuous coefficients. For $d>1$, it was only known so far, that the Euler-Maruyama scheme achieves 
		an $L_2$-error rate of at least $1/4-$  if, additionally, the coefficients $\mu$ and $\sigma$ are  globally bounded, see \cite{LS18}.

		In this article, we prove that
in the above setting
 the Euler-Maruyama scheme in fact achieves   an $L_{p}$-error rate of at least $1/2-$ for all $d\in \N$ and all $p \geq 1$.
The proof of this result is based on the well-known approach of transforming such an SDE into an SDE with globally Lipschitz continuous coefficients,  a new It\^{o} formula for a class of functions 
which are
not globally $C^2$  and  a detailed analysis of the expected total time that the actual position of the time-continuous Euler-Maruyama scheme and its position at the preceding time point on the underlying grid are on 'different sides' of  the hypersurface $\Theta$.
		\end{abstract}

\maketitle	
	\section{Introduction}

	Let 
	$ ( \Omega, \mathcal{F}, \PP ) $ 
	be a complete probability space with a 
	filtration $ ( \mathcal{F}_t )_{ t \in [0,1] } $
	that satisfies the usual conditions, let $d\in\N$ and 
	consider a $d$-dimensional autonomous stochastic differential equation (SDE)
	\begin{equation} \label{SDE}
		\begin{aligned}
			dX_{t} &= \mu(X_{t})dt + \sigma(X_{t})dW_{t}, \quad t \in [0,1], \\
			X_{0} &= x_{0},
		\end{aligned}
	\end{equation}
	where $x_0\in\R^d$,   $\mu\colon\R^d\to\R^d$ and   $\sigma\colon \R^d\to\R^{d \times d}$ are measurable functions and $
	W \colon [0,1] \times \Omega \to \R^d
	$
	is a $d$-dimensional
	$ ( \mathcal{F}_t )_{ t \in [0,1] } $-Brownian motion
	on $ ( \Omega, \mathcal{F}, \PP ) $.
	
	It is well-known that if the coefficients $\mu$ and $\sigma$ are Lipschitz continuous then the SDE \eqref{SDE} has a unique strong solution $X$. 
	Moreover, the Euler-Maruyama scheme
	with $n$ equidistant steps,
	 given 
	 by 
	$\eul_{n,0}=x_0$ and
	\[
	\eul_{n,(i+1)/n}=\eul_{n,i/n}+\mu(\eul_{n,i/n})\cdot 1/n+\sigma(\eul_{n,i/n})\cdot (W_{(i+1)/n}-
	W_{i/n})
	\]
	for  $i\in\{0,\ldots,n-1\}$, achieves at the final time $1$ an $L_p$-error rate of at least $1/2$ for all $p\geq 1$ in terms of the number $n$ of evaluations of $W$, i.e., 	for all $p\geq 1$  there exists $c>0$ such that for all $n\in\N$, 
	\begin{equation}\label{Leul}
	\bigl(\EE\bigl[\|X_1-\eul_{n,1}\|^p\bigr]\bigr)^{1/p}\leq \frac{c}{n^{1/2}},
	\end{equation}
	where $\|x\|$ denotes  the Euclidean norm of  $x\in\R^d$.
	
	In this article we study the performance of the Euler-Maruyama scheme $\eul_{n,1}$ in the case when the drift coefficient $\mu$ is discontinuous. Such SDEs  arise e.g.  in insurance, mathematical finance  and stochastic control problems, see e.g.~\cite{B80,Ka11,SS16} for examples.

We 
essentially make the following assumptions:
	the drift coefficient  $\mu$ is piecewise Lipschitz continuous with an exceptional set $\Theta$ that is an orientable $C^4$-hypersurface of positive reach, the diffusion coefficient $\sigma$ is 
	Lipschitz continuous and, in a  neighborhood of $\Theta$, the coefficients $\mu$ and $\sigma$ are bounded and $\sigma$
	has a non-degenerate portion  orthogonal to $\Theta$.
	See conditions (A) and (B) in Section \ref{ErrEs} for the precise assumptions on $\mu$ and $\sigma$.
	
	 For such {systems of} SDEs, existence and uniqueness of a strong solution 
	 is essentially known. See
	 Theorem 3.21 in {the pioneering paper} \cite{LS17} and 
	 Theorem 6
	 in the associated correction note~\cite{LScorr1}. See, however, 
Remark~\ref{wrongito}
in Section~\ref{ErrEs} 
for a discussion of some gaps
 in the proofs of the latter two theorems and 
	 Theorem~\ref{exist} in Section~\ref{ErrEs} 
	 for a complete proof of existence and uniqueness.

	Moreover, in \cite{LS17, LS18, NSS19}  $L_2$-appoximation of $X_1$ was studied {for the first time for multidimensional equations of this type}. More precisely, in \cite{LS17} 
	an  $L_2$-error rate of at least $1/2$ was shown for a transformation-based Euler-Maruyama scheme.	This scheme is obtained by first applying a suitable transformation to the SDE \eqref{SDE} to obtain an SDE with Lipschitz continuous coefficients, then using the Euler-Maruyama scheme to approximate the solution of the transformed SDE and finally  applying the inverse of the above transformation to the 
	Euler-Maruyama scheme 
	for the transformed SDE 
	to obtain an approximation to $X_1$.
	In {the pioneering paper} \cite{ NSS19}, an adaptive Euler-Maruyama scheme was constructed that
	 adapts its step size to the actual distance of the scheme to the exceptional set $\Theta$ of $\mu$ -- it uses smaller  time steps the smaller the distance to $\Theta$ is. {This elegant and immediately intuitive approach} was shown to achieve an $L_2$-error rate of at least $1/2-$ (i.e, $1/2-\delta$ for every $\delta>0$) in terms of the average number of evaluations of $W$. See, however, Remark \ref{remoccup}
	 for a  gap in the proof of the latter result.

In contrast to the classical Euler-Maruyama scheme, 
the two schemes from  \cite{LS17} and \cite{ NSS19} are not easy to implement
in general.
 In both cases, the exceptional set $\Theta$ must be known and projections to $\Theta$ of the actual position of the scheme or its distance to $\Theta$ have to be computed. Moreover, the transformation-based Euler-Maruyama scheme from \cite{LS17} also requires  evaluation of the inverse of the 
transformation at each step of the Euler-Maruyama scheme for the transformed SDE. This inverse  is, however, not known explicitly in general.

	In  \cite{LS18}, the performance of the 
classical
 Euler-Maruyama scheme $\eul_{n,1}$ for such SDEs was studied  and 
	an $L_2$-error rate of at least $1/4-$ was proven if the coefficients $\mu$ and $\sigma$ are additionally bounded.
See, however, Remark \ref{remoccup}
     for a  gap in the proof of the latter result.
	 Note that the $L_2$-error rate of at least $1/4-$ is significantly smaller than the $L_2$-error rate of at least $1/2$ known for the Euler-Maruyama scheme $\eul_{n,1}$ in the classical case of Lipschitz continuous coefficients. It was therefore a challenging question whether the error bound from \cite{LS18} can be improved and if so, whether the Euler-Maruyama scheme $\eul_{n,1}$ even achieves an $L_p$-error rate of at least $1/2$ for all $p\geq 1$ in the above setting. 
	
	Recently, this question was answered to the positive in \cite{MGY20}   for one-dimensional SDEs, i.e., for $d=1$, in the case when the drift coefficient  $\mu$ has finitely many  points of discontinuity, i.e., the exceptional set $\Theta$ of $\mu$ is  of the form
	\begin{equation}\label{The}
	\Theta=\{x_1, \ldots, x_K\}\subset\R.
	\end{equation}
	In this case, 
	with $x_1 < \dots < x_K$, the drift coefficient
	$\mu$  is Lipschitz continuous on each of the intervals $(x_{k}, x_{k+1})$, $k=0, \ldots, K$, where 
	$x_0=-\infty$ and $x_{K+1}=\infty$, 
	and   $\sigma$ is Lipschitz continuous and non-degenerate at the points of discontinuity of $\mu$, see Remark \ref{remdisc}.
	For such SDEs the upper bound \eqref{Leul} was proven in \cite{MGY20}.
	
	   In the present article, we answer 
	   the above
	   question to the positive (up to an arbitrary small exponent $\delta>0$)  for all $d\in\N$. More precisely, we show that 
	for all $p\geq 1$ and all $\delta>0$ there exists $c>0$ such that for all $n\in\N$, 
	\begin{equation}\label{UBE}
		\bigl(\EE\bigl[\|X_1-\eul_{n,1}\|^p\bigr]\bigr)^{1/p}\leq \frac{c}{n^{1/2-\delta}},
	\end{equation}
	i.e., the Euler-Maruyama scheme $\eul_{n,1}$ achieves an $L_p$-error rate of at least $1/2-$ for all $p\geq 1$.
    This upper bound 
    is a direct consequence of
    our main result, Theorem \ref{Thm1}, which states that for all $p\geq 1$ and all $\delta>0$ the supremum error of the time-continuous Euler-Maruyama scheme achieves  
    a rate of at least  $1/2-\delta$ in the $L_p$-sense, see Section \ref{ErrEs}.

	We furthermore study the performance of the piecewise linear  interpolation of the  time-discrete Euler-Maruyama scheme 
		$(\eul_{n,i/n})_{i=0, \ldots, n}$
	globally on the time interval $[0,1]$. Using Theorem \ref{Thm1} we  show that  for all $p\geq 1$ and all $\delta>0$ the supremum error 
	in $p$-th mean
	of the piecewise linear  interpolated  Euler-Maruyama  scheme 
	$(\eul_{n,i/n})_{i=0, \ldots, n}$
	is at least of order $1/2-\delta$ in terms of $n$,
	see Theorem \ref{Thm2} in  Section \ref{ErrEs}.
	
	We add that for $d$-dimensional SDEs \eqref{SDE}, 
	it was recently shown in \cite{DGL22} that  the 
	classical
	Euler-Maruyama scheme $\eul_{n,1}$ also achieves an $L_p$-error rate of at least $1/2-$ for all $p\geq 1$ in the case when the drift coefficient $\mu$ is   measurable and bounded  and the diffusion coefficient $\sigma$ is  bounded, uniformly elliptic and twice continuously differentiable with bounded partial derivatives of order
	$1$ and $2$. Moreover, for 
	SDEs \eqref{SDE} with
	additive noise,
	an $L_p$-error rate of at least  $1/(2\max(2,d,p))+1/2-$ for all $p\geq 1$ was shown in \cite{DGL22}  for the Euler-Maruyama scheme $\eul_{n,1}$ 
	in the case when the drift coefficient is of the form
	$\mu=\sum_{i=1}^m f_i \one_{K_i}$ with bounded
	Lipschitz domains $K_1, \ldots, K_m\subset \R^d$ and bounded Lipschitz continuous functions $f_1, \ldots, f_m\colon\R^d\to\R^d$ 
	for some $m\in\N$.
	The proof of these  results in~\cite{DGL22}
relies on the uniform ellipticity of $\sigma$ 
	and uses the stochastic sewing technique introduced in~\cite{Le2020}. In contrast, the proof of Theorem~\ref{Thm1} is based on a detailed analysis of the expected total time
	that
	 the actual position of the time-continuous Euler-Maruyama scheme and its position at the preceding time point on the grid are on 'different sides' of  the hypersurface $\Theta$, see Proposition~\ref{occtime2}, a new It\^{o} formula for a class of functions
	 $f\colon \R^d \to \R$
	  not globally $C^2$, see Theorem~\ref{Ito_new},  and the transformation approach introduced in~\cite{LS17}. 
	
	We furthermore add that recently, in \cite{MGY19b, Y21}, 
	higher-order methods  for approximation of  one-dimensional SDEs \eqref{SDE} with a discontinuous drift  coefficient were constructed 
	for the first time. 
	More precisely, in \cite{MGY19b} a transformation-based Milstein-type scheme was introduced, which is based on evaluations of $W$ at the uniform grid $\{0, 1/n, \ldots, 1\}$ and   achieves for all $p\geq 1$ an $L_p$-error rate of at least $3/4$   in terms of $n$ in the setting considered in 
	the present 
	article with $d=1$ and $\Theta$ given by \eqref{The} if, additionally, $\mu$ and $\sigma$ have a Lipschitz  continuous derivative on each of the intervals $(x_{k}, x_{k+1})$, $k=0, \ldots, K$. Moreover, in  \cite{Y21} an adaptive transformation-based Milstein-type scheme was constructed which achieves for all $p\geq 1$ an $L_p$-error rate of at least $1$   in terms of the average number of evaluations of $W$ used by the scheme under the same assumptions on the coefficients as in \cite{MGY19b}. Note that for such SDEs an  $L_p$-error rate better  than $3/4$  can not be achieved in general  by no numerical method based on $n$ evaluations of $W$ at fixed time points, see \cite{Ell24, MGY21} for  matching lower error bounds, and an   $L_p$-error rate better  than $1$  can not be achieved in general  by no numerical method based on $n$ sequentially chosen evaluations of $W$ on average, see \cite{hhmg2019, m04} for matching lower error bounds. See also 
	\cite{MGY24} for a recent survey on the complexity of $L_p$-approximation of one-dimensional SDEs with a discontinuous drift coefficient. 
	 The extension of the upper bounds from \cite{MGY19b, Y21}  to an appropriate subclass of $d$-dimensional SDEs considered in the present article 
	 using techniques developed in this
	 article will be  the subject of future work.

We briefly describe the content of the paper. 
The precise assumptions on the coefficients $\mu$ and $\sigma$,
 the existence and uniqueness result, Theorem~\ref{exist},
 as well as our error estimates, Theorem~\ref{Thm1} and Theorem~\ref{Thm2}, are stated in Section~\ref{ErrEs}. Section~\ref{Proofs} contains 
the
proofs of these results.
In Section~\ref{Exmpl}, we present some examples. Section~\ref{Num} is devoted to numerical experiments. 
In Section~\ref{Appendix} we 
state 
a number of 
results from differential geometry that are used for our proofs in Section~\ref{Proofs}.

\section{Setting and main results}\label{ErrEs}

We first briefly recall the notions of 
a hypersurface,
a tangent vector, a normal vector, the orthogonal projection and the reach 
of a set 
from differential geometry as well as the notion of piecewise Lipschitz continuity 
introduced in~\cite{LS17}.

Let $\emptyset\neq \Theta \subset \R^d$ 
and	let $k \in \N_{0} \cup \{\infty \}$. Let $x\in \Theta$, let $U,V \subset \R^{d}$ be open with $x\in U$ and let $\phi\colon U \to V$ be a $C^{k}$-diffeomorphism with $\phi(\Theta \cap U) = \R^{d-1}_{0} \cap V$, where 
\[
\R^{d-1}_0 = \begin{cases} \R^{d-1}\times\{0\}, & \text{if }d\ge 2,\\ \{0\}, &\text{if } d=1.\end{cases}
\]
Then
$(\phi,U)$ is called a $C^{k}$-chart for $\Theta$ at $x$. 
The set
$\Theta$ is called a $C^{k}$-hypersurface if for all $x \in \Theta$ there exists a $C^{k}$-chart for $\Theta$ at $x$.

 If $x\in \Theta$ then $v\in \R^d$ is called a tangent vector to $\Theta$ at $x$ if there exist $\varepsilon\in (0,\infty)$ and a $C^1$-mapping $\gamma\colon (-\varepsilon,\varepsilon) \to \Theta$ such that $\gamma(0)=x\text{ and }\gamma'(0) = v$. 
The set 
	\[
	T_x(\Theta) = \{v\in\R^d\mid v\text{ is a tangent vector to $\Theta$ at $x$}\}
	\]
	is called the tangent cone of $\Theta$ at $x$.
	It is well known that $T_{x}(\Theta)$ is a $(d-1)$-dimensional vector space if $\Theta$ is a $C^1$-hypersurface.

A function $\nor\colon \Theta \to \R^{d}$ is called a normal vector along $\Theta$ if $\nor$ is continuous, $\|\nor\| =1$ and  $\langle \nor(x),v\rangle = 0$ for every $x\in\Theta$ and every tangent vector $v$ to $\Theta$ at $x$, where 
$\langle\cdot, \cdot\rangle$ denotes  the Euclidean scalar product.
The set $\Theta$ is called orientable if there exists a normal vector along $\Theta$.

The Lipschitz continuous mapping
\[
d(\cdot, \Theta)\colon\R^d\to [0,\infty),\,\, x\mapsto \inf\{\|y-x\|\mid y\in \Theta\}
\]
is called the distance function of $\Theta$. The set
\[
\unp(\Theta) = \{x\in \R^d \mid \exists_1 y\in\Theta\colon \|y-x\|=d(x,\Theta)\}
\]
consists of all points in $\R^d$ that have a unique nearest point in $\Theta$ and the mapping
\[
\pr_\Theta\colon \unp(\Theta)\to \Theta,\,\, x\mapsto \text{argmin}_{y\in \Theta} \|x-y\|
\]
is called the orthogonal projection onto $\Theta$.  
For  $\eps\in [0,\infty)$, the $\eps$-neighbourhood of $\Theta$ is given by the open set
\[
\Theta^\eps = \{x\in\R^d\mid d(x,\Theta)< \eps \},
\]
and the quantity
\[
\reach(\Theta) = \sup\{\eps\in [0,\infty)\mid \Theta^\eps\subset \unp(\Theta)\} \in [0,\infty]
\]
is called the reach of $\Theta$. 
The set
$\Theta$ is said to be of positive reach if $\reach(\Theta)>0$.
Note that
$\reach(\Theta)>0$ implies that $\Theta$ is closed.

Next, recall that the length of a continuous function $\gamma\colon  [0,1] \to \R^{d}$ is defined by
\[
l(\gamma) = \sup \left\{ \sum_{k = 1}^{n} \|\gamma(t_{k}) - \gamma(t_{k-1})\| \mid 0 \leq t_{0} < \dots < t_{n} \leq 1,\, n\in \N   \right\} \in [0,\infty]
\]
and that for 
$\emptyset\neq A \subset \R^d$,
 the intrinsic metric $\rho_A\colon A\times A \to [0,\infty]$ is given by
\[ 
\rho_A(x,y) = \inf\{l(\gamma) \mid \gamma\colon  [0,1] \to A \text{ is continuous with } 
\gamma(0) = x \text{ and } \gamma(1) = y \},\quad x,y\in A.
\]
Note that $\rho_A$ is an extended metric, i.e. $\rho_A$ is definite, symmetric and satisfies the triangle inequality 
but may take the value $\infty$.

Let $\emptyset\neq A\subset D\subset \R^d$ and $m,k\in\N$. 
A  function $f\colon  D \to \R^{k\times m}$ is called intrinsic Lipschitz continuous on $A$, if there exists $L \in (0,\infty)$ such that  for all $x,y \in A$ we have $\|f(x) - f(y)\| \leq L \rho_A(x,y)$. In this case, $L$ is called an intrinsic Lipschitz constant for $f$ on $A$. If 
$f$ is intrinsic Lipschitz continuous on $D$
then $f$ is called intrinsic Lipschitz continuous.

A function $f\colon  \R^{d} \to \R^{k\times m}$  is called piecewise Lipschitz continuous if there exists a hypersurface $\emptyset\neq\Theta \subset \R^{d}$ such that $f$ is intrinsic Lipschitz continuous on $\R^d\setminus\Theta$. In this case, the hypersurface $\Theta$ is called an exceptional set for $f$.

We assume that the drift coefficient $\mu$ and the diffusion coefficient $\sigma$ of the SDE \eqref{SDE} satisfy the following conditions.
\begin{itemize}
	\item [(A)] There 
	exist 
	a $C^{4}$-hypersurface $\emptyset\neq\Theta\subset \R^d$
 of positive reach
	and a normal vector $\nor$ along $\Theta$
	such that\\[-.3cm]
	\begin{itemize}
			\item[(i)] there exists an open neighbourhood $U\subset \R^d$ of $\Theta$ such that $\nor$ can be extended to a $C^3$-function $\nor\colon U\to\R^d$ that has bounded partial derivatives up to order $3$ on $\Theta$,\\[-.3cm]
			\item[(ii)] $\inf_{x\in \Theta} \|\normal(x)^\top \sigma(x)\| > 0$,\\[-.3cm]
		\item[(iii)] there exists   an open neighbourhood $U\subset \R^d$ of $\Theta$ such that the function
		\[
		\alpha\colon \Theta\to\R^d,\,\, 	x\mapsto \lim_{h \downarrow 0}\frac{\mu(x-h\normal(x))- \mu(x+h\normal(x))}{2 \|\sigma(x)^\top\normal(x)\|^2}\\[.1cm]
			\]
			can be extended to a $C^3$-function $\alpha\colon U\to\R^d$ that has bounded partial derivatives up to order $3$ on $\Theta$,\\[-.3cm]
					\item[(iv)] there exists $\eps\in (0,\reach(\Theta))$ such that $\mu$ and $\sigma$ are bounded on $\Theta^{\eps}$,\\[-.3cm]
			\item[(v)] $\mu$ is piecewise Lipschitz continuous with exceptional set $\Theta$.\\[-.3cm]
	\end{itemize}
\item[(B)] $\sigma$ is Lipschitz continuous.
\end{itemize}

\begin{Rem}\label{normreg00}
Note that, by Lemma~\ref{normalreg0} in the appendix, every normal vector $\normal$ along a $C^4$-hypersurface $\Theta$ is a $C^3$-mapping. By \cite[Remark 1.1]{Fu95}, the mapping $\nor$ can thus be extended to a $C^3$-mapping on an open neighbourhood of $\Theta$.
Thus, the condition (A)(i) is 
the condition that 
the extension of 
the normal vector $\normal$ has bounded partial
 derivatives 
 up to order  $3$ on $\Theta$. 
\end{Rem}

\begin{Rem}\label{remcond00}
	For the purpose of later use we note that the condition (A)(ii) 
	is equivalent to the condition
		that there exists  $\eps\in (0,\reach(\Theta))$ such that 
	\begin{equation}\label{remcond0}
		\inf_{x\in \Theta^{\eps}} \|\normal(\pr_\Theta(x))^\top \sigma(x)\| > 0.
		\end{equation}
	Indeed, clearly \eqref{remcond0} implies (A)(ii). Next, assume that (A)(ii) holds.  By the Lipschitz continuity of $\sigma$ there exists $K>0$ such that $\|\sigma(x) -\sigma(y)\|\leq K\|x-y\|$ for all $x,y\in\R^d$. Let $\eps \in (0,\reach(\Theta))$ with $\eps \le 	\inf_{x\in \Theta} \|\normal(x)^\top \sigma(x)\|/(K+1)$. Then, for all  $x\in \Theta^\eps$,
	\begin{align*}
		\|\normal(\pr_\Theta(x))^\top \sigma(x)\| & \ge 	\|\normal(\pr_\Theta(x))^\top \sigma(\pr_\Theta(x))\| -	\|\normal(\pr_\Theta(x))^\top (\sigma(x) - \sigma(\pr_\Theta(x)))\| \\
		& \ge (K+1)\eps - \|\sigma(x) - \sigma(\pr_\Theta(x))\| \\
		& \ge (K+1)\eps - K\|x - \pr_\Theta(x)\| > \eps,
	\end{align*}
which yields \eqref{remcond0}.

We furthermore provide a brief motivation of the condition (A)(ii): for $x\in \unp(\Theta)$  and a $d$-dimensional standard normal random vector $Z=(Z_1,\dots,Z_d) $, the random vector $\sigma(x) Z$ has the component 
\[
\langle \sigma(x) Z, \nor(\pr_{\Theta}(x))\rangle \nor(\pr_{\Theta}(x)) = \Bigl(\sum_{j=1}^d  \langle \sigma_j(x), \nor(\pr_{\Theta}(x))\rangle Z_j\Bigr) \cdot \nor(\pr_{\Theta}(x))
\]
in the direction of $\nor(\pr_{\Theta}(x))$, i.e. orthogonal to the tangent space of $\Theta$ at $\pr_{\Theta}(x)$, and, furthermore, 
\[
\langle \sigma(x) Z, \nor(\pr_{\Theta}(x))\rangle \,\sim \,\text{N}(0, V(x))
\]
with variance $V(x) =  \sum_{j=1}^d  \langle \sigma_j(x), \nor(\pr_{\Theta}(x))\rangle^2 = \|\normal(\pr_\Theta(x))^\top \sigma(x)\|^2$,
where $\sigma_j(x)$ denotes the $j$-th column of $\sigma(x)$ for $j\in\{1, \ldots, d\}$.
Observing~\eqref{remcond0}, the
condition (A)(ii) thus 
ensures that there is a neighborhood $\Theta^{\varepsilon}$ of $\Theta$ such that, roughly speaking, on $\Theta^{\varepsilon}$ the solution is pushed away from $\Theta$ with positive minimum probability. This condition 
is essential for many parts of our proofs. In particular,  it implies the non-degeneracy of $\sigma$ on $\Theta^{\varepsilon}$, which is needed to guarantee the existence of a solution of the SDE~\eqref{SDE}. See, e.g.  \cite{LTS15} for  a counter example 
with respect to the 
existence in 
dimensions $d=1$ and $d=2$. 
	\end{Rem}

\begin{Rem}\label{remcond}
	We briefly 	motivate the condition (A)(iii).
	We first note that the function $\alpha$ is well-defined, see Lemma~\ref{exlim}
	in Section \ref{propcoeff}.
	For $x \in \Theta$, the value $\alpha(x)$ is essentially given by the jump of the drift coefficient $\mu$  in $x$, divided by
	twice
		the variance $V(x)$ of the component of  the random vector $\sigma(x) Z$ in the direction of  $\nor(x)$, see the above discussion
		in Remark~\ref{remcond00},
		 and can thus be interepreted as the intrinsic difficulty of pushing the solution away from $x$ in 
		 orthogonal direction to the tangent space of $\Theta$ at $x$ in terms of the irregularity of $\mu$  and the strength of the favourable noise at $x$.
	The function $\alpha$ is used to construct a suitable transformation that removes the discontinuity from the drift coefficient, see Section \ref{trans}. The regularity of $\alpha$ is needed to apply an It\^{o} formula in connection with this transformation, see the proof of Theorem~\ref{exist} in Section \ref{Exist} and the proof of Theorem~\ref{Thm1} in Section \ref{ProofThm1}.
\end{Rem}

\begin{Rem}\label{remdisc}
	In the one-dimensional case, i.e. $d=1$, it is easy to check that the conditions (A) and (B) are equivalent to the following three conditions:
	\begin{itemize}
		\item[(i)] $\emptyset\neq\Theta \subset \R$ is countable with $\delta:=\inf\{|x-y|\mid x,y\in \Theta, x\neq y\} > 0$,
		\item[(ii)] $\sigma\colon\R\to\R$ is Lipschitz continuous with
		\[
		0 < \inf_{x\in \Theta} |\sigma(x)| \le  \sup_{x\in \Theta} |\sigma(x)| < \infty,
		\]
		\item[(iii)] $\mu\colon\R\to\R$ is Lipschitz continuous on each of the countable intervals $(x,y) \subset \R$ with $x,y\in \Theta\cup\{-\infty,\infty\}$ and $(x,y)\cap \Theta = \emptyset$ and for every $x\in\Theta$  there exist $y_x,z_x\in \R$ with $x-\delta < y_x < x< z_x < x+\delta$ such  that
		\[
		\sup_{x\in \Theta} ( |\mu(x)| + |\mu(y_x)| + |\mu(z_x)|) < \infty.
		\]
	\end{itemize}
	A particular instance of (i)-(iii) is given by $\Theta = \{x_1,\dots,x_K\}$ with $-\infty = x_0 < x_1 < \dots < x_K < x_{K+1} = \infty$
	and $K\in\N$
	 such that $\sigma(x_k)\neq 0 $ for every $k\in \{1,\dots,K\}$ and $\mu$ is Lipschitz continuous on $(x_k,x_{k+1})$ for every $k\in \{0,\dots,K\}$. The latter setting is studied in 
	 \cite{LS16, LS17, MGY20, MGY19b, Y21}
\end{Rem}

\begin{Rem}\label{compLS}
	We compare the conditions (A) and (B) with the conditions employed in the correction note~\cite{LScorr1} 
		of~\cite{LS17} to obtain existence and uniqueness of a solution of the SDE~\ref{SDE}, and with the conditions employed in the corrected version~\cite{LScorr2} 
	of~\cite{LS18} to obtain 
	an $L_2$-error estimate
	for the Euler-Maruyama scheme. 
	\begin{itemize}
		 \item[(i)] In both cases, the authors assume, additionally, that the hypersurface $\Theta$ consists of finitely many connected components. 
	 \item[(ii)] On the other hand, 
	in place of (A)(i) and (A)(iii) they only require that the mappings $\nor,\alpha\colon \Theta\to \R^d$ are $C^{3}$ with bounded derivatives up to order $3$. Note that in this case $\nor,\alpha$  can always be extended to 
	$C^{3}$-mappings $\widetilde\nor, \widetilde \alpha$  on an open neighbourhood of $\Theta$, see e.g. \cite[Remark 1.1]{Fu95}, however, boundedness of the derivatives of $\widetilde\nor, \widetilde \alpha$ on $\Theta$ is stronger than boundedness of the derivatives of $\nor,\alpha$ on $\Theta$, because the latter derivatives are only acting on the tangent spaces of $\Theta$.
		\end{itemize}
		Except for (i) and (ii), the assumptions used 
	in~\cite{LScorr1} 
	coincide with our assumptions (A) and (B). 
		In~\cite{LScorr2} 
	the authors furthermore assume, in contrast to (A)(iv), that the coefficients $\mu$ and $\sigma$ are  bounded.
\end{Rem}

We turn to our results.

\begin{Thm} \label{exist}
	Assume that $\mu$ and $\sigma$ satisfy $(A)$ and $(B)$. Then the SDE \eqref{SDE} has a unique strong solution $X$.
\end{Thm}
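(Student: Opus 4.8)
The plan is to use the transformation approach of \cite{LS17}: one removes the discontinuity of $\mu$ by a global $C^1$-change of variables $G\colon\R^d\to\R^d$ that equals the identity outside a small tubular neighbourhood of $\Theta$, transfers the SDE \eqref{SDE} into an SDE with globally Lipschitz continuous coefficients, solves the latter by classical theory, and then pulls the solution back through $G^{-1}$; see Section~\ref{trans}.

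First I would construct $G$. Since $\Theta$ has positive reach, the orthogonal projection $\pr_\Theta$ and the signed distance $\delta_\Theta(x):=\langle x-\pr_\Theta(x),\nor(\pr_\Theta(x))\rangle$ are well defined on a neighbourhood $\Theta^\eps$ of $\Theta$, and because $\Theta$ is a $C^4$-hypersurface and $\nor$ extends as in (A)(i), these maps, as well as $\alpha\circ\pr_\Theta$ by (A)(iii), are $C^3$ on $\Theta^\eps$ with bounded derivatives. One then sets $G(x)=x+\alpha(\pr_\Theta(x))\,\phi(\delta_\Theta(x))\,\psi(\delta_\Theta(x))$ on $\Theta^\eps$ and $G(x)=x$ otherwise, where $\psi$ is a cut-off that is $1$ near $0$ with $\supp\psi\subset(-\eps,\eps)$ and $\phi(u)=u|u|$ (up to scaling), so that $\phi\in C^1$ with $\phi'(0)=0$ while the jump of $\phi''$ at $0$ is nonzero. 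Then $G$ is globally $C^1$, equals the identity on $\Theta$ and off $\Theta^\eps$, its restriction to each of the two open sides of $\Theta$ inside $\Theta^\eps$ is $C^3$ with bounded derivatives up to order three, and for $\eps$ small enough $DG(x)$ is invertible for all $x$, so $G$ is a $C^1$-diffeomorphism whose inverse $G^{-1}$ has the same structure with exceptional set $G(\Theta)=\Theta$.

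Next I would pass to the transformed equation $dZ_t=\widetilde\mu(Z_t)\,dt+\widetilde\sigma(Z_t)\,dW_t$, $Z_0=G(x_0)$, with $\widetilde\sigma(z)=DG(G^{-1}(z))\,\sigma(G^{-1}(z))$ and $\widetilde\mu(z)=DG(G^{-1}(z))\,\mu(G^{-1}(z))+\tfrac12\sum_{i,j=1}^d\partial^2_{ij}G(G^{-1}(z))\,(\sigma\sigma^\top)_{ij}(G^{-1}(z))$, and show that $\widetilde\mu,\widetilde\sigma$ are globally Lipschitz continuous. The only candidate for a discontinuity is across $\Theta$; computing the one-sided limits of $\tfrac12\sum_{i,j}\partial^2_{ij}G\,(\sigma\sigma^\top)_{ij}$ at $x\in\Theta$ and using $\|\sigma(x)^\top\nor(x)\|^2=V(x)>0$ from (A)(ii), one checks that the jump of this term cancels the jump of $DG\cdot\mu$ across $\Theta$, precisely by the definition of $\alpha$ in (A)(iii) (well defined by Lemma~\ref{exlim}); hence $\widetilde\mu$ extends continuously across $\Theta$. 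Together with (A)(i), (A)(iii), (A)(iv), the Lipschitz continuity (B) of $\sigma$ and the boundedness of the derivatives of $\pr_\Theta$, $\nor$ and $\alpha$, this yields that $\widetilde\mu$ and $\widetilde\sigma$ are continuous and piecewise Lipschitz with exceptional set $\Theta$, hence Lipschitz, and therefore of linear growth, so the transformed SDE has a unique strong solution $Z$ by the classical theory. To transfer solutions I would invoke the new It\^o formula, Theorem~\ref{Ito_new}: applied componentwise to $G^{-1}$ --- which belongs to the admissible class of $C^1$, piecewise $C^2$ functions with exceptional set $\Theta$ --- and to the It\^o process $Z$, using that $Z$ spends Lebesgue-almost no time on the null set $\Theta$ because $\widetilde\sigma$ is non-degenerate orthogonal to $\Theta$ near $\Theta$ (so no local-time term contributes), it follows that $X:=G^{-1}(Z)$ solves \eqref{SDE}. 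Conversely, if $X$ is any strong solution of \eqref{SDE}, then $X$ too spends no time on $\Theta$ and Theorem~\ref{Ito_new} applied to $G$ and $X$ shows that $G(X)$ solves the transformed SDE; uniqueness there forces $G(X)=Z$, i.e. $X=G^{-1}(Z)$, giving uniqueness for \eqref{SDE}.

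The main obstacle is the bookkeeping around the It\^o formula: verifying rigorously that $G$ and $G^{-1}$ lie in the class of functions to which Theorem~\ref{Ito_new} applies, that the occupation time of the solution on $\Theta$ vanishes, and that the second-order correction term exactly absorbs the jump of $\mu$ so that the transformed drift is genuinely Lipschitz with no surviving local-time contribution --- precisely the points where the arguments of \cite{LS17,LScorr1} have gaps, cf. Remark~\ref{wrongito}.
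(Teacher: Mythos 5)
Your proposal is correct and follows essentially the same route as the paper: construct the transformation $G$ as in Section~\ref{trans} (identity off a tubular neighbourhood, correction term built from $\alpha\circ\pr_\Theta$ and the signed distance so that the second-order It\^o correction cancels the jump of $\mu$), pass to the transformed SDE with globally Lipschitz coefficients, and transfer existence and uniqueness back and forth by applying Theorem~\ref{Ito_new} componentwise to $G^{-1}$ and $G$, with the non-degeneracy (A)(ii) near $\Theta$ (implemented in the paper via the auxiliary $C^2$ function $g$ of Proposition~\ref{grep}) ensuring the vanishing occupation/local-time contribution on $\Theta$. The technical points you flag as remaining bookkeeping are exactly the ones the paper settles in Proposition~\ref{G}, Proposition~\ref{grep} and the hypotheses of Theorem~\ref{Ito_new}, so your plan matches the paper's proof in Section~\ref{Exist}.
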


\begin{Rem}\label{wrongito}
	Theorem~\ref{exist} is already stated 
	and proven
	in~\cite{LS17} and 
	the associated correction note~\cite{LScorr1},
    see Theorem 3.21 and 
	Theorem 6,
	respectively.  
	In both cases, the proofs heavily rely on the use of 
	~\cite[Theorem 2.9]{LTS15},
	 which provides an It\^{o} formula for functions 
	$f\colon\mathcal D\to \R$ 
	with $\mathcal D\subset \R^d$ open, that are not globally $C^2$,
	see the proof of ~\cite[Theorem 3.19]{LS17}.
		The It\^{o} formula~\cite[Theorem 2.9]{LTS15}, however, is easily seen to be wrong. 
		Indeed, take, $X=W$ and $\mathcal D = \{ x \in \mathbb{R}^d \mid \|x\| < 1 \}$. Then the statement in~\cite[Theorem 2.9]{LTS15} reads:
	\begin{equation}\label{wrongito}
		\forall t \geq 0: f(W_t) = f(0) + \sum_{i=1}^{d} \int_{0}^{t \wedge \zeta} \frac{\partial}{\partial x_i} f(W_s) dW_s^i + \frac{1}{2} \sum_{i=1}^{d} \int_{0}^{t \wedge \zeta} \frac{\partial^2}{\partial x_i^2} f(W_s) ds,
	\end{equation}
	where $\zeta=\inf\{t>0\mid W_t\not\in \mathcal D\}$. The term $f(W_t)$ on the left side of \eqref{wrongito} is however  undefined, since $\PP(W_t \notin \mathcal D) > 0$ for every $t > 0$. Replacing the left side of \eqref{wrongito} by  $f(W_{t \wedge \zeta})$ does not help. Since $\mathcal D$ is open and bounded and $W$ has continuous paths we have $W_{\zeta}\notin\mathcal D$, and therefore, for all $t>0$,
	\[
	\PP(W_{t \wedge \zeta}\notin\mathcal D)\geq \PP(\zeta\le t, W_\zeta\notin \mathcal D)
	=
	\PP(\zeta\le t)\geq \PP(W_t\notin\mathcal D)>0.
	\]
	Assuming $f$ to be defined on the whole of $\R^d$ does not help either. Take $f=\ind_{\mathcal D}$. Then $f$ satisfies all of the assumptions of~\cite[Theorem 2.9]{LTS15}  with respect to its behaviour on $\mathcal D$. The right side of \eqref{wrongito} is $\ind_{\mathcal D}(0)=1$ while the (corrected) left side $\ind_{\mathcal D}(W_{t \wedge \zeta})$ is zero with positive probability.
	
	There is 
	a corrected version of~\cite{LTS15} 
	available
	on arXiv, which contains a corrected version of 
	the It\^{o} formula~\cite[Theorem 2.9]{LTS15}
	for functions $f\colon\R^d\to \R$  that are not globally $C^2$,
	see~\cite[Theorem 2.9]{LSTcorr}.  
	However, functions $f$ to which the It\^{o} formula is applied in the proof of~\cite[Theorem 3.19]{LS17} are only defined locally, on an open rectangle, and it is not clear to us whether it is possible to extend these functions to the whole of $\R^d$ so that the assumptions of the It\^{o} formula~\cite[Theorem 2.9]{LSTcorr} are fulfilled.

	 Furthermore, the proof of~\cite[Theorem 3.19]{LS17} seems to be based on an iterative procedure that depends on whether the current state of the solution is in $\R^d\setminus\Theta$ or in $\Theta$. In the first case the classical It\^{o} formula is applied, and in the second case the  It\^{o} formula~\cite[Theorem 2.9]{LTS15} is applied. Despite the technical problems with the application of the latter It\^{o} formula described above, it is unclear to us how this iterative procedure is defined exactly and whether it terminates with probability one.

	 We therefore provide a complete proof of Theorem~\ref{exist} in the present paper, see Section~\ref{Exist}. This proof is based on a new  It\^{o} formula for functions $f\colon\R^d\to \R$  not globally $C^2$,
		 see Theorem~\ref{Ito_new} in Section~\ref{ito},
	 that can also be used to estimate the $L_p$-distance of the Euler-Maruyama scheme for a transformed SDE and the associated transformed Euler-Maruyama scheme for the original SDE. 
	\end{Rem}

For $n\in\N$ let $\eul_{n}=(\eul_{n,t})_{t\in[0,1]}$  denote  the time-continuous Euler-Maruyama scheme with step-size $1/n$ associated to the SDE \eqref{SDE}, i.e. $\eul_{n}$ is recursively given by
$\eul_{n,0}=x_0$ and
\[
\eul_{n,t}=\eul_{n,i/n}+\mu(\eul_{n,i/n})\, (t-i/n)+\sigma(\eul_{n,i/n})\, (W_t-
W_{i/n})
\]
for $t\in (i/n,(i+1)/n]$ and $i\in\{0,\ldots,n-1\}$. 
For $f\colon[0,1]\to\R^d$ let $\|f\|_{\infty}=\sup\{\|f(t)\|\mid t\in[0,1]\}$ denote the supremum norm of $f$.
 We have the following  estimate for the supremum error of $\eul_{n}$.

\begin{Thm}\label{Thm1} Assume that $\mu$ and $\sigma$ satisfy $(A)$ and $(B)$.
	For every $p\in [1,\infty)$ and every $\delta\in (0,\infty)$ there exists $c\in(0, \infty)$ such that for all $n\in\N$, 
	\begin{equation}\label{l3}
		\bigl(\EE\bigl[\|X-\eul_{n}\|_\infty^p\bigr]\bigr)^{1/p}\leq \frac{c}{n^{1/2-\delta}}. 
	\end{equation}
\end{Thm}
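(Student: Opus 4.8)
The plan is to reduce \eqref{l3} to an $L_p$-estimate for the image of the Euler--Maruyama scheme under the transformation introduced in \cite{LS17}, and to control the resulting correction terms by means of the new It\^o formula (Theorem~\ref{Ito_new}) together with the occupation-time estimate (Proposition~\ref{occtime2}).

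\emph{Step 1 (Transformation and reduction).} Following \cite{LS17}, one constructs a $C^1$-diffeomorphism $G\colon\R^d\to\R^d$ with $G-\mathrm{id}_{\R^d}$ supported in a bounded neighbourhood of $\Theta$, such that $G$ and $G^{-1}$ are globally Lipschitz, $G$ is $C^3$ on $\R^d\setminus\Theta$ with bounded derivatives, the second-order partial derivatives of $G$ possess one-sided limits on $\Theta$, and $Z_t:=G(X_t)$ solves $dZ_t=\widetilde\mu(Z_t)\,dt+\widetilde\sigma(Z_t)\,dW_t$ with globally Lipschitz coefficients $\widetilde\mu,\widetilde\sigma$; the construction uses $\alpha$ and $\nor$ and their regularity from (A)(i), (A)(iii). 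Setting $\euly_{n,t}:=G(\eul_{n,t})$ and using that $G^{-1}$ is Lipschitz gives $\|X-\eul_n\|_\infty\le c\,\|Z-\euly_n\|_\infty$, so it suffices to bound $(\EE[\|Z-\euly_n\|_\infty^p])^{1/p}$. By (A)(iv), (A)(v) and (B) the coefficients $\mu,\sigma$ grow at most linearly, whence $\sup_{n\in\N}\EE[\|\eul_n\|_\infty^p]<\infty$, and likewise for $X$ and $Z$.

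\emph{Step 2 (It\^o formula for $\euly_n$) and Step 3 (error equation).} On each interval $(i/n,(i+1)/n]$ the process $\eul_n$ satisfies $d\eul_{n,t}=\mu(\eul_{n,i/n})\,dt+\sigma(\eul_{n,i/n})\,dW_t$, so it is an It\^o process and, since $\eul_n$ hits $\Theta$ with positive probability, Theorem~\ref{Ito_new} (rather than the classical It\^o formula) applies to the components of $G$ and yields
\[
\euly_{n,t}=G(x_0)+\int_0^t b^{(n)}_s\,ds+\int_0^t a^{(n)}_s\,dW_s+L^{(n)}_t,
\]
where $b^{(n)}_s,a^{(n)}_s$ are explicit expressions in $DG(\eul_{n,s})$, $D^2G(\eul_{n,s})$, $\mu(\eul_{n,\underline{s}_n})$, $\sigma(\eul_{n,\underline{s}_n})$ with $\underline{s}_n:=\lfloor ns\rfloor/n$, and $L^{(n)}$ is the correction term of the new It\^o formula: a continuous finite-variation process supported on $\{s\colon\eul_{n,s}\in\Theta\}$, built from the jump of $D^2G$ across $\Theta$ and the normal component $\nor(\pr_\Theta(\cdot))^\top\sigma(\eul_{n,\underline{s}_n})$, which is nondegenerate by (A)(ii). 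Putting $e_t:=Z_t-\euly_{n,t}$ and inserting $\pm\widetilde\mu(\euly_{n,s})$, $\pm\widetilde\sigma(\euly_{n,s})$,
\[
e_t=\int_0^t\bigl(\widetilde\mu(Z_s)-\widetilde\mu(\euly_{n,s})\bigr)ds+\int_0^t\bigl(\widetilde\sigma(Z_s)-\widetilde\sigma(\euly_{n,s})\bigr)dW_s+R^{(n)}_t,
\]
with $R^{(n)}_t:=\int_0^t(\widetilde\mu(\euly_{n,s})-b^{(n)}_s)\,ds+\int_0^t(\widetilde\sigma(\euly_{n,s})-a^{(n)}_s)\,dW_s-L^{(n)}_t$. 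The decisive point is that $\widetilde\mu$ is \emph{defined} so that on $\Theta$ the jump of $\mu$ cancels against the jump of $\tfrac12\tr(\sigma\sigma^\top D^2G)$; hence, on a step where $\eul_{n,s}$ and $\eul_{n,\underline{s}_n}$ lie on the same side of $\Theta$, the contributions to $\widetilde\mu(\euly_{n,s})-b^{(n)}_s$ and to $L^{(n)}$ are of the usual Euler consistency order, controllable through the Lipschitz property of $DG,D^2G$ on each side and $(\EE[\|\eul_{n,s}-\eul_{n,\underline{s}_n}\|^p])^{1/p}\le c\,n^{-1/2}$ near $\Theta$. An extra $O(1)$ contribution to the drift term and to $L^{(n)}$ appears only on the event $B^{(n)}_s:=\{\eul_{n,s},\eul_{n,\underline{s}_n}\text{ lie on different sides of }\Theta\}$; by Proposition~\ref{occtime2} the expected time spent in $B^{(n)}$ and the associated expected local time of $\eul_n$ on $\Theta$ are so small that, together with the moment bounds of Step~1 and the Burkholder--Davis--Gundy inequality, $(\EE[\|R^{(n)}\|_\infty^p])^{1/p}\le c\,n^{-(1/2-\delta)}$.

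\emph{Step 4 (Closing the estimate).} The first two terms of the error equation are globally Lipschitz in $e_s$. Passing to $p$-th moments of the running supremum, using the Burkholder--Davis--Gundy inequality for the stochastic integral and Jensen's inequality for the Lebesgue integral, and then Gr\"onwall's lemma, gives $(\EE[\|e\|_\infty^p])^{1/p}\le c\,(\EE[\|R^{(n)}\|_\infty^p])^{1/p}\le c\,n^{-(1/2-\delta)}$, and \eqref{l3} follows from $\|X-\eul_n\|_\infty\le c\,\|e\|_\infty$.

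\emph{Main obstacle.} The heart of the argument is Step~3: since the classical It\^o formula is unavailable one must use Theorem~\ref{Ito_new} to make sense of $G(\eul_{n,t})$; one then has to identify the precise form of $L^{(n)}$ and the near-cancellation against the frozen drift on same-side steps; and, most delicately, one has to establish Proposition~\ref{occtime2}, i.e.\ that the continuous Euler path and its preceding grid value lie on different sides of the $C^4$-hypersurface $\Theta$ only during a negligible expected amount of time — this is where the nondegeneracy condition (A)(ii) and the geometry of $\Theta$ (positive reach, $C^4$-regularity) enter in an essential way.
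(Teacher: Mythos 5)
Your proposal follows essentially the same route as the paper: transform by the $C^1$-diffeomorphism $G$ of Proposition~\ref{G}, apply the new It\^o formula (Theorem~\ref{Ito_new}) to $G\circ\eul_n$ with frozen coefficients $\beta_t=\sigma(\eul_{n,\utn})$, $\gamma_t=\sigma(\eul_{n,t})$, control the ``different sides'' contribution with Proposition~\ref{occtime2}, and close via moment bounds, Burkholder--Davis--Gundy and Gronwall — the paper merely routes the comparison through the Euler scheme $\euly_n$ of the transformed SDE instead of comparing $G\circ\eul_n$ directly with $Y=G\circ X$, an immaterial difference. One small correction: Theorem~\ref{Ito_new} does not produce a local-time-type correction term $L^{(n)}$ supported on $\{\eul_{n,s}\in\Theta\}$; its conclusion is an inequality whose error is bounded by $\|f''\|_{\infty}\int_0^1\|\beta_s\beta_s^\top-\gamma_s\gamma_s^\top\|\,ds$ (of order $n^{-1/2}$ here), and the event $\{d(\eul_{n,\usn},\Theta)\le\|\eul_{n,s}-\eul_{n,\usn}\|\}$ together with Proposition~\ref{occtime2} enters only through the increments of the extended second derivatives $R_i$ (the term $V_{n,3}$ in the paper), where intrinsic Lipschitz continuity is available precisely off that event.
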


Next, we study the performance of the piecewise linear interpolation 
$\overline{X}_n = (\overline X_{n,t})_{t\in [0,1]}$ 
of the time-discrete Euler-Maruyama scheme $(\widehat X_{n,i/n})_{i=0,\dots,n}$, i.e. 
\[
\overline X_{n,t} = (nt-i)\,\widehat X_{n,(i+1)/n} + (i+1-n t)\, \widehat X_{n,i/n}
\]
for $t\in [i/n,(i+1)/n]$
and $i\in\{0,\ldots,n-1\}$. 
We have the following  estimate for the supremum error of $\overline X_n$.

\begin{Thm}\label{Thm2} Assume that $\mu$ and $\sigma$ satisfy $(A)$ and $(B)$.
		For every $p\in [1,\infty)$ and every $\delta\in (0,\infty)$
	there exists $c\in(0, \infty)$ such that for all $n\in\N$, 
	\begin{equation}\label{l4}
		\bigl(\EE\bigl[\|X-\overline X_n\|_\infty^p\bigr]\bigr)^{1/p}\leq 
			\frac{c}{n^{1/2-\delta}}.
		\end{equation}
\end{Thm}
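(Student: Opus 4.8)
The plan is to deduce Theorem~\ref{Thm2} from Theorem~\ref{Thm1} by estimating the interpolation error $\eul_n-\overline X_n$ in $p$-th mean. Since $\eul_n$ and $\overline X_n$ agree at the grid points $i/n$, the triangle inequality gives
\[
\bigl(\EE\bigl[\|X-\overline X_n\|_\infty^p\bigr]\bigr)^{1/p}\le\bigl(\EE\bigl[\|X-\eul_n\|_\infty^p\bigr]\bigr)^{1/p}+\bigl(\EE\bigl[\|\eul_n-\overline X_n\|_\infty^p\bigr]\bigr)^{1/p},
\]
and Theorem~\ref{Thm1} bounds the first term by $c\,n^{-(1/2-\delta)}$. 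It therefore suffices to prove $\bigl(\EE[\|\eul_n-\overline X_n\|_\infty^p]\bigr)^{1/p}\le c\,n^{-(1/2-\delta)}$ for all $p\ge1$, $\delta>0$; in fact the interpolation error will turn out to be of order $\sqrt{\log(n+1)}\,n^{-1/2}$ in every $L_p$, which is even slightly better than needed.

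First I would write the interpolation error out explicitly. Fix $i\in\{0,\dots,n-1\}$, $t\in[i/n,(i+1)/n]$, and set $s=t-i/n$, so that $ns=nt-i\in[0,1]$. Plugging the Euler--Maruyama recursion for $\eul_{n,(i+1)/n}$ into the definition of $\overline X_{n,t}$, the terms $\eul_{n,i/n}$ and the drift increments $s\,\mu(\eul_{n,i/n})$ cancel against the corresponding terms of $\eul_{n,t}$, leaving
\[
\eul_{n,t}-\overline X_{n,t}=\sigma(\eul_{n,i/n})\bigl[(W_t-W_{i/n})-ns\,(W_{(i+1)/n}-W_{i/n})\bigr].
\]
Bounding the bracket (using $ns\le1$) in norm by $2\sup_{u\in[i/n,(i+1)/n]}\|W_u-W_{i/n}\|$ and taking suprema over $t$ and the maximum over $i$ yields
\[
\|\eul_n-\overline X_n\|_\infty\le 2\Bigl(\max_{0\le i\le n-1}\|\sigma(\eul_{n,i/n})\|\Bigr)\Bigl(\max_{0\le i\le n-1}\ \sup_{u\in[i/n,(i+1)/n]}\|W_u-W_{i/n}\|\Bigr).
\]

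It then remains to control the $p$-th moments of the two factors and to combine them by H\"older's inequality. For the Brownian factor I would use that each $\sup_{u\in[i/n,(i+1)/n]}\|W_u-W_{i/n}\|$ is equal in law to $n^{-1/2}\sup_{v\in[0,1]}\|B_v\|$ for a $d$-dimensional Brownian motion $B$, together with the Gaussian tail bound $\PP(\sup_{v\in[0,1]}\|B_v\|>\lambda)\le C_0 e^{-c_0\lambda^2}$; a union bound over the $n$ cells and integration of the resulting tail give $\bigl(\EE[(\max_i\sup_{u}\|W_u-W_{i/n}\|)^q]\bigr)^{1/q}\le c_q\sqrt{\log(n+1)}\,n^{-1/2}$ for every $q\ge1$. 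For the $\sigma$-factor, Lipschitz continuity of $\sigma$ gives $\|\sigma(x)\|\le\|\sigma(0)\|+L\|x\|$, so it is enough to have the uniform moment bound $\sup_{n\in\N}\EE[\|\eul_n\|_\infty^q]<\infty$ for all $q\ge1$; this follows from the at most linear growth of $\mu$ and $\sigma$ (for $\mu$: bounded on $\Theta^\eps$ by (A)(iv) and intrinsic Lipschitz on $\R^d\setminus\Theta$ by (A)(v)) via the standard discrete Gronwall estimate for the Euler--Maruyama recursion, supplemented by the within-step Brownian contribution, or alternatively from Theorem~\ref{Thm1} together with $\EE[\|X\|_\infty^q]<\infty$. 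Applying H\"older with conjugate exponents chosen so that the Brownian factor retains a rate arbitrarily close to $n^{-1/2}$ while the $\sigma$-factor is only required to be $L_q$-bounded then gives $\bigl(\EE[\|\eul_n-\overline X_n\|_\infty^p]\bigr)^{1/p}\le c_p\sqrt{\log(n+1)}\,n^{-1/2}\le c\,n^{-(1/2-\delta)}$, which finishes the proof.

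I expect the only genuinely delicate point to be the uniform-in-$n$ moment bound for the Euler--Maruyama scheme, i.e.\ the fact that a piecewise Lipschitz drift whose exceptional set is a hypersurface of positive reach and on whose neighbourhood $\mu$ is bounded automatically has at most linear growth; this has to be extracted from (A)(iv)--(v) and the geometry of $\Theta$ (and is presumably recorded among the properties of the coefficients in Section~\ref{Proofs}). Once this is available, Theorem~\ref{Thm2} is essentially a corollary of Theorem~\ref{Thm1}, the additional ingredients being only the short explicit computation of $\eul_{n,t}-\overline X_{n,t}$ above and a classical estimate for the modulus of continuity of Brownian motion on a uniform grid.
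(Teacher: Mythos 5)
Your proposal is correct and follows essentially the same route as the paper: the triangle inequality combined with Theorem~\ref{Thm1}, the observation that $\eul_{n,t}-\overline X_{n,t}=\sigma(\eul_{n,\utn})(W_t-\overline W_{n,t})$, the uniform moment bound for $\eul_n$ from the linear-growth and moment lemmas (Lemmas~\ref{lingrowth} and~\ref{eulprop}), and H\"older's inequality together with the $\sqrt{\ln(n+1)}/\sqrt{n}$ bound for the piecewise linear interpolation error of $W$. The only cosmetic difference is that you derive that Brownian estimate by a union bound with Gaussian tails, whereas the paper simply cites it from the literature.
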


\section{Proofs}\label{Proofs}

In this section we provide proofs of  Theorem~\ref{exist}, Theorem~\ref{Thm1} and Theorem~\ref{Thm2}.

We briefly describe the 
structure
of the section. In Section~\ref{Notation} we introduce some notation. In Section~\ref{propcoeff} we prove the linear growth property of $\mu$ and $\sigma$ as well as the existence of the limit on the right hand side in condition (A)(iii). In 
Section~\ref{trans} we provide the construction of the transformation $G\colon\R^d\to\R^d$ that is used to switch from the SDE \eqref{SDE} to an SDE with Lipschitz continuous coefficients and prove its crucial properties.  Section~\ref{ito}  contains a new It\^{o} formula for a class of functions
$f\colon \R^d \to \R$
not globally $C^2$. Applying this It\^{o} formula with the
transformation $G$ and its inverse $G^{-1}$, we prove in Section~\ref{Exist}  the existence and uniqueness result, Theorem~\ref{exist}. In Section~\ref{MomOcc} we provide moment estimates and occupation time estimates
for the time-continuous Euler-Maruyama scheme $\eul_n$. Using these extimates as well as the 
It\^{o} formula from Section~\ref{ito} we prove  Theorem~\ref{Thm1}  in Section~\ref{ProofThm1}. Section~\ref{ProofThm2} contains the proof of Theorem~\ref{Thm2}.

\subsection{Notation}\label{Notation}
For a matrix $A\in\R^{d\times m}$ we use $\|A\|$ to denote the Frobenius norm of $A$, $\text{Ker}(A) = \{x\in\R^m\mid Ax=0\}$ to denote the null space of $A$, $A_{j}$ to denote the $j$-th column of $A$ for $j \in \{1,\dots,m\}$ and 
\[
\text{vec}(A)=\begin{pmatrix}
		A_{1} \\
		\vdots \\
		A_{m}
	\end{pmatrix}
\in \R^{md}
	\]
to denote the vector 
obtained by concatenation
of the columns of $A$.
In the case $ d=m$ we use $\tr(A)$ to denote the trace of $A$ and $\text{det}(A)$ to denote the determinant of $A$. For $x\in\R^{d^2}$ we put
$\text{mat}(x)=(x_{i+(j-1)d})_{1\leq i,j\leq d}
\in \R^{d\times d}$.
Thus, $\text{vec} (\text{mat} (x) ) =x$.

For $x,y\in\R^d$ we use  
$\langle x,y\rangle$ to denote the Euclidean scalar product of $x$ and $y$ and
$ \overline{x,y} = \{\lambda x + (1-\lambda)y \mid  \lambda \in [0,1] \}
\subset \R^d
$ to denote the straight line connecting  $x$ and $y$. Furthermore, we use $S^{d-1} = \{x\in\R^d\mid \|x\|=1\}$ to denote the unit sphere in $\R^d$. For $r\in [0,\infty)$ and $x\in \R^d$ we use $B_{r}(x) = \{y \in \R^{d} \mid \|x-y\| < r \}$ to denote the open ball and $\overline{B}_{r}(x) = \{y \in \R^{d} \mid \|x-y\| \leq r \}$ to denote the closed ball with center $x$ and radius $r$, respectively.

For a set $U \subset \R^{d}$ we write $\inter(U)$, $\cl(U)$ and $\partial U$ for the interior, the closure and the boundary of $U$, respectively. For  a function $f \colon U \to \R^{m}$ and a set 
$ M\subset U$ 
we use $\|f\|_{\infty,M} = \sup\{\|f(x)\|\mid x\in M\}$ 
to denote the supremum 
of  
the
values of 
$\|f\|$ on $M$
 and we put
$\|f\|_{\infty}=\|f\|_{\infty,U}$.

For a multi-index $\alpha=(\alpha_1,\dots,\alpha_d)^\top\in \N_0^d$ we put $|\alpha| = \alpha_1+\dots + \alpha_d$.
For
a set $ U\subset \R^d$, an open set  $\emptyset\neq M\subset U$,
 $k\in\N_0$ and a function $f=(f_1,\dots,f_m)^\top \colon U \to \R^{m}$, which is a $C^{k}$-function on $M$, we put
\[ 
\|f^{(\ell)}(x)\|_\ell = \max_{i\in\{1,\dots,m\}}\max_{\alpha \in \N_0^{d}, \abs{\alpha} = \ell} |f_i^{(\alpha)}(x) |
\] 
for $\ell \in \{0,1,\dots,k \}$ and $x \in M$. 
If $k\ge 1$ then we use
\[
f'\colon M\to \R^{m\times d},\,\, x\mapsto \Bigl(\frac{\partial f_i}{\partial x_j}(x)\Bigr)_{\substack{1\le i\le m \\ 1\le j \le d}}\in\R^{m\times d}
\]
to denote the first derivative of $f$ on $M$. If $m=1$ and $k\ge 2$ then we use 
\[
f''\colon M\to \R^{d\times d},\,\, x\mapsto \Bigl(\frac{\partial^2 f}{\partial x_{j_1} \partial x_{j_2}}(x)\Bigr)_{1\le j_1,j_2\le d}\in\R^{d\times d}
\]
to denote the second derivative of $f$ on $M$.	

For a function $f\colon \R^{d}\to \R$ we 
use $\supp(f) = \text{cl}(\{x \in \R^{d} \mid f(x) \neq 0 \})$ to denote the support of $f$.
\subsection{Properties of the coefficients}\label{propcoeff}

We first prove the linear growth property of $\mu$ and $\sigma$ stated in Remark~\ref{remdisc}.

\begin{Lem}\label{lingrowth} Let $\emptyset\neq\Theta\subset \R^d$ be a  $C^1$-hypersurface of positive reach 
 and assume that $\mu$ and $\sigma$ satisfy (A)(iv),(v) and (B).
Then there exists $c\in (0,\infty)$ such that for all $x\in\R^d$,
	\[
	\|\mu(x)\| + \|\sigma(x)\| \le c\, (1+\|x\|).
	\]
\end{Lem}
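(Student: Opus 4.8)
The plan is to bound $\sigma$ and $\mu$ separately. For $\sigma$ there is nothing to do beyond invoking (B): if $L_\sigma$ is a Lipschitz constant of $\sigma$, then $\|\sigma(x)\|\le\|\sigma(0)\|+L_\sigma\|x\|$ for all $x\in\R^d$, which already has the required form. For $\mu$ I would fix $\eps\in(0,\reach(\Theta))$ as provided by (A)(iv), set $M:=\|\mu\|_{\infty,\Theta^\eps}<\infty$, let $L$ be an intrinsic Lipschitz constant of $\mu$ on $\R^d\setminus\Theta$ (which exists by (A)(v)), put $C_0:=d(0,\Theta)$, and then distinguish whether or not $x$ lies in the tube $\Theta^\eps$. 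If $x\in\Theta^\eps$, then $\|\mu(x)\|\le M$ and we are done. The work is in the complementary case $d(x,\Theta)\ge\eps$.

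In that case I would first use that $\reach(\Theta)>0$ forces $\Theta$ to be closed, so a nearest point $p\in\Theta$ with $\|x-p\|=d(x,\Theta)$ exists. The key geometric observation is that the half-open segment $\{(1-s)x+sp\mid s\in[0,1)\}$ avoids $\Theta$ entirely: for $s\in(0,1)$ the point $(1-s)x+sp$ has distance $s\|x-p\|<d(x,\Theta)$ from $x$ and hence cannot lie in $\Theta$, while $x\notin\Theta$ because $d(x,\Theta)\ge\eps>0$. I would then set $y:=p+\tfrac{\eps}{2}\,(x-p)/\|x-p\|$; since $0<\eps/2<\eps\le\|x-p\|$, this $y$ lies strictly between $x$ and $p$, so $y\in\R^d\setminus\Theta$, and $d(y,\Theta)\le\|y-p\|=\eps/2<\eps$, whence $y\in\Theta^\eps$ and $\|\mu(y)\|\le M$. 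The straight segment from $x$ to $y$ stays in $\R^d\setminus\Theta$ and has Euclidean length $\|x-p\|-\eps/2\le d(x,\Theta)$, so $\rho_{\R^d\setminus\Theta}(x,y)\le d(x,\Theta)$. Combining the intrinsic Lipschitz continuity of $\mu$ on $\R^d\setminus\Theta$ with the $1$-Lipschitz continuity of $d(\cdot,\Theta)$ (which gives $d(x,\Theta)\le C_0+\|x\|$) yields
\[
\|\mu(x)\|\le\|\mu(y)\|+L\,\rho_{\R^d\setminus\Theta}(x,y)\le M+L\bigl(C_0+\|x\|\bigr).
\]
Together with the case $x\in\Theta^\eps$ this shows $\|\mu(x)\|\le(M+LC_0)+L\|x\|$ for all $x\in\R^d$, and adding the bound for $\sigma$ produces the claim with a suitable $c$.

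The only genuine subtlety — and the step I would be most careful about — is that $\R^d\setminus\Theta$ need not be connected, so intrinsic Lipschitz continuity only relates values of $\mu$ that can be joined by a path inside one connected component; this is precisely why $y$ must be taken on the segment from $x$ to its nearest point in $\Theta$ (which stays in a single component and has length controlled by $d(x,\Theta)$), rather than at some arbitrary point of $\Theta^\eps$ near $x$. Everything else is the routine triangle-inequality bookkeeping indicated above; note in particular that $C^1$-smoothness of $\Theta$ plays no role here, only that positive reach makes $\Theta$ closed.
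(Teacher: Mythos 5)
Your proof is correct and follows essentially the same route as the paper: $\sigma$ is handled directly by Lipschitz continuity, and $\mu$ outside the tube $\Theta^\eps$ is controlled by joining $x$ to a point $y\in\Theta^\eps$ along a straight segment contained in $\R^d\setminus\Theta$, so that intrinsic Lipschitz continuity together with $d(x,\Theta)\le d(0,\Theta)+\|x\|$ yields the linear bound. The only cosmetic difference is that you construct $y$ explicitly via the exact nearest point (using closedness from positive reach), whereas the paper just selects some $y\in B_{d(x,\Theta)}(x)\cap\Theta^\eps$ and argues with $\overline{x,y}\subset B_{d(x,\Theta)}(x)\subset\R^d\setminus\Theta$.
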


\begin{proof}
	Since $\sigma$ is Lipschitz continuous, we immediately obtain that $\sigma$ is of at most linear growth. 
	
	According to (A)(iv) there exists $\eps\in (0,\reach(\Theta))$ such that  $\mu$ is bounded on $\Theta^{\eps}$. It thus remains to show that $\mu$ is of at most linear growth on $\R^{d}\setminus \Theta^{\eps}$. 
	Fix $\theta\in\Theta$. Let $x\in \R^{d}\setminus \Theta^{\eps}$. Then $B_{d(x,\Theta)}(x) \cap \Theta = \emptyset$ and there exists $ y \in  B_{d(x,\Theta)}(x) \cap \Theta^{\eps}$. We conclude that $\overline{x,y} \subset B_{d(x,\Theta)}(x) \subset \R^{d}\setminus \Theta$, which implies 
	$\rho_{\R^{d}\setminus \Theta}(x,y) = \|x-y\|$,
	see Lemma~\ref{Lipconnec0} in the appendix.
	
	By (A)(v), $\mu$ is intrinsic Lipschitz continuous on $\R^{d}\setminus \Theta$. Let $L \in (0,\infty)$ 
	be a corresponding intrinsic Lipschitz constant. 
	Put $c_{1} = \sup_{z \in \Theta^{\eps}} \|\mu(z)\|\in [0,\infty)$. Then
	\begin{align*}
		\|\mu(x)\| & \le \|\mu(x) - \mu(y)\| + \|\mu(y)\| \le L \|x-y\| + c_{1} \\
		&< Ld(x,\Theta) + c_{1} \leq L\|x-\theta\| + c_{1} \leq (L+L\|\theta\|+c_1)(1+\|x\|),
	\end{align*}
	which completes the proof.
\end{proof}
We briefly recall a well-known fact from differential geometry. 

Let $\emptyset\neq\Theta\subset\R^d$ be an orientable $C^1$-hypersurface of positive reach and  let  $\normal\colon \Theta\to\R^d$ be a normal vector along $\Theta$.
For $s\in\{+,-\}$ and $\eps\in (0,\reach(\Theta))$  put
\begin{equation}\label{u00}
Q_{\eps,s} = \{x+s\lambda\normal(x)\mid x\in\Theta, \lambda\in (0,\eps) \}.
\end{equation}
Since $\Theta$ is an orientable $C^1$-hypersurface of positive reach it follows that $Q_{\eps,+}$ and $Q_{\eps,-}$ are open and disjoint with 
\begin{equation}\label{u01}
	\Theta^\eps \setminus \Theta = Q_{\eps,+}\cup Q_{\eps,-},
\end{equation}
see Lemma~\ref{sides0} in the appendix. 

Using~\eqref{u01} we can prove the existence of the limit on the right hand side in condition (A)(iii), see Remark~\ref{remcond}.

\begin{Lem}\label{exlim} Let $\emptyset\neq\Theta\subset\R^d$ be an orientable $C^1$-hypersurface of positive reach   and assume that $\mu$  satisfies (A)(v). Let  $\normal\colon \Theta\to\R^d$ be a normal vector along $\Theta$. Then for every $x\in\Theta$ and every $s\in\{+,-\}$, the limit 
	\[
\lim_{h\downarrow 0} \mu (x+sh\normal(x))
	\]
	exists in $\R^d$.
\end{Lem}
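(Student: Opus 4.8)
The plan is to use the intrinsic Lipschitz continuity of $\mu$ on $\R^d\setminus\Theta$ together with the geometry of the normal segments near a hypersurface of positive reach. Fix $x\in\Theta$ and $s\in\{+,-\}$, and fix $\eps\in(0,\reach(\Theta))$. For $h\in(0,\eps)$ the point $x+sh\normal(x)$ lies in $\Theta^\eps\setminus\Theta$, and in fact in the open set $Q_{\eps,s}$ defined in \eqref{u00}; moreover, by the defining property of the reach, $\pr_\Theta(x+sh\normal(x))=x$ and $d(x+sh\normal(x),\Theta)=h$. Since $\mu$ is intrinsic Lipschitz continuous on $\R^d\setminus\Theta$ with some constant $L\in(0,\infty)$ by (A)(v), it suffices to show that the net $(x+sh\normal(x))_{h\downarrow 0}$ is Cauchy for the intrinsic metric $\rho_{\R^d\setminus\Theta}$, i.e. that for $0<h_1<h_2<\eps$ one has $\rho_{\R^d\setminus\Theta}(x+sh_1\normal(x),x+sh_2\normal(x))\to 0$ as $h_1,h_2\downarrow 0$. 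Then $\bigl(\mu(x+sh\normal(x))\bigr)_{h\downarrow 0}$ is Cauchy in $\R^d$, hence convergent.

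For the Cauchy estimate I would use that the straight segment $\overline{x+sh_1\normal(x),\,x+sh_2\normal(x)}$ is contained in the open normal segment $\{x+s\lambda\normal(x)\mid \lambda\in(0,\eps)\}\subset Q_{\eps,s}\subset \R^d\setminus\Theta$, so that this segment is an admissible competitor path in $\R^d\setminus\Theta$ and therefore
\[
\rho_{\R^d\setminus\Theta}(x+sh_1\normal(x),\,x+sh_2\normal(x))\le \|sh_2\normal(x)-sh_1\normal(x)\| = |h_2-h_1|\cdot\|\normal(x)\| = |h_2-h_1|,
\]
using $\|\normal(x)\|=1$. (This is exactly the type of argument already used in the proof of Lemma \ref{lingrowth}, via Lemma \ref{Lipconnec0} in the appendix.) Hence $\|\mu(x+sh_1\normal(x))-\mu(x+sh_2\normal(x))\|\le L|h_2-h_1|$, which shows the net is Cauchy in $\R^d$, and completeness of $\R^d$ gives the existence of $\lim_{h\downarrow 0}\mu(x+sh\normal(x))$.

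The only genuinely non-routine point is the geometric claim that the open normal segment $\{x+s\lambda\normal(x)\mid\lambda\in(0,\eps)\}$ avoids $\Theta$ — equivalently that it lies in $Q_{\eps,s}$ — and this is supplied by \eqref{u01} (Lemma \ref{sides0} in the appendix) together with the observation that each such point has $x$ as its unique nearest point in $\Theta$, which holds because $\eps<\reach(\Theta)$. Everything else is an application of the triangle inequality and the definition of $\rho_{\R^d\setminus\Theta}$. I would also remark that the argument shows the one-sided limits in the two directions $s=+$ and $s=-$ need not coincide; their (rescaled) difference is precisely the quantity $\alpha(x)$ appearing in (A)(iii), which is why Lemma \ref{exlim} is the natural prerequisite for that condition to be well-posed.
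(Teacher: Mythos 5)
Your argument is correct and is essentially the paper's own proof: both rest on the observation that the normal segment $\{x+s\lambda\normal(x)\mid\lambda\in(0,\eps)\}$ lies in $Q_{\eps,s}\subset\R^d\setminus\Theta$ by \eqref{u01}, so that (A)(v) combined with the straight-segment identity for the intrinsic metric yields a Euclidean Lipschitz bound along the segment and hence a Cauchy criterion. The only cosmetic difference is that you invoke Lemma~\ref{Lipconnec0} directly to bound $\rho_{\R^d\setminus\Theta}$ between two points of the segment, while the paper restricts $\mu$ to the convex set $A=\overline{x,x+s(\eps/2)\normal(x)}\setminus\{x\}$ and uses Lemma~\ref{lipimpl0}(ii); these are the same estimate.
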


\begin{proof}
	Let $x\in\Theta$, $s\in\{+,-\}$, $\eps\in (0,\reach(\Theta))$ and put 
	\[
	A=  \overline{x,x+s(\eps/2) \normal(x)}\setminus\{x\}. 
	\]
	By~\eqref{u01} we get
	\[
    A\subset \Theta^\eps\setminus \Theta\subset \R^d\setminus  \Theta. 
	\] 
	By (A)(v) we thus obtain  that $\mu$ is intrinsic Lipschitz continuous on  $A$. Since $A$ is convex, we  conclude by Lemma~\ref{lipimpl0}(ii) in the appendix that $\mu$ is Lipschitz continuous on $A$. 
	
	Let $(\lambda_n)_{n\in\N}$ be a  sequence in
	$(0,\varepsilon/2)$ 
	with $\lim_{n\to\infty} \lambda_n = 0$.  Then $x+s\lambda_n\normal(x)\in A$ for all $n\in \N$, and by the Lipschitz continuity of $\mu$ on $A$ we obtain that $(\mu(x+s\lambda_n\normal(x))_{n\in\N}$ is a Cauchy-sequence and hence has a limit $z\in\R^d$. If $(\tilde \lambda_n)_{n\in\N}$ is a further sequence in  $(0,\varepsilon /2)$ with $\lim_{n\to\infty} \tilde \lambda_n = 0$, then $\lim_{n\to\infty} (\lambda_n -\tilde\lambda_n) = 0$, and by the Lipschitz continuity of $\mu$ on $A$ we obtain that $\lim_{n\to\infty} (\mu(x+s\lambda_n\normal(x))-\mu(x+s\tilde \lambda_n\normal(x))) = 0$. Thus, the sequence $(\mu(x+s\tilde \lambda_n\normal(x)))_{n\in\N}$ converges to $z$ as well.
 \end{proof}

\subsection{The transformation $G$}\label{trans}

In this section we construct the bijection $G\colon\R^d\to\R^d$ that is used to transform  the SDE~\eqref{SDE} into an SDE with Lipschitz continuous coefficients and we provide its crucial properties.
We essentially  follow the construction in the corrected version~\cite{LScorr1} of~\cite{LS17}. Since the assumptions used in~\cite{LScorr1} 
differ from 
the conditions (A) and (B), 
see Remark \ref{compLS},
we provide a full proof of Proposition~\ref{G}.

\begin{Prop} \label{G}
	Let $\emptyset\neq\Theta\subset \R^d$ be an orientable $C^4$-hypersurface of positive reach and assume that $\mu$ and $\sigma$ satisfy (A) and (B). Then there exists a function $G\colon\R^d\to\R^d$ with the following properties.
	\begin{itemize}
		\item[(i)] $ G$ is a $C^{1}$-diffeomorphism.
		\item[(ii)] $G$, $G^{-1}$, $G'$, $(G^{-1})' $ are Lipschitz continuous and $G'$, $(G^{-1})' $ are bounded.
		\item[(iii)] $G=(G_1,\dots,G_d)^\top$ and $G^{-1}=(G^{-1}_1,\dots,G^{-1}_d)^\top$ are $C^2$-functions on $\R^{d} \setminus \Theta$ 
and for every $i\in\{1,\dots,d\}$, the funtions $G''_i, (G^{-1}_i)''\colon \R^{d} \setminus \Theta \to \R^{d\times d}$ are bounded and intrinsic Lipschitz continuous.
		\item[(iv)] The function 
		\[
		\sigma_{G} =(G'\,\sigma)\circ G^{-1} \colon \R^{d} \to \R^{d\times d}
		\]
		is Lipschitz continuous with $	\sigma_G(x) = \sigma(x)$ for every $x\in\Theta$ and it holds
		\[
		\sigma =((G^{-1})'\,\sigma_G)\circ G.
		\]
		\item[(v)] For every $i\in\{1,\dots,d\}$, the second derivatives $G''_i, (G^{-1}_i)''\colon \R^{d} \setminus \Theta \to \R^{d\times d}$ of $G_{i}$ and $G^{-1}_i$ on $\R^d\setminus \Theta$ can be extended to bounded mappings $ R_i\colon \R^d \to \R^{d\times d}$ and $ S_i\colon \R^d \to \R^{d\times d}$, respectively, such that the function
		\[
	    	\mu_G= \Bigl(G'\,\mu + \frac{1}{2}    	
	    	\bigl( \tr\bigl(R_1\,\sigma\sigma^{\top}\bigr),\dots,\tr\bigl(R_d\,\sigma\sigma^{\top}\bigr)\bigr)^\top      
	    	\Bigr)\circ G^{-1}\colon\R^d\to\R^d
		\]
		is Lipschitz continuous and it holds
		\[
		\mu = \Bigl((G^{-1})'\,\mu_G + \frac{1}{2} \bigl(\tr\bigl(S_1\,\sigma_G\sigma_G^{\top}\bigr),\dots, \tr\bigl(S_d\,\sigma_G\sigma_G^{\top}\bigr)\bigr)^\top\Bigr)\circ G.
		\]
	\end{itemize}
\end{Prop}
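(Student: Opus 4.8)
The plan is to construct $G$ explicitly as a perturbation of the identity that is supported in a tubular neighbourhood of $\Theta$ and that ``corrects'' the jump of $\mu$ in the normal direction. First I would fix $\eps\in(0,\reach(\Theta))$ small enough that (A)(iv) and the equivalent form \eqref{remcond0} of (A)(ii) hold on $\Theta^{3\eps}$ (say), and use the positive reach together with the orientability to obtain the well-defined normal coordinate map $x\mapsto(\pr_\Theta(x),\,\langle x-\pr_\Theta(x),\nor(\pr_\Theta(x))\rangle)$ on $\Theta^{2\eps}$, whose regularity I would take from the appendix (the signed distance and $\pr_\Theta$ are $C^3$ near a $C^4$-hypersurface, using Lemma~\ref{normalreg0} and the reach). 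On $\Theta^{2\eps}$ write $u(x)=d(x,\Theta)\cdot s(x)$ for the signed distance, where $s(x)\in\{+,-\}$ records the side as in \eqref{u00}–\eqref{u01}. Then set
\[
G(x)=x+u(x)\,\abs{u(x)}\,\phi\bigl(u(x)\bigr)\,\alpha\bigl(\pr_\Theta(x)\bigr)
\]
for $x\in\Theta^{2\eps}$ and $G(x)=x$ otherwise, where $\phi\colon\R\to[0,\infty)$ is a fixed smooth cut-off with $\phi\equiv 1$ near $0$ and $\supp(\phi)\subset(-\eps,\eps)$, and $\alpha$ is the extended $C^3$ field from (A)(iii). (This is exactly the ansatz of~\cite{LScorr1}; the factor $u\abs{u}$ gives a function that is $C^1$ across $\Theta$ with the prescribed one–sided second normal derivatives $\pm\alpha$, which is what kills the drift jump after an Itô transformation.) Because $\alpha,\nor$ and the normal–coordinate data are $C^3$ with bounded derivatives on $\Theta$, and $u\abs{u}\phi(u)$ is globally $C^1$ with a piecewise-$C^2$, intrinsically Lipschitz second derivative (the only loss of smoothness sits on $\Theta$), one reads off that $G$ is $C^1$ globally and $C^2$ on $\R^d\setminus\Theta$.

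Next I would verify the five items in order. For (i) and (ii): $G'-\mathrm{id}$ is bounded with Lipschitz entries (all building blocks are bounded Lipschitz on the compact-in-normal-direction tube and $G=\mathrm{id}$ outside), and by choosing $\phi$ and the amplitude small — shrinking $\eps$ or multiplying $\alpha$ by a further cut-off is not allowed, so instead one notes $\norm{G'-\mathrm{id}}_\infty$ can be made $<1/2$ by taking $\eps$ small, using that $u\abs{u}\phi(u)$ and its derivative are $O(\eps)$ on the support — so $G'$ is everywhere invertible with uniformly bounded inverse; a global inverse function theorem (properness of $G$, which holds since $G-\mathrm{id}$ is bounded) gives that $G$ is a $C^1$-diffeomorphism, and $G^{-1}$ inherits Lipschitz continuity and a bounded, Lipschitz derivative from $(G^{-1})'=(G'\circ G^{-1})^{-1}$. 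For (iii): on $\R^d\setminus\Theta$ the second derivatives $G_i''$ are built from $C^1$ functions of the normal coordinates times $\alpha\circ\pr_\Theta$ and the bounded function $u\mapsto (u\abs{u}\phi(u))''=(2\sgn(u)\phi(u)+\dots)$, hence bounded; intrinsic Lipschitz continuity on $\R^d\setminus\Theta$ follows because along any rectifiable path avoiding $\Theta$ the side $s(x)$ is constant on each piece and the formula is then genuinely $C^2$ with bounded third derivatives on $\Theta^{2\eps}$ by the regularity hypotheses, while outside the tube $G_i''\equiv 0$; the same for $(G^{-1}_i)''$ via the chain rule. For (iv): since $u\abs{u}\phi(u)$ vanishes to second order at $u=0$, we have $G'(x)=\mathrm{id}$ for $x\in\Theta$, hence $\sigma_G=\sigma$ on $\Theta$; Lipschitz continuity of $\sigma_G=(G'\sigma)\circ G^{-1}$ is the composition/product of Lipschitz maps ($G'$ bounded Lipschitz, $\sigma$ Lipschitz of linear growth but bounded on the tube where $G'\neq\mathrm{id}$ by (A)(iv), and $G^{-1}$ Lipschitz), and $\sigma=((G^{-1})'\sigma_G)\circ G$ is the trivial algebraic identity $(G^{-1})'\circ G=(G')^{-1}$. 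For (v): define $R_i$ to be $G_i''$ on $\R^d\setminus\Theta$ and its (well-defined, by the one-sided limits being bounded and continuous on the compactified tube) continuous extension to $\R^d$; boundedness is clear. The key point is that the $\tfrac12\mathrm{tr}(R_i\sigma\sigma^\top)$ correction is precisely engineered so that, after the Itô transformation, the would-be jump of the drift cancels: using the definition of $\alpha$ in (A)(iii) and $\nor^\top\sigma\sigma^\top\nor=\norm{\sigma^\top\nor}^2$, the one–sided limits of $G'\mu+\tfrac12(\mathrm{tr}(R_i\sigma\sigma^\top))_i$ from the two sides of $\Theta$ agree, so $\mu_G$ extends continuously across $\Theta$; away from $\Theta$ it is intrinsic Lipschitz (product of Lipschitz pieces, $\mu$ bounded on the tube) with matching one–sided values, hence globally Lipschitz by a connectedness/Lemma~\ref{lipimpl0}-type argument. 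The identity $\mu=((G^{-1})'\mu_G+\tfrac12(\mathrm{tr}(S_i\sigma_G\sigma_G^\top))_i)\circ G$ is then the Itô change-of-variables identity read backwards, with $S_i$ the extension of $(G^{-1}_i)''$.

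The main obstacle I expect is item (v): both the \emph{construction of the amplitude so that the normal jump of $\mu$ is exactly annihilated} and the \emph{proof that the resulting $\mu_G$ is globally Lipschitz} are delicate. One has to compute the one–sided limits of $G'(x)\mu(x)$ as $x\to\theta\in\Theta$ along $\pm\nor(\theta)$ — here the jump of $\mu$ enters via Lemma~\ref{exlim} — and of $\tfrac12\mathrm{tr}(R_i(x)\sigma(x)\sigma(x)^\top)$, and see that their difference across $\Theta$ is $-\,(\mu(\theta-)-\mu(\theta+))$ plus the $\alpha$-term arranged to be $+\,(\mu(\theta-)-\mu(\theta+))$; this is where the factor $1/(2\norm{\sigma^\top\nor}^2)$ in the definition of $\alpha$ is used, and where one needs $\mathrm{tr}$ applied to a rank-one normal-direction correction to reproduce exactly $\norm{\sigma^\top\nor}^2$ times the second normal derivative. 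Once the one–sided boundary values match, upgrading ``intrinsic Lipschitz on $\R^d\setminus\Theta$ with matching traces'' to ``globally Lipschitz on $\R^d$'' requires the geometric input that points on opposite sides of $\Theta$ can be joined through $\R^d\setminus\Theta$ with path-length comparable to Euclidean distance (again a positive-reach fact from the appendix). All other items are bookkeeping with the chain and product rules and the boundedness hypotheses (A)(i),(iii),(iv).
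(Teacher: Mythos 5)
Your construction is the same as the paper's (it is the transformation $G_\eps(x)=x+\Phi_\eps(x)\,\alpha(\pr_\Theta(x))$ with $\Phi_\eps$ a signed-distance-squared factor cut off in a tube, invertibility via an $O(\eps)$ bound on the perturbation of $G'$ and a global inverse function theorem, bounded higher derivatives on $\R^d\setminus\Theta$ giving intrinsic Lipschitz continuity, the one-sided cancellation of the drift jump via $\alpha$ and $\tr(\nor\nor^\top\sigma\sigma^\top)=\|\sigma^\top\nor\|^2$, and the upgrade from matching one-sided traces to global Lipschitz continuity using the positive-reach path lemmas). However, there is a genuine gap in your treatment of item (v): you define $R_i$ as ``$G_i''$ on $\R^d\setminus\Theta$ and its continuous extension to $\R^d$''. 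No such continuous extension exists: by the very design of the transformation, the one-sided limits of $G_i''$ at $x\in\Theta$ along $\pm\nor(x)$ differ by $4\alpha_i(x)\,\nor(x)\nor(x)^\top$, which is nonzero exactly where $\mu$ jumps. Moreover, even choosing one of the one-sided limits as the value of $R_i$ on $\Theta$ would not work, because $\bar\nu_i:=\bigl(G'\mu\bigr)_i+\tfrac12\tr(R_i\,\sigma\sigma^\top)$ evaluated at $x\in\Theta$ involves the actual value $\mu_i(x)$ of the drift on $\Theta$, which under (A) is only assumed bounded and need not coincide with either one-sided limit $\mu_i(x\pm)$; with your prescription $\bar\nu_i$ would then be discontinuous at $\Theta$ and $\mu_G$ would fail to be Lipschitz. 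The correct choice, as in the paper, is $R_i(x)=G_i''(x+)+\tfrac{2(\mu_i(x+)-\mu_i(x))}{\|\sigma(x)^\top\nor(x)\|^2}\,\nor(x)\nor(x)^\top$ for $x\in\Theta$, which absorbs exactly this discrepancy.

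The same issue recurs, and is not addressed at all in your plan, for $S_i$: the identity $\mu=\bigl((G^{-1})'\mu_G+\tfrac12(\tr(S_i\,\sigma_G\sigma_G^\top))_{1\le i\le d}\bigr)\circ G$ is asserted pointwise on all of $\R^d$, in particular at points of $\Theta$ (where $G(x)=x$, $(G^{-1})'(x)=I_d$ and $\sigma_G(x)=\sigma(x)$), so the value of $S_i$ on $\Theta$ must be engineered to reproduce the arbitrary value $\mu(x)$; taking ``the extension of $(G_i^{-1})''$'' does not achieve this (again, the two-sided limit does not even exist). The paper's choice $S_i(x)=-\tfrac{\tr(R_i(x)\sigma(x)\sigma(x)^\top)}{\|\nor(x)^\top\sigma(x)\|^2}\nor(x)\nor(x)^\top$ on $\Theta$ is what makes the identity hold there. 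Apart from this point — which is precisely the delicate part of (v) you flagged but did not resolve correctly — your outline matches the paper's argument.
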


For the proof of Proposition~\ref{G}  we assume throughout the following that $\mu$ and $\sigma$ satisfy (A) and (B), we fix 
a $C^4$-hypersurface $\emptyset\neq\Theta\subset \R^d$ of positive reach 
and a normal vector $\nor$ along $\Theta$
according to (A), an open neighbourhood $U$ of $\Theta$ according to 
(A)(i),(iii) 
and $\eps^*\in (0,\reach(\Theta))$ such that 
(A)(iv) and \eqref{remcond0}
hold with $\eps = \eps^*$.

 First, we provide useful properties of the functions $\alpha$, $\pr_{\Theta}$, $\normal\circ\pr_{\Theta}$ and $\alpha\circ\pr_{\Theta}$. 

\begin{Lem} \label{propconc} The function $\alpha\colon U\to\R^d$ is bounded on $\Theta$. Moreover, there exists  $\tilde \eps \in (0,\reach(\Theta))$ such that the 
	functions $\pr_{\Theta}, \normal\circ \pr_{\Theta}, \alpha\circ \pr_{\Theta}\colon \text{unp}(\Theta)\to \R^d$ are $C^{3}$-functions on $\Theta^{\tilde \eps}\subset\text{unp}(\Theta)$ with 
	\begin{equation}\label{est00}
   \sup_{x\in \Theta^{\tilde \eps}}	\|f^{(\ell)}(x)\|_\ell < \infty
	\end{equation}
	for every $f\in \{\pr_{\Theta}, \normal\circ \pr_{\Theta}, \alpha\circ \pr_{\Theta}\}$ and every $\ell\in\{1,2,3\}$.
\end{Lem}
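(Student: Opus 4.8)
The proof splits into two independent parts: (a) boundedness of $\alpha$ on $\Theta$, and (b) the regularity statement for $\pr_\Theta$, $\normal\circ\pr_\Theta$ and $\alpha\circ\pr_\Theta$ on a suitable neighbourhood $\Theta^{\tilde\eps}$. For part (a), recall that by condition (A)(iii) the function $\alpha$ extends to a $C^3$-function on the open neighbourhood $U$ of $\Theta$ that has bounded partial derivatives up to order $3$ on $\Theta$; boundedness of the zeroth-order ``derivative'', i.e. of $\alpha$ itself, on $\Theta$ is precisely part of this hypothesis (in the notation of the paper, $\sup_{x\in\Theta}\|\alpha^{(0)}(x)\|_0<\infty$), so this is essentially immediate. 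One should just spell out that bounded partial derivatives up to order $3$ on $\Theta$ includes $\ell=0$, hence $\|\alpha\|_{\infty,\Theta}<\infty$.

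\textbf{The regularity of the projection.} For part (b), the key input is a classical fact from differential geometry, which will be stated in the appendix: since $\Theta$ is a $C^4$-hypersurface of positive reach, the nearest-point projection $\pr_\Theta$ is a $C^3$-map on some neighbourhood $\Theta^{\tilde\eps}\subset\unp(\Theta)$ with $\tilde\eps\in(0,\reach(\Theta))$ (for a $C^k$-hypersurface of positive reach, $\pr_\Theta$ is $C^{k-1}$ on a tubular neighbourhood; see e.g. Federer's work on sets of positive reach, and the appendix of the present paper). I would invoke the relevant appendix lemma to obtain such a $\tilde\eps$ and the $C^3$-regularity of $\pr_\Theta$ on $\Theta^{\tilde\eps}$, together with the bound $\sup_{x\in\Theta^{\tilde\eps}}\|\pr_\Theta^{(\ell)}(x)\|_\ell<\infty$ for $\ell\in\{1,2,3\}$ — this last uniform bound is available because $\Theta^{\tilde\eps}$ can be shrunk to have its closure contained in a slightly larger tubular neighbourhood on which $\pr_\Theta$ is $C^3$, and the relevant derivatives of $\pr_\Theta$ depend continuously (in fact can be written explicitly) in terms of the second fundamental form of $\Theta$ and the signed distance, both of which are controlled; alternatively the appendix lemma can be formulated to deliver this bound directly.

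\textbf{Composition with $\normal$ and $\alpha$.} Once $\pr_\Theta$ is $C^3$ with bounded derivatives up to order $3$ on $\Theta^{\tilde\eps}$, the remaining two functions follow by the chain rule. By (A)(i) (see Remark~\ref{normreg00}) the normal vector $\normal$ extends to a $C^3$-map on an open neighbourhood of $\Theta$, and by (A)(iii) so does $\alpha$; after shrinking $\tilde\eps$ if necessary we may assume $\pr_\Theta(\Theta^{\tilde\eps})=\Theta$ lies in the domains of these extensions and in fact that $\Theta^{\tilde\eps}$ maps into a fixed compact neighbourhood $K\subset U$ of $\Theta$ (e.g. $K=\cl(\Theta^{\eps^{*}/2})$) on which $\normal$ and $\alpha$ together with their derivatives up to order $3$ are bounded — here it is essential that (A)(i) and (A)(iii) give \emph{bounded} partial derivatives on $\Theta$, which by continuity of the $C^3$-extensions propagates to boundedness on a compact neighbourhood $K$. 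Then $\normal\circ\pr_\Theta$ and $\alpha\circ\pr_\Theta$ are $C^3$ on $\Theta^{\tilde\eps}$ by the chain rule, and the Faà di Bruno formula expresses their derivatives of order $\ell\le 3$ as finite sums of products of derivatives of $\normal$ (resp. $\alpha$) of order $\le\ell$ evaluated on $K$ and derivatives of $\pr_\Theta$ of order $\le\ell$ on $\Theta^{\tilde\eps}$; each factor is uniformly bounded, hence so is the product, giving \eqref{est00} for these two functions as well.

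\textbf{Main obstacle.} The only genuinely non-routine point is securing the $C^3$-regularity and the \emph{uniform} derivative bounds for the nearest-point projection $\pr_\Theta$ on a tubular neighbourhood of a $C^4$-hypersurface of positive reach; everything else is the chain rule plus the bounded-derivative hypotheses (A)(i),(iii). I would therefore state and prove (or cite in) the appendix a clean lemma of the form: \emph{if $\Theta$ is a $C^4$-hypersurface of positive reach then there is $\tilde\eps\in(0,\reach(\Theta))$ such that $\pr_\Theta$ and $d(\cdot,\Theta)$ are $C^3$ on $\Theta^{\tilde\eps}$ with all partial derivatives up to order $3$ bounded on $\Theta^{\tilde\eps}$}, and reduce the present lemma to it. Given that, the proof of Lemma~\ref{propconc} is short.
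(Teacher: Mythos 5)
Your part (a) has a genuine gap. Boundedness of $\alpha$ on $\Theta$ is \emph{not} contained in hypothesis (A)(iii): "bounded partial derivatives up to order $3$ on $\Theta$" refers to the derivatives of order $1,2,3$ (this is also how the lemma itself uses the phrase, since \eqref{est00} is only claimed for $\ell\in\{1,2,3\}$), and since $\Theta$ is not assumed compact, boundedness of the derivatives does not imply boundedness of $\alpha$ itself. So you are assuming exactly what the first assertion of Lemma~\ref{propconc} asks you to prove. The paper's argument uses different hypotheses altogether: by (A)(iv) the drift $\mu$ is bounded on $\Theta^{\eps^*}$ and by (A)(ii) one has $\inf_{x\in\Theta}\|\sigma(x)^\top\nor(x)\|>0$, so for $x\in\Theta$ and $h\in(0,\eps^*)$ the quotient defining $\alpha(x)$ is bounded by $\sup_{y\in\Theta^{\eps^*}}\|\mu(y)\|\big/\inf_{y\in\Theta}\|\sigma(y)^\top\nor(y)\|^2$, whence $\|\alpha\|_{\infty,\Theta}<\infty$.

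The second gap is the uniform bound $\sup_{x\in\Theta^{\tilde\eps}}\|\pr_\Theta^{(\ell)}(x)\|_\ell<\infty$. Both of your justifications fail when $\Theta$ is unbounded: $\cl(\Theta^{\eps^*/2})$ need not be compact, so continuity of the derivatives on a slightly larger tube gives no uniform bound; and the standalone lemma you propose ($C^4$ plus positive reach alone imply bounded derivatives of $\pr_\Theta$ up to order $3$ on a tube) is false in general. Positive reach controls the second fundamental form (by $1/\reach(\Theta)$) but not its derivatives; an unbounded $C^4$ graph with bounded first and second but unbounded third derivative has positive reach, yet the second derivative of $\pr_\Theta$ is unbounded on every tubular neighbourhood. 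The uniform bound genuinely requires (A)(i): this is precisely the hypothesis of Lemma~\ref{pdiffbd0} (bounded derivatives of $\nor$ up to order $3$ on $\Theta$), which the paper combines with Lemma~\ref{projdist}(i) (giving the $C^3$-regularity of $\pr_\Theta$ on $\Theta^\delta$ for every $\delta<\reach(\Theta)$). Your chain-rule step has a related, though fixable, blemish: you route the bounds for $\nor$ and $\alpha$ through a "compact" set $K=\cl(\Theta^{\eps^*/2})$, which again need not be compact; the detour is unnecessary, because the Fa\`a di Bruno terms evaluate the derivatives of $\nor$ and $\alpha$ only at $\pr_\Theta(x)\in\Theta$, where (A)(i) and (A)(iii) give the required bounds directly — and this is exactly how the paper concludes.
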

\begin{proof}
	Put $c_1= \inf_{x\in \Theta}\|\normal(x)^\top \sigma(x)\|$ and $c_2=\sup_{x\in \Theta^{\eps^*}}\|\mu(x)\|$. By (A)(ii) and (A)(iv) we have $c_1\in (0,\infty)$ and $c_2\in [0,\infty)$. Let $x\in\Theta$. For all $h\in (0,\eps^*)$ we have $x+h\normal(x), x-h\normal(x)\in \Theta^{\eps^*}$, and therefore
	\[
	\frac{\bigl\|\mu(x-h\normal(x))- \mu(x+h\normal(x)))\bigr\|}{2 \|\sigma(x)^\top\normal(x)\|^2}\le \frac{c_2}{c_1^2},
	\] 
	which implies $\|\alpha(x)\|\le c_2/c_1^2$.
	
	Since $\Theta$ is a $C^4$-hypersurface of positive reach, we get by Lemma~\ref{projdist}(i) in the appendix that $\pr_{\Theta}$ is a $C^3$-function on $\Theta^\delta$ for all  $\delta \in (0,\reach(\Theta))$. Since $\normal$ and $\alpha$ are $C^3$-functions
	on $U$, 
	we conclude that $\normal\circ\pr_{\Theta}$ and $\alpha\circ\pr_{\Theta}$ are $C^3$-functions on $\Theta^\delta$ for all  $\delta \in (0,\reach(\Theta))$ as well. Using the property (A)(i) of  $\nor\colon U\to\R^d$ we obtain by Lemma~\ref{pdiffbd0} in the appendix the existence of $\tilde \eps \in (0,\reach(\Theta))$ such that 
		\begin{equation}\label{est01}
	\sup_{x\in \Theta^{\tilde \eps}}	\|\pr_{\Theta}^{(\ell)}(x)\|_\ell < \infty
	\end{equation}
	for every $\ell\in\{1,2,3\}$. Moreover, by (A)(i) and (A)(iii) we have
		\begin{equation}\label{est02}
	\sup_{x\in \Theta}	\|f^{(\ell)}(x)\|_\ell < \infty
	\end{equation}
	for every $f\in \{ \normal, \alpha\}$ and every $\ell\in\{1,2,3\}$.
	Using~\eqref{est01} and~\eqref{est02} we obtain~\eqref{est00} for every $f\in \{ \normal\circ \pr_{\Theta}, \alpha\circ \pr_{\Theta}\}$ and every $\ell\in\{1,2,3\}$ by the
chain rule for derivatives, which completes the proof of the lemma.
\end{proof} 

We turn to the construction of the transformation $G$.

Choose $\tilde \eps$ according to Lemma~\ref{propconc} and put
\[
\gamma = \min(\eps^*,\tilde \eps).
\]
For all $ \eps \in(0,\gamma)$ we define 
\[ 
G_{\eps} = (G_{\eps,1},\dots, G_{\eps,d})^\top \colon \R^{d} \to \R^{d}, \,\,x \mapsto 
\begin{cases}	x + \Phi_{\eps}(x)\alpha(\pr_{\Theta}(x)), &\text{if }x\in \Theta^\eps,\\ x , &\text{if }x\in \R^d\setminus \Theta^\eps,\end{cases}
\] 
where
\[ 
\Phi_{\eps} \colon \Theta^{\gamma} \to \R, \,\,x \mapsto \normal(\pr_{\Theta}(x))^{\top}(x-\pr_{\Theta}(x))\|x-\pr_{\Theta}(x)\|\phi\left(\frac{\|x-\pr_{\Theta}(x)\|}{\eps}\right)
\]
and
\begin{equation}\label{fctphi}
\phi \colon \R \to \R, \,\,x \mapsto 
\begin{cases}
	(1-x^2)^{4}, &\text{ if } |x|\leq 1, \\
	0, &\text{ otherwise}.
\end{cases} 
\end{equation}

We will show below that there exists $\delta\in (0,\gamma)$ 
such that for all $\eps\in (0,\delta)$, the function $G=G_\eps$ satisfies the conditions (i) to (v)  in Proposition~\ref{G}.

For this purpose, we first study the functions $\phi$ and $\Phi_\eps$. 

\begin{Lem} \label{diff2}
	The function $\phi$	is a $ C^{3}$-function. 	For every $ \eps \in(0,\gamma)$, the function 	
$\Phi_{\eps}$ has the following properties. 
	\begin{itemize}
			\item[(i)] For every $s\in\{+,-\}$ and $x\in  Q_{\eps,s} $ we have
		\[ 
	 \Phi_{\eps}(x)= s \norm{x-\pr_{\Theta}(x)}^{2} \phi\Bigl(\frac{\|x-\pr_{\Theta}(x)\|}{\eps}\Bigr).
		\]
				\item[(ii)] $\sup_{x \in \Theta^{\gamma}}|\Phi_{\eps}(x)| \le \eps^{2}$. 
				\item[(iii)] $ \Phi_{\eps}$ is a $C^{1}$-function with $\Phi_\eps'(x)=0$ for every $x\in\Theta$ and  there exists $K \in (0,\infty)$, which does not depend on $\eps$, such that 
			\[ 
			\sup_{x \in \Theta^{\gamma}} \|\Phi'_{\eps}(x)\| \leq K \eps. 
			\]
					\item[(iv)]  $\Phi_{\eps}$ is a $C^3$-function on the open set $\Theta^{\gamma}\setminus \Theta $ and  $\Phi_\eps$ as well as all partial derivatives of $\Phi_\eps$ up to order $3$ vanish on $\Theta^{\gamma}\setminus \Theta^\eps$. Moreover,
		\[
		\sup_{x \in \Theta^{\gamma}\setminus \Theta} \|\Phi_{\eps}^{(2)}(x)\|_2 + \sup_{x \in \Theta^{\gamma} \setminus \Theta} \|\Phi_{\eps}^{(3)}(x)\|_3 < \infty. 
		\]		
	
	\end{itemize} 
\end{Lem}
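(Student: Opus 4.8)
The plan is to first dispose of the claim that $\phi$ is $C^3$, which is a one-variable elementary computation: $\phi(x) = (1-x^2)^4$ for $|x|\le 1$ is smooth on $(-1,1)$, identically zero outside $[-1,1]$, and at $x=\pm 1$ the polynomial $(1-x^2)^4$ has a zero of order $4$, so its derivatives up to order $3$ vanish there, matching the zero function from the other side; hence $\phi\in C^3(\R)$ (in fact $C^3$ but not $C^4$). Having this, I would prove (i)--(iv) in order, exploiting the structure $\Phi_\eps(x) = \normal(\pr_\Theta(x))^\top(x-\pr_\Theta(x))\,\|x-\pr_\Theta(x)\|\,\phi(\|x-\pr_\Theta(x)\|/\eps)$ together with the regularity estimates for $\pr_\Theta$ and $\normal\circ\pr_\Theta$ on $\Theta^\gamma$ supplied by Lemma~\ref{propconc} (recall $\gamma\le\tilde\eps$).

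For (i): on $Q_{\eps,s}$ every point has the form $x = y + s\lambda\normal(y)$ with $y=\pr_\Theta(x)\in\Theta$ and $\lambda=\|x-\pr_\Theta(x)\|\in(0,\eps)$; here I use Lemma~\ref{sides0}/\eqref{u00}. Then $x-\pr_\Theta(x) = s\lambda\normal(\pr_\Theta(x))$, so $\normal(\pr_\Theta(x))^\top(x-\pr_\Theta(x)) = s\lambda$ because $\|\normal\|=1$, and substituting gives exactly $s\lambda^2\phi(\lambda/\eps) = s\|x-\pr_\Theta(x)\|^2\phi(\|x-\pr_\Theta(x)\|/\eps)$. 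For (ii): by Cauchy--Schwarz $|\normal(\pr_\Theta(x))^\top(x-\pr_\Theta(x))|\le\|x-\pr_\Theta(x)\| = d(x,\Theta)$, and $\phi$ takes values in $[0,1]$ and vanishes once the argument exceeds $1$, i.e. once $d(x,\Theta)\ge\eps$; so $|\Phi_\eps(x)|\le d(x,\Theta)^2\one_{\{d(x,\Theta)<\eps\}}\le\eps^2$. For (iv): on $\Theta^\gamma\setminus\Theta$ the distance function $x\mapsto d(x,\Theta) = \|x-\pr_\Theta(x)\|$ is $C^3$ (its square $\|x-\pr_\Theta(x)\|^2$ is a difference/composition of the $C^3$ maps $\pr_\Theta$ and the identity, and it is bounded away from $0$ on any set bounded away from $\Theta$ — but in fact one should invoke Lemma~\ref{projdist} for the $C^3$-regularity of $d(\cdot,\Theta)$ away from $\Theta$), $\pr_\Theta$ and $\normal\circ\pr_\Theta$ are $C^3$ on $\Theta^\gamma$ by Lemma~\ref{propconc}, and $\phi$ is $C^3$; so $\Phi_\eps$ is $C^3$ on $\Theta^\gamma\setminus\Theta$ as a product/composition, and it and all its derivatives up to order $3$ vanish on $\Theta^\gamma\setminus\Theta^\eps$ since there $\phi(\|x-\pr_\Theta(x)\|/\eps)=0$ together with all its derivatives (argument $\ge 1$). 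The uniform bound on $\|\Phi_\eps^{(2)}\|_2+\|\Phi_\eps^{(3)}\|_3$ over $\Theta^\gamma\setminus\Theta$ then follows from the chain and product rules: each factor and each of its relevant derivatives is bounded on $\Theta^\gamma$ (for $\pr_\Theta,\normal\circ\pr_\Theta$ by \eqref{est00}; for the $\phi$-factor because $\phi\in C^3$ with compact support and the inner function $d(\cdot,\Theta)/\eps$ has bounded derivatives up to order $3$ on $\Theta^\gamma\setminus\Theta$), and any negative powers of $d(\cdot,\Theta)$ produced by differentiating are harmless because they only appear multiplied by enough positive powers of $d(\cdot,\Theta)$ coming from the $\|x-\pr_\Theta(x)\|\cdot\normal(\pr_\Theta(x))^\top(x-\pr_\Theta(x))$ prefactor (which vanishes to second order at $\Theta$); alternatively one can simply estimate on the closure and note the bound may depend on $\eps$, but the statement allows that for (iv).

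The genuinely delicate item is (iii): I must show $\Phi_\eps$ is globally $C^1$ on all of $\Theta^\gamma$ — in particular across $\Theta$ — with $\Phi_\eps'=0$ on $\Theta$ and $\sup_{\Theta^\gamma}\|\Phi_\eps'\|\le K\eps$ with $K$ independent of $\eps$. The strategy here is to write $\Phi_\eps(x) = \psi(x)\,\lambda(x)$ where $\lambda(x)=\|x-\pr_\Theta(x)\|^2 = d(x,\Theta)^2$ and $\psi(x) = \bigl(\normal(\pr_\Theta(x))^\top(x-\pr_\Theta(x))/d(x,\Theta)\bigr)\cdot\phi(d(x,\Theta)/\eps)$, noting that the bracket equals $\pm 1$ on each side of $\Theta$ (by the computation in (i)) and is bounded, so $\Phi_\eps(x) = \pm d(x,\Theta)^2\phi(d(x,\Theta)/\eps)$ on $\Theta^\gamma\setminus\Theta$ with the sign constant on each component $Q_{\eps,\pm}$. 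Since $t\mapsto t^2\phi(t/\eps)$ is $C^1$ on $\R$, even, with derivative $2t\phi(t/\eps)+ (t^2/\eps)\phi'(t/\eps)$ bounded by $C\eps$ uniformly (because $\phi,\phi'$ are bounded with support in $[-1,1]$, so $|t|\le\eps$ on the support), and since $d(\cdot,\Theta)$ is $1$-Lipschitz and $C^1$ off $\Theta$ with $\|d(\cdot,\Theta)'\|\le 1$, the function $x\mapsto d(x,\Theta)^2\phi(d(x,\Theta)/\eps)$ is $C^1$ on $\Theta^\gamma\setminus\Theta$ with gradient bounded by $C\eps$. The two pieces glue to a $C^1$ function across $\Theta$ because $\Phi_\eps$, being $O(d(\cdot,\Theta)^2)$, tends to $0$ at $\Theta$ and its gradient, being $O(d(\cdot,\Theta))$ in norm, tends to $0$ at $\Theta$ from both sides, forcing $\Phi_\eps$ to be differentiable at each $x\in\Theta$ with $\Phi_\eps'(x)=0$; continuity of $\Phi_\eps'$ at $\Theta$ is then immediate. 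I expect the bookkeeping in this gluing argument — keeping the $\eps$-dependence of the constant under control and justifying the sign-splitting via \eqref{u01} — to be the main obstacle; everything else is routine once Lemma~\ref{propconc} and the support properties of $\phi$ are in hand.
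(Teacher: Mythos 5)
Your proposal is correct in substance, and parts (i) and (ii) coincide with the paper's argument; the interesting differences are in (iii) and (iv). For (iii) the paper proceeds more computationally: it first gets $C^1$-regularity on $\Theta^\eps$ from the explicit form $\phi(\|x-\pr_\Theta(x)\|/\eps)=(1-\|x-\pr_\Theta(x)\|^2/\eps^2)^4$ (only $\|\cdot\|^2$ and the $C^1$-map $z\mapsto z\|z\|$ enter), then uses the representation $\Phi_\eps=s\,f_\eps(\|x-\pr_\Theta(x)\|^2)$ with $f_\eps(t)=t(1-t/\eps^2)^4$ to compute $\Phi_\eps'(x)=s\,f_\eps'(\|x-\pr_\Theta(x)\|^2)\,2(x-\pr_\Theta(x))^\top$ via the chain rule and Lemma~\ref{projdist}(iii), which yields the bound $96\eps$ and, by continuity and a limit along $x+n^{-1}\normal(x)$, the identity $\Phi_\eps'=0$ on $\Theta$. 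You instead glue across $\Theta$ directly from the quadratic vanishing $|\Phi_\eps|\le d(\cdot,\Theta)^2$ and the $O(d(\cdot,\Theta))$ gradient bound for $\pm d^2\phi(d/\eps)$ on $Q_{\eps,\pm}$, using $\|\nabla d\|\le 1$; this is a valid and arguably cleaner way to obtain differentiability at $\Theta$ with vanishing derivative and the $\eps$-uniform bound $K=2\|\phi\|_\infty+\|\phi'\|_\infty$, and it avoids the limit argument along the normal. Two points in your sketch deserve tightening. First, in (iv) you assert that $d(\cdot,\Theta)/\eps$ has bounded derivatives up to order $3$ on $\Theta^\gamma\setminus\Theta$; this is true but needs the observation that $\nabla d(x)=\pm\,\normal(\pr_\Theta(x))$ on $Q_{\gamma,\pm}$, so that higher derivatives of $d$ are controlled by the bounds \eqref{est00} for $\normal\circ\pr_\Theta$ — the paper sidesteps this entirely by differentiating only $\|x-\pr_\Theta(x)\|^2$ and $f_\eps$, never $d$ itself, which is why its bookkeeping in \eqref{u2}--\eqref{u6} is shorter. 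Second, your fallback ``estimate on the closure'' is not rigorous as stated, since $\Phi_\eps^{(2)},\Phi_\eps^{(3)}$ are not defined on $\cl(\Theta^\gamma\setminus\Theta)$ a priori; drop it in favour of the product/chain-rule bound, which (as the statement of (iv) permits $\eps$-dependent constants) suffices. With these small repairs your argument is complete and genuinely parallel to, rather than a transcription of, the paper's proof.
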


\begin{proof} The proof of the statement on the function $\phi$ is straightforward.
	
	Let $ \eps \in(0,\gamma)$. 
	We turn to the proof of the properties (i) to (iv) of the function $\Phi_\eps$. 
	
	Let $y\in \Theta$, $\lambda\in (0,\eps$), $s\in\{+,-\}$ and put $x= y+ s\lambda \normal(y)$. Since $\normal(y)$ is orthogonal to the tangent space of $\Theta$ at $y$, we have $ \pr_{\Theta}(x)= y $ by Lemma~\ref{fed0} in the appendix. Hence $\|x-\pr_{\Theta}(x)\| = \|s\lambda \normal(y)\| = \lambda$ and we conclude that
		\[
		\normal(\pr_{\Theta}(x))^{\top}(x-\pr_{\Theta}(x)) = \normal(y)^\top s\lambda \normal(y) = s\lambda = s\|x-\pr_{\Theta}(x)\|,
			\]
			which finishes the proof of the property (i).
	
	   For $x\in \Theta^\gamma \setminus \Theta^\eps$ we have $\|x-\pr_{\Theta}(x)\| \ge \eps$. Hence $\phi(\|x-\pr_{\Theta}(x)\| /\eps) =0$, which implies $\Phi_\eps(x) = 0$. Next, let $x\in  \Theta^\eps$. Then $\|x-\pr_{\Theta}(x)\| < \eps$ and therefore
	   \[
	   |\Phi_{\eps}(x)| \le \|x -\pr_{\Theta}(x)\|^{2}\|\normal(\pr_{\Theta}(x))\|\Bigl(1-\frac{\|x-\pr_{\Theta}(x)\|^2}{\eps^2}\Bigr)^{4}  < \eps^{2}, 
	   \]
	   which finishes the proof of the property (ii).
	   
	   We turn to the proof of the properties (iii) and (iv).
		 By  Lemma~\ref{propconc}, the functions $ \normal \circ \pr_{\Theta}$ and $\pr_{\Theta}$ are $C^{3}$-functions on $\Theta^{\gamma}$. 
		Since $\|\cdot\|$ is a $C^{\infty}$-function on  $\R^{d} \setminus \{0\}$ 
		we obtain that $\|\cdot - \pr_{\Theta}(\cdot)\| $ is a $ C^{3}$-function on $\Theta^{\gamma}\setminus \Theta$. Using the fact that $\phi$ is a $C^3$-function on $\R$ we conclude that $\phi \circ (\|\cdot - \pr_{\Theta}(\cdot)\|/\eps) $ is a $ C^{3}$-function on  $\Theta^{\gamma}\setminus \Theta$ as well. Thus, $\Phi_\eps$ is a $ C^{3}$-function on $\Theta^{\gamma}\setminus \Theta$.
				Furthermore, for $x \in \Theta^{\eps}$ we have $\phi(\|x-\pr_{\Theta}(x)\|/\eps)=(1-\|x - \pr_{\Theta}(x)\|^2/\eps^2)^{4} $. Since $\|\cdot\|^2$ is a $C^{\infty}$-function on  $\R^{d}$, we conclude that $ \phi \circ  (\|\cdot - \pr_{\Theta}(\cdot)\|/\eps) $ is a $ C^{3}$-function on $\Theta^{\eps}$. Since $f\colon \R^{d} \to \R^{d}, \, x \mapsto x \, \norm{x}$ is a $C^{1}$-function, we obtain that $\Phi_\eps $ is a $ C^{1}$-function on   $\Theta^{\eps}$. Since $\Theta^{\eps} \cup( \Theta^{\gamma}\setminus \Theta)$ = $\Theta^\gamma$ we conclude that  $\Phi_\eps$ is a $C^1$-function on $\Theta^\gamma$.
				
	Clearly, $\phi(\|x-\pr_{\Theta}(x)\|/\eps) = 0$ for all $x\in \Theta^\gamma \setminus \Theta^\eps$, which implies in particular that $\Phi_\eps$ vanishes on the open set $\{x\in\R^d\mid \eps < d(x,\Theta) <\gamma\} \subset \Theta^\gamma \setminus \Theta^\eps\subset \Theta^\gamma\setminus \Theta$. As a consequence, all partial derivatives of $\Phi_\eps$ up to order $3$ vanish on  $\{x\in\R^d\mid \eps < d(x,\Theta) <\gamma\}$ as well. Since $\Phi_\eps$ is a $C^3$-function on $\Theta^{\gamma}\setminus \Theta$ we conclude that $\Phi_\eps$ and all partial derivatives of $\Phi_\eps$ up to order $3$ also vanish on    $\Theta^{\gamma}\setminus \Theta^\eps=\{x\in\R^d\mid \eps\le d(x,\Theta) <\gamma\} $. 
	
	It remains to prove the estimates  in (iii) and (iv) and the fact that $\Phi_\eps'$ vanishes on $\Theta$. Let $s\in\{+,-\}$. By the property (i) we have
	\[
	\Phi_{\eps}(x)= s f_{\eps}(\norm{x-\pr_{\Theta}(x)}^{2}), \quad x\in Q_{\eps,s},
	\]
    with 	$f_{\eps} \colon \R \to \R,\,  x \mapsto x(1-x/\eps^{2})^{4} $ . Clearly, $ f_{\eps}$ is a $C^{\infty}$-function and straightforward calculations yield  that for all $x \in \R$,
		\[ 
		f_{\eps}'(x)= 1- \frac{8}{\eps^{2}} x + \frac{18}{\eps^{4}}x^{2} - \frac{16}{\eps^{6}} x^{3} + \frac{5}{\eps^{8}}x^{4}. 
		\]
		For $x \in(-\eps^{2},\eps^{2})$ we thus have $ \abs{f_{\eps}'(x)} \leq 1 + 8 + 18 + 16 + 5 = 48 $.
		Since $Q_{\eps,s}\subset \Theta^\eps$ we obtain by the chain rule and Lemma~\ref{projdist}(iii) in the appendix that for every $ x \in Q_{\eps,s}$,
		\begin{equation}\label{phiref} 
			\begin{aligned}
				\Phi_{\eps}'(x)  &= s f_{\eps}'(\|x-\pr_{\Theta}(x)\|^{2})2(x-\pr_{\Theta}(x))^{\top}(I_{d} - \pr_{\Theta}'(x))\\
				& = s f_{\eps}'(\|x-\pr_{\Theta}(x)\|^{2})2(x-\pr_{\Theta}(x))^{\top},
				\end{aligned}
		\end{equation} 
	and therefore
	\[
\|	\Phi_{\eps}'(x)\| \le 96\eps
	\]
	for all  $ x \in Q_{\eps,s}$. Hence, by \eqref{u01},
		\begin{equation}\label{zzt1}
		\sup_{x \in \Theta^\eps\setminus\Theta}\|\Phi_{\eps}'(x)\| \le 96\, \eps. 
		\end{equation}
	
		Let $ x \in \Theta$. Clearly, $\lim_{n\to\infty} x+n^{-1}\normal(x) = x$ and $x+n^{-1}\normal(x)\in Q_{\eps,+} $ for $n> 1/\eps$. Moreover, $\pr_{\Theta}(x+n^{-1}\normal(x)) = x$ 
		for $n>1/\reach(\Theta)$.
		Since $ \Phi_\eps'$ is  continuous we thus obtain by~\eqref{phiref} that
		\begin{align*}
		\Phi_{\eps}'(x)  = \lim_{n \to \infty}\Phi_{\eps}'(x+n^{-1}\normal(x)) = \lim_{n \to \infty} f_{\eps}'(\|n^{-1}\normal(x)\|^{2}) 2n^{-1}\normal(x)^{\top}  = \lim_{n \to \infty}  f_\eps'(n^{-2})2n^{-1} \normal(x)^{\top} = 0,
		\end{align*}
	which jointly with~\eqref{zzt1} and the fact  that $\Phi_{\eps}'$ vanishes on $ \Theta^{\gamma} \setminus \Theta^{\eps}$ completes the proof of the property (iii).
			
        Finally, we prove the estimate in the property (iv). Recall that all partial derivatives of $\Phi_{\eps}$ up to order $3$ vanish on $ \Theta^{\gamma} \setminus \Theta^{\eps}$. Observing~\eqref{u01} it thus remains to show that for $s\in\{+,-\}$,
        \begin{equation}\label{u0}
        	\sup_{x \in Q_{\eps,s}} \|\Phi_{\eps}^{(2)}(x)\|_2 + \sup_{x \in Q_{\eps,s} } \|\Phi_{\eps}^{(3)}(x)\|_3 < \infty. 
        \end{equation}
       
	    Fix $s\in\{-,+\}$. By Lemma~\ref{propconc} we have
		\begin{equation}\label{u2}
	\max_{\ell \in \{1,2,3\}}	\sup_{x \in \Theta^{\gamma}}  \|\pr_{\Theta}^{(\ell)}(x)\|_\ell < \infty, 
		\end{equation}
	which implies
		\begin{equation}\label{u4}
	 \max_{\ell \in \{0,1,2\}} 	\sup_{x \in  \Theta^{\gamma}}\|(\cdot-\pr_{\Theta}(\cdot))^{(\ell)}(x)\|_\ell < \infty. 
	\end{equation}
    	Clearly, 
    		\[ 
    \max_{\ell \in \{0,1,2\}}	
    \sup_{x \in B_{\gamma}(0)}  \bigl\|(\|\cdot\|^{2})^{(\ell)}(x)\bigr\|_\ell < \infty, 
    	\] 
    	which jointly with~\eqref{u4} implies
    	    		\begin{equation}\label{u5}
    	\max_{\ell \in \{0,1,2\}} 		
    	\sup_{x \in  \Theta^{\gamma}}\bigl \|\bigl(\|\cdot-\pr_{\Theta}(\cdot)\|^2\bigr)^{(\ell)}(x)\bigr\| _\ell< \infty.
    \end{equation}
	   Obviously we have
			\[ 
	\max_{\ell \in \{1,2,3\}} 	\sup_{x \in [0,\gamma^2]}  |f_{\eps}^{(\ell)}(x)| < \infty, 
		\]
		which jointly with~\eqref{u5} yields
		\begin{equation}\label{u6}
	\max_{\ell \in \{0,1,2\}} 	\sup_{x \in \Theta^{\gamma}} \|(f_{\eps}' \circ \|\cdot-\pr_{\Theta}(\cdot)\|^{2})^{(\ell)}(x)\|_\ell < \infty. 
\end{equation}
	Employing \eqref{phiref} as well as 
	\eqref{u4} and~\eqref{u6} yields~\eqref{u0} and hereby completes the proof of the lemma.
\end{proof}

Now, we turn to the analysis of the transformation $G_\eps$.

\begin{Lem}\label{Gdiff}
	For every $\eps \in(0,\gamma)$, the function  $ G_{\eps}$ has the following properties.
	\begin{itemize}
		\item [(i)] $ G_{\eps}$ is a $C^{1}$-function with bounded derivative $G_\eps'$ that satisfies $G_\eps'(x)= I_d$ for every $x\in\Theta$ and every $x\in\R^d\setminus \Theta^\eps$.
		\item[(ii)] $G_{\eps}$ is a $C^{3}$-function on  $\R^{d} \setminus \Theta $ with 
		\[
		 \sup_{x \in \R^{d}\setminus \Theta}  \|G_{\eps}^{(2)}(x)\|_2 +  \sup_{x \in \R^{d}\setminus \Theta}  \|G_{\eps}^{(3)}(x)\|_3 < \infty.
		 \]
	\end{itemize}
\end{Lem}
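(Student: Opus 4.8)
The plan is to read off the two regularity regimes directly from the two branches in the definition of $G_\eps$, using the crucial fact from Lemma~\ref{diff2}(iv) that $\Phi_\eps$ and all its partial derivatives up to order $3$ vanish on the collar $\Theta^\gamma\setminus\Theta^\eps$. This means that on the whole neighbourhood $\Theta^\gamma$ of $\Theta$ --- not just on $\Theta^\eps$ --- the map $G_\eps$ is given by the single formula $x\mapsto x+\Phi_\eps(x)\,\alpha(\pr_\Theta(x))$, so there is in fact no ``matching across $\partial\Theta^\eps$'' to worry about. Concretely, since $\eps\in(0,\gamma)$, the open sets $\Theta^\gamma$ and $\R^d\setminus\cl(\Theta^\eps)$ cover $\R^d$; on the latter $G_\eps=\mathrm{id}_{\R^d}$, and on $\Theta^\gamma$ I would check that $G_\eps(x)=x+\Phi_\eps(x)\,\alpha(\pr_\Theta(x))$ --- this holds by definition on $\Theta^\eps$, and on $\Theta^\gamma\setminus\Theta^\eps$ both sides equal $x$ because $\Phi_\eps$ vanishes there. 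On the overlap the two representations agree, since $d(x,\Theta)>\eps$ forces $\Phi_\eps(x)=0$, so $G_\eps$ is obtained by gluing. Throughout I would use that $\gamma\in(0,\reach(\Theta))$, so $\Theta^\gamma\subset\unp(\Theta)$ and $\Theta^\gamma\subset\Theta^{\tilde\eps}$, whence by Lemma~\ref{propconc} the map $\alpha\circ\pr_\Theta$ is a $C^3$-function on $\Theta^\gamma$ with $\|(\alpha\circ\pr_\Theta)'\|$, $\|(\alpha\circ\pr_\Theta)''\|_2$, $\|(\alpha\circ\pr_\Theta)'''\|_3$ bounded on $\Theta^\gamma$, and $\|\alpha\circ\pr_\Theta\|$ bounded on $\Theta^\gamma$ because $\alpha$ is bounded on $\Theta$.

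For (i), the product rule applied to $x\mapsto x+\Phi_\eps(x)\,\alpha(\pr_\Theta(x))$ on $\Theta^\gamma$ --- using that $\Phi_\eps$ is a $C^1$-function there (Lemma~\ref{diff2}(iii)) and $\alpha\circ\pr_\Theta$ is $C^1$ there --- shows that $G_\eps$ is a $C^1$-function on $\Theta^\gamma$ with
\[
G_\eps'(x)=I_d+\alpha(\pr_\Theta(x))\,\Phi_\eps'(x)+\Phi_\eps(x)\,(\alpha\circ\pr_\Theta)'(x),\qquad x\in\Theta^\gamma,
\]
where the second summand is the outer product of the column vector $\alpha(\pr_\Theta(x))$ with the row vector $\Phi_\eps'(x)$. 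Together with $G_\eps=\mathrm{id}_{\R^d}$ on the open set $\R^d\setminus\cl(\Theta^\eps)$ and the gluing above, this gives that $G_\eps$ is a $C^1$-function on $\R^d$. Boundedness of $G_\eps'$ follows from the displayed formula using $\sup_{\Theta^\gamma}|\Phi_\eps|\le\eps^2$ and $\sup_{\Theta^\gamma}\|\Phi_\eps'\|\le K\eps$ (Lemma~\ref{diff2}(ii),(iii)) together with the boundedness of $\alpha\circ\pr_\Theta$ and $(\alpha\circ\pr_\Theta)'$ on $\Theta^\gamma$, while $\|G_\eps'(x)\|=\sqrt d$ for $x\in\R^d\setminus\Theta^\eps$. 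Finally, $G_\eps'(x)=I_d$ for $x\in\Theta$ because $\Phi_\eps(x)=\Phi_\eps'(x)=0$ there (Lemma~\ref{diff2}(iii)); and for $x\in\R^d\setminus\Theta^\eps$ either $x\notin\cl(\Theta^\eps)$, where $G_\eps=\mathrm{id}$, or $x\in\partial\Theta^\eps\subset\{y\in\R^d\mid d(y,\Theta)=\eps\}\subset\Theta^\gamma\setminus\Theta^\eps$, where again $\Phi_\eps(x)=\Phi_\eps'(x)=0$, so $G_\eps'(x)=I_d$ by the displayed formula.

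For (ii), note that $\R^d\setminus\Theta$ is covered by the two open sets $\Theta^\gamma\setminus\Theta$ and $\R^d\setminus\cl(\Theta^\eps)$ (again using $\eps<\gamma$). On the second one $G_\eps=\mathrm{id}_{\R^d}$ is $C^\infty$ with vanishing derivatives of order $\ge 2$. On $\Theta^\gamma\setminus\Theta$, $\Phi_\eps$ is a $C^3$-function (Lemma~\ref{diff2}(iv)) and $\alpha\circ\pr_\Theta$ is a $C^3$-function (Lemma~\ref{propconc}), so $G_\eps$ is a $C^3$-function there; hence $G_\eps$ is $C^3$ on $\R^d\setminus\Theta$. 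For the bounds, I would expand the second and third partial derivatives of $\Phi_\eps(x)\,\alpha(\pr_\Theta(x))$ on $\Theta^\gamma\setminus\Theta$ by the Leibniz rule: each resulting term is a product of a partial derivative of $\Phi_\eps$ of order at most $3$ and a partial derivative of $\alpha\circ\pr_\Theta$ of order at most $3$, and all of these are bounded on $\Theta^\gamma\setminus\Theta$ --- the $\Phi_\eps$-factors by Lemma~\ref{diff2}(ii),(iii),(iv) and the $\alpha\circ\pr_\Theta$-factors by Lemma~\ref{propconc} together with the boundedness of $\alpha$ on $\Theta$. This yields $\sup_{\R^d\setminus\Theta}\|G_\eps^{(2)}\|_2+\sup_{\R^d\setminus\Theta}\|G_\eps^{(3)}\|_3<\infty$.

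I expect no analytic difficulty here; the only thing requiring a little care is the elementary topological bookkeeping --- verifying that $\Theta^\gamma$ and $\R^d\setminus\cl(\Theta^\eps)$ (respectively $\Theta^\gamma\setminus\Theta$ and $\R^d\setminus\cl(\Theta^\eps)$) cover $\R^d$ (respectively $\R^d\setminus\Theta$), that the two formulas for $G_\eps$ agree on the overlap, and that $\partial\Theta^\eps$ lies inside the region on which $\Phi_\eps$ and its derivatives vanish --- after which (i) and (ii) follow from the product and Leibniz rules and the uniform bounds already furnished by Lemma~\ref{diff2} and Lemma~\ref{propconc}.
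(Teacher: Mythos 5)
Your proposal is correct and follows essentially the same route as the paper: cover $\R^d$ (resp.\ $\R^d\setminus\Theta$) by $\Theta^\gamma$ (resp.\ $\Theta^\gamma\setminus\Theta$) and $\R^d\setminus\cl(\Theta^\eps)$, use that $\Phi_\eps$ and its derivatives vanish on $\Theta^\gamma\setminus\Theta^\eps$ so that the single formula $x+\Phi_\eps(x)\alpha(\pr_\Theta(x))$ represents $G_\eps$ on all of $\Theta^\gamma$, and then combine the product/Leibniz rule with the bounds from Lemma~\ref{diff2} and Lemma~\ref{propconc}; your identification of $G_\eps'=I_d$ on $\Theta$ and on $\R^d\setminus\Theta^\eps$ (including $\partial\Theta^\eps$) via the vanishing of $\Phi_\eps$ and $\Phi_\eps'$ matches the paper's use of \eqref{Gdiffref}. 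The only cosmetic difference is that you make the gluing across $\partial\Theta^\eps$ fully explicit, where the paper argues slightly more tersely via continuity of $G_\eps'$.
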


\begin{proof}
Let $\eps \in(0,\gamma)$. By Lemma~\ref{propconc} we know that $\alpha \circ \pr_{\Theta}$ is a  $C^{3}$-function on $\Theta^\gamma$. Using Lemma~\ref{diff2}(iii) and (iv) we conclude that $G_\eps$ is a $C^1$-function on  $\Theta^\gamma$ and a $C^3$-function on  $\Theta^\gamma\setminus\Theta$, respectively. Since $G_\eps(x) = x$ for all $x\in \R^d\setminus  \Theta^\eps$, we obtain that $G_\eps$ is a $C^\infty$-function on the open set $ \R^d\setminus \cl(\Theta^\eps)$. Note that $ \cl( \Theta^\eps) =\{x\in\R^d\mid d(x,\Theta) \le \eps\} \subset \Theta^\gamma$. Hence $G_\eps$ is a $C^1$-function on $\R^d =\Theta^\gamma \cup (\R^d\setminus  \cl(\Theta^\eps))$ and a $C^3$-function on  $\R^d\setminus  \Theta = (\Theta^\gamma\setminus\Theta) \cup (\R^d\setminus \cl( \Theta^\eps))$, respectively.

By Lemma~\ref{propconc},
\[ 
\max_{\ell\in \{0,\dots,3\}} \sup_{x \in \Theta^{\gamma}} \|(\alpha\circ \pr_{\Theta})^{(\ell)}(x)\|_\ell< \infty. 
\]
Combining the latter fact with Lemma~\ref{diff2}(ii), (iii) and (iv) we obtain by the product rule for derivatives,
\[
\sup_{x\in \Theta^\gamma} \|G_{\eps}^{(1)}(x)\|_1 + \sup_{x \in \Theta^{\gamma}\setminus \Theta} \max \{\|G_{\eps}^{(2)}(x)\|_2,\|G_{\eps}^{(3)}(x)\|_3 \} < \infty. 
\]
Since $ G_{\eps}(x)= x $  for all $ x \in \R^{d}\setminus \Theta^{\eps}$ we furthermore have
 \[
 \max_{\ell\in\{1,2,3\}}  \sup_{x \in \R^{d}\setminus \cl(\Theta^{\eps})} \|G_{\eps}^{(\ell)}(x)\|_\ell < \infty. 
  \]
 
 It remains to prove that $G_\eps'(x)=I_d$ for every $x\in\Theta$ and every $x\in\R^d\setminus \Theta^\eps$. 
  Since $G_\eps(x) = x$ for every $x\in \R^d\setminus \Theta^\eps$ and $G'_\eps$ is continuous we have
  \begin{equation} \label{Gdiffref} 
  	G_{\eps}'(x)= \begin{cases}
  		I_{d} +(\alpha\circ \pr_{\Theta})(x) \Phi_{\eps}'(x) +  \Phi_{\eps}(x)(\alpha \circ \pr_{\Theta})'(x), & \text{ if } x \in \Theta^{\eps},\\
  		I_{d}, & \text{ if } x \in \R^{d} \setminus \Theta^{\eps}
  	\end{cases} 
  \end{equation}
by the product rule for derivatives. Let $x\in\Theta$. Then $\Phi_\eps(x)=0$ by the definition of $\Phi_\eps$ and we have $\Phi'_\eps(x) =0$   by Lemma~\ref{diff2}(iii). Thus $G_\eps'(x)=I_d$,  which finishes the proof of the lemma. 
\end{proof}

Next, we show that $\eps$ can be chosen in such a way that $G_\eps$ is a diffeomorphism.

\begin{Lem} \label{diffeo}
	There exists $ \delta \in(0,\gamma)$ such that for all $ \eps \in(0,\delta)$ the function $ G_{\eps}$ 
is a diffeomorphism with $\sup_{x \in \R^{d}} \|(G_{\eps}^{-1})'(x)\| < \infty $.	
\end{Lem}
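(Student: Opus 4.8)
The plan is to arrange that, once $\eps$ is small, $G_\eps$ is a globally small $C^1$-perturbation of the identity, so that invertibility of the Jacobian, injectivity, surjectivity and the bound on $(G_\eps^{-1})'$ all follow from the Neumann series, the contraction mapping principle and the inverse function theorem.

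First I would establish a uniform bound on the Jacobian. By Lemma~\ref{Gdiff}(i), $G_\eps$ is $C^1$ on $\R^d$ with $G_\eps'=I_d$ on $\R^d\setminus\Theta^\eps$, and by~\eqref{Gdiffref}, for $x\in\Theta^\eps\subset\Theta^\gamma$,
\[
G_\eps'(x)-I_d=(\alpha\circ\pr_\Theta)(x)\,\Phi_\eps'(x)+\Phi_\eps(x)\,(\alpha\circ\pr_\Theta)'(x).
\]
Combining the boundedness of $\alpha\circ\pr_\Theta$ and $(\alpha\circ\pr_\Theta)'$ on $\Theta^\gamma$ (Lemma~\ref{propconc}) with $\sup_{x\in\Theta^\gamma}|\Phi_\eps(x)|\le\eps^2$ and $\sup_{x\in\Theta^\gamma}\|\Phi_\eps'(x)\|\le K\eps$ (Lemma~\ref{diff2}(ii),(iii)), and using submultiplicativity of the Frobenius norm, one obtains a constant $c_0\in(0,\infty)$ \emph{independent of $\eps$} with $\sup_{x\in\R^d}\|G_\eps'(x)-I_d\|\le c_0\eps$. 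Now fix $\delta\in(0,\gamma)$ with $c_0\delta\le 1/2$ and fix $\eps\in(0,\delta)$. Then $\|G_\eps'(x)-I_d\|\le 1/2$ for every $x\in\R^d$, so by the Neumann series $G_\eps'(x)$ is invertible with $\|(G_\eps'(x))^{-1}\|\le 2$; and since $\R^d$ is convex, the mean value inequality (together with $\|\cdot\|_{\mathrm{op}}\le\|\cdot\|$) shows that $H_\eps:=G_\eps-\mathrm{id}$ is globally Lipschitz with constant at most $1/2$.

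Next I would deduce that $G_\eps$ is a $C^1$-diffeomorphism. Injectivity is immediate from $\|G_\eps(x)-G_\eps(y)\|\ge\|x-y\|-\|H_\eps(x)-H_\eps(y)\|\ge\tfrac12\|x-y\|$. For surjectivity, note that $H_\eps$ vanishes on $\R^d\setminus\Theta^\eps$ and satisfies $\|H_\eps(x)\|=|\Phi_\eps(x)|\,\|\alpha(\pr_\Theta(x))\|\le\eps^2\sup_{z\in\Theta}\|\alpha(z)\|$ on $\Theta^\eps$, hence is bounded on $\R^d$; so for any $y\in\R^d$ the map $x\mapsto y-H_\eps(x)$ is a contraction of the complete space $\R^d$, and its unique fixed point $x$ satisfies $G_\eps(x)=y$. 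Thus $G_\eps$ is a continuous bijection which is $C^1$ with everywhere invertible derivative, so by the inverse function theorem $G_\eps^{-1}$ is $C^1$ and $G_\eps$ is a $C^1$-diffeomorphism. Finally, differentiating $G_\eps\circ G_\eps^{-1}=\mathrm{id}$ gives $(G_\eps^{-1})'(y)=(G_\eps'(G_\eps^{-1}(y)))^{-1}$, whence $\sup_{y\in\R^d}\|(G_\eps^{-1})'(y)\|\le\sup_{x\in\R^d}\|(G_\eps'(x))^{-1}\|\le 2<\infty$.

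The argument is essentially bookkeeping once the earlier lemmas are in place, and I do not expect a genuine obstacle; the only mildly delicate point — and the one I would be most careful about — is that the constant $c_0$ in the Jacobian estimate is truly independent of $\eps$, which is exactly what the $\eps$-uniform bounds in Lemma~\ref{diff2}(ii),(iii) and the $\eps$-free bounds in Lemma~\ref{propconc} were set up to provide.
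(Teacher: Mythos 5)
Your proof is correct, and it differs from the paper's in how the \emph{global} invertibility is obtained. Both arguments rest on the same $\eps$-uniform Jacobian estimate $\sup_{x\in\R^d}\|G_\eps'(x)-I_d\|\le c_0\,\eps$ (from Lemma~\ref{propconc} and Lemma~\ref{diff2}(ii),(iii)) and on the Neumann series, which simultaneously gives invertibility of $G_\eps'(x)$ and the uniform bound on $(G_\eps^{-1})'=(G_\eps'\circ G_\eps^{-1})^{-1}$. The paper then invokes Hadamard's global inverse function theorem, verifying its two hypotheses: everywhere invertible derivative and norm coercivity $\lim_{\|x\|\to\infty}\|G_\eps(x)\|=\infty$, the latter via the boundedness of $\Phi_\eps\cdot(\alpha\circ\pr_\Theta)$. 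You instead treat $G_\eps=\mathrm{id}+H_\eps$ as a Lipschitz-small perturbation of the identity: the mean value inequality on the convex set $\R^d$ makes $H_\eps$ a $\tfrac12$-contraction, injectivity follows from the reverse triangle inequality, surjectivity from the Banach fixed point theorem applied to $x\mapsto y-H_\eps(x)$, and smoothness of the inverse from the local inverse function theorem. Your route is more elementary and self-contained (no global inverse function theorem needed), at the cost of using the global smallness of $G_\eps'-I_d$ slightly more heavily; the paper's route only needs pointwise invertibility of the Jacobian plus coercivity, which is why it phrases the choice of $\delta$ through the spectral-norm bound on $\Gamma_{x,\eps}$. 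One small bookkeeping remark: your bound $\|(G_\eps'(x))^{-1}\|\le 2$ is an operator-norm bound, so passing to the Frobenius norm used in the statement costs a dimensional factor (as in the paper's constant $c$), but this does not affect the finiteness claim.
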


\begin{proof}
	We first recall that by  Lemma~\ref{propconc} and 
	Lemma~\ref{diff2}(ii), (iii)
	 there exist $c_1,c_2\in (0,\infty)$ such that
\begin{equation}\label{r2}
	\sup_{x\in \Theta^\gamma}\max\bigl( \|(\alpha\circ \pr_{\Theta})(x)\|, \|(\alpha\circ \pr_{\Theta})'(x)\|\bigr) \le c_1
\end{equation}	
and for all $\eps \in (0,\gamma)$,
\begin{equation}\label{r3}
	\sup_{x \in \Theta^{\eps}} \max\bigl(|\Phi_{\eps}(x)|, \|\Phi_{\eps}'(x)\|\bigr) \leq c_2 \eps, 
\end{equation}	
respectively.

	Let $ \eps \in(0,\gamma)$. By Lemma~\ref{Gdiff}(i) we know that $G_\eps$ is a $C^1$-function. Thus, by Hadamard's global inverse function theorem, $G_\eps$ is a diffeomorphism if and only if
	\begin{itemize}
		\item [(a)] $G_\eps'(x)$ is invertible for every $x\in\R^d$, and
		\item[(b)] $\lim_{\|x\| \to \infty} \|G_{\eps}(x)\| = \infty $.
	\end{itemize}

Since $G_\eps(x) = x$ for $x\in \R^d\setminus \Theta^\eps$ we have for all $x\in\R^d$,
\begin{equation}\label{r1}
\|G_\eps(x)\| \ge \|x\| - \sup_{y\in \Theta^\eps} |\Phi_\eps(y)|\cdot\|\alpha(\pr_{\Theta}(y))\|.
\end{equation}
Combining~\eqref{r1} with~\eqref{r2} and~\eqref{r3} yields (b).

Put
\[
\delta = \min \bigl((2c_1c_2 + 1)^{-1},\gamma\bigr).
\]
We show that (a) is satisfied for every $\eps \in (0,\delta)$.

 Let $x\in \Theta^\eps$, recall~\eqref{Gdiffref}  and put 
\[
	\Gamma_{x,\eps} = G_\eps'(x) - I_d = (\alpha\circ \pr_{\Theta})(x) \Phi_{\eps}'(x)+\Phi_{\eps}(x)( \alpha \circ \pr_{\Theta})'(x). 
\]
Let $|	\Gamma_{x,\eps}|_2$ denote the spectral norm of $\Gamma_{x,\eps}$. By ~\eqref{r2} and~\eqref{r3} we obtain that
\begin{equation}\label{rrr1}
	\begin{aligned}
	|	\Gamma_{x,\eps}|_2 & \le	\|\Gamma_{x,\eps}\| \le  \|\Phi_{\eps}'(x)\|\|(\alpha\circ \pr_{\Theta})(x)\|+|\Phi_{\eps}(x)|\|( \alpha \circ \pr_{\Theta})'(x)\| \\
	& \le 2c_1c_2\eps <2c_1c_2\delta <1. 
	\end{aligned}
	\end{equation}
By well-known facts on Neumann series we conclude from~\eqref{rrr1} that 
$G_{\eps}'(x)= I_{d} + \Gamma_{x,\eps} $ is invertible and
\begin{equation}\label{rr2}
 |	(I_{d} + \Gamma_{x,\eps})^{-1}|_2  \le (1-|\Gamma_{x,\eps}|_2)^{-1}\le  (1-\|\Gamma_{x,\eps}\|)^{-1}.
	\end{equation}
Since 	$G_{\eps}'(x)= 	I_{d}$ for all $x\in   \R^d\setminus \Theta^\eps$ we thus obtain that (a) is satisfied as well.

Finally, we prove that $\sup_{x \in \R^{d}} \|(G_{\eps}^{-1})'(x)\| <\infty$. For all $x\in\R^d$ we have $(G_\eps^{-1})'(x)= (G_\eps'(G^{-1}_\eps(x)))^{-1}$. In the case $G^{-1}_\eps(x)\in \R^d\setminus\Theta^\eps$ we thus obtain by~\eqref{Gdiffref} that $ \|(G_\eps^{-1})'(x)\| = \|I_d\|=d^{1/2}$. In the case $G^{-1}_\eps(x)\in \Theta^\eps$ we get  by~\eqref{rrr1} and~\eqref{rr2} that 
\[
\|(G_\eps^{-1})'(x)\| \le  c|(G_\eps^{-1})'(x)|_2\le c(1-\|\Gamma_{G_\eps^{-1}(x),\eps}\|)^{-1} \le 
c(1-2c_1c_2\delta)^{-1},
\]
where $c\in(0,\infty)$ depends neither on  $x$ nor on $\eps$.
This finishes the proof of the lemma.
\end{proof}

In the sequel we fix 
\[
\delta\in (0,\gamma)
\]
 according to Lemma~\ref{diffeo}.

\begin{Lem} \label{Lip}
	For every  $\eps \in(0,\delta)$, the diffeomorphism $G_\eps$ has the following properties. 
	\begin{itemize}
		\item[(i)]  The functions $G_{\eps}$ and $G_{\eps}'$ are Lipschitz continuous.
	\item[(ii)] For every $i\in\{1,\dots,d\}$, the function $G_{\eps,i}''\colon \R^{d} \setminus \Theta \to \R^{d\times d}$ is  intrinsic Lipschitz continuous.
			\item[(iii)] The functions $G_{\eps}^{-1}= (G_{\eps,1}^{-1},\dots, G_{\eps,d}^{-1})^\top$ and $(G_\eps^{-1})'$ are Lipschitz continuous.
		\item[(iv)] $G_{\eps}^{-1} $ is a $ C^{2}$-function on $\R^{d}\setminus \Theta$ and for every $i\in\{1,\dots,d\}$, the function $(G_{\eps,i}^{-1})''\colon \R^{d} \setminus \Theta \to \R^{d\times d}$ is bounded and intrinsic Lipschitz continuous.
	\end{itemize}
\end{Lem}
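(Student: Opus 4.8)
The plan is to read off all four properties from the regularity and boundedness statements of Lemma~\ref{Gdiff} and Lemma~\ref{diffeo}, using repeatedly two elementary facts: (a) a $C^1$-function on an open set $M\subset\R^d$ whose derivative is bounded is Lipschitz continuous on every convex subset of $M$ and intrinsic Lipschitz continuous on $M$ (integrate the derivative along segments, respectively along paths in $M$); and (b) the matrix inversion map $A\mapsto A^{-1}$ is Lipschitz continuous on any set of invertible matrices whose inverses are uniformly bounded in norm, since $A^{-1}-B^{-1}=A^{-1}(B-A)B^{-1}$.

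For (i), Lipschitz continuity of $G_\eps$ is immediate from Lemma~\ref{Gdiff}(i), since $G_\eps'$ is bounded. For the Lipschitz continuity of $G_\eps'$ I would combine three facts from Lemma~\ref{Gdiff}: $G_\eps'$ is continuous on all of $\R^d$; it is a $C^1$-function on the open set $\R^d\setminus\Theta$ whose derivative is bounded in norm (its entries are second partial derivatives of $G_\eps$, and $\sup_{x\in\R^d\setminus\Theta}\|G_\eps^{(2)}(x)\|_2<\infty$); and $G_\eps'(x)=I_d$ for every $x\in\Theta$. Given $x,y\in\R^d$, parametrise the segment by $\ell(t)=x+t(y-x)$, $t\in[0,1]$, and put $C=\{t\in[0,1]\mid\ell(t)\in\Theta\}$. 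Then $C$ is closed, $t\mapsto G_\eps'(\ell(t))-I_d$ is continuous on $[0,1]$, vanishes on $C$, and is a $C^1$-function on $[0,1]\setminus C$ with derivative bounded in norm by $c\|x-y\|$ for a constant $c$ that does not depend on $x$ and $y$. Distinguishing whether or not a subinterval $[s,t]\subset[0,1]$ meets $C$ and, in the latter case, splitting $[s,t]$ at $\min([s,t]\cap C)$ and $\max([s,t]\cap C)$ and using that $G_\eps'\circ\ell-I_d$ vanishes at those two points, one obtains that $t\mapsto G_\eps'(\ell(t))$ is Lipschitz continuous on $[0,1]$ with constant $c\|x-y\|$; evaluating at $t\in\{0,1\}$ gives $\|G_\eps'(x)-G_\eps'(y)\|\le c\|x-y\|$. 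I expect this step — upgrading the bound on the second derivative of $G_\eps$, which is available only on $\R^d\setminus\Theta$, to a genuine Lipschitz bound on $G_\eps'$ across the hypersurface — to be the only delicate point of the proof; the essential input is $G_\eps'|_\Theta\equiv I_d$ from Lemma~\ref{Gdiff}(i), which renders the crossing points of the segment with $\Theta$ harmless.

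For (ii), for each $i\in\{1,\dots,d\}$ the map $G_{\eps,i}''$ is a $C^1$-function on the open set $\R^d\setminus\Theta$ by Lemma~\ref{Gdiff}(ii) whose derivative is bounded in norm (its entries are third partial derivatives of $G_\eps$, and $\sup_{x\in\R^d\setminus\Theta}\|G_\eps^{(3)}(x)\|_3<\infty$), hence it is intrinsic Lipschitz continuous on $\R^d\setminus\Theta$ by fact (a). For (iii), $G_\eps^{-1}$ is Lipschitz continuous because $(G_\eps^{-1})'$ is bounded by Lemma~\ref{diffeo}; and since $(G_\eps^{-1})'(x)=\bigl(G_\eps'(G_\eps^{-1}(x))\bigr)^{-1}$ for $x\in\R^d$, with $\sup_{x\in\R^d}\bigl\|\bigl(G_\eps'(G_\eps^{-1}(x))\bigr)^{-1}\bigr\|=\sup_{x\in\R^d}\|(G_\eps^{-1})'(x)\|<\infty$, and since $G_\eps^{-1}$ is Lipschitz continuous and $G_\eps'$ is Lipschitz continuous by (i), the map $x\mapsto\bigl(G_\eps'(G_\eps^{-1}(x))\bigr)^{-1}$ is a composition of Lipschitz continuous maps by fact (b) and hence Lipschitz continuous.

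For (iv), since $\Phi_\eps$ vanishes on $\Theta$ we have $G_\eps(z)=z$ for $z\in\Theta$, hence $G_\eps(\Theta)=\Theta$ and $G_\eps$ restricts to a bijection of the open set $\R^d\setminus\Theta$ onto itself; since $G_\eps$ is a $C^3$-function there by Lemma~\ref{Gdiff}(ii) with everywhere invertible derivative by Lemma~\ref{diffeo}, the inverse function theorem shows that $G_\eps^{-1}$ is a $C^3$-function — in particular a $C^2$-function — on $\R^d\setminus\Theta$. Moreover $(G_\eps^{-1})'=\bigl(G_\eps'\circ G_\eps^{-1}\bigr)^{-1}$ on $\R^d$, and on $\R^d\setminus\Theta$ differentiating this identity once and twice more by the chain and product rules expresses the second and third partial derivatives of $G_{\eps,i}^{-1}$, $i\in\{1,\dots,d\}$, in terms of $(G_\eps^{-1})'$ and of the second and third partial derivatives of $G_\eps$ evaluated at $G_\eps^{-1}$ — the only matrix inverses occurring being $\bigl(G_\eps'\circ G_\eps^{-1}\bigr)^{-1}=(G_\eps^{-1})'$. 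All of these are bounded by Lemma~\ref{diffeo} and Lemma~\ref{Gdiff}(ii), so the second and third partial derivatives of $G_{\eps,i}^{-1}$ are bounded on $\R^d\setminus\Theta$. Consequently, for each $i$, $(G_{\eps,i}^{-1})''$ is bounded on $\R^d\setminus\Theta$ and is a $C^1$-function on the open set $\R^d\setminus\Theta$ with bounded derivative, hence intrinsic Lipschitz continuous on $\R^d\setminus\Theta$ by fact (a), which completes the plan.
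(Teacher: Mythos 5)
Your proof is correct, but in two places it takes a genuinely different route from the paper. For the Lipschitz continuity of $G_\eps'$ across $\Theta$ in (i), the paper first shows (via Lemma~\ref{diffintr0}, which is exactly your fact (a)) that $G_\eps'$ is intrinsic Lipschitz continuous on $\R^d\setminus\Theta$ and then upgrades this to global Lipschitz continuity using only continuity of $G_\eps'$ together with the structural Lemmas~\ref{condfullf0} and~\ref{piecewtolip0} (paths of nearly minimal length meeting the closed $C^1$-hypersurface $\Theta$ only finitely often); your argument instead exploits the specific normalization $G_\eps'\equiv I_d$ on $\Theta$ and a segment-splitting trick at the first and last crossing of $\Theta$. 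Both are valid: yours is more elementary and makes transparent why the crossings are harmless, while the paper's route is more general and is reused later for functions that are \emph{not} constant on $\Theta$ (e.g. $\bar\nu_\eps$ in Lemma~\ref{exten} and $(G_\eps^{-1})'$), so the authors get it from one lemma. Likewise, for the Lipschitz continuity of $(G_\eps^{-1})'$ in (iii), the paper again goes through the hypersurface (explicit differentiation of $x\mapsto(G_\eps'(G_\eps^{-1}(x)))^{-1}$ on $\R^d\setminus\Theta$, boundedness of $(G_{\eps,i}^{-1})''$, intrinsic Lipschitz continuity, then Lemmas~\ref{condfullf0} and~\ref{piecewtolip0}), whereas your composition argument with the resolvent identity $A^{-1}-B^{-1}=A^{-1}(B-A)B^{-1}$ and the uniform bound from Lemma~\ref{diffeo} bypasses $\Theta$ entirely and is shorter. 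Parts (ii) and (iv) essentially coincide with the paper, except that you obtain the $C^2$- (indeed $C^3$-) regularity of $G_\eps^{-1}$ on $\R^d\setminus\Theta$ from $G_\eps(\Theta)=\Theta$ and the inverse function theorem, while the paper reads it off the explicit formula for $(\mathrm{vec}\circ(G_\eps^{-1})')'$; your schematic chain/product-rule bound for the second and third partial derivatives of $G_{\eps,i}^{-1}$ is exactly what the paper's displayed computation carries out in detail. One small caution: your fact (a) should be justified as in Lemma~\ref{diffintr0}, i.e. by polygonal approximation of a connecting path inside the open set (Lemma~\ref{parting0}), not by literally integrating the derivative along an arbitrary continuous path, which need not be rectifiable.
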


\begin{proof}
	The Lipschitz continuity of $G_{\eps}$  is a consequence of the boundedness of the derivative $G_{\eps}'$, see Lemma~\ref{Gdiff}(i). 
	
	Next, let $i\in\{1,\dots,d\}$. By Lemma~\ref{Gdiff} we know that $G_{\eps,i}$ has bounded partial derivatives up to order $3$ on $\R^d\setminus\Theta$. We may thus apply Lemma~\ref{diffintr0} in the appendix to obtain that all partial derivatives of $(G_{\eps,i})_{|\R^d\setminus\Theta}$ of order $1$ and $2$ are  intrinsic Lipschitz continuous. This yields part (ii) of the lemma and the fact that $G_\eps'$ is intrinsic Lipschitz continuous on $\R^d\setminus\Theta$. 
	Since $G_{\eps}'$ is continuous and $\Theta$ is a $C^{4}$-hypersurface of positive reach, the Lipschitz continuity of $G_{\eps}'$ now follows from Lemmas~\ref{condfullf0} and~\ref{piecewtolip0} in the appendix. This completes the proof of part (i)  of the lemma.
	
	We turn to the proof of part (iii) and part (iv) of the lemma. Clearly, the Lipschitz continuity of  $G_{\eps}^{-1}$ is a consequence of the boundedness of the derivative  $(G_{\eps}^{-1})' $, see Lemma~\ref{diffeo}.

	We next prove the desired regularity of  $(G_{\eps}^{-1})'$. 
For all $x\in \R^{d}$ we have
	\[ 
	(G_{\eps}^{-1})'(x)=(G_{\eps}'(G_{\eps}^{-1}(x)))^{-1},
	\] 
	and hence
	\begin{equation} \label{ginvdiffrep}
		\text{vec}(( G_{\eps}^{-1})'(x))= \text{vec}((G_{\eps}'(G_{\eps}^{-1}(x)))^{-1})=f(\text{vec}\circ G_{\eps}'(G_{\eps}^{-1}(x))),
	\end{equation}
where 
\[
 f \colon O \to \R^{d^{2}}, \quad x \mapsto \text{vec}(\text{mat}(x)^{-1})
\]
and 
\[
 O = \{ x \in \R^{d^{2}} \mid \text{det}(\text{mat}(x))\neq 0 \}.
\]

	By Lemmas \ref{Gdiff} and \ref{diffeo}  the functions $\text{vec}\circ G_{\eps}'$ and $G_{\eps}^{-1}$ are continuously differentiable on $\R^{d} \setminus \Theta$ and $\R^{d}$, respectively. Moreover, $f$ is continuously differentiable on the open set $O$ and  for all $x \in O$ and all $y \in \R^{d^{2}}$,
	\[
	f'(x)y = -\text{vec}(\text{mat}(x)^{-1} \; \text{mat}(y)\; \text{mat}(x)^{-1}),
	\]	
	see, e.g. ~\cite[Chapter 8, Theorem 4.3]{MatCalc}.  Thus, 
	by \eqref{ginvdiffrep}, 
	the function $\text{vec} \circ (G_{\eps}^{-1})'$ 
	 is continuously differentiable on $(G_{\eps}^{-1})^{-1}(\R^{d} \setminus \Theta)=  \R^{d} \setminus \Theta$ and   for all $ x \in \R^{d}\setminus \Theta$ and all $j\in \{1, \ldots, d\}$,
\begin{align} \label{Gdiffrep}
		\begin{aligned}
			&((\text{vec}\circ ( G_{\eps}^{-1})')'(x))_j\\
&\quad= f'(\text{vec}(G_{\eps}'(G_{\eps}^{-1}(x))))\cdot (\text{vec}\circ G_{\eps}')'(G_{\eps}^{-1}(x))\cdot((G_{\eps}^{-1})'(x))_j\\
			&\quad=  -\text{vec}\bigl((G_{\eps}'(G_{\eps}^{-1}(x)))^{-1} \cdot \text{mat}\big((\text{vec}\circ G_{\eps}')'(G_{\eps}^{-1}(x))\cdot((G_{\eps}^{-1})'(x))_j\big)\cdot (G_{\eps}'(G_{\eps}^{-1}(x)))^{-1}\bigr)\\
			&\quad=  -\text{vec}\bigl((G_{\eps}^{-1})'(x)\cdot \text{mat}\big((\text{vec}\circ G_{\eps}')'(G_{\eps}^{-1}(x))\cdot((G_{\eps}^{-1})'(x))_j\big)\cdot (G_{\eps}^{-1})'(x)\bigr).
		\end{aligned}
	\end{align}
	 Using Lemmas \ref{Gdiff} and \ref{diffeo} we conclude from \eqref{Gdiffrep} that for all $ i \in \{1,\dots,d \}$,
	\begin{equation}\label{ll1}
	 \sup_{x \in \R^{d} \setminus \Theta} \norm{(G_{\eps, i}^{-1})''(x)} < \infty.
	 \end{equation}

	  Let $ i \in \{1,\dots,d \}$. Lemma~\ref{diffintr0} in the appendix and \eqref{ll1} yield that  $(G_{\eps, i}^{-1})' $ is intrinsic Lipschitz continuous on $\R^{d}\setminus \Theta$. Since $(G_{\eps, i}^{-1})' $ is continuous, we obtain from Lemmas~\ref{condfullf0} and ~\ref{piecewtolip0} in the appendix that $(G_{\eps,i}^{-1})' $ is Lipschitz continuous. 
Thus, $(G_{\eps}^{-1})' $ is Lipschitz continuous.
	
Let $ j \in \{1,\dots,d \}$.  It follows from ~\eqref{Gdiffrep} 
and Lemma \ref{Gdiff}(ii)
that $( (G_{\eps, i}^{-1})'')_j$ is continuously differentiable on $\R^{d} \setminus \Theta$. Moreover, applying the product rule to ~\eqref{Gdiffrep} and using Lemmas \ref{Gdiff} and \ref{diffeo} as well as \eqref{ll1} it is straightforward to show that 
\[
 \sup_{x \in \R^{d} \setminus \Theta} \norm{( (G_{\eps, i}^{-1})'')_j'(x)} < \infty.
 \]
Lemma~\ref{diffintr0} in the appendix  implies that $((G_{\eps, i}^{-1})'')_j $ is intrinsic Lipschitz continuous. Hence, $(G_{\eps,i}^{-1})'' $ is intrinsic Lipschitz continuous. This completes the proof of the lemma.

\end{proof}

\begin{Lem} \label{Lip1} For every $\eps\in (0,\delta)$, 
the mapping
	\[
	\nu_{\eps}\colon \R^{d} \setminus \Theta \to \R^{d}, \,\, x \mapsto \Bigl(G_{\eps}'\mu + \frac{1}{2} \bigl(\tr( G_{\eps,i}''\sigma\sigma^{\top})\bigr)_{1\le i \le d}\Bigr)(x)
	\]
	is intrinsic Lipschitz continuous.
\end{Lem}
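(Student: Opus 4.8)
The plan is to combine the intrinsic Lipschitz continuity of $\mu$ on $\R^d\setminus\Theta$ (condition (A)(v)) and the Lipschitz continuity of $\sigma$ (condition (B)) with the fact that the ``non-trivial part'' of the transformation $G_\eps$ is localized in $\Theta^\eps$. Fix $\eps\in(0,\delta)$. Since $G_\eps$ coincides with the identity outside $\Theta^\eps$ and, by Lemma~\ref{diff2}(iv), $\Phi_\eps$ together with all its partial derivatives up to order $3$ vanishes on $\Theta^\gamma\setminus\Theta^\eps$, we obtain (also using Lemma~\ref{Gdiff}(i)) that $G_\eps'(x)=I_d$ and $G_{\eps,i}''(x)=0$ for every $i\in\{1,\dots,d\}$ and every $x\in(\R^d\setminus\Theta)\setminus\Theta^\eps$. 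Moreover, since $\eps<\delta<\gamma\le\eps^*$, we have $\Theta^\eps\subset\Theta^{\eps^*}$, so by (A)(iv) the functions $\mu$ and $\sigma$ are bounded on $\Theta^\eps$; set $M=1+\sup_{x\in\Theta^\eps}(\|\mu(x)\|+\|\sigma(x)\|)$. Finally, $G_\eps'$ is bounded and Lipschitz continuous by Lemma~\ref{Gdiff}(i) and Lemma~\ref{Lip}(i), and, for each $i$, the function $G_{\eps,i}''$ is bounded and intrinsic Lipschitz continuous on $\R^d\setminus\Theta$ by Lemma~\ref{Gdiff}(ii) and Lemma~\ref{Lip}(ii).

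I would then write $\nu_\eps=\mu+\psi_\eps$ on $\R^d\setminus\Theta$, where
\[
\psi_\eps=(G_\eps'-I_d)\,\mu+\tfrac12\bigl(\tr(G_{\eps,i}''\,\sigma\sigma^\top)\bigr)_{1\le i\le d}.
\]
Since a sum of intrinsic Lipschitz continuous functions is intrinsic Lipschitz continuous and $\mu$ is intrinsic Lipschitz continuous on $\R^d\setminus\Theta$, it suffices to prove that $\psi_\eps$ is intrinsic Lipschitz continuous on $\R^d\setminus\Theta$. Note that $\psi_\eps$ vanishes on $(\R^d\setminus\Theta)\setminus\Theta^\eps$ and is bounded on $\Theta^\eps$. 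To bound $\|\psi_\eps(x)-\psi_\eps(y)\|$ for $x,y\in\R^d\setminus\Theta$ I would distinguish three cases. If $x,y\notin\Theta^\eps$, the difference is $0$. If $x,y\in\Theta^\eps$, then all of $\mu,\sigma,\sigma\sigma^\top,G_\eps',G_{\eps,i}''$ are bounded on $\Theta^\eps$, $\sigma$ and $G_\eps'$ are globally Lipschitz continuous and $G_{\eps,i}''$ is intrinsic Lipschitz continuous on $\R^d\setminus\Theta$, so the standard ``product rule'' estimates --- using $\|x-y\|\le\rho_{\R^d\setminus\Theta}(x,y)$ for the globally Lipschitz factors and the intrinsic Lipschitz bound for the $G_{\eps,i}''$ --- give $\|\psi_\eps(x)-\psi_\eps(y)\|\le c\,\rho_{\R^d\setminus\Theta}(x,y)$. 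In the remaining case, say $x\in\Theta^\eps$ and $y\notin\Theta^\eps$, we have $\psi_\eps(y)=0$, $G_\eps'(y)=I_d$ and $G_{\eps,i}''(y)=0$, hence
\[
\|\psi_\eps(x)-\psi_\eps(y)\|=\|\psi_\eps(x)\|\le\|G_\eps'(x)-G_\eps'(y)\|\,\|\mu(x)\|+\tfrac12\sum_{i=1}^d\|G_{\eps,i}''(x)-G_{\eps,i}''(y)\|\,\|\sigma(x)\|^2,
\]
and bounding $\|\mu(x)\|,\|\sigma(x)\|\le M$ (as $x\in\Theta^\eps$) and invoking the Lipschitz continuity of $G_\eps'$ and the intrinsic Lipschitz continuity of the $G_{\eps,i}''$ once more yields $\|\psi_\eps(x)-\psi_\eps(y)\|\le c\,\rho_{\R^d\setminus\Theta}(x,y)$. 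Taking the maximum of the constants over the three cases finishes the argument.

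The only point requiring care --- and the reason the statement is not immediate from ``products of (intrinsic) Lipschitz functions are (intrinsic) Lipschitz'' --- is that $\mu$ and $\sigma$ are only of linear growth (Lemma~\ref{lingrowth}) rather than bounded, so a term such as $\|G_\eps'(x)-G_\eps'(y)\|\,\|\mu(y)\|$ cannot in general be dominated by $\rho_{\R^d\setminus\Theta}(x,y)$. What saves the argument is precisely that $G_\eps'-I_d$ and the $G_{\eps,i}''$ are supported in $\Theta^\eps$, on which $\mu$ and $\sigma$ are bounded: whenever one of these factors is non-zero at a point, the coefficients evaluated at that point are controlled by $M$, and when $x$ and $y$ lie on different sides of $\partial\Theta^\eps$ the value of $G_\eps'-I_d$ (respectively $G_{\eps,i}''$) at the point outside $\Theta^\eps$ is exactly $0$, leaving only the contribution from the point inside $\Theta^\eps$. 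Everything beyond this observation is routine bookkeeping of constants.
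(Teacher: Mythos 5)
Your proof is correct and takes essentially the same route as the paper: the case distinction you carry out inline (both points outside $\Theta^\eps$, both inside, or one on each side, using that $G'_\eps-I_d$ and the $G''_{\eps,i}$ vanish outside $\Theta^\eps$ while $\mu$ and $\sigma$ are bounded on $\Theta^\eps$ by (A)(iv) and all factors are (intrinsic) Lipschitz on $\R^d\setminus\Theta$) is exactly what the paper packages as Lemma~\ref{productnew0}, which it applies with $f=G'_\eps$, $g=\mu$ and then twice for $G''_{\eps,i}\sigma\sigma^\top$, finishing with Lemma~\ref{comp0} for the trace. Your decomposition $\nu_\eps=\mu+\psi_\eps$ is only a cosmetic variant of this, and the point you flag as the crux --- boundedness on $\Theta^\eps$ compensating for the mere linear growth of $\mu,\sigma$ --- is precisely the observation the paper's argument rests on.
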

\begin{proof}
	First we prove that the function $G_{\eps}'\mu$ is intrinsic Lipschitz continuous on $\R^d\setminus \Theta$. By Assumption (A)(v), the function $\mu$ is intrinsic Lipschitz continuous on $\R^d\setminus\Theta$. Since $\eps < \eps^*$ we furthermore get by Assumption (A)(iv) that $\mu$ is bounded on $\Theta^\eps$. Lemma~\ref{Gdiff}(i) and Lemma~\ref{Lip}(i) obviously imply that the function  $G_{\eps}'$ is intrinsic Lipschitz continuous on $\R^d\setminus\Theta$ and bounded on $\Theta^\eps$. Moreover, we have $G_\eps'(x)= I_{d} $ for all $ x \in \R^{d} \setminus \Theta^{\eps}$, see 
	Lemma~\ref{Gdiff}(i), 
	which implies that $G_\eps'$ is constant on $\R^{d}\setminus \Theta^{\eps} $. Applying Lemma~\ref{productnew0} in the appendix with $A=C=\R^d\setminus\Theta$, $B=\Theta^\eps\setminus\Theta$, $f=G'_\eps$ and $g=\mu$ we conclude that $G_{\eps}'\mu$ is intrinsic Lipschitz continuous on $\R^d\setminus\Theta$.

It remains to prove that for every  $i \in \{1,\dots,d \}$, the function 
    $\tr\bigl(G_{\eps,i}''(\sigma\sigma^{\top})_{|\R^{d} \setminus \Theta} \bigr)$ 
   is intrinsic Lipschitz continuous. 
     By Assumptions (A)(iv) and (B) we 
   obtain that the mappings $\sigma,\sigma^\top \colon\R^d\to \R^{d\times d}$ are  bounded on $\Theta^\eps$  and intrinsic Lipschitz continuous on $\R^d\setminus\Theta$. By Lemma~\ref{Gdiff}(ii) and Lemma~\ref{Lip}(ii) we have that the mapping  $G_{\eps,i}'' \colon\R^{d} \setminus \Theta\to \R^{d \times d} $ is 
     intrinsic Lipschitz continuous and bounded. Using~\eqref{Gdiffref} we furthermore get that $G_{\eps,i}''(x)= 0 $  for all $ x \in \R^{d} \setminus \Theta^{\epsilon}$. Applying Lemma~\ref{productnew0} in the appendix first with $A=C=\R^d\setminus\Theta$, $B=\Theta^\eps\setminus\Theta$, $f=G''_{\eps,i}$ and $g=\sigma_{|\R^d\setminus\Theta}$ and then with $A=C=\R^d\setminus\Theta$, $B=\Theta^\eps\setminus\Theta$, $f=G''_{\eps,i}\sigma_{|\R^d\setminus\Theta}$ and $g=\sigma^\top_{|\R^d\setminus\Theta}$ we  conclude that $G_{\eps,i}''(\sigma\sigma^{\top})_{| \R^{d} \setminus \Theta    }$ is intrinsic Lipschitz continuous. Finally, since $\tr\colon\R^d\to\R^{d\times d}$ is a Lipschitz continuous mapping, we obtain by Lemma~\ref{comp0} in the appendix that $\tr\circ G_{\eps,i}''(\sigma\sigma^{\top})_{|\R^{d}\setminus \Theta}$ is intrinsic Lipschitz continuous.	
   \end{proof}

\begin{Lem} \label{exten}
		For every $\eps\in (0,\delta)$, the mappings  $G_{\eps,i}''\colon \R^{d} \setminus \Theta\to \R^{d\times d} $, $i \in \{1,\dots,d \}$, can be extended to bounded mappings $R_{\eps,i}\colon \R^{d}  \to  \R^{d\times d}$, $i \in \{1,\dots,d \}$, respectively, such that the function 
	\[
	\bar\nu_{\eps} = G_{\eps}'\mu + \frac{1}{2} \bigl(\tr(R_{\eps,i}\,\sigma\sigma^{\top})\bigr)_{1\le i \le d}\colon \R^d\to \R^d
	\] 
	is a Lipschitz continuous extension of $\nu_{\eps}$ 
	to $\R^{d}$.
\end{Lem}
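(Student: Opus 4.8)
The plan is to construct the extensions $R_{\eps,i}$ by hand from the explicit structure of $G_{\eps,i}''$ on $\R^d\setminus\Theta$, exploiting the fact that $G_{\eps}$ equals the identity on $\R^d\setminus\Theta^\eps$. First I would recall from Lemma~\ref{Gdiff}(ii) that each $G_{\eps,i}''\colon\R^d\setminus\Theta\to\R^{d\times d}$ is bounded and from~\eqref{Gdiffref} that $G_{\eps,i}''$ vanishes identically on $\R^d\setminus\Theta^\eps$. The remaining piece, on $\Theta^\eps\setminus\Theta$, is built from the $C^3$ functions $\alpha\circ\pr_\Theta$ (on $\Theta^\gamma$) and $\Phi_\eps$ (on $\Theta^\gamma\setminus\Theta$). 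The key point is that by Lemma~\ref{diff2}(iv) the function $\Phi_\eps$ together with all its partial derivatives up to order $3$ vanishes on $\Theta^\gamma\setminus\Theta^\eps$, so the second-derivative expression for $G_{\eps,i}$ obtained by differentiating~\eqref{Gdiffref} twice and using the product rule is a sum of terms each containing a derivative of $\Phi_\eps$ of order $\le 2$ as a factor, multiplied by bounded $C^1$ factors coming from $\alpha\circ\pr_\Theta$ and its derivatives. Since these $\Phi_\eps$-derivatives extend continuously by zero across $\Theta$ (they already vanish near $\Theta$ within the chart, being $O(\|x-\pr_\Theta(x)\|^{2})$, $O(\|x-\pr_\Theta(x)\|)$ and $O(1)$ respectively, cf.\ the proof of Lemma~\ref{diff2}), one obtains a bounded extension $R_{\eps,i}\colon\R^d\to\R^{d\times d}$; concretely one may simply set $R_{\eps,i}(x)=G_{\eps,i}''(x)$ for $x\notin\Theta$ and $R_{\eps,i}(x)=0$ for $x\in\Theta$, and boundedness is exactly Lemma~\ref{Gdiff}(ii).

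Given the $R_{\eps,i}$, the function $\bar\nu_\eps$ is defined by the displayed formula and clearly agrees with $\nu_\eps$ on $\R^d\setminus\Theta$, so it is an extension of $\nu_\eps$. It remains to prove that $\bar\nu_\eps$ is Lipschitz continuous on all of $\R^d$. For the term $G_\eps'\mu$ this follows already as in the proof of Lemma~\ref{Lip1}: $G_\eps'$ is Lipschitz (Lemma~\ref{Lip}(i)) and equals $I_d$ off $\Theta^\eps$, $\mu$ is intrinsic Lipschitz on $\R^d\setminus\Theta$ and bounded on $\Theta^\eps$ (Assumptions (A)(iv),(v)), and one invokes the gluing lemma (Lemma~\ref{productnew0}) plus the passage from piecewise/intrinsic Lipschitz continuity of a continuous function across a $C^4$-hypersurface of positive reach to genuine Lipschitz continuity (Lemmas~\ref{condfullf0} and~\ref{piecewtolip0}). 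For the trace terms, the argument is analogous to the second half of the proof of Lemma~\ref{Lip1}: on $\R^d\setminus\Theta$ the map $\tr(R_{\eps,i}\,\sigma\sigma^\top)=\tr(G_{\eps,i}''\,\sigma\sigma^\top)$ is intrinsic Lipschitz continuous there by Lemma~\ref{Lip1}, it vanishes off $\Theta^\eps$ and is bounded on $\Theta^\eps$, hence by Lemmas~\ref{productnew0}, \ref{condfullf0} and~\ref{piecewtolip0} it extends to a Lipschitz continuous function on $\R^d$ — and that extension must coincide with $\tr(R_{\eps,i}\,\sigma\sigma^\top)$ by continuity, since $R_{\eps,i}=0$ on $\Theta$ while $\sigma\sigma^\top$ is bounded there, so the product vanishes on $\Theta$. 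Summing the (finitely many) Lipschitz pieces gives that $\bar\nu_\eps$ is Lipschitz continuous on $\R^d$.

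The main obstacle I expect is not the gluing machinery, which is by now routine given Lemmas~\ref{condfullf0}, \ref{piecewtolip0} and~\ref{productnew0}, but rather verifying cleanly that the zero-extension of $G_{\eps,i}''$ across $\Theta$ is the "right" extension in the sense that the resulting $\tr(R_{\eps,i}\,\sigma\sigma^\top)$ is continuous on $\R^d$ and agrees with the Lipschitz extension produced by the gluing lemmas; this requires checking that $G_{\eps,i}''(x)$ tends to $0$ as $x\to\Theta$, which one reads off from the fact that every term in the differentiated~\eqref{Gdiffref} carries a factor $\Phi_\eps(x)$ or $\Phi_\eps'(x)$ or $\Phi_\eps''(x)$, the first two of which vanish on $\Theta$ (Lemma~\ref{diff2}(iii) and the definition of $\Phi_\eps$) while the $\Phi_\eps''$-terms are multiplied by $(\alpha\circ\pr_\Theta)(x)$ which is bounded but does \emph{not} vanish on $\Theta$ — so one must check that the $\Phi_\eps''$ contributions are themselves continuous across $\Theta$ with a well-defined (possibly nonzero) limit, or, more simply, argue that boundedness of $G_{\eps,i}''$ on $\R^d\setminus\Theta$ together with intrinsic Lipschitz continuity already forces, via Lemma~\ref{piecewtolip0}, the existence of a continuous bounded extension, and take that as $R_{\eps,i}$. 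I would adopt the latter route to keep the argument short.
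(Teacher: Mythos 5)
Your construction of the extension does not work, and the gap is exactly at the point you flag at the end of your proposal. The zero extension $R_{\eps,i}=0$ on $\Theta$ (or any ``continuous extension'' of $G_{\eps,i}''$) is not available: from \eqref{Gdiffref} and the computation of $\Phi_\eps''$ one finds that for $x\in\Theta$ the one-sided limits are $\lim_{h\downarrow0}G_{\eps,i}''(x\pm h\nor(x))=H(x)\pm2\alpha_i(x)\nor(x)\nor(x)^\top$ (with $H(x)=0$ on $\Theta$ since $\Phi_\eps$ and $\Phi_\eps'$ vanish there), so whenever $\alpha_i(x)\neq0$, i.e.\ whenever $\mu$ actually jumps, $G_{\eps,i}''$ has two different nonzero limits at $x$ and admits no continuous extension at all. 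Your fallback — that boundedness plus intrinsic Lipschitz continuity on $\R^d\setminus\Theta$ ``forces, via Lemma~\ref{piecewtolip0}, a continuous bounded extension'' — is not valid: the intrinsic metric makes points on opposite sides of $\Theta$ far apart, so intrinsic Lipschitz continuity says nothing about the jump across $\Theta$, and Lemma~\ref{piecewtolip0} \emph{assumes} continuity on all of $\R^d$ as a hypothesis; it does not produce it. For the same reason your treatment of the two summands separately fails: $G_\eps'\mu$ is discontinuous across $\Theta$ (its jump is the jump of $\mu$, since $G_\eps'=I_d$ on $\Theta$), and $\tr(G_{\eps,i}''\sigma\sigma^\top)$ jumps by $\mp4\alpha_i\|\sigma^\top\nor\|^2$, so neither piece can be Lipschitz on $\R^d$; only their specific combination has matching one-sided limits — this cancellation is precisely what the transformation $G$ was built to achieve — and even then the value on $\Theta$ must be chosen correctly, because $\mu(x)$ for $x\in\Theta$ is essentially arbitrary (only bounded), so with $R_{\eps,i}(x)=0$ one gets $\bar\nu_{\eps,i}(x)=\mu_i(x)$, which generally differs from the common one-sided limit $\mu_i(x+)+\tfrac12\tr(G_{\eps,i}''(x+)\sigma(x)\sigma(x)^\top)$.

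The paper's proof therefore proceeds differently: it \emph{defines} $R_{\eps,i}(x)=G_{\eps,i}''(x+)+\frac{2(\mu_i(x+)-\mu_i(x))}{\|\sigma(x)^\top\nor(x)\|^2}\nor(x)\nor(x)^\top$ for $x\in\Theta$ (boundedness following from (A)(ii), (A)(iv) and Lemma~\ref{Gdiff}(ii)), computes the one-sided limits of $G_{\eps,i}''$ via $\Phi_\eps''(x\pm)=\pm2\nor(x)\nor(x)^\top$, and then verifies by the explicit cancellation identities that both one-sided limits of $\nu_{\eps,i}$ along $\pm\nor(x)$ equal $\bar\nu_{\eps,i}(x)$; continuity of $\bar\nu_{\eps,i}$ then follows from Lemma~\ref{a0} (not from any claim that the pieces are continuous), and only after that are Lemmas~\ref{condfullf0} and~\ref{piecewtolip0} invoked, applied to the full function $\bar\nu_{\eps,i}$, to upgrade intrinsic Lipschitz continuity plus continuity to global Lipschitz continuity. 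The part of your plan that matches the paper is the final gluing step; the missing idea is the jump computation for $G_{\eps,i}''$ and the tailored definition of $R_{\eps,i}$ on $\Theta$ that makes the drift jump and the second-order jump cancel.
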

\begin{proof}
	Let $\eps\in (0,\delta)$ and recall that  $\Theta^\eps\setminus\Theta = Q_{\eps,+}\cup Q_{\eps,-}$ where the sets 
	$Q_{\eps,s}=\{x+s\lambda\normal(x)\colon x\in \Theta,\lambda\in (0,\eps)\}$, $s\in \{+,-\}$, are open and disjoint, see~\eqref{u00} and~\eqref{u01}.
	For $k,m\in\N$, a function $g\colon \R^d\setminus\Theta\to\R^{k\times m}$ and $x\in\Theta$ we put
	\[
	g(x+)=\lim_{h\downarrow 0} g(x+h\normal(x)),\,\, 	g(x-)=\lim_{h\downarrow 0} g(x-h\normal(x))
	\]
	if these limits exist in $\R^{k\times m}$.
	
	Let $i\in\{1,\dots,d\}$. Below we show that there exists a  mapping $H\colon \Theta^\eps\to\R^{d\times d}$ such that for all $x\in\Theta$ and $s\in\{+,-\}$ we have
	\begin{equation}\label{u000}
	G_{\eps,i}''(xs) = 	H(x)+ s2\alpha_{i}(x)\normal(x)\normal(x)^{\top}.
	\end{equation}	
 By Lemma~\ref{exlim} we know that the limit $\mu_i(xs)$ exists in $\R$ for every $x\in\Theta$ and $s\in\{+,-\}$.
Now, we define  
\[
R_{\eps,i}\colon\R^d\to \R^{d\times d},\,\, x\mapsto 
\begin{cases}
G_{\eps,i}''(x), & \text{if } x\in\R^d\setminus\Theta,\\ G_{\eps,i}''(x+)+ \frac{2(\mu_i(x+)- \mu_i(x))}{\|\sigma(x)^\top\normal(x)\|^2}\normal(x)\normal(x)^{\top}, & \text{if } x\in\Theta.
\end{cases}
\]	
By Lemma~\ref{Gdiff}(ii) we have $\sup_{x\in\R^d\setminus\Theta} \|G_{\eps,i}''(x)\| <\infty$, which implies  $\sup_{x\in\Theta} \|G_{\eps,i}''(x+)\| <\infty$.  By condition (A)(iv) and the fact that $\eps <\eps^*$ we obtain that $\sup_{x\in\Theta^\eps} \|\mu_{i}(x)\| <\infty$, which  implies $\sup_{x\in\Theta} \|\mu_i(x+)\| <\infty$.  Furthermore, note that $\inf_{x\in\Theta} \|\sigma(x)^\top\normal(x)\| >0$, due to condition (A)(ii), and that $\|\normal(x)\normal(x)^\top\| =1$ for every $x\in\Theta$. Combining the latter facts yields the boundedness of $R_{\eps,i}$.

By Lemma~\ref{Lip1}, the mapping $\bar\nu_{\eps,i}$ is intrinsic Lipschitz continuous on $\R^d\setminus \Theta$. Next, we show that for all $x\in\Theta$ and $s\in\{+,-\}$,
\begin{equation}\label{uvw2}
	\lim_{h\downarrow 0} \bar\nu_{\eps,i}(x+sh\normal(x)) = \bar\nu_{\eps,i}(x)
\end{equation} 
By Lemma~\ref{a0} in the appendix we may then conclude that $\bar \nu_{\eps,i}$ is continuous, and since $\Theta$ is a 
 $C^4$-hypersurface 
of positive reach we now obtain the Lipschitz continuity of $\bar \nu_{\eps,i}$ by using Lemmas~\ref{condfullf0} and~\ref{piecewtolip0} in the appendix.

For the proof of~\eqref{uvw2} we first note that by Lemma~\ref{exlim}, the continuity of $G_{\eps,i}'$ and $\sigma$ and Lemma~\ref{Gdiff}(i)  we get
\begin{equation}\label{uvw3}
	\begin{aligned}
	\lim_{h\downarrow 0} \bar\nu_{\eps,i}(x+sh\normal(x)) &  =\lim_{h\downarrow 0} \nu_{\eps,i}(x+sh\normal(x)) 
	 = \mu_i(xs) + \frac{1}{2} \tr(G_{\eps,i}''(xs)\sigma(x)\sigma(x)^\top).
\end{aligned}
\end{equation}
Using Lemma~\ref{Gdiff}(i) and
~\eqref{u000} 
we obtain
\begin{equation}\label{uvw4}
	\begin{aligned}
&	\mu_i(x+) + \frac{1}{2} \tr(G_{\eps,i}''(x+)\sigma(x)\sigma(x)^\top)\\
 & \qquad \qquad = 	\mu_i(x) + \frac{1}{2} \tr(R_{\eps,i}(x)\sigma(x)\sigma(x)^\top)\\
	& \qquad \qquad \qquad+ \mu_i(x+) - \mu_i(x)- \frac{\mu_i(x+)- \mu_i(x)}{\|\sigma(x)^\top\normal(x)\|^2} \tr(\normal(x)\normal(x)^{\top}\sigma(x)\sigma(x)^\top)\\
	& \qquad \qquad =(G_{\eps}'(x)\mu(x))_i + \frac{1}{2} \tr(R_{\eps,i}(x)\sigma(x)\sigma(x)^\top) = \bar\nu_{\eps,i}(x)  
	\end{aligned}
\end{equation}
as well as
\begin{equation}\label{uvw5}
	\begin{aligned}
		&	\mu_i(x-) + \frac{1}{2} \tr\bigl(G_{\eps,i}''(x-)\sigma(x)\sigma(x)^\top)\\
		& \qquad \qquad = 	\mu_i(x) + \frac{1}{2} \tr\bigl(R_{\eps,i}(x)\sigma(x)\sigma(x)^\top)  + \mu_i(x-) - \mu_i(x) \\
		& \qquad \qquad \qquad+ \frac{1}{2}\tr\Bigl(\bigl(G_{\eps,i}''(x-) - G_{\eps,i}''(x+)- 2\frac{\mu_i(x+)- \mu_i(x)}{\|\sigma(x)^\top\normal(x)\|^2} \normal(x)\normal(x)^{\top}\bigr)\sigma(x)\sigma(x)^\top\Bigr)\\
		& \qquad \qquad =(G_{\eps}'(x)\mu(x))_i + \frac{1}{2} \tr\bigl(R_{\eps,i}(x)\sigma(x)\sigma(x)^\top) + \mu_i(x-) - \mu_i(x) \\
		& \qquad \qquad \qquad- \frac{1}{2}\tr\Bigl(\Bigl( 4\alpha_i(x) + 2\frac{\mu_i(x+)- \mu_i(x)}{\|\sigma(x)^\top\normal(x)\|^2}\Bigr) \normal(x)\normal(x)^{\top}\sigma(x)\sigma(x)^\top\Bigr)\\	
		&  \qquad \qquad= \bar\nu_{\eps,i}(x)  + \mu_i(x-) - \mu_i(x) - \frac{\mu_i(x-) -\mu_i(x)}{\|\sigma(x)^\top\normal(x)\|^2}\tr\Bigl( \normal(x)\normal(x)^{\top}\sigma(x)\sigma(x)^\top\Bigr)\\ & \qquad \qquad= \bar\nu_{\eps,i}(x).
	\end{aligned}
\end{equation}
Combining~\eqref{uvw3} with~\eqref{uvw4} and~\eqref{uvw5} yields~\eqref{uvw2}.

It remains to prove~\eqref{u000}. 
Let $z\in\R^d$.
Using Lemma~\ref{diff2}(iv) as well as~\eqref{phiref} and Lemma~\ref{projdist}(iii) in the appendix we obtain that for every $y\in Q_{\eps,s}$ and $s\in\{+,-\}$, 
\begin{align*}
		z^\top\Phi_{\eps}''(y) & 
			= (\Phi_\eps' z)'(y)
     		 = s 2\bigl(f_{\eps}'(\|\cdot-\pr_{\Theta}(\cdot)\|^{2})(\cdot-\pr_{\Theta}(\cdot))^{\top}z\bigr)' (y)\\
		& = s4f_{\eps}''(\| y-\pr_{\Theta}(y) \|^{2})(y-\pr_{\Theta}(y))^{\top}((y-\pr_{\Theta}(y))^{\top}z) \\
		& \qquad \qquad + s2f_{\eps}'(\|y-\pr_{\Theta}(y)\|^{2})z^\top(I_d - \pr_{\Theta}'(y)).
\end{align*}
Let $x\in \Theta$ and $h\in (0,\eps)$. Then $x+sh\normal(x)\in Q_{\eps,s}$ and we have $\pr_{\Theta}(x+sh\normal(x)) = x$. Hence
\begin{align*}
	z^\top\Phi_{\eps}''(x+sh\normal(x)) & = s4f_{\eps}''(\| sh\normal(x) \|^{2})(sh\normal(x))^{\top}((sh\normal(x))^{\top}z) \\
	& \qquad\qquad  + s2f_{\eps}'(\|sh\normal(x)\|^{2})z^\top(I_d - \pr_{\Theta}'(x+sh\normal(x)))\\
	& = s4h^2 f_{\eps}''( h^{2})\normal(x)^{\top}(\normal(x)^{\top}z) + s2f_{\eps}'(h^{2})z^\top(I_d - \pr_{\Theta}'(x+sh\normal(x))).
\end{align*}
By the continuity of $f_\eps'$, $f_\eps''$ and $\pr'_{\Theta}$ and by the fact that $\pr'_\Theta(x) = I_d- \normal(x)\normal(x)^\top$, see Lemma~\ref{pdiff01} in the appendix, we conclude that
\[
\lim_{h\downarrow 0} 	z^{T}\Phi_{\eps}''(x+sh\normal(x))  = s2f_\eps'(0) z^\top 
(I_d-\pr'_{\Theta}(x))
 = s2z^\top \normal(x)\normal(x)^\top,
 \]
 which yields 
 \begin{equation}\label{uvw1}
 \Phi_{\eps}''(xs) =  s2 \normal(x)\normal(x)^\top.
 \end{equation}

    Recall from  Lemma~\ref{propconc} 
    that  $\alpha \circ \pr_{\Theta}$ is a $C^{3}$-function 
    on $\Theta^\eps$. By \eqref{Gdiffref} we  have for all $y \in \Theta^{\epsilon}$,
	\[
		G_{\eps,i}'(y)= e_i^\top +(\alpha_i \circ \pr_{\Theta})(y)\Phi_{\eps}'(y)+ \Phi_{\eps}(y)  (\alpha_i \circ \pr_{\Theta})'(y).
	\]
By the product rule for derivatives we conclude that for all $y\in Q_{\eps,s}$ and $s\in\{+,-\}$, 
\begin{align*}
	G_{\eps,i}''(y) &= ((G_{\eps,i}')^\top)'(y)=
 	\bigr((\alpha_i \circ \pr_{\Theta})'(y)\bigl)^\top\Phi_{\eps}'(y)  +(\alpha_i \circ \pr_{\Theta})(y)\Phi_{\eps}''(y)   \\
	    & \qquad  + (\Phi_{\eps}'(y))^\top  (\alpha_i \circ \pr_{\Theta})'(y) +  \Phi_{\eps}(y) (\alpha_i \circ \pr_{\Theta})''(y),
	\end{align*}
which jointly with~\eqref{uvw1} implies that for all $x\in \Theta$ and $s\in\{+,-\}$,
\begin{align*}
	\lim_{h\downarrow 0} G_{\eps,i}''(x+sh\normal(x) ) &= \bigr((\alpha_i \circ \pr_{\Theta})'(x)\bigl)^\top\Phi_{\eps}'(x)  +(\alpha_i \circ \pr_{\Theta})(x)\Phi_{\eps}''(xs)   \\ 	& \qquad  + (\Phi_{\eps}'(x))^\top  (\alpha_i \circ \pr_{\Theta})'(x) +  \Phi_{\eps}(x) (\alpha_i \circ \pr_{\Theta})''(x) \\
	& = H(x) + s2\alpha_i(x) \normal(x)\normal(x)^\top,
\end{align*}
where $H(x) =  \bigr((\alpha_i \circ \pr_{\Theta})'(x)\bigl)^\top\Phi_{\eps}'(x) + (\Phi_{\eps}'(x))^\top  (\alpha_i \circ \pr_{\Theta})'(x) +  \Phi_{\eps}(x) (\alpha_i \circ \pr_{\Theta})''(x) $.

This completes the proof of the lemma.
\end{proof}

\begin{Lem}\label{transfinal}
Let $\eps\in (0,\delta)$ and  choose $\bar\nu_\eps\colon \R^d\to\R^d$ according to Lemma~\ref{exten}.
\begin{itemize}
	\item[(i)] The mapping $\mu_\eps\colon \R^d\to\R^d,\,\, x\mapsto \bar \nu_\eps\circ G_\eps^{-1}$ is Lipschitz continuous.
	\item[(ii)] The mapping $\sigma_\eps\colon\R^d\to\R^{d\times d},\,\, x\mapsto (G'_\eps\sigma)\circ G_\eps^{-1}$ is Lipschitz continuous with $\sigma_\eps(x) = \sigma(x)$ for every $x\in\Theta$ and satisfies $	\sigma =((G_\eps^{-1})'\,\sigma_\eps)\circ G_\eps$.
\end{itemize} 
	\end{Lem}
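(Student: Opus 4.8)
The plan is to reduce both statements to the Lipschitz continuity of suitable compositions, using the regularity of $G_\eps$, $G_\eps^{-1}$ and $\bar\nu_\eps$ established in Lemmas~\ref{Gdiff}--\ref{exten}. For part~(i) it suffices to note that $\bar\nu_\eps\colon\R^d\to\R^d$ is Lipschitz continuous by Lemma~\ref{exten} and that $G_\eps^{-1}\colon\R^d\to\R^d$ is Lipschitz continuous by Lemma~\ref{Lip}(iii); hence $\mu_\eps=\bar\nu_\eps\circ G_\eps^{-1}$ is Lipschitz continuous as a composition of Lipschitz continuous mappings.

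For part~(ii), the key step is to show that $G_\eps'\,\sigma\colon\R^d\to\R^{d\times d}$ is Lipschitz continuous; once this is known, $\sigma_\eps=(G_\eps'\,\sigma)\circ G_\eps^{-1}$ is Lipschitz continuous, again because $G_\eps^{-1}$ is Lipschitz continuous by Lemma~\ref{Lip}(iii). To prove that $G_\eps'\,\sigma$ is Lipschitz continuous I would argue exactly as in the proof of Lemma~\ref{Lip1} for the mapping $G_\eps'\,\mu$: by (B) and (A)(iv) (recall $\eps<\eps^*$) the mapping $\sigma$ is intrinsic Lipschitz continuous on $\R^d\setminus\Theta$ and bounded on $\Theta^\eps$, while by Lemma~\ref{Gdiff}(i) and Lemma~\ref{Lip}(i) the mapping $G_\eps'$ is intrinsic Lipschitz continuous on $\R^d\setminus\Theta$, bounded, and identically equal to $I_d$ on $\R^d\setminus\Theta^\eps$; applying Lemma~\ref{productnew0} in the appendix with $A=C=\R^d\setminus\Theta$, $B=\Theta^\eps\setminus\Theta$, $f=G_\eps'$ and $g=\sigma_{|\R^d\setminus\Theta}$ then yields that $G_\eps'\,\sigma$ is intrinsic Lipschitz continuous on $\R^d\setminus\Theta$. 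Since $G_\eps'$ and $\sigma$ are continuous on $\R^d$, so is $G_\eps'\,\sigma$, and since $\Theta$ is a $C^4$-hypersurface of positive reach, Lemmas~\ref{condfullf0} and~\ref{piecewtolip0} in the appendix upgrade this to Lipschitz continuity on all of $\R^d$, exactly as in the proofs of Lemmas~\ref{Lip} and~\ref{exten}.

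It then remains to verify the two remaining assertions of part~(ii) by direct computation. For $x\in\Theta$ we have $\Phi_\eps(x)=0$ by the definition of $\Phi_\eps$, hence $G_\eps(x)=x$ and $G_\eps^{-1}(x)=x$, while $G_\eps'(x)=I_d$ by Lemma~\ref{Gdiff}(i); therefore $\sigma_\eps(x)=G_\eps'(G_\eps^{-1}(x))\,\sigma(G_\eps^{-1}(x))=\sigma(x)$. For the identity $\sigma=((G_\eps^{-1})'\,\sigma_\eps)\circ G_\eps$, fix $x\in\R^d$; combining $(G_\eps^{-1})'(y)=(G_\eps'(G_\eps^{-1}(y)))^{-1}$ with $G_\eps^{-1}\circ G_\eps=\mathrm{id}_{\R^d}$ gives $(G_\eps^{-1})'(G_\eps(x))=(G_\eps'(x))^{-1}$ and $\sigma_\eps(G_\eps(x))=G_\eps'(x)\,\sigma(x)$, so that $((G_\eps^{-1})'\,\sigma_\eps)(G_\eps(x))=(G_\eps'(x))^{-1}\,G_\eps'(x)\,\sigma(x)=\sigma(x)$.

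I expect the only genuinely delicate point to be the global Lipschitz continuity of $G_\eps'\,\sigma$, since $\sigma$ is only of linear growth rather than globally bounded, so that the naive product estimate fails. What makes the argument work is that $G_\eps'-I_d$ is supported in $\cl(\Theta^\eps)$, a neighbourhood of $\Theta$ on which $\sigma$ \emph{is} bounded by (A)(iv); combining this localization with the product lemma applied on either side of $\Theta$ and with the positive-reach geometry of $\Theta$ (through Lemmas~\ref{condfullf0} and~\ref{piecewtolip0}) lets one glue the one-sided intrinsic Lipschitz estimates into a single global Lipschitz estimate.
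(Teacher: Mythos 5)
Your proposal is correct and follows essentially the same route as the paper: part (i) as a composition of the Lipschitz maps $\bar\nu_\eps$ and $G_\eps^{-1}$, part (ii) via Lipschitz continuity of $G_\eps'\sigma$ obtained from Lemma~\ref{productnew0} (exploiting that $G_\eps'\equiv I_d$ off $\Theta^\eps$ and $\sigma$ is bounded on $\Theta^\eps$), followed by the same pointwise identities on $\Theta$ and the inverse-function computation for $\sigma=((G_\eps^{-1})'\sigma_\eps)\circ G_\eps$. The only difference is that you run the product argument on $\R^d\setminus\Theta$ and then glue across $\Theta$ via Lemmas~\ref{condfullf0} and~\ref{piecewtolip0}, whereas the paper — since $\sigma$, unlike $\mu$, is globally Lipschitz — applies Lemma~\ref{productnew0} directly with $A=C=\R^d$ and $B=\Theta^\eps$ and then invokes convexity of $\R^d$ (Lemma~\ref{lipimpl0}(ii)), which makes your detour valid but unnecessary.
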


\begin{proof}
Part (i) is an immediate consequence of Lemma~\ref{exten} and Lemma~\ref{Lip}(iii). 

For the proof of part (ii) we first note that, by Lemmas~\ref{Lip}(i) and~\ref{Gdiff}(i), the mapping $G_\eps'$ is bounded and Lipschitz continuous on $\R^d$ as well as constant on $\R^d\setminus \Theta^\eps$. Moreover, by condition (A)(iv) and condition (B), the mapping $\sigma$ is Lipschitz continuous on $\R^d$ as well as bounded on $\Theta^{\eps^*}\supset \Theta^\eps$. We may thus apply Lemma~\ref{productnew0} in the appendix with $A=C=\R^d$, $B=\Theta^\eps$, $f=G_\eps'$ and $g=\sigma$ to obtain that the mapping $G'_\eps \sigma\colon \R^d\to\R^d$ is intrinsic Lipschitz continuous. Since $\R^d$ is convex we have Lipschitz continuity of $G'_\eps \sigma$, see Lemma~\ref{lipimpl0} (ii) in the appendix. The latter fact and Lemma~\ref{Lip}(iii) imply the Lipschitz continuity of $\sigma_\eps$.

Next, let $x\in\Theta$. We have $G_\eps'(x) = I_d$, see Lemma~\ref{Gdiff}(i), and $G_\eps(x)=x$ by the definition of $G_\eps$. The latter fact implies $G_\eps^{-1}(x) = x$. Hence $\sigma_\eps(x) = \sigma(x)$.

Finally, we have $(G_\eps^{-1})' = (G_\eps')^{-1}\circ G_\eps^{-1}$, which yields $((G_\eps^{-1})'\,\sigma_\eps)\circ G_\eps = (G_\eps')^{-1} G'_\eps \sigma = \sigma$ and hereby finishes the proof of part (ii) of the lemma.
	\end{proof}

\begin{Lem}\label{transfinal2}
	Let $\eps\in (0,\delta)$, choose $\bar\nu_\eps\colon \R^d\to\R^d$ according to Lemma~\ref{exten} and define  $\mu_\eps = \bar \nu_\eps\circ G_\eps^{-1}$ and $\sigma_\eps$ as in Lemma~\ref{transfinal}. For all $i \in \{1,\dots,d \}$, the mapping $(G^{-1}_{\eps,i})''\colon \R^{d} \setminus \Theta\to \R^{d\times d} $ can be extended to a bounded mapping $S_{\eps,i}\colon \R^{d}  \to  \R^{d\times d}$ such that
	\begin{equation}\label{ln}
	\mu = \Bigl(\bigl(G_{\eps}^{-1}\bigr)'\mu_\eps + \frac{1}{2} \bigl(\tr(S_{\eps,i}\,\sigma_\eps\sigma_\eps^{\top})\bigr)_{1\le i \le d}\Bigr)\circ G_\eps.
	\end{equation}
\end{Lem}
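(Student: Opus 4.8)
The plan is to verify~\eqref{ln} separately on $\R^d\setminus\Theta$ and on $\Theta$, after fixing the extensions $S_{\eps,i}$. On $\R^d\setminus\Theta$ the identity~\eqref{ln} is simply the classical second-order chain rule applied to $G_\eps^{-1}\circ G_\eps=\text{id}_{\R^d}$, and there I would take $S_{\eps,i}=(G_{\eps,i}^{-1})''$, which is bounded on $\R^d\setminus\Theta$ by Lemma~\ref{Lip}(iv); note that, being a diffeomorphism that is the identity on $\Theta$, $G_\eps$ restricts to a bijection of $\R^d\setminus\Theta$ onto itself. On $\Theta$, where $G_\eps$ and $G_\eps^{-1}$ reduce to the identity and $G_\eps'=(G_\eps^{-1})'=I_d$, the identity~\eqref{ln} collapses coordinatewise to a single scalar equation, which I would satisfy by a bounded rank-one correction of a one-sided normal limit of $(G_{\eps,i}^{-1})''$, in exact analogy with the construction of $R_{\eps,i}$ in Lemma~\ref{exten}.

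For the computation on $\R^d\setminus\Theta$, fix $x\in\R^d\setminus\Theta$; then $G_\eps(x)\in\R^d\setminus\Theta$, so $G_\eps$ is $C^3$ near $x$ (Lemma~\ref{Gdiff}(ii)) and $G_\eps^{-1}$ is $C^2$ near $G_\eps(x)$ (Lemma~\ref{Lip}(iv)). Differentiating $G_{\eps,i}^{-1}(G_\eps(x))=x_i$ twice in $x$ yields the matrix identity
\[
G_\eps'(x)^\top\,(G_{\eps,i}^{-1})''(G_\eps(x))\,G_\eps'(x)\;=\;-\sum_{k=1}^d\bigl((G_\eps'(x))^{-1}\bigr)_{ik}\,G_{\eps,k}''(x),
\]
which may also be read off from~\eqref{Gdiffrep}. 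Using $(G_\eps^{-1})'(G_\eps(x))=(G_\eps'(x))^{-1}$, $\sigma_\eps(G_\eps(x))=G_\eps'(x)\sigma(x)$ (Lemma~\ref{transfinal}(ii)) and $\mu_\eps(G_\eps(x))=\bar\nu_\eps(x)=G_\eps'(x)\mu(x)+\tfrac12\bigl(\tr(R_{\eps,i}(x)\sigma(x)\sigma(x)^\top)\bigr)_{1\le i\le d}$ (Lemma~\ref{exten}), together with $R_{\eps,k}=G_{\eps,k}''$ on $\R^d\setminus\Theta$ and the cyclic invariance of the trace, the right-hand side of~\eqref{ln} at $x$ equals
\[
\mu(x)+\tfrac12\,(G_\eps'(x))^{-1}\bigl(\tr(R_{\eps,i}(x)\sigma\sigma^\top)\bigr)_i+\tfrac12\bigl(\tr\bigl(G_\eps'(x)^\top(G_{\eps,i}^{-1})''(G_\eps(x))\,G_\eps'(x)\,\sigma\sigma^\top\bigr)\bigr)_i;
\]
substituting the matrix identity shows that the last two terms cancel, so the right-hand side of~\eqref{ln} equals $\mu(x)$.

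It remains to define the bounded extensions $S_{\eps,i}$ across $\Theta$. Since $(G_{\eps,i}^{-1})''$ is intrinsic Lipschitz continuous on $\R^d\setminus\Theta$ (Lemma~\ref{Lip}(iv)), it is Lipschitz continuous on the convex segment $\overline{x,x+(\eps/2)\nor(x)}\setminus\{x\}\subset\R^d\setminus\Theta$ (Lemma~\ref{lipimpl0}(ii)), so, exactly as in the proof of Lemma~\ref{exlim}, the one-sided normal limit $(G_{\eps,i}^{-1})''(x+):=\lim_{h\downarrow 0}(G_{\eps,i}^{-1})''(x+h\nor(x))$ exists for every $x\in\Theta$. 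For $x\in\Theta$ I would set
\[
S_{\eps,i}(x)=(G_{\eps,i}^{-1})''(x+)-\frac{\tr\bigl(\bigl((G_{\eps,i}^{-1})''(x+)+R_{\eps,i}(x)\bigr)\sigma(x)\sigma(x)^\top\bigr)}{\|\sigma(x)^\top\nor(x)\|^2}\,\nor(x)\nor(x)^\top,
\]
which is well defined by (A)(ii) and bounded on $\Theta$ since $R_{\eps,i}$ is bounded (Lemma~\ref{exten}), $(G_{\eps,i}^{-1})''$ is bounded off $\Theta$, $\sigma$ is bounded on $\Theta$ by (A)(iv), $\inf_{x\in\Theta}\|\sigma(x)^\top\nor(x)\|>0$ and $\|\nor(x)\nor(x)^\top\|=1$; moreover $\tr(S_{\eps,i}(x)\sigma(x)\sigma(x)^\top)=-\tr(R_{\eps,i}(x)\sigma(x)\sigma(x)^\top)$ because $\tr(\nor(x)\nor(x)^\top\sigma(x)\sigma(x)^\top)=\|\sigma(x)^\top\nor(x)\|^2$. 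Finally, for $x\in\Theta$ we have $G_\eps(x)=x$, $G_\eps'(x)=I_d$ (Lemma~\ref{Gdiff}(i)), hence $G_\eps^{-1}(x)=x$, $(G_\eps^{-1})'(x)=I_d$, $\sigma_\eps(x)=\sigma(x)$ (Lemma~\ref{transfinal}(ii)) and $\mu_\eps(x)=\bar\nu_\eps(x)=\mu(x)+\tfrac12\bigl(\tr(R_{\eps,i}(x)\sigma\sigma^\top)\bigr)_i$; inserting these into the right-hand side of~\eqref{ln} and using the last trace identity gives $\mu(x)$, which completes the proof.

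The main effort is essentially bookkeeping: one must write out the second-order chain rule for $G_\eps^{-1}\circ G_\eps$ with indices to obtain the matrix identity above (or quote~\eqref{Gdiffrep}), and keep track of the fact that every composition occurring stays inside $\R^d\setminus\Theta$, where the needed $C^2$/$C^3$ regularity of $G_\eps$ and $G_\eps^{-1}$ is available. The only genuinely delicate point is the existence of the one-sided limits $(G_{\eps,i}^{-1})''(x+)$, which is handled precisely as in Lemma~\ref{exlim}; since the statement asks nothing of $S_{\eps,i}$ beyond boundedness, the remaining latitude in the rank-one correction causes no difficulty.
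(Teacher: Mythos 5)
Your proof is correct and follows essentially the same route as the paper's: you verify \eqref{ln} separately on $\R^d\setminus\Theta$, where it reduces to the second-order chain rule for $G_\eps^{-1}\circ G_\eps=\mathrm{id}$ (the paper's cancellation identity \eqref{l5}), and on $\Theta$, where any bounded extension with $\tr(S_{\eps,i}(x)\sigma(x)\sigma(x)^\top)=-\tr(R_{\eps,i}(x)\sigma(x)\sigma(x)^\top)$ does the job. The only difference is your choice of $S_{\eps,i}$ on $\Theta$, which carries the one-sided limit $(G^{-1}_{\eps,i})''(x+)$ plus a rank-one correction and therefore needs the extra (correct) Lemma~\ref{exlim}-type argument for that limit, whereas the paper simply takes $S_{\eps,i}(x)=-\tr(R_{\eps,i}(x)\sigma(x)\sigma(x)^\top)\,\|\nor(x)^\top\sigma(x)\|^{-2}\,\nor(x)\nor(x)^\top$ on $\Theta$ and avoids that step.
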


\begin{proof} 
Let $i\in\{1, \ldots, d\}$. We define  
\[
S_{\eps,i}\colon\R^d\to \R^{d\times d},\,\, x\mapsto 
\begin{cases}
(G^{-1}_{\eps,i})''(x), & \text{if } x\in\R^d\setminus\Theta,\\ -\frac{\tr(R_{\eps,i}(x)\,\sigma(x)\sigma(x)^{\top})}{\|\normal(x)^\top\sigma(x)\|^2} \normal(x)\normal(x)^\top, & \text{if } x\in\Theta.
\end{cases}
\]	
By Lemma~\ref{Lip}(iv) we have $\sup_{x\in\R^d\setminus\Theta} \|(G^{-1}_{\eps,i})''(x)\| <\infty$.  
 Furthermore, for all $x\in\Theta$ we have $\inf_{x\in\Theta} \|\normal(x)^\top\sigma(x)\| >0$ due to condition (A)(ii) and $\|\normal(x)\normal(x)^\top\| =1$.  Moreover,
\[
\sup_{x\in\Theta}|\tr(R_{\eps,i}(x)\,\sigma(x)\sigma(x)^{\top})|<\infty
\]
 due to boundedness of $R_{\eps,i}$, see Lemma~\ref{exten}, and condition (A)(iv).
Thus,  $S_{\eps,i}$ is bounded.

We next show  \eqref{ln}. First, let $x\in\Theta$. Since $G_\eps(x)=x$ by definition of $G_\eps$ and $G_\eps'(x)= I_d$ by Lemma \ref{Gdiff}    we obtain
\[
\bigl(G_{\eps}^{-1}\bigr)'(x)=(G_{\eps}'(G_{\eps}^{-1}(x)))^{-1}= I_d
\]
and
\[
\mu_{\eps}(x)=\mu(x) + \frac{1}{2} \bigl(\tr(R_{\eps,i}(x)\,\sigma(x)\sigma(x)^{\top})\bigr)_{1\le i \le d}.
\]
 Using the fact that $\sigma_\eps(x)=\sigma(x)$, see  Lemma \ref{transfinal}, we conclude that
\begin{align*}
&\Bigl(\bigl(G_{\eps}^{-1}\bigr)'\mu_\eps + \frac{1}{2} \bigl(\tr(S_{\eps,i}\,\sigma_\eps\sigma_\eps^{\top})\bigr)_{1\le i \le d}\Bigr)\circ G_\eps(x)\\
&\qquad\qquad=\mu_{\eps}(x) + \frac{1}{2}\bigl( \tr(S_{\eps,i}(x)\,\sigma(x)\sigma(x)^{\top})\bigr)_{1\le i \le d}\\
&\qquad\qquad=\mu(x)+ \frac{1}{2}\bigl(\tr(R_{\eps,i}(x)\,\sigma(x)\sigma(x)^{\top})+\tr(S_{\eps,i}(x)\,\sigma(x)\sigma(x)^{\top})\bigr)_{1\le i \le d}\\
&\qquad\qquad=\mu(x),
\end{align*}
where the last equality follows from
\[
\tr(\normal(x)\normal(x)^\top\sigma(x)\sigma(x)^\top)=\tr(\sigma(x)^\top\normal(x)\normal(x)^\top\sigma(x))
=\|\normal(x)^\top\sigma(x)\|^2.
\]

For all $x\in\R^d\setminus \Theta$ we  have  $R_{\eps,i}(x)=G_{\eps,i}''(x)$, $i\in\{1, \ldots, d\}$, and
\[
\bigl(G_{\eps}^{-1}\bigr)'\circ G_\eps(x)=\bigl(G_{\eps}'(x)\bigr)^{-1}.
\]
Using Lemma~\ref{exten} and Lemma \ref{transfinal}
we therefore obtain that for all $x\in\R^d\setminus \Theta$, 
\begin{align*}
&\Bigl(\bigl(G_{\eps}^{-1}\bigr)'\mu_\eps + \frac{1}{2} \bigl(\tr(S_{\eps,i}\,\sigma_\eps\sigma_\eps^{\top})\bigr)_{1\le i \le d}\Bigr)\circ G_\eps(x)\\
&\qquad =\bigl(G_{\eps}'(x)\bigr)^{-1}\bar\nu_{\eps}(x) + \frac{1}{2} \bigl(\tr(S_{\eps,i}\,\sigma_\eps\sigma_\eps^{\top})\bigr)_{1\le i \le d}\circ G_\eps(x)\\
&\qquad=\mu(x) + \frac{1}{2} \Bigl(((G_{\eps}^{-1})'\circ G_\eps)  \bigl(\tr\bigl(G_{\eps,i}''\,\sigma\sigma^{\top}\bigr)\bigr)_{1\le i \le d}+\bigl(\tr\bigl(((G^{-1}_{\eps,i})''\circ G_\eps)\,G_\eps'\sigma\sigma^{\top}(G_\eps')^{\top}\bigr)\bigr)_{1\le i \le d}\Bigr)(x).
\end{align*}
For convenience of writing we define $f, g\colon \R^d\setminus \Theta\to \R^d\setminus \Theta$ by
\[
f(x)=G^{-1}_{\eps}(x), \quad g(x)=G_{\eps}(x).
\]
 It thus remains to show that for all $k\in\{1, \ldots, d\}$,
\begin{equation}\label{l5}
(f_k'\circ g)\bigl(\tr\bigl(g_i''\,\sigma\sigma^{\top}\bigr)\bigr)_{1\le i \le d}+\tr\bigl((f_k''\circ g)\,g'\sigma\sigma^{\top}(g')^{\top}\bigr)=0.
\end{equation}
Let $k\in\{1, \ldots, d\}$. The chain rule and the fact that $f\circ g(x)=x$ for all $x\in \R^d\setminus \Theta$ yield
\begin{align*}
&(f_k'\circ g)\bigl(\tr\bigl(g_i''\,\sigma\sigma^{\top}\bigr)\bigr)_{1\le i \le d}+\tr\bigl((f_k''\circ g)\,g'\sigma\sigma^{\top}(g')^{\top}\bigr)\\
&\qquad=\sum_{i=1}^d  \Bigl(\frac{\partial f_k}{\partial x_i }\circ g\Bigr) \tr\bigl(g_i''\,\sigma\sigma^{\top}\bigr)+\sum_{i,\ell=1}^d (f_k''\circ g)_{i,\ell}\,(g'\sigma\sigma^{\top}(g')^{\top})_{\ell,i}\\
&\qquad =\sum_{h,j=1}^d \Bigl(\sum_{i=1}^d \Bigl(\frac{\partial f_k}{\partial x_i}\circ g\Bigr)\frac{\partial^2 g_i}{\partial x_h\partial x_j}+\sum_{i, \ell=1}^d \Bigl(\frac{\partial^2 f_k}{\partial x_i\partial x_\ell}\circ g\Bigr)\frac{\partial g_i}{\partial x_h}\frac{\partial g_\ell}{\partial x_j}\Bigr)(\sigma\sigma^{\top})_{j,h}\\
&\qquad= \sum_{h,j=1}^d\frac{\partial^2(f\circ g)_k}{\partial x_h \partial x_j} (\sigma\sigma^{\top})_{j,h}\\
&\qquad =0,
\end{align*}
which yields \eqref{l5} and finishes the proof of the lemma.
	
	\end{proof}

\subsubsection*{Proof of Propostion~\ref{G}}
Choose $\tilde \eps\in (0,\reach(\Theta))$ according to Lemma~\ref{propconc}, let $\gamma=\min(\tilde \eps, \eps^*)$, choose $\delta\in (0,\gamma)$ according to Lemma~\ref{diffeo}, let $\eps\in (0,\delta)$ and put $G=G_\eps$. Then part (i) of Proposition~\ref{G} is a consequence of Lemma~\ref{diffeo}. Part (ii) follows from Lemma~\ref{Gdiff}(i), Lemma~\ref{diffeo} and Lemma~\ref{Lip}(i),(iii). Part (iii) of the proposition follows from Lemma~\ref{Gdiff}(ii) and Lemma~\ref{Lip}(ii),(iv). Part (iv) of the proposition follows from Lemma~\ref{transfinal}(ii). Part (v) is a consequence of Lemma~\ref{exten}, Lemma~\ref{transfinal}(i) and Lemma~\ref{transfinal2}. \qed
\bigskip

\subsection{An It\^{o} formula}\label{ito}

In this section we provide in Theorem~\ref{Ito_new} an It\^o formula that can be applied with the transformation $G$ and its inverse $G^{-1}$ from Proposition~\ref{G} in Section~\ref{trans} and enables us to 
prove the existence and uniqueness of a strong solution of the SDE~\eqref{SDE} under the conditions (A) and (B), see Section~\ref{Exist}, and is also employed to obtain an $L_p$-estimate  for the distance in supremum norm of the Euler-Maruyama scheme  transformed by $G$, i.e. $G\circ \widehat X_n$, and the  Euler-Maruyama scheme $\widehat Y_n$ for the transformed solution $Y=G\circ X$, see~\eqref{prox1} in Section~\ref{ProofThm1}.

Recall that for any Lipschitz continuous function $\psi\colon\R^d\to\R$ there exists  
a Borel
 set $A\subset\R^d$ with $\lambda_d(\R^d\setminus A)=0$ such that $\psi$ is differentiable in every $x\in A$ and $\sup_{x\in A} \|\psi'(x)\| <\infty$.
 Moreover, any measurable extension of $\psi'\colon A\to\R^d$ to $\R^d$ is a weak 
derivative 
of $\psi$. See, e.g.~\cite[Section 5.8]{Evans2010} for these facts. In the sequel, we use $\psi'= (\partial \psi/\partial x_1,\dots, \partial \psi/\partial x_d )\colon \R^d \to 
\R^{1\times d}$ 
to denote any weak derivative of $\psi$. Clearly, $\psi'$ can always be chosen to be bounded on $\R^d$.

 \begin{Thm} \label{Ito_new}
	Let $\alpha = (\alpha_t)_{t\in [0,1]}$ be an $\R^d$-valued, measurable, 
adapted 
	stochastic
	process with  $\PP$-a.s.
	\begin{equation}\label{a1}
	\int_0^1 \|\alpha_t\|\, dt <\infty,
	\end{equation}
	  and let $\beta=(\beta_t)_{t\in [0,1]}$ be an $\R^{d\times d}$-valued, measurable, adapted 
	stochastic
	process with
	\begin{equation}\label{a2}
	\int_0^1 \EE \bigl[\|\beta_t\|^2\bigr]\, dt  <\infty.
	\end{equation}
	Let $y_0\in \R^d$
	and  let $Y=(Y_t)_{t\in [0,1]}$ be the  continuous semi-martingale given by 
	\[
	Y_t = y_0 + \int_0^t \alpha_s\, ds + \int_0^t \beta_s\, dW_s,\quad t\in [0,1].
	\]
	Furthermore, let $f\colon\R^d\to\R$ be a $C^1$-function with bounded, Lipschitz continuous derivative $f'\colon\R^d\to \R^{1\times d}$ and let $f''\colon\R^d\to\R^{d\times d}$ be a bounded weak derivative of $f'$. Let $M\subset \R^d$ be closed and
assume that 	$f$ is a
$C^2$-function on  $\R^d\setminus M$.
 Finally, {let $\gamma = (\gamma_{t})_{t \in [0,1]}$ be an $\R^{d\times d}$-valued, measurable stochastic process and} assume that there exist $\delta\in (0,\infty)$ and a $C^2$-function $g\colon\R^d\to \R$  with $M\subset \{g=0\}$, {$\|g'\|_{\infty} < \infty$} and $\PP$-a.s.
	\[
	\inf_{t\in [0,1]} ( \|g'(Y_t)\gamma_t\| - \delta)\one_{\{Y_t\in M\}} \ge 0.
	\]
	Then, {for every $r \in (0,\infty)$,} $\PP$-a.s., 
	\begin{equation*}
		\begin{aligned}
		&	\sup_{t \in [0,1]} \Big| f(Y_t) - f(y_{0})   - \int_{0}^{t} \bigl(f'(Y_s) \alpha_{s} 
			+ \frac{1}{2} \tr( f''(Y_s)\beta_s\beta_s^\top)\bigr) \, ds 
			 - \int_{0}^{t} f'(Y_s) \beta_s\; dW_{s}\Big|\\
			&\leq {  \| f'' \|_{\infty} \biggl(\frac{2\|g'\|_{\infty}^{2}}{\delta^{2}}\biggr)^{(r-2)/r} \biggl(\int_0^1  \|\beta_s\|^r \, ds \biggr)^{2/r}   } \, \biggl( \int_0^1 \bigl \|{ \beta_t-\gamma_t}\bigr\|^2\, dt\biggr)^{{(r-2)/r}},
		\end{aligned}
	\end{equation*}
	{where $0 \cdot \infty := 0$.}
\end{Thm}

{
\begin{Rem}
	We briefly comment on the role of the function $g$ in Theorem \ref{Ito_new}. In the proof of Theorem \ref{Ito_new}, local time estimates are used to find a suitable bound on the total time the process $Y$ spends in $M$. However, results on local times are only available in the one-dimensional case. The function $g$ allows to switch to the one-dimensional process $g(Y)$ since $M \subset \{g = 0 \}$ and hence the total  time $g(Y)$ spends in zero is an upper bound for the total time $Y$ spends in $M$. This idea was introduced in \cite{LS18}. The specific construction of $g$ in Proposition \ref{grep} is inspired by the  construction used in \cite[Proof of Theorem 2.7]{LS18}.
\end{Rem}
}

The proof of Theorem~\ref{Ito_new} will be based on a mollification argument.
{See \cite[Appendix A]{LSTcorr} for a similar approach.}
We first provide a number of technical results on convolution and smoothing. 

Recall that for
a locally integrable function $\psi\colon\R^d\to\R$ and a $C^{\infty}$-function $\varphi\colon\R^d\to\R$ with compact support, the convolution of $\psi$ and $\varphi$ is given by
\[
\psi\ast \varphi\colon \R^d\to\R,\,\, x\mapsto \int_{\R^d} \psi(y) \varphi(x-y)\, dy.
\]

Furthermore, recall that any Lipschitz continuous function $\psi\colon\R^d\to\R$ is of at most linear growth and is therefore locally integrable. Moreover,  weak 
partial
derivatives $\partial \psi /\partial x_i$ of $\psi$ are locally integrable and convolutions with $\partial \psi/ \partial x_i$  do not depend on the particular version of $\partial \psi /\partial x_i$.

See~\cite[Theorem 6.30]{Rudin1991} for a proof of the following result.

\begin{Lem} \label{diffswap}
Let 
$\psi\colon\R^d\to\R$ be Lipschitz continuous and let $\varphi\colon \R^{d} \to \R$ be a  $C^{\infty}$-function with compact support. 	Then $\psi \ast \varphi\colon\R^d\to \R$ is a $C^{\infty}$-function and for every $i \in \{1,\dots, d \}$  we have
	\[ 
	\frac{\partial (\psi \ast \varphi)}{\partial x_{i}} = \Bigl(\frac{\partial \psi}{\partial x_{i}}\Bigr) \ast \varphi = \psi \ast \Bigl(\frac{\partial \varphi}{\partial x_{i}}\Bigr). 
	\] 
\end{Lem}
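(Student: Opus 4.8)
The plan is to establish the two displayed identities separately and then chain them; throughout, write $(\psi\ast\varphi)(x)=\int_{\R^d}\psi(y)\varphi(x-y)\,dy$ and fix $R\in(0,\infty)$ with $\supp(\varphi)\subset\overline{B}_R(0)$. For the passage of derivatives onto $\varphi$ I would invoke the standard theorem on differentiation under the integral sign. For $x$ in a fixed ball $B_\rho(0)$ and any multi-index $\alpha$, the function $y\mapsto\psi(y)(\partial^\alpha\varphi)(x-y)$ is supported in $\overline{B}_{R+\rho}(0)$, and since $\psi$, being continuous, is bounded on $\overline{B}_{R+\rho}(0)$, it is dominated there by the integrable function $\bigl(\sup_{\overline{B}_{R+\rho}(0)}|\psi|\bigr)\,\|\partial^\alpha\varphi\|_\infty\,\one_{\overline{B}_{R+\rho}(0)}$. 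Hence one may differentiate under the integral sign, and induction on $|\alpha|$ shows that $\psi\ast\varphi$ is a $C^\infty$-function with
\[
\frac{\partial(\psi\ast\varphi)}{\partial x_i}(x)=\int_{\R^d}\psi(y)\,\frac{\partial\varphi}{\partial x_i}(x-y)\,dy=\Bigl(\psi\ast\frac{\partial\varphi}{\partial x_i}\Bigr)(x),
\]
which is the second equality in the lemma.

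For the first equality I would use the defining property of the weak derivative. Fix $x\in\R^d$ and put $h_x\colon\R^d\to\R$, $h_x(y)=\varphi(x-y)$; then $h_x$ is a $C^\infty$-function with compact support, i.e.\ a legitimate test function, and the chain rule gives $\partial h_x/\partial y_i=-(\partial\varphi/\partial x_i)(x-\cdot)$. Since $\psi$ is Lipschitz, its weak partial derivative $\partial\psi/\partial x_i$ is bounded and hence locally integrable, and the definition of the weak derivative yields
\[
\Bigl(\frac{\partial\psi}{\partial x_i}\ast\varphi\Bigr)(x)=\int_{\R^d}\frac{\partial\psi}{\partial x_i}(y)\,h_x(y)\,dy=-\int_{\R^d}\psi(y)\,\frac{\partial h_x}{\partial y_i}(y)\,dy=\int_{\R^d}\psi(y)\,\frac{\partial\varphi}{\partial x_i}(x-y)\,dy,
\]
which equals $(\psi\ast(\partial\varphi/\partial x_i))(x)$. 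Chaining the last two displays gives $\partial(\psi\ast\varphi)/\partial x_i=(\partial\psi/\partial x_i)\ast\varphi=\psi\ast(\partial\varphi/\partial x_i)$ for every $i$; I would also remark that the first of these convolutions does not depend on the chosen version of $\partial\psi/\partial x_i$, since any two versions agree $\lambda_d$-a.e.\ and $h_x\in L^1(\R^d)$.

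The only genuinely delicate point is the justification of differentiation under the integral sign, and this is precisely where the compact support of $\varphi$ together with the local boundedness of $\psi$ enter: together they provide, locally uniformly in $x$, an integrable majorant for the integrand and for each of its $x$-derivatives. Everything else is a routine application of the chain rule and the definition of the weak derivative. Alternatively---and this is the route taken here---one may simply invoke \cite[Theorem 6.30]{Rudin1991}.
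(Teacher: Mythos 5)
Your argument is correct. Note, however, that the paper does not prove this lemma at all: it simply cites \cite[Theorem 6.30]{Rudin1991} (the distributional statement $D^\alpha(u\ast\phi)=(D^\alpha u)\ast\phi=u\ast(D^\alpha\phi)$), which is exactly the reference you offer as an "alternative" at the end. What you have written is in effect a self-contained, elementary specialization of that theorem to Lipschitz $\psi$: the identity $\partial_i(\psi\ast\varphi)=\psi\ast\partial_i\varphi$ and the $C^\infty$-smoothness via differentiation under the integral sign (the dominating function being legitimate because, for $x$ ranging over a fixed ball, the integrand is supported in a fixed compact set on which the continuous $\psi$ is bounded), and the identity $(\partial_i\psi)\ast\varphi=\psi\ast\partial_i\varphi$ via the defining property of the weak derivative applied to the test function $y\mapsto\varphi(x-y)$, together with the chain-rule sign flip. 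Both steps are handled correctly, and your closing remark that the first convolution is independent of the chosen version of $\partial\psi/\partial x_i$ matches the observation the paper itself makes just before the lemma. So your route buys a proof that avoids the machinery of distributions and makes explicit where the compact support of $\varphi$ and the Lipschitz/local boundedness of $\psi$ are used, at the cost of a page of routine verification that the paper outsources to Rudin; either is acceptable.
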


We make use of a standard mollifier given by
\[ 
\eta\colon \R^{d} \to \R,\,\, x \mapsto c^{-1} \exp\bigl((\|x\|^2-1)^{-1}\bigr) \one_{B_1(0)}(x), 
\] 
where $c = \int_{B_1(0)} \exp\bigl((\|x\|^2-1)^{-1}\bigr)\, dx$.  For every $n\in\N$ we define
\begin{equation}\label{moll}
		\eta_{n}\colon \R^{d} \to \R, \,\,x \mapsto n^{d} \eta(n x).
\end{equation}
Then $\eta_n$ is a $C^\infty$-function with $\supp(\eta_n) = \overline B_{1/n}(0)$ and  $\int_{\R^d} \eta_n(x)\, dx=1$.

\begin{Lem} \label{convres}
	Let $\psi\colon \R^{d} \to \R$ be locally integrable. Then for $\lambda_d$-almost all $ x \in \R^{d}$,
	\[ 
	\lim_{n \to \infty} (\psi \ast \eta_{n})(x) = \psi(x).  
	\] 
	If $U \subset \R^{d}$
	 is open and $\psi$ is continuous on $U$ then for every compact set $K \subset U$,
	\[ 
	\lim_{n \to \infty} \|\psi\ast \eta_{n}- \psi\|_{\infty,K} = 0. 
	\] 
\end{Lem}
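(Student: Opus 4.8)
The plan is to prove the two assertions separately, in both cases reducing to standard facts about the Lebesgue differentiation theorem and uniform continuity. For the first (pointwise a.e.) statement, I would invoke the Lebesgue differentiation theorem: for a locally integrable $\psi$, $\lambda_d$-almost every $x\in\R^d$ is a Lebesgue point, i.e.\ $\lim_{r\downarrow 0} r^{-d}\int_{B_r(x)}|\psi(y)-\psi(x)|\,dy = 0$. Fix such a point $x$. Since $\supp(\eta_n)=\overline B_{1/n}(0)$ and $\int_{\R^d}\eta_n=1$, one writes
\[
(\psi\ast\eta_n)(x) - \psi(x) = \int_{\R^d}\bigl(\psi(x-y)-\psi(x)\bigr)\eta_n(y)\,dy = \int_{\overline B_{1/n}(0)}\bigl(\psi(x-y)-\psi(x)\bigr)\eta_n(y)\,dy.
\]
Using $0\le \eta_n(y)\le n^d\|\eta\|_\infty = C n^d$ on $\overline B_{1/n}(0)$, one bounds the absolute value by
\[
Cn^d\int_{\overline B_{1/n}(0)}|\psi(x-y)-\psi(x)|\,dy = C n^d\int_{\overline B_{1/n}(x)}|\psi(z)-\psi(x)|\,dz,
\]
which tends to $0$ as $n\to\infty$ by the Lebesgue point property (with $r=1/n$). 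This proves the first claim.

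For the second (local uniform) statement, fix an open $U\subset\R^d$ on which $\psi$ is continuous, and a compact $K\subset U$. Choose $\rho>0$ with $K^{2\rho}:=\{x: d(x,K)\le 2\rho\}\subset U$; this is possible since $K$ is compact and $U^c$ is closed and disjoint from $K$. The set $K^{2\rho}$ is compact, hence $\psi$ is uniformly continuous on it: given $\eps>0$ there is $\delta\in(0,\rho)$ such that $|\psi(z)-\psi(x)|<\eps$ whenever $x,z\in K^{2\rho}$ and $\|z-x\|<\delta$. For $n>1/\delta$ and $x\in K$, the same identity as above gives, since $\supp(\eta_n)\subset\overline B_{1/n}(0)$ and every $y$ in that ball has $\|y\|<\delta<\rho$ so that $x-y\in K^{\rho}\subset K^{2\rho}$,
\[
|(\psi\ast\eta_n)(x)-\psi(x)| \le \int_{\overline B_{1/n}(0)}|\psi(x-y)-\psi(x)|\,\eta_n(y)\,dy \le \eps\int_{\R^d}\eta_n(y)\,dy = \eps.
\]
Taking the supremum over $x\in K$ gives $\|\psi\ast\eta_n-\psi\|_{\infty,K}\le\eps$ for all $n>1/\delta$, whence the limit is zero.

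The two arguments are essentially routine once the correct tools are named; the only point requiring a little care is the first part, where one must not confuse ``almost every point works'' with ``the convergence is controlled by the Lebesgue point estimate'' — the nonnegativity and uniform $O(n^d)$ bound on $\eta_n$ together with $\int\eta_n=1$ are exactly what convert the Lebesgue point condition into the desired limit. I expect no genuine obstacle here; the main thing to get right is bookkeeping with the compactly supported mollifier so that, in the second part, the argument $x-y$ stays inside the region of uniform continuity.
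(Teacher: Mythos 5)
Your proposal is correct, but it takes a more self-contained route than the paper. For the first claim the paper simply cites \cite[Theorem C.5.7(ii)]{Evans2010}, whereas you reprove that fact directly from the Lebesgue differentiation theorem, converting the Lebesgue point condition into the limit via the bounds $0\le\eta_n\le n^d\|\eta\|_\infty$, $\supp(\eta_n)\subset \overline B_{1/n}(0)$ and $\int\eta_n=1$; this is exactly the standard proof underlying the cited result, and your bookkeeping is sound. For the second claim the paper again reduces to a citation, \cite[Theorem C.5.7(iii)]{Evans2010}, which concerns mollification of a function defined only on the open set $U$; since the lemma's $\psi$ lives on all of $\R^d$, the paper must first check that $\psi\ast\eta_n$ agrees on $K$, for $n$ large, with the mollification of the restriction $\psi_{|U}$ (using $K\subset U_n$ and $U_n+B_{1/n}(0)\subset U$). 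You avoid this restriction argument entirely by working with the full-space convolution and exploiting uniform continuity of $\psi$ on a compact neighbourhood $K^{2\rho}\subset U$ of $K$, checking that $x-y$ stays in that neighbourhood when $x\in K$ and $\|y\|\le 1/n<\delta<\rho$. Both arguments are valid; the paper's is shorter given the reference but needs the agreement-of-convolutions step, while yours is elementary and avoids that step at the cost of writing out the classical estimates.
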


\begin{proof}
	See 
		\cite[Theorem C.5.7(ii)]{Evans2010} 
   for a proof of the first statement. 
	
		To prove the second statement, let $ \rho = \psi_{|U}$, put 
		$U_n=\{x\in U\mid d(x,\partial U) > 1/n\}$ 
		and let
			\[
		\rho_n\colon U_n\to\R,\,\, x\mapsto \int_{U} \eta_n(x-y)\rho(y)\, dy
		\] 
		 for every $n\in\N$. Since $U$ is open we have $U_n+B_{1/n}(0)\subset U$ for every $n\in\N$ and
		since $K$ is compact and $U$ is open there exists $n_0\in\N$ such that $K\subset U_n$ for every $n\ge n_0$. 
		Thus, for all   $x\in K$ and all $n\ge n_0$,
		\begin{align*}
			\psi \ast \eta_{n}(x) & = \int_{\R^d} \eta_n(y) \psi(x-y)\, dy = \int_{B_{1/n}(0)} \eta_n(y) \psi(x-y)\, dy \\
			& =  \int_{B_{1/n}(0)} \eta_n(y)\rho(x-y)\, dy = \rho_n(x).
		\end{align*}
	By
	\cite[Theorem C.5.7(iii)]{Evans2010} 
	we have $\lim_{n \to \infty} \|\rho_n- \rho\|_{\infty,K} = 0$, which finishes the proof of the lemma.
	\end{proof}

We turn to a result on approximating the components $f_i$ of $f$ in Theorem~\ref{Ito_new} by mollification with $\eta_n$.
\begin{Lem} \label{approxi}
	Let  $h\colon \R^{d} \to \R$ be a  $C^{1}$-function with bounded, Lipschitz continuous derivative. Let $M \subset \R^{d}$ be closed  and assume that $h$ is a $C^{2}$-function on $\R^d\setminus M$. Then, for every $n\in\N$, the function $\phi_{n} = h \ast 	\eta_{n}\colon\R^d\to\R$ is a $C^{\infty}$-function. Moreover, for every compact set $K \subset \R^{d}$ we have\\[-.4cm] 
	\begin{itemize}
		\item[(i)] $\lim_{n \to \infty} \|h - \phi_{n}\|_{\infty,K} = 0$,\\[-.3cm]
		\item[(ii)] $\lim_{n \to \infty} \|h' - \phi_{n}'\|_{\infty,K} = 0$,\\[-.3cm]
	\end{itemize}
	Furthermore, for all $x \in \R^{d} \setminus M$ we have
	\begin{itemize}
		\item[(iii)] $\lim_{n \to \infty} \|h''(x) - \phi_{n}''(x)\| = 0$.\\[-.3cm]
	\end{itemize}
Finally, for any weak derivative $h''$ of $h'$ we have $\sup_{n \in \N} \norm{\phi_{n}''}_{\infty} \le \|h''\|_\infty$. 	
\end{Lem}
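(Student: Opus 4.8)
The plan is to prove the statement of Lemma~\ref{approxi} by a standard mollification argument, treating the four assertions in sequence and exploiting the regularity structure of $h$: global $C^1$ with bounded Lipschitz $h'$, and $C^2$ away from the closed set $M$. The $C^\infty$-property of $\phi_n = h\ast\eta_n$ is immediate from Lemma~\ref{diffswap} (applied coordinatewise, since $h$ is Lipschitz, being $C^1$ with bounded derivative). For part (i), I would note that $h$ is continuous on the open set $U=\R^d$, so Lemma~\ref{convres} (second statement) applied with $U=\R^d$ and the given compact $K$ yields $\|h-\phi_n\|_{\infty,K}\to 0$ directly.

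For part (ii), the key identity is $\phi_n' = h'\ast\eta_n$, which follows from Lemma~\ref{diffswap}: for each $i$, $\partial\phi_n/\partial x_i = (\partial h/\partial x_i)\ast\eta_n$. Since $h'$ is continuous on $\R^d$ (as $h$ is $C^1$), Lemma~\ref{convres} again gives $\|h'-\phi_n'\|_{\infty,K}\to 0$ for every compact $K$. For part (iii), fix $x\in\R^d\setminus M$. Since $M$ is closed, there is an open ball $B=B_\rho(x)$ with $\overline B\subset \R^d\setminus M$, on which $h$ is $C^2$; in particular each first partial $\partial h/\partial x_j$ is $C^1$ on $B$, hence its (classical = weak) partial derivatives $\partial^2 h/\partial x_i\partial x_j$ are continuous on $B$. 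Using $\phi_n' = h'\ast\eta_n$ once more and then $\partial\phi_n'/\partial x_i = (\partial h'/\partial x_i)\ast\eta_n$ via Lemma~\ref{diffswap} applied to the Lipschitz function $\partial h/\partial x_j$, we get $\phi_n'' = h''\ast\eta_n$ entrywise, where on $B$ the entries of $h''$ are continuous. Applying the second statement of Lemma~\ref{convres} with $U=B$ and the compact set $K=\{x\}$ gives $\|h''(x)-\phi_n''(x)\|\to 0$. (One should be slightly careful: the convolution only "sees" values of $h'$ in $B_{1/n}(x)$, which for large $n$ lies inside $B$, so the limit is unaffected by the behaviour of $h''$ outside $B$; this is exactly the local nature of the second part of Lemma~\ref{convres}.)

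For the final uniform bound, let $h''$ be any weak derivative of $h'$, so that the entries $\partial^2 h/\partial x_i\partial x_j\in L^\infty(\R^d)$ are weak partials of the Lipschitz functions $\partial h/\partial x_j$. By Lemma~\ref{diffswap}, for all $i,j$ and all $x$,
\[
\frac{\partial^2\phi_n}{\partial x_i\partial x_j}(x) = \Bigl(\frac{\partial^2 h}{\partial x_i\partial x_j}\ast\eta_n\Bigr)(x) = \int_{\R^d} \frac{\partial^2 h}{\partial x_i\partial x_j}(x-y)\,\eta_n(y)\,dy,
\]
and since $\eta_n\ge 0$ with $\int_{\R^d}\eta_n=1$, Jensen's inequality (or the triangle inequality for integrals) yields $\bigl|\partial^2\phi_n/\partial x_i\partial x_j(x)\bigr|\le \|\partial^2 h/\partial x_i\partial x_j\|_\infty$. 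Taking the maximum over $i,j$ and then the supremum over $x$ gives $\|\phi_n''\|_\infty\le \|h''\|_\infty$ for every $n$, hence $\sup_{n}\|\phi_n''\|_\infty\le\|h''\|_\infty$.

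I do not expect any serious obstacle here; the proof is essentially bookkeeping built on Lemmas~\ref{diffswap} and~\ref{convres}. The one point requiring minor care is part (iii): one must justify interchanging differentiation and convolution twice — first to get $\phi_n'=h'\ast\eta_n$ globally, then, near a point $x\notin M$, to get $\phi_n''=h''\ast\eta_n$ by applying Lemma~\ref{diffswap} to the Lipschitz functions $\partial h/\partial x_j$ whose weak partials coincide on a neighbourhood of $x$ with the continuous classical second partials of $h$ — and then invoke the local convergence statement of Lemma~\ref{convres} on a small ball around $x$ contained in $\R^d\setminus M$. Everything else is routine.
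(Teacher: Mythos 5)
Your proposal is correct and follows essentially the same route as the paper: $C^\infty$-smoothness and the identities $\phi_n'=h'\ast\eta_n$, $\phi_n''=h''\ast\eta_n$ via Lemma~\ref{diffswap}, parts (i)--(ii) via the local uniform convergence in Lemma~\ref{convres} with $U=\R^d$, part (iii) by choosing a version of the weak second derivative coinciding with the continuous classical one off $M$ and applying Lemma~\ref{convres} on an open subset of $\R^d\setminus M$, and the uniform bound from $\eta_n\ge 0$, $\int\eta_n=1$. Your localization to a ball $B_\rho(x)\subset\R^d\setminus M$ with $K=\{x\}$ instead of the paper's choice $U=\R^d\setminus M$ with arbitrary compact $K$ is an immaterial variant (and the parenthetical ``values of $h'$'' should read ``values of the weak derivative $h''$''), so no gap remains.
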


\begin{proof}
 Part (i) of the lemma  follows by applying Lemma~\ref{convres} with $\psi=h$ and $U=\R^d$.
 
  Since $h$ is Lipschitz continuous we may  apply Lemma~\ref{diffswap} with $\psi= h$ and $\varphi = \eta_n$ to obtain that for all $n\in\N$, $\phi_n$ is a $C^\infty$-function with
\begin{equation}\label{diffs}
\frac{\partial \phi_n}{\partial x_i} = \frac{\partial h}{\partial x_i} \ast \eta_n
\end{equation}
for all $i\in\{1,\dots,d\}$.
Since $\frac{\partial h}{\partial x_i}$ is continuous we can now apply Lemma~\ref{convres} with $\psi=\frac{\partial h}{\partial x_i}$ and $U=\R^d$ to obtain part (ii) of the lemma.

Next, let $i,j \in \{1,\dots,d \}$ and note that, by assumption, $\rho=\frac{\partial h}{\partial x_{i}}$ is Lipschitz continuous. Using Lemma \ref{diffswap} with  $\psi=\rho$ and $\varphi = \eta_n$ as well as   \eqref{diffs} yields for every $n\in\N$,
\begin{equation} \label{secder} 
	\frac{\partial \rho}{\partial x_j} \ast \eta_{n} =  	\frac{\partial ( \rho \ast \eta_n)}{\partial x_j}  = \frac{\partial^{2} \phi_n}{\partial x_{i} \partial x_{j}}.
\end{equation}
Clearly,  we may assume that  $	\partial \rho /\partial x_j$ is bounded and coincides with $ \partial^{2} h/ \partial x_{i} \partial x_{j}$ on the open set $\R^d\setminus M$, see the remarks on weak derivatives before 
Theorem \ref{Ito_new},
and, in particular,  is continuous on  $\R^d\setminus M$. Using Lemma~\ref{convres} with $U=\R^d\setminus M$ and $\psi = 	\frac{\partial \rho}{\partial x_j}$ as well as \eqref{secder} we conclude that for every compact set $K\subset \R^d\setminus M$,
\[
\lim_{n\to\infty}\Bigl \| \frac{\partial^{2} h}{\partial x_{i} \partial x_{j}} -\frac{\partial^{2} \phi_n}{\partial x_{i} \partial x_{j}} \Bigr\|_{\infty,K} = 0
\]
and, furthermore, for every $x\in \R^d$,
\[
\Bigl|\frac{\partial^{2} \phi_n}{\partial x_{i} \partial x_{j}} (x) \Bigr| = \Bigl|	\Bigl(\frac{\partial \rho}{\partial x_j} \ast \eta_{n} \Bigr)(x)\Bigr| \le \| \partial \rho/ \partial x_j\|_\infty \int_{\R^d} \eta_n(y)\, dy =  \| \partial \rho/ \partial x_j\|_\infty,
\]
which finishes the proof of part (iii)  and the final estimate of the lemma. 
\end{proof}

\subsubsection*{Proof of Theorem~\ref{Ito_new}}

{Put
\[
  C = \| f'' \|_{\infty} \biggl(\frac{2\|g'\|_{\infty}^{2}}{\delta^{2}}\biggr)^{(r-2)/r} \biggl(\int_0^1  \|\beta_s\|^r \, ds \biggr)^{2/r}    \, \biggl( \int_0^1 \bigl \| \beta_t-\gamma_t\bigr\|^2\, dt\biggr)^{(r-2)/r}.
\]}
Let $K \subset \R^{d}$  be a compact neighborhood of $y_{0}$ and consider the stopping time  $\tau_K = 1\wedge \inf\{t \in [0,1] \colon Y_{t} \not\in K \}$. 
Below we show that $\PP$-a.s.,
\begin{equation}\label{stopping}
	\begin{aligned}
		& \sup_{t\in [0,1]}	\Bigl| f(Y_{t\wedge \tau_K}) - f(y_{0})   - \int_{0}^{t\wedge \tau_K} \bigl(f'(Y_s) \alpha_{s} 
	+ \frac{1}{2} \tr(f''(Y_s)\beta_s\beta_s^\top)\bigr) \, ds 
	- \int_{0}^{t\wedge \tau_K} f'(Y_s) \beta_s\, dW_{s}\Bigr|\\ 
	&\qquad\qquad\qquad\qquad 
	\leq  {C}.	
	\end{aligned}
\end{equation}
For  $K_m = \overline B_m(y_0)$, $m\in \N$, we have
\[
\forall \omega\in\Omega\quad \exists m_0\in\N\quad \forall m\geq m_0:\quad \tau_{K_m}(\omega) = 1,
\]
 which jointly with~\eqref{stopping} yields the statement of Theorem~\ref{Ito_new}.

We turn to the proof of~\eqref{stopping}. Let $\phi_n = f\ast \eta_n$ for all $n\in\N$. Using  Lemma~\ref{approxi} with $h=f$ we obtain in particular that $\phi_n$ is a $C^2$-function for all $n\in\N$. Applying the It\^o formula we therefore conclude that $\PP$-a.s.  for all $n\in\N$ and all $t\in [0,1]$,
	\begin{align*}
			\phi_n(Y_{t\wedge \tau_K}) & = \phi_n(y_{0})   + \int_{0}^{t\wedge \tau_K} \bigl(\phi_n'(Y_s) \alpha_{s} 
		+ \frac{1}{2} \tr(\phi_n''(Y_s)\beta_s\beta_s^\top)\bigr) \, ds  + \int_{0}^{t\wedge \tau_K} \phi_n'(Y_s) \beta_s\, dW_{s}.	
	\end{align*}
 Hence, $\PP$-a.s.  for all $n\in\N$ and all $t\in [0,1]$,
\begin{equation}\label{stopping2}
	\begin{aligned}
		&	\Bigl| f(Y_{t\wedge \tau_K}) - f(y_{0})   - \int_{0}^{t\wedge \tau_K} \bigl(f'(Y_s) \alpha_{s} 
		+ \frac{1}{2} \tr( f''(Y_s)\beta_s\beta_s^\top)\bigr) \, ds  - \int_{0}^{t\wedge \tau_K} f'(Y_s) \beta_s\, dW_{s}\Bigr|\\
	& \qquad \le \bigl|f(Y_{t\wedge \tau_K})-\phi_n(Y_{t\wedge \tau_K})\bigr| + |f(y_0) - \phi_n(y_0)| +\int_{0}^{t\wedge \tau_K} \|f'(Y_s) - \phi'_n(Y_s)\| \|\alpha_{s}\|\, ds  \\
	&\qquad\qquad + \frac{1}{2}  \int_{0}^{t\wedge \tau_K} 
	\bigl|\tr(( f''(Y_s)-\phi_n''(Y_s))\beta_s\beta_s^\top)\bigr| \, ds + \biggl|  \int_{0}^{t\wedge \tau_K} \bigl(f'(Y_s)-\phi_n'(Y_s)\bigr) \beta_s\, dW_{s}\biggr|\\
	& \qquad \le  2	\|f - \phi_{n}\|_{\infty,K} + 	\|f' - \phi'_{n}\|_{\infty,K}\int_{0}^{1}  \|\alpha_{s}\|\, ds  + \frac{1}{2 } \int_0^1\| f''(Y_s)-\phi_n''(Y_s)\| \| {\beta}_s\|^2\, ds\\
	&\qquad \qquad  +\sup_{u\in [0,1]}\biggl|  \int_{0}^{u\wedge \tau_K} \bigl(f'(Y_s)-\phi_n'(Y_s)\bigr) \beta_s\, dW_{s}\biggr|.
	\end{aligned}
\end{equation} 	
Using Lemma~\ref{approxi}(i),(ii) with $h=f$ as well as~\eqref{a1} we obtain that  $\PP$-a.s.,
\begin{equation}\label{estt1}
\lim_{n\to\infty}\biggl(	\|f - \phi_{n}\|_{\infty,K} + 	\|f' - \phi'_{n}\|_{\infty,K}\int_{0}^{1}  \|\alpha_{s}\|\, ds\biggr) =0.
\end{equation}
By the last estimate in Lemma~\ref{approxi} we obtain that
\begin{equation}\label{estt2}
\sup_{k\in\N} \| f'' - \phi''_{k}\|_{\infty} \le 2\|f''\|_{\infty}.
\end{equation}

By the Burkholder-Davis-Gundy inequality we get the existence of $c_1, c_2\in (0,\infty)$ such that for all $n\in\N$,
\begin{align*}
\EE\biggl[\sup_{u\in [0,1]}\biggl|  \int_{0}^{u\wedge \tau_K} \bigl(f'(Y_s)-\phi_n'(Y_s)\bigr) \beta_s\, dW_{s}\biggr| \biggr]& \le c_1 \EE\biggl[ \Bigl(\int_0^{1\wedge \tau_K} \|(f'(Y_s)-\phi_n'(Y_s))\beta_s\|^2\, ds\Bigr)^{1/2}\biggr]\\
& \le c_2	\|f' - \phi'_{n}\|_{\infty,K}  \Bigl(\int_0^{1}\EE \bigl[\| \beta_s\|^2\bigr]\, ds\Bigr)^{1/2},
\end{align*}
which jointly with~\eqref{a2} and Lemma~\ref{approxi}(ii) yields
\[
\lim_{n\to\infty} \EE\biggl[\sup_{u\in [0,1]}\biggl|  \int_{0}^{u\wedge \tau_K} \bigl(f'(Y_s)-\phi_n'(Y_s)\bigr) \beta_s\, dW_{s}\biggr| \biggr] = 0.
\]
As a consequence, there exists a 
strictly
increasing sequence $(n_k)_{k\in\N}$ in $\N$ such that $\PP$-a.s.,
\begin{equation}\label{estt3}
\lim_{k\to\infty} \sup_{u\in [0,1]}\biggl|  \int_{0}^{u\wedge \tau_K} \bigl(f'(Y_s)-\phi_{n_k}'(Y_s)\bigr) \beta_s\, dW_{s}\biggr|  = 0.
\end{equation}
Next, we obtain by the properties of $g$ { and \eqref{estt2}} that $\PP$-a.s. for all $n\in\N$,
\begin{equation}\label{estt3b}
\begin{aligned}
& \int_0^1 \| f''(Y_s)-\phi_n''(Y_s)\|  \|{\beta}_s\|^2\, ds\\
&\qquad \le \sup_{k\in\N} \| f'' - \phi''_{k}\|_{\infty} \biggl(\int_0^1  \|{\beta}_s\|^r \, ds \biggr)^{2/r} \biggl( \int_0^1 
\one_{ \{Y_s\in M\}}
 \, ds\biggr)^{(r-2)/r} \\ &\qquad\qquad+ \int_0^1\| f''(Y_s)-\phi_n''(Y_s)\| \|{\beta}_s\|^2  \one_{\{Y_s\in \R^d\setminus M\}}\, ds\\
&\qquad \le { 2 \, \| f''\|_{\infty}} \biggl(\int_0^1  \|{\beta}_s\|^r \, ds \biggr)^{2/r} \delta^{-2(r-2)/r}\biggl( \int_0^1 \one_{ \{g(Y_s)=0\}} \|g'(Y_s)\gamma_s\|^2 \, ds\biggr)^{(r-2)/r} \\ &\qquad\qquad+ \int_0^1\| f''(Y_s)-\phi_n''(Y_s)\| \|{\beta}_s\|^2  \one_{\{Y_s\in \R^d\setminus M\}}\, ds.
	\end{aligned}
\end{equation}
Using Lemma~\ref{approxi}(iii) we obtain $\lim_{n\to\infty}\| f''(Y_s)-\phi_n''(Y_s)\| \|{\beta}_s\|^2  \one_{\{Y_s\in \R^d\setminus M\}} =0$
for all $s\in[0,1]$.
 Observing  { \eqref{estt2} we get}
 $\sup_{k\in\N} \| f''(Y_s)-\phi_k''(Y_s)\| \|{\beta}_s\|^2  \one_{\{Y_s\in \R^d\setminus M\}} \le 2 \|f''\|_{\infty} \|{\beta}_s\|^2 $. { By} the boundedness of $f''$ as well as~{\eqref{a2}} we may therefore conclude by the dominated convergence theorem that $\PP$-a.s.,
\begin{equation}\label{estt4}
	\lim_{n\to\infty} \int_0^1\| f''(Y_s)-\phi_n''(Y_s)\| \|{\beta}_s\|^2  \one_{\{Y_s\in \R^d\setminus M\}}\, ds = 0.
\end{equation}
Since $g$ is a $C^2$-function we may apply the It\^o formula to obtain that $g\circ Y$ is a continuous semi-martingale with quadratic variation 
\[
 \langle g\circ Y\rangle_t = \int_0^t \|(g'(Y_s){\beta}_s\|^2\, ds.
 \]
 Thus, by the occupation time formula,
\[
 \int_0^1 \one_{ \{g(Y_s)=0\}} \|g'(Y_s){\beta}_s\|^2 \, ds = 
\int_{\R} \one_{\{0 \}}(a) L_{1}^{a}(g\circ Y) \, da = 0, 
\]
where $L^{a}(g\circ Y) = (L^{a}_t(g\circ Y))_{t\in[0,1]}$ denotes the local time of $g\circ Y$ at the point $a\in\R$. 
This yields
\begin{equation}\label{estt5}
	  \begin{aligned}
	  \int_0^1 \one_{ \{g(Y_s)=0\}} \|g'(Y_s) {\gamma}_s\|^2 \, ds 
	 &\leq 2 \,  \int_0^1 \one_{ \{g(Y_s)=0\}} \|g'(Y_s){\beta}_s\|^2 \, ds + 2 \| g'\|_{\infty}^{{{2}}} \int_{0}^{1} \| \gamma_{s} - \beta_{s} \|^{2} ds \\
	 & = 2 \, \| g'\|_{\infty}^{{2}} \int_{0}^{1} \| \gamma_{s} - \beta_{s} \|^{2} ds.
	 \end{aligned}
\end{equation}
Combining~\eqref{estt3b} with~\eqref{estt4} and~\eqref{estt5} yields that $\PP$-a.s., 
\begin{equation}\label{estt6}
	\limsup_{n \to \infty} \int_0^1 \| f''(Y_s)-\phi_n''(Y_s)\|  \|\beta_s\|^2\, ds { \,\leq\, 2C.} 
\end{equation}
Finally, combining~\eqref{stopping2} with~\eqref{estt1}, \eqref{estt3} and~\eqref{estt6} we obtain that  $\PP$-a.s., 
	\begin{align*}
	& \sup_{t\in [0,1]}	\Bigl| f(Y_{t\wedge \tau_K}) - f(y_{0})   - \int_{0}^{t\wedge \tau_K} \bigl(f'(Y_s) \alpha_{s} 
	+ \frac{1}{2} \tr( f''(Y_s)\beta_s\beta_s^\top)\bigr) \, ds  - \int_{0}^{t\wedge \tau_K} f'(Y_s) \beta_s\, dW_{s}\Bigr|\\
	& \qquad \le \limsup_{k\to\infty}\biggl( 
	2	\|f - \phi_{n_k}\|_{\infty,K} + 	\|f' - \phi'_{n_k}\|_{\infty,K}\int_{0}^{1}  \|\alpha_{s}\|\, ds \\
	& \qquad \qquad  + \frac{1}{2 } \int_0^1\| f''(Y_s)-\phi_{n_k}''(Y_s)\| \|{\beta}_s\|^2\, ds  +\sup_{u\in [0,1]}\biggl|  \int_{0}^{u\wedge \tau_K} \bigl(f'(Y_s)-\phi_{n_k}'(Y_s)\bigr) \beta_s\, dW_{s}\biggr| 	\biggr)\\
	& \qquad {\le   C,}
\end{align*}
which yields~\eqref{stopping} and hereby completes the proof of Theorem~\ref{Ito_new}. \qed

Finally, we provide {a construction of the function $g$ that is needed to assure that the It\^o formula Theorem~\ref{Ito_new} may be applied with the transformation $G$ and its inverse $G^{-1}$.

We recall that a normal vector along a $C^2$-hypersurface is a $C^1$-function, see Lemma~\ref{normalreg0} in the appendix.

\begin{Prop} \label{grep}
	Let $\emptyset\neq\Theta\subset\R^d$ be an orientable $C^2$-hypersurface of positive reach, let $\nor\colon\Theta\to\R^d$ be a normal vector along $\Theta$, assume that there exists an open neighborhood   $U\subset \R^d$  of $\Theta$ such   that $\nor$ can be extended to a $C^1$-function $\nor\colon U\to\R^d$ with bounded derivative on $\Theta$,  and assume that $\sigma$
	and $\nor$ satisfy
	(A)(ii). Then there exist $\eps\in (0,\reach(\Theta))$ and a $C^2$-function $g\colon\R^d\to\R$ with the following properties. \\[-.3cm]
	\begin{itemize}
		\item[(i)] $\|g\|_\infty + 	\|g'\|_\infty + \|g''\|_\infty < \infty$. \\[-.3cm]
		\item[(ii)] For all $x\in \Theta^\eps$ we have $|g(x)| \le d(x,\Theta)$. \\[-.3cm]
		\item[(iii)] $\inf_{x\in \Theta^\eps} \|g'(x)\sigma(x)\| > 0$.  
	\end{itemize}
\end{Prop}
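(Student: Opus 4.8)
The plan is to take $g$ to be a smoothly truncated version of the \emph{signed} distance function of $\Theta$. First I would fix $\eps^*\in(0,\reach(\Theta))$ according to Remark~\ref{remcond00}, so that $\inf_{x\in\Theta^{\eps^*}}\|\nor(\pr_\Theta(x))^\top\sigma(x)\|>0$, and fix radii $\eps_1<\eps_0<\eps^*$. On the tubular neighbourhood $\Theta^{\eps_0}$ I put
\[
\widetilde g(x)=\nor(\pr_\Theta(x))^\top\bigl(x-\pr_\Theta(x)\bigr).
\]
By Lemma~\ref{fed0} in the appendix, for $x\in\Theta^{\eps_0}$ the vector $x-\pr_\Theta(x)$ is orthogonal to the tangent space $T_{\pr_\Theta(x)}(\Theta)$, hence $x-\pr_\Theta(x)=\widetilde g(x)\,\nor(\pr_\Theta(x))$ and $|\widetilde g(x)|=\|x-\pr_\Theta(x)\|=d(x,\Theta)$. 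Next, using the mollifiers $\eta_n$ from~\eqref{moll}, I choose a $C^\infty$-function $\psi\colon\R^d\to[0,1]$ with bounded partial derivatives of every order such that $\psi\equiv 1$ on $\cl(\Theta^{\eps_1})$ and $\supp(\psi)\subset\Theta^{\eps_0}$ (e.g. by mollifying a Lipschitz cut-off of $d(\cdot,\Theta)$, whose derivatives then stay bounded uniformly even if $\Theta$ is unbounded). Finally I set $g=\psi\,\widetilde g$ on $\Theta^{\eps_0}$ and $g=0$ on $\R^d\setminus\supp(\psi)$; the two definitions agree on the overlap and their domains cover $\R^d$, so $g\colon\R^d\to\R$ is well defined.

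The crucial computation is that $\widetilde g$ is in fact a $C^2$-function on $\Theta^{\eps_0}$ with $\widetilde g'(x)=\nor(\pr_\Theta(x))^\top$. Since $\Theta$ is a $C^2$-hypersurface of positive reach, Lemma~\ref{projdist} in the appendix gives that $\pr_\Theta$ is a $C^1$-function on $\Theta^{\eps_0}$ with bounded derivative; as $\nor$ is $C^1$ on $U$, the map $x\mapsto\nor(\pr_\Theta(x))$ is $C^1$ on $\Theta^{\eps_0}$ and so is $\widetilde g$. Differentiating the product defining $\widetilde g$ and using that (a) the image of $\pr_\Theta'(x)$ lies in $T_{\pr_\Theta(x)}(\Theta)$, whence $\nor(\pr_\Theta(x))^\top\pr_\Theta'(x)=0$, (b) $\langle\nor'(y)v,\nor(y)\rangle=0$ for every $y\in\Theta$ and $v\in T_y(\Theta)$ (differentiate $\|\nor\|^2\equiv 1$ along $\Theta$), and (c) $x-\pr_\Theta(x)$ is a scalar multiple of $\nor(\pr_\Theta(x))$, I find that both cross-terms vanish and $\widetilde g'(x)=\nor(\pr_\Theta(x))^\top$. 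Hence $\widetilde g'$ is $C^1$, i.e. $\widetilde g\in C^2(\Theta^{\eps_0})$, with $\widetilde g''(x)=\nor'(\pr_\Theta(x))\,\pr_\Theta'(x)$; since $\nor'$ is bounded on $\Theta$ by hypothesis and $\pr_\Theta'$ is bounded on $\Theta^{\eps_0}$, the functions $\widetilde g$, $\widetilde g'$, $\widetilde g''$ are all bounded on $\Theta^{\eps_0}$ (indeed $|\widetilde g|\le\eps_0$ and $\|\widetilde g'\|\equiv 1$). Consequently $g=\psi\widetilde g$ is a $C^2$-function on $\Theta^{\eps_0}$, and it is identically $0$ together with all its derivatives off $\supp(\psi)$, so $g\in C^2(\R^d)$; expanding $g'=\widetilde g\,\psi'+\psi\,\widetilde g'$ and $g''=\widetilde g\,\psi''+(\psi')^\top\widetilde g'+(\widetilde g')^\top\psi'+\psi\,\widetilde g''$ and inserting the bounds on the derivatives of $\psi$ and of $\widetilde g$ yields property~(i). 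Property~(ii) is immediate: for $x\in\Theta^{\eps_0}$, $|g(x)|=\psi(x)\,|\widetilde g(x)|\le|\widetilde g(x)|=d(x,\Theta)$.

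It then remains to verify property~(iii) with $\eps:=\eps_1$. On the open set $\Theta^{\eps_1}$ one has $\psi\equiv 1$ and $\psi'\equiv 0$, so $g=\widetilde g$ and $g'(x)=\widetilde g'(x)=\nor(\pr_\Theta(x))^\top$ there; hence $\|g'(x)\sigma(x)\|=\|\nor(\pr_\Theta(x))^\top\sigma(x)\|$ for every $x\in\Theta^{\eps_1}\subset\Theta^{\eps^*}$, and the choice of $\eps^*$ via Remark~\ref{remcond00} gives $\inf_{x\in\Theta^{\eps_1}}\|g'(x)\sigma(x)\|>0$, which is~(iii).

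The main obstacle is the differential-geometric core of the second paragraph: showing that the truncated signed distance of a merely $C^2$ hypersurface of positive reach is a $C^2$-function with gradient $\nor\circ\pr_\Theta$ and with a Hessian that stays bounded under the assumed boundedness of $\nor'$ on $\Theta$. This rests on the $C^1$-regularity and the boundedness of $\pr_\Theta'$ on the tubular neighbourhood (positive reach plus $C^2$) and on the orthogonality identities (a), (b), all of which I expect to be available among the differential-geometry lemmas collected in Section~\ref{Appendix}. The cut-off construction and the remaining estimates are routine; the only point requiring care is that the bounds on $\widetilde g$, $\widetilde g'$, $\widetilde g''$ must hold up to $\partial\Theta^{\eps_0}$, which is guaranteed because $\supp(\psi)$ is a compact-support-type closed subset of the open set $\Theta^{\eps_0}$ on which $\widetilde g$ is $C^2$.
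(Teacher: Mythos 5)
Your construction is correct and is essentially the paper's own: both build $g$ from the signed distance $\widetilde g(x)=\nor(\pr_{\Theta}(x))^\top(x-\pr_{\Theta}(x))$ on a tubular neighbourhood, establish the key identity $\widetilde g'=(\nor\circ\pr_{\Theta})^\top$ (hence $C^2$-regularity with bounded Hessian $\nor'(\pr_{\Theta})\pr_{\Theta}'$) and deduce (iii) from \eqref{remcond0}; the paper merely truncates in the range by composing with a one-dimensional $C^2$ function $\lambda$ with $|\lambda(x)|\le |x|$, instead of multiplying by your spatial cutoff $\psi$. Only the citations need adjusting: the boundedness of $\pr_{\Theta}'$ on the tube is Lemma~\ref{pdiffbd0} (which uses exactly the assumed boundedness of $\nor'$ on $\Theta$), not Lemma~\ref{projdist}, and the fact that $x-\pr_{\Theta}(x)$ is a multiple of $\nor(\pr_{\Theta}(x))$ follows from Lemma~\ref{ort0} together with Lemma~\ref{nrep0}(i) rather than from Lemma~\ref{fed0}.
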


\begin{proof}
	Choose $\eps^* \in (0,\reach(\Theta))$ such that $\sigma$ satisfies
	\eqref{remcond0}
	with $\eps = \eps^*$. Let 
	\[
	\eps \in (0,\eps^*/5).
	\]
	We first define the function $g$. 
	Let
	\[
	\widetilde{\lambda}\colon \R\to\R, \, x\mapsto 
	\begin{cases}  -\frac{8\eps}{15}, & \text{ if } x < -\eps,\\ x - \frac{2}{3\eps^2} x^3 + \frac{1}{5\eps^4} x^5, & \text{ if }|x|\le \eps,\\
		\frac{8\eps}{15}, & \text{ if } x >\eps, \end{cases}
	\]
	and define  
	\[
	\lambda\colon \R\to\R, \, x\mapsto \widetilde{\lambda}(x) - \widetilde{\lambda}(x+2\eps) +  \frac{8\eps}{15} = 
	\begin{cases} - \widetilde{\lambda}(x+2\eps), & \text{ if } x < -\eps,\\   \widetilde{\lambda}(x) & \text{ if } x \ge -\eps. \end{cases}
	\]
	Let
	\[
	f\colon \R^d\to\R,\, x\mapsto \begin{cases}
		\nor(\pr_{\Theta}(x))^\top (x-\pr_{\Theta}(x)), & \text{ if } x\in \Theta^{4\eps},\\ 4\eps, & \text{ otherwise},
	\end{cases}
	\]
	and put
	\[
	g = \lambda \circ f.
	\]
	
	Next,
	we provide properties of the function $\lambda$. It is easy to check that $\widetilde \lambda$ is a $C^2$-function with
	\[
	\widetilde \lambda'(x) = \Bigl(1-\frac{2}{\eps^2} x^2 + \frac{1}{\eps^4}x^4\Bigr) \one_{[-\eps,\eps]}(x), \,\, \widetilde \lambda''(x) = \Bigl(-\frac{4}{\eps^2} x + \frac{4}{\eps^4}x^3\Bigr) \one_{[-\eps,\eps]}(x)
	\]
	for all $x\in\R$. Hence
	$\lambda$ is a $C^2$-function and
	\begin{equation}\label{pgrep1}
		\|\lambda'\|_\infty + \|\lambda''\|_\infty < \infty.
	\end{equation}
	Furthermore,  $\widetilde \lambda'(x)=0$ iff $|x|\ge \eps$, which yields $\|\widetilde{\lambda}\|_\infty = 8\eps/15$, and therefore
	\begin{equation}\label{pgrep2}
		\|\lambda\|_\infty \le \|\widetilde{\lambda}\|_\infty \le  \eps.
	\end{equation}
	Moreover, for all $x\in (-\eps/2,\eps/2)$ we have $\lambda(x) = \widetilde{\lambda}(x)$, which implies that for all $x\in (-\eps/2,\eps/2)$, 
	\begin{equation}\label{pgrep3}
		\lambda'(x) = 1-\frac{2}{\eps^2} x^2 + \frac{1}{\eps^4}x^4 \ge 1-\frac{2}{\eps^2} x^2 \ge \frac{1}{2}.
	\end{equation}
	Next, it is easy to see that 
	$0\le 2y^2/3 - y^4/5 \le 7/15$
	for all $y\in [-1,1]$, which implies that for all $x\in [-\eps,\eps]$,
	\begin{equation}\label{pgrep4}
		|\lambda(x)| = |\widetilde{\lambda}(x)| = \Bigl| x - \frac{2}{3\eps^2} x^3 + \frac{1}{5\eps^4} x^5\Bigr| = |x| \bigl| 1-\bigl(2 (x/\eps)^2/3 -(x/\eps)^4/5\bigr)\bigr| \le |x|.
	\end{equation}
	Finally, note that for all $x\in \R$ with $|x| \ge 3\eps$ we have
	\begin{equation}\label{pgrep4a}
		\lambda(x) = 8\eps/15
	\end{equation}
	by the definition of $\lambda$.

        Next, we show that for all $x\in \Theta^{4\eps}$, 
	\begin{equation}\label{pgrep5}
		|f(x)| = d(x,\Theta).
		\end{equation}
         If $x\in\Theta$ then $f(x)=0=d(x,\Theta)$. Let $x\in \Theta^{4\eps}\setminus \Theta$
         and note that $4\eps < \text{reach}(\Theta)$.
          Then by \eqref{u01} there exist $y\in\Theta, s\in\{+,-\}$ and 
        $ \eta \in(0,          4\eps)$
          such that $x=y+s\eta\normal(y)$. Since $\normal(y)$ is orthogonal to the tangent space of $\Theta$ at $y$, we have $ \pr_{\Theta}(x)= y $ by Lemma~\ref{fed0} in the appendix. Hence,
\[
|f(x)| = |\nor(y)^\top s\eta\normal(y)|=\eta=d(x,\Theta).
\]

          Clearly, \eqref{pgrep5} implies that $|f(x)|\ge 3\eps$ for all $x\in \Theta^{4\eps}\setminus \Theta^{3\eps}$, and by the definition of $f$ we thus obtain that $|f(x)|\ge 3\eps$ for all $x\in\R^d\setminus \Theta^{3\eps}$. Using~\eqref{pgrep4a} we get that for all $x\in\R^d\setminus \Theta^{3\eps}$,
	\begin{equation}\label{pgrep6}
		g(x) = 8\eps/15,
	\end{equation}
	and, in particular, $g$ is a $C^\infty$-mapping on the open set $\R^d\setminus \cl(\Theta^{3\eps})$.
	
	Recall
	that $4\eps < \reach(\Theta)$. By Lemma~\ref{normalreg0}  and Lemma~\ref{projdist}(i) in the appendix we thus obtain that $\nor\circ\pr_{\Theta}$ and $\pr_{\Theta}$ are $C^1$-functions on $\Theta^{4\eps}$. Hence, $f$ is a $C^1$-function on $\Theta^{4\eps}$ and since $\lambda$ is a $C^2$-function we obtain that  $g$ is a $C^1$-function on $\Theta^{4\eps}$ as well. Since $\Theta^{4\eps}\cup(\R^d\setminus \cl(\Theta^{3\eps})) = \R^d $ we conclude that $g$ is a $C^1$-function.
	
	By 
	Lemma–\ref{ort0}, 
	Lemma~\ref{normalreg0} and Lemma~\ref{projdist}(iii) in the appendix we obtain for every $x\in \Theta^{4\eps}$,
	\begin{equation}\label{pgrep7}
		\begin{aligned}
			f'(x) & = (x-\pr_{\Theta}(x))^\top (\nor\circ\pr_{\Theta} )'(x)+ (\nor\circ\pr_{\Theta}(x))^\top (\cdot - \pr_{\Theta}(\cdot))'(x)\\
			& = \nor(\pr_{\Theta}(x))^\top (I_d - \pr_{\Theta}'(x)) =  \nor(\pr_{\Theta}(x))^\top,
		\end{aligned}
	\end{equation}
	which jointly with ~\eqref{pgrep6} yields
	\begin{equation}\label{pgrep8}
		g'(x) = 	(\lambda\circ f)' (x) = \begin{cases}
			\lambda'(f(x)) \nor(\pr_{\Theta}(x))^\top, & \text{ if } x\in\Theta^{4\eps},\\
			0, & \text{ if }x\in\R^d\setminus\cl(\Theta^{3\eps}).
		\end{cases}
	\end{equation}
	Using~\eqref{pgrep8} and again the fact that $\lambda$  is a $C^2$-function and $\nor\circ\pr_{\Theta}$ is a $C^1$-function on $\Theta^{4\eps}$ as well as the fact that $f$ is a $C^1$-function on $\Theta^{4\eps}$ we can now conclude that $g$ is a $C^2$-function with
	\begin{equation}\label{pgrep9}
		g''(x)  = \begin{cases}
			\lambda'(f(x)) (\nor\circ\pr_{\Theta})'(x)^\top + \lambda''(f(x)) \nor(\pr_{\Theta}(x))\nor(\pr_{\Theta}(x))^\top , & \text{ if } x\in\Theta^{4\eps},\\
			0, & \text{ if }x\in\R^d\setminus\cl(\Theta^{3\eps}).
		\end{cases}
	\end{equation}
	By~\eqref{pgrep2} and the latter two equations  we immediately get 
	\begin{equation}\label{pgrep10}
		\|g\|_\infty \le \eps, \, \|g'\|_\infty \le \|\lambda'\|_\infty, \,\|g''\|_\infty  \le \|\lambda'\|_\infty \|\nor'\|_{\Theta,\infty}  \| \pr'_{\Theta}\|_{\Theta^{4 \eps},\infty} +  \|\lambda''\|_\infty.
	\end{equation}
	Since $\nor'$ is bounded on $\Theta$ we may apply Lemma~\ref{pdiffbd0} in the appendix with $M=\Theta$ and $k=2$
	to obtain 
	$\| \pr'_{\Theta}\|_{\Theta^{4\eps},\infty} <\infty$. 
 Using the latter fact as well as  $\|\nor'\|_{\Theta,\infty}<\infty$ and~\eqref{pgrep1}, we obtain part (i) of the proposition from~\eqref{pgrep10}.
	
	Next, let $x\in \Theta^\eps$. By~\eqref{pgrep5} we then have $|f(x)| =d(x,\Theta) <\eps$. Hence, by~\eqref{pgrep4}, 
	\[
	|g(x)| = |\lambda(f(x))| \le |f(x)| = d(x,\Theta),
	\]
	which proves part (ii) of the proposition.
	
	Finally, let $x\in \Theta^{\eps/2}$. By~\eqref{pgrep5} we then have $|f(x)| <\eps/2$ and therefore $\lambda'(f(x)) \ge 1/2$, due to~\eqref{pgrep3}. Using  
	\eqref{remcond0}
	and~\eqref{pgrep8} we thus obtain
	that there exists $c\in (0,\infty)$ such that for every $x\in \Theta^{\eps/2}$,
	\[
	\|g'(x)\sigma(x)\| = \lambda'(f(x)) \|\nor(\pr_{\Theta}(x))^\top \sigma(x)\|\ge c/2.
	\]
	This proves part (iii) of the proposition with $\eps/2$ in place of $\eps$. Clearly, part (ii) holds for $\eps/2$ in place of $\eps$ as well, and therefore the proof of Proposition~\ref{grep} is complete.
	\end{proof}

\subsection{Proof of Theorem~\ref{exist}}\label{Exist}

	Choose an orientable $C^4$-hypersurface $\emptyset\neq\Theta\subset\R^d$ of positive reach according to (A), a function $G\colon\R^d\to\R^d$ according to Proposition~\ref{G}
	and for every $i\in\{1,\dots,d\}$  bounded extensions $R_i,S_i\colon \R^d \to \R^{d\times d}$ of the second derivatives of $G_{i}$ and $G^{-1}_i$ on $\R^d\setminus \Theta$, respectively, according to Proposition~\ref{G}(v). Define $\sigma_G\colon\R^d\to\R^{d\times d}$ and $\mu_G\colon \R^d\to\R^d$ as in Proposition~\ref{G}(iv) and (v), respectively. Then the latter two functions are Lipschitz continuous and therefore the SDE
	\begin{equation}\label{sdetrans}
		\begin{aligned}
				dY_{t} & = \mu_G(Y_{t})\,dt + \sigma_G(Y_{t})\,dW_{t}, \quad t \in [0,1], \\
			Y_{0} &= G(x_{0})
		\end{aligned}
	\end{equation}
has a unique strong solution $Y=(Y_t)_{t\in[0,1]}$, which satisfies for all 
$q\in [0,\infty)$,
\begin{equation}\label{momenttrans}
	\EE[\|Y\|_\infty^q] < \infty,
\end{equation}
see, e.g.~\cite{Mao08}. 

Choose $\eps\in(0,\reach(\Theta))$ and a $C^2$-function $g\colon\R^d\to\R$ according to Proposition~\ref{grep} and put
\begin{equation}\label{llbb0}
\delta = \inf_{x\in\Theta} \|g'(x) \sigma(x)\|.
\end{equation}
Using Proposition~\ref{G}(iv) and Proposition~\ref{grep}(iii) we then have 
\begin{equation}\label{llbb1}
\inf_{x\in\Theta} \|g'(x) \sigma_G(x)\| = \delta >0,
\end{equation}
 which implies that for all $t\in [0,1]$,
\[
 \|g'(Y_t)\sigma_G(Y_t)\| \one_{\{Y_t\in \Theta\}} \ge  \delta\one_{\{Y_t\in \Theta\}}.
\]
By Proposition~\ref{grep}(ii) we furthermore have $\Theta\subset \{g=0\}$.
Moreover, $\Theta$ is closed
since $\Theta$ is of positive reach.
Since $\lambda_d(\Theta)=0$, see Lemma \ref{leb0} in the appendix, we conclude  that
 $S_i$ is a bounded weak derivative of 
 $(G_i^{-1})'$
 for every $i\in\{1,\dots,d\}$.
Using the latter facts as well as~\eqref{momenttrans}, the continuity of the stochastic process $Y$ and the function $\mu_G$ and
the Lipschitz continuity of the function
 $\sigma_G$ we  conclude that for every $i\in\{1,\dots,d\}$ we may apply Theorem~\ref{Ito_new} with  $y_0=G(x_0)$, $\alpha = \mu_G\circ Y$, $\beta=\gamma=\sigma_G\circ Y$, $M=\Theta$, $f= G^{-1}_i $, $ f'' = S_i$ {and any $r \in (2,\infty)$} 
to obtain that $\PP$-a.s. for all  $t\in[0,1]$,
\begin{align*}
G_i^{-1}(Y_t) & = G_i^{-1}(y_{0})   + \int_{0}^{t} \bigl((G^{-1}_i)'(Y_s) \mu_{G}(Y_s)
+ \frac{1}{2} \tr\bigl(S_i(Y_s)\sigma_G(Y_s)\sigma_G(Y_s)^\top\bigr)\bigr) \, ds \\
& \qquad\qquad + \int_{0}^{t} (G^{-1}_i)'(Y_s)  \sigma_G(Y_s)\, dW_{s}.
\end{align*}
Using Proposition~\ref{G}(iv),(v) we thus conclude that $\PP$-a.s. for all $t\in[0,1]$,
\begin{align*}
 G^{-1}(Y_t) & = G^{-1}(y_{0})   + \int_{0}^{t} \Bigl((G^{-1})'(Y_s) \mu_{G}(Y_s) \\
 & \qquad\qquad\qquad 
	+ \frac{1}{2} \bigl( \tr\bigl(S_1(Y_s)\sigma_G(Y_s)\sigma_G(Y_s)^\top\bigr),\dots, \tr\bigl(S_d(Y_s)\sigma_G(Y_s)\sigma_G(Y_s)^\top\bigr) \bigr)^\top\Bigr) \, ds \\
	& \qquad\qquad + \int_{0}^{t} (G^{-1})'(Y_s)  \sigma_G(Y_s)\, dW_{s}\\
	& = x_0 + \int_0^t \mu (G^{-1}(Y_s))\, ds + \int_0^t \sigma  (G^{-1}(Y_s))\, dW_s, 
\end{align*}
which shows that the stochastic process $G^{-1}\circ Y $ is a strong solution of the SDE ~\eqref{SDE}.

Next assume that there exists a further strong solution $\widetilde X$ of the SDE ~\eqref{SDE} and put $\alpha=\mu\circ \widetilde{X}$ and $\beta=\gamma=\sigma\circ\widetilde{X}$. Since $\mu$ and $\sigma$ are of at most linear growth, see Lemma~\ref{lingrowth}, we obtain that $\EE\bigl[\|\widetilde{X}\|_\infty^q\bigr] < \infty$ for every $q\in (0,\infty)$, see e.g.~\cite{Mao08}. Moreover, there exists $c\in(0,\infty)$ such that $\|\alpha\|_\infty+ \|\beta\|_\infty \le c(1+\|\widetilde{X}\|_\infty)$. Hence $\beta$ satisfies the condition~\eqref{a2} in Theorem~\ref{Ito_new}, and using the continuity of $\widetilde X$ we see that $\alpha$ satisfies the condition~\eqref{a1} in Theorem~\ref{Ito_new}. We furthermore have for all $t\in [0,1]$,
\[
\|g'(\widetilde{X}_t)\sigma(\widetilde{X}_t)\| \one_{\{\widetilde{X}_t\in \Theta\}} \ge  \delta\one_{\{\widetilde{X}_t\in \Theta\}}
\]
with $\delta\in (0,\infty)$ given by~\eqref{llbb0}. For every $i\in\{1,\dots,d\}$ we may thus apply Theorem~\ref{Ito_new} with $y_0 = x_0$,  $\alpha = \mu\circ \widetilde{X}$, $\beta=\gamma=\sigma\circ\widetilde{X}$, $M=\Theta$, $f= G_i$ and 
$ f'' = R_i$ { and any $r \in (2,\infty)$}
 to obtain that $\PP$-a.s. for all $t\in[0,1]$,
\begin{align*}
	G_i(\widetilde{X}_t) & = G_i(x_{0})   + \int_{0}^{t} \bigl(G_i'(\widetilde{X}_s) \mu(\widetilde{X}_s)
	+ \frac{1}{2} \tr\bigl(R_i(\widetilde{X}_s)\sigma(\widetilde{X}_s)\sigma(\widetilde{X}_s)^\top\bigr)\bigr) \, ds \\
	& \qquad\qquad + \int_{0}^{t} G_i'(\widetilde{X}_s)  \sigma(\widetilde{X}_s)\, dW_{s}.
\end{align*}
Hence, $\PP$-a.s. for all $t\in[0,1]$,
\begin{align*}
	G(\widetilde{X}_t) & = G(x_{0})   + \int_{0}^{t} \Bigl(G'(\widetilde{X}_s) \mu(\widetilde{X}_s)\\
	&\qquad\qquad\qquad 
	+ \frac{1}{2} \bigl(\tr\bigl(R_1(\widetilde{X}_s)\sigma(\widetilde{X}_s)\sigma(\widetilde{X}_s)^\top\bigr),\dots, \tr\bigl(R_d(\widetilde{X}_s)\sigma(\widetilde{X}_s)\sigma(\widetilde{X}_s)^\top\bigr)\bigr)^\top\Bigr) \, ds \\
	& \qquad\qquad + \int_{0}^{t} G'(\widetilde{X}_s)  \sigma(\widetilde{X}_s)\, dW_{s}\\
&	= y_0 + \int_0^t \mu_G( G(\widetilde{X}_s))\, ds + \int_0^t \sigma_G(G(\widetilde{X}_s))\, dW_s.
\end{align*}
Thus, $G\circ \widetilde{X}$ is a strong solution of the SDE~\eqref{sdetrans}, which implies $Y=G\circ\widetilde{X}$ $\PP$-a.s. 
We conclude that $G^{-1}\circ Y = \widetilde{X}$ $\PP$-a.s., which finishes the proof of Theorem~\ref{exist}.

\subsection{Moment estimates and occupation time estimates for the Euler-Maruyama scheme}\label{MomOcc}

In this section we provide moment estimates and occupation time estimates for the time-continuous Euler-Maruyama scheme associated to the SDE~\eqref{SDE} that are needed for the proof of Theorems \ref{Thm1} and \ref{Thm2}. For technical reasons we provide these estimates  dependent on the initial value $x_0$. To be formally precise, for every $x\in\R^d$, we consider the SDE
\begin{equation}\label{sde00}
	\begin{aligned}
		dX^x_t & = \mu(X^x_t) \, dt + \sigma(X^x_t) \, dW_t, \quad t\in [0,1],\\
		X^x_0 & = x,
	\end{aligned}
\end{equation}
and for all  $n\in\N$ we use $\eul_{n}^x=(\eul_{n,t}^x)_{t\in[0,1]}$ to denote the time-continuous Euler-Maruyama scheme with step-size $1/n$ associated to the SDE \eqref{sde00}, i.e. $\eul_{n,0}^x=x$ and
\[
\eul_{n,t}^x=\eul_{n,\utn}^x+\mu(\eul_{n,\utn}^x)\cdot (t-\utn)+\sigma(\eul_{n,\utn}^x)\cdot (W_t-W_{\utn})
\]
for every $t\in [0,1]$, where $\utn = \lfloor nt\rfloor /n$ for every $t\in[0,1]$.
In particular, 
$\widehat X_n = \eul_{n}^{x_0}$ for every $n\in\N$. Furthermore,   the integral representation
\begin{equation}\label{intrep}
	\eul_{n,t}^x=x+\int_0^t \mu(\eul_{n,\usn}^x)\, ds+\int_0^t\sigma(\eul_{n,\usn}^x)\,dW_s
\end{equation}
holds for every $n\in\N$ and $t\in[0,1]$.

We have the following uniform $L_p$-estimates for $\eul_{n}^x$, $n\in\N$, which follow  from \eqref{intrep} and the linear growth property of $\mu$ and $\sigma$, see Lemma~\ref{lingrowth}, by using standard arguments.

\begin{Lem}\label{eulprop} Let $\emptyset\neq\Theta\subset\R^d$ be a
$C^1$-hypersurface of positive reach and assume that $\mu$ and $\sigma$ satisfy (A)(iv),(v) and (B).
	Then for all $p\in[1, \infty)$ there exists  $c\in(0, \infty)$ such that for all $x\in\R^d$, all $n\in\N$, all $\delta\in[0,1]$ and all $t\in[0, 1-\delta]$,
	\[
	\bigl(\EE\bigl[\sup_{s\in[t, t+\delta]} \|\eul_{n,s}^x-\eul_{n,t}^x\|^p\bigr]\bigr)^{1/p}\leq c\cdot (1+\|x\|)\cdot \sqrt{\delta}.
	\]
	In particular,
	\[
	\sup_{n\in\N} \bigl(\EE\bigl[\|\eul_{n}^x\|_\infty^p\bigr]\bigr)^{1/p}\leq c\cdot (1+\|x\|).
	\]
\end{Lem}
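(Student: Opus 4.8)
The proof follows the classical route for $L_p$-estimates of Euler--Maruyama schemes, based on the integral representation~\eqref{intrep}, the linear growth of $\mu$ and $\sigma$ provided by Lemma~\ref{lingrowth}, the Burkholder--Davis--Gundy (BDG) inequality and Gronwall's lemma. Since for $p\in[1,2)$ Jensen's inequality gives $\EE[\sup_{s\in[t,t+\delta]}\|\eul_{n,s}^x-\eul_{n,t}^x\|^p]\le(\EE[\sup_{s\in[t,t+\delta]}\|\eul_{n,s}^x-\eul_{n,t}^x\|^2])^{p/2}$ and analogously for the moment bound, it suffices to treat the case $p\in[2,\infty)$, which I fix from now on. A first preliminary step is to check that $\EE[\|\eul_n^x\|_\infty^p]<\infty$ for each fixed $n\in\N$ and $x\in\R^d$: this follows by induction over the grid points, since if $\eul_{n,i/n}^x\in L^p(\Omega)$ then, by Lemma~\ref{lingrowth}, $\mu(\eul_{n,i/n}^x),\sigma(\eul_{n,i/n}^x)\in L^p(\Omega)$; as these are $\F_{i/n}$-measurable and the increments $W_t-W_{i/n}$, $t\ge i/n$, are independent of $\F_{i/n}$ with finite moments of all orders, one gets $\sup_{t\in[i/n,(i+1)/n]}\|\eul_{n,t}^x\|\in L^p(\Omega)$, and combining the $n$ subintervals yields $\EE[\|\eul_n^x\|_\infty^p]<\infty$.

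Next I would establish the moment bound, from which the ``in particular'' assertion is immediate. By~\eqref{intrep}, Jensen's inequality applied to $\int_0^t\mu(\eul_{n,\usn}^x)\,ds$ and the BDG inequality (followed by Jensen) applied to $\int_0^\cdot\sigma(\eul_{n,\usn}^x)\,dW_s$, there is $c_p\in(0,\infty)$, depending only on $p$, with
\[
\EE\Bigl[\sup_{u\in[0,t]}\|\eul_{n,u}^x\|^p\Bigr]\le c_p\|x\|^p+c_p\int_0^t\bigl(\EE[\|\mu(\eul_{n,\usn}^x)\|^p]+\EE[\|\sigma(\eul_{n,\usn}^x)\|^p]\bigr)\,ds
\]
for all $t\in[0,1]$. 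Since $\usn\le s$, we have $\|\eul_{n,\usn}^x\|\le\sup_{u\in[0,s]}\|\eul_{n,u}^x\|$, so Lemma~\ref{lingrowth} turns the right-hand side into $c_p\|x\|^p+\tilde c_p+\tilde c_p\int_0^t\EE[\sup_{u\in[0,s]}\|\eul_{n,u}^x\|^p]\,ds$ for a further constant $\tilde c_p$ independent of $n,x$. The map $t\mapsto\EE[\sup_{u\in[0,t]}\|\eul_{n,u}^x\|^p]$ is nondecreasing, hence bounded on $[0,1]$ by the preliminary step, so Gronwall's lemma yields $\EE[\|\eul_n^x\|_\infty^p]\le c(1+\|x\|)^p$ with $c\in(0,\infty)$ independent of $n$ and $x$.

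Finally I would prove the increment bound. Fix $t\in[0,1-\delta]$; by~\eqref{intrep}, for $s\in[t,t+\delta]$,
\[
\eul_{n,s}^x-\eul_{n,t}^x=\int_t^s\mu(\eul_{n,\urn}^x)\,dr+\int_t^s\sigma(\eul_{n,\urn}^x)\,dW_r.
\]
Taking the supremum over $s\in[t,t+\delta]$, the $p$-th power and the expectation, and using Jensen on the $dr$-integral together with BDG (then Jensen) on the stochastic integral, one obtains, with $c_p$ not depending on $n,x,t,\delta$,
\[
\EE\Bigl[\sup_{s\in[t,t+\delta]}\|\eul_{n,s}^x-\eul_{n,t}^x\|^p\Bigr]\le c_p\,\delta^{p-1}\int_t^{t+\delta}\EE[\|\mu(\eul_{n,\urn}^x)\|^p]\,dr+c_p\,\delta^{p/2-1}\int_t^{t+\delta}\EE[\|\sigma(\eul_{n,\urn}^x)\|^p]\,dr.
\]
Lemma~\ref{lingrowth} together with the moment bound just proven gives $\EE[\|\mu(\eul_{n,\urn}^x)\|^p]+\EE[\|\sigma(\eul_{n,\urn}^x)\|^p]\le c(1+\|x\|)^p$ uniformly in $n,r,x$; since $\delta\le1$ forces $\delta^p\le\delta^{p/2}$, the right-hand side is $\le c(1+\|x\|)^p\delta^{p/2}$, and taking $p$-th roots finishes the proof.

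The argument is routine throughout, so I do not expect a genuine obstacle. The only points that need attention are the logical order — the increment bound relies on the moment bound, so the latter must be derived directly via Gronwall rather than as the special case $t=0,\delta=1$ of the former — and the bookkeeping showing that the constants coming from Lemma~\ref{lingrowth}, the BDG inequality and the Gronwall factor $e^{\tilde c_p}$ are all independent of $n$ (which they are, since the linear-growth constant in Lemma~\ref{lingrowth} and the length of the time interval do not depend on $n$).
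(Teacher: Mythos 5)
Your proof is correct and is precisely the ``standard argument'' that the paper invokes for Lemma~\ref{eulprop} without writing it out: the integral representation \eqref{intrep}, the linear growth from Lemma~\ref{lingrowth}, Jensen and Burkholder--Davis--Gundy estimates, and Gronwall's lemma. The preliminary finiteness check by induction over the grid points (so that Gronwall applies) and the observation that the increment bound must use the already-established uniform moment bound are exactly the right points of care; nothing is missing.
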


Next, we provide a Markov
type
 property of the time-continuous Euler-Maruyama 
scheme $\eul^x_n$ relative to the gridpoints $1/n,2/n,\ldots,1$, which is an immediate consequence of the definition of $\eul^x_n$, see also~\cite[Lemma 3]{MGY20}.

\begin{Lem}\label{markov}
	Assume that  $\mu$ and $\sigma$ are measurable. Let $x\in\R^d$, $n\in\N$, $j\in\{0, \ldots, n-1\}$ and $f\colon C([j/n,1];\R^d)\to \R$ be measurable and bounded. Then
	\[
	\EE\bigl[f\bigl((\eul_{n,t}^x)_{t\in [j/n, 1]}\bigr)\bigr | \mathcal F_{j/n}\bigr] =\EE\bigl[f\bigl((\eul_{n,t}^x)_{t\in [j/n, 1]}\bigr)\bigr | \eul_{n,j/n}^x\bigr]\quad \PP\text{-a.s.},
	\]
	and for $\PP ^{\eul_{n,j/n}^x} $-almost all $y\in\R^d$,
	\[
	\EE\bigl[f \bigl( (\eul_{n,t}^x)_{t\in [j/n, 1]} \bigr) \bigr |\eul_{n,j/n}^x=y\bigr] =\EE\bigl[f\bigl( (\eul^y_{n,t-j/n})_{t\in [j/n,1]}\bigr)\bigr].
	\]
\end{Lem}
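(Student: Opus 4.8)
The plan is to derive both assertions of Lemma~\ref{markov} directly from the explicit recursive structure of the time-continuous Euler--Maruyama scheme, which makes its increments over the grid blocks into a simple Markov chain driven by independent Brownian increments.

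\textbf{Step 1: Representing the scheme on $[j/n,1]$ as a measurable functional of $\eul^x_{n,j/n}$ and the post-$j/n$ Brownian path.} First I would note that for $t\in[j/n,1]$ the value $\eul^x_{n,t}$ is obtained by iterating the one-step map
\[
\Psi(y,w) = y + \mu(y)\,\tfrac{1}{n} + \sigma(y)\,w
\]
starting from $\eul^x_{n,j/n}$ and feeding in the Brownian increments $W_{(i+1)/n}-W_{i/n}$ for $i=j,\dots,n-1$, together with the final partial increment on the block containing $t$. Concretely, there is a measurable map
\[
\Xi\colon \R^d \times C([0,1-j/n];\R^d) \to C([j/n,1];\R^d)
\]
such that $(\eul^x_{n,t})_{t\in[j/n,1]} = \Xi\bigl(\eul^x_{n,j/n},\,(W_{j/n+u}-W_{j/n})_{u\in[0,1-j/n]}\bigr)$ $\PP$-a.s.; this is immediate by induction on the grid blocks. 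The random variable $\eul^x_{n,j/n}$ is $\mathcal F_{j/n}$-measurable, and the process $B_u := W_{j/n+u}-W_{j/n}$, $u\in[0,1-j/n]$, is a Brownian motion independent of $\mathcal F_{j/n}$.

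\textbf{Step 2: The conditioning identities.} Given Step~1, both displayed equalities follow from the standard factorization lemma for conditional expectations: if $Z$ is $\mathcal F_{j/n}$-measurable, $B$ is independent of $\mathcal F_{j/n}$, and $h$ is bounded measurable, then $\EE[h(Z,B)\mid \mathcal F_{j/n}] = \EE[h(Z,B)\mid Z] = H(Z)$ $\PP$-a.s., where $H(z)=\EE[h(z,B)]$. Applying this with $Z=\eul^x_{n,j/n}$, $B=(B_u)_{u\in[0,1-j/n]}$ and $h(z,b)=f(\Xi(z,b))$ gives the first identity, since $f\bigl((\eul^x_{n,t})_{t\in[j/n,1]}\bigr)=h(Z,B)$ $\PP$-a.s. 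For the second identity, evaluating $H$ at $z=y$ gives
\[
\EE\bigl[f\bigl((\eul^x_{n,t})_{t\in[j/n,1]}\bigr)\mid \eul^x_{n,j/n}=y\bigr] = \EE\bigl[f(\Xi(y,B))\bigr]
\]
for $\PP^{\eul^x_{n,j/n}}$-almost all $y$, and it remains to identify $\Xi(y,B)$ in law with the shifted scheme $(\eul^y_{n,t-j/n})_{t\in[j/n,1]}$ started at $y$. But $\eul^y_{n,t-j/n}$ is, by its own recursion, exactly the iteration of $\Psi$ from the initial value $y$ fed with the increments $W_{(i+1)/n-j/n+\,j/n}-\cdots$; more cleanly, $(\eul^y_{n,s})_{s\in[0,1-j/n]} = \Xi(y,(W_u)_{u\in[0,1-j/n]})$ has the same distribution as $\Xi(y,B)$ because $(W_u)_{u\in[0,1-j/n]}$ and $B$ have the same law, and a time shift by $j/n$ in the index turns this into the claimed statement.

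\textbf{Main obstacle.} The content here is entirely bookkeeping: making the measurable map $\Xi$ precise (especially the treatment of the final incomplete block and the endpoint conventions $t\in(i/n,(i+1)/n]$), and carefully handling the ``for $\PP^{\eul^x_{n,j/n}}$-almost all $y$'' qualifier when passing from $\EE[\cdot\mid\mathcal F_{j/n}]$ to the regular conditional expectation given $\eul^x_{n,j/n}$. I expect the only genuinely delicate point to be verifying that the function $y\mapsto \EE[f(\Xi(y,B))]$ is a bona fide version of $\EE[f(\cdots)\mid\eul^x_{n,j/n}=y]$, which is precisely the disintegration step in the factorization lemma; since $f$ is bounded and $\Xi$ is jointly measurable, dominated convergence and a monotone-class argument dispatch it. As the statement says, this is an immediate consequence of the definition of $\eul^x_n$ and is analogous to \cite[Lemma 3]{MGY20}, so the proof can be kept short.
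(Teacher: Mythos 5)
Your argument is correct and is exactly the one the paper intends: the paper gives no detailed proof, declaring the lemma an immediate consequence of the definition of $\eul^x_n$ (citing the analogous Lemma 3 of \cite{MGY20}), and that implicit argument is precisely your Step 1--Step 2 scheme — write the post-$j/n$ path as a jointly measurable functional $\Xi$ of the $\mathcal F_{j/n}$-measurable value $\eul^x_{n,j/n}$ and the independent Brownian increments, then apply the standard factorization lemma and identify $\Xi(y,B)$ in law with the time-shifted scheme started at $y$. No gaps; the bookkeeping points you flag (joint measurability of $\Xi$, the a.e.\ qualifier in the disintegration) are handled adequately by your monotone-class remark.
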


We proceed with
an estimate for the expected occupation time  of a neighborhood of the hypersurface $\Theta$ by the time-continuous Euler-Maruyama scheme $\eul_{n}^x$.
The following result is a generalization of  \cite[Lemma 4]{MGY20}, where the case $d=1$ and $\Theta=\{\xi\}$ with $\xi\in\R$ is studied.
\begin{Lem} \label{occtime}
Let $\emptyset\neq\Theta\subset\R^d$ be an orientable $C^2$-hypersurface of positive reach, let $\nor\colon\Theta\to\R^d$ be a normal vector along $\Theta$, assume that there exists an open neighborhood   $U\subset \R^d$  of $\Theta$ such   that $\nor$ can be extended to a $C^1$-function $\nor\colon U\to\R^d$ with bounded derivative on $\Theta$, and assume that  $\mu$, $\sigma$
and $\nor$
 satisfy (A)(ii),(iv),(v) and (B).
	Then there exists $c \in (0,\infty)$ such that for all $x \in \R^{d}$, all $n \in \N$ and all $\eps \in [0,\infty)$, 
	\begin{equation*}
		\int_{0}^{1} \PP(\{\widehat{X}_{n,t}^{x} \in \Theta^{\eps} \}) dt \leq c(1+\|x\|^{2})\Bigl(\eps + \frac{1}{{n}}\Bigr).
	\end{equation*}
\end{Lem}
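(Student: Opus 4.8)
The plan is to exploit the auxiliary function $g$ furnished by Proposition~\ref{grep}: a bounded $C^2$-function with bounded first and second derivatives satisfying $|g(y)|\le d(y,\Theta)$ on $\Theta^{\eps_0}$ and $\kappa:=\inf_{y\in\Theta^{\eps_0}}\|g'(y)\sigma(y)\|>0$ for a suitable $\eps_0\in(0,\reach(\Theta))$ (all hypotheses of Proposition~\ref{grep}, as well as of Lemma~\ref{lingrowth} and Lemma~\ref{eulprop}, being implied by the assumptions of the present lemma). Since $\int_0^1\PP(\widehat X^x_{n,t}\in\Theta^\eps)\,dt\le 1$, it suffices to prove the bound for $\eps\in[0,\eps_0]$; for $\eps>\eps_0$ the claimed right-hand side already dominates $1$ once $c\ge 1/\eps_0$, and $\eps=0$ is trivial. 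Fix such an $\eps$, $n\in\N$, $x\in\R^d$, write $\widehat X=\widehat X^x_n$ and $\underline t_n=\lfloor nt\rfloor/n$, and consider the real-valued process $Z_t=g(\widehat X_t)$. Applying It\^o's formula to $g\in C^2(\R^d)$ and the It\^o process $\widehat X$ (recall \eqref{intrep}), $Z$ is a continuous semimartingale with $dZ_t=b_t\,dt+\theta_t\,dW_t$, where $\theta_t=g'(\widehat X_t)\sigma(\widehat X_{\underline t_n})$, $b_t=g'(\widehat X_t)\mu(\widehat X_{\underline t_n})+\tfrac12\tr(g''(\widehat X_t)\sigma(\widehat X_{\underline t_n})\sigma(\widehat X_{\underline t_n})^\top)$, and quadratic variation $d\langle Z\rangle_t=\|\theta_t\|^2\,dt$.

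The central observation is a lower bound for $\|\theta_t\|$ on $\{\widehat X_t\in\Theta^\eps\}$. If $\widehat X_t\in\Theta^\eps\subset\Theta^{\eps_0}$, then $|Z_t|\le d(\widehat X_t,\Theta)<\eps$ by Proposition~\ref{grep}(ii) and $\|g'(\widehat X_t)\sigma(\widehat X_t)\|\ge\kappa$ by Proposition~\ref{grep}(iii); combining the latter with the Lipschitz continuity of $\sigma$ (constant $L$, from (B)) yields $\|\theta_t\|\ge\kappa-\|g'\|_\infty L\,\|\widehat X_t-\widehat X_{\underline t_n}\|$. Hence, setting $\rho=\kappa/(2\|g'\|_\infty(L+1))$, on the event $E_t:=\{\widehat X_t\in\Theta^\eps\}\cap\{\|\widehat X_t-\widehat X_{\underline t_n}\|\le\rho\}$ we have simultaneously $|Z_t|<\eps$ and $\|\theta_t\|^2\ge\kappa^2/4$. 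On the complementary event, the one-step increment estimate of Lemma~\ref{eulprop} (applied on $[\underline t_n,\underline t_n+1/n]$ with $p=2$) together with Markov's inequality gives $\PP(\widehat X_t\in\Theta^\eps,\ \|\widehat X_t-\widehat X_{\underline t_n}\|>\rho)\le\rho^{-2}\,\EE[\|\widehat X_t-\widehat X_{\underline t_n}\|^2]\le c_1(1+\|x\|^2)/n$, so the integral over $t\in[0,1]$ of this contribution is at most $c_1(1+\|x\|^2)/\sqrt n$.

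It remains to estimate $\int_0^1\PP(E_t)\,dt$. On $E_t$ one has $\one_{(-\eps,\eps)}(Z_t)=1\le\tfrac{4}{\kappa^2}\|\theta_t\|^2$, so $\PP(E_t)\le\EE[\one_{(-\eps,\eps)}(Z_t)]\le\tfrac{4}{\kappa^2}\EE[\one_{(-\eps,\eps)}(Z_t)\|\theta_t\|^2]$; integrating over $[0,1]$, using Tonelli and the occupation time formula for the continuous semimartingale $Z$,
\[
\int_0^1\PP(E_t)\,dt\le\frac{4}{\kappa^2}\,\EE\Big[\int_0^1\one_{(-\eps,\eps)}(Z_t)\,d\langle Z\rangle_t\Big]=\frac{4}{\kappa^2}\,\EE\Big[\int_{-\eps}^{\eps}L_1^a(Z)\,da\Big]=\frac{4}{\kappa^2}\int_{-\eps}^{\eps}\EE[L_1^a(Z)]\,da,
\]
where $L^a(Z)$ denotes the local time of $Z$ at level $a$. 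Finally, Tanaka's formula gives $\EE[L_1^a(Z)]=\EE[|Z_1-a|]-\EE[|Z_0-a|]-\EE[\int_0^1\sgn(Z_s-a)b_s\,ds]$, the stochastic integral term being a true martingale since $\EE\int_0^1\|\theta_s\|^2ds<\infty$ by the linear growth of $\sigma$ (Lemma~\ref{lingrowth}) and Lemma~\ref{eulprop}; using $\bigl||Z_1-a|-|Z_0-a|\bigr|\le|Z_1-Z_0|\le 2\|g\|_\infty$ together with $|b_s|\le C(1+\sup_t\|\widehat X_t\|^2)$, which follows from the boundedness of $g',g''$ and the linear growth of $\mu,\sigma$, we get $\EE[L_1^a(Z)]\le 2\|g\|_\infty+C(1+\EE[\sup_t\|\widehat X_t\|^2])\le c_2(1+\|x\|^2)$ uniformly in $a$, again by Lemma~\ref{eulprop}. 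Hence $\int_0^1\PP(E_t)\,dt\le\tfrac{8c_2}{\kappa^2}(1+\|x\|^2)\,\eps$, and combining with the estimate for the complementary event yields the assertion.

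The main obstacle is precisely the freezing of the diffusion coefficient at the grid point $\widehat X_{\underline t_n}$ rather than at $\widehat X_t$: the argument is driven by the non-degeneracy of $\langle Z\rangle$ near $\Theta$, but $g'(\widehat X_t)\sigma(\widehat X_{\underline t_n})$ need not be non-degenerate even when $\widehat X_t$ lies close to $\Theta$. The decomposition into $E_t$ and its complement, which transfers the non-degeneracy from $\sigma(\widehat X_t)$ to $\sigma(\widehat X_{\underline t_n})$ at the price of controlling the one-step increment via Lemma~\ref{eulprop} and the Lipschitz property of $\sigma$, is what reconciles this mismatch and produces the $1/\sqrt n$ term in the bound.
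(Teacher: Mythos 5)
Your proof is correct and follows essentially the same route as the paper: the auxiliary function $g$ from Proposition~\ref{grep}, It\^o/Tanaka and the occupation time formula applied to the semimartingale $g\circ\widehat X^x_n$, a uniform-in-$a$ bound of order $1+\|x\|^2$ on the expected local time, and the one-step increment estimate of Lemma~\ref{eulprop} to cope with the frozen coefficient $\sigma(\widehat X^x_{n,\underline{t}_n})$. The only (harmless) difference is in that last step: the paper replaces the quadratic-variation density $\|g'(\widehat X^x_{n,t})\sigma(\widehat X^x_{n,\underline{t}_n})\|^2$ by the non-degenerate $\|g'(\widehat X^x_{n,t})\sigma(\widehat X^x_{n,t})\|^2$ through an $L^1$-comparison costing $1/\sqrt n$, whereas you split on the event $\{\|\widehat X^x_{n,t}-\widehat X^x_{n,\underline{t}_n}\|\le\rho\}$ so that the frozen coefficient is itself bounded below by $\kappa/2$ there, and dispose of the complementary event by Markov's inequality.
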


\begin{proof}
 Let $x\in\R^d$ and $n\in\N$ and note that by~\eqref{intrep}, Lemma~\ref{lingrowth} and Lemma~\ref{eulprop}, the process $\eul_n^{x}$ is a continuous semi-martingale. Choose $\eps\in (0,\reach(\Theta))$ and a $C^2$-function $g\colon\R^d\to\R$
according to Proposition~\ref{grep}, put
 \[
 Y^x_n = g\circ \eul_n^{x}, \,\,\kappa = \inf_{y\in\Theta^\eps}\|g'(y) \sigma (y)\|
 \] 
 and note that $\kappa >0$ due to Proposition~\ref{grep}(iii).
 By the It\^{o} formula we obtain that $Y^x_n$ is a continuous semi-martingale such that almost surely, for all $t\in[0,1]$,
 \[
 Y^x_{n,t} = g(x) + \int_0^t U_s\, ds + \int_0^t V_s\, dW_s,
 \] 
 where 
 \[
 U_s = g'(\eul^x_{n,s})\mu(\eul^x_{n,\usn}) + \frac{1}{2}\tr \bigl(g''(\eul^x_{n,s})\sigma(\eul^x_{n,\usn}) \sigma(\eul^x_{n,\usn})^\top\bigr),\,\, V_s = g'(\eul^x_{n,s})\sigma(\eul^x_{n,\usn})
 \]
 for all $s\in [0,1]$. Moreover, $Y^x_n$ has quadratic variation
 \[
 \langle Y_n^x\rangle_t = \int_0^t { \|V_{s}\|^{2}}\, ds.
 \]
 For $a\in\R$ let $L^a(Y_n^x) = (L^a_t(Y_n^x))_{t\in[0,1]}$ denote the local time of $Y^x_n$ at the point $a$. By  the Tanaka formula, see e.g.~\cite[Chap. VI]{RevuzYor2005}, we have for all $a\in\R$ and all $t\in[0,1]$,
 \[
 L^a_t(Y_n^x) = |Y^x_{n,t} -a|
 -|g(x)-a| 
- \int_0^t \sgn(Y_{n,s}^x-a)\, U_s \, ds - \int_0^t \sgn(Y_{n,s}^x-a)\, V_s \, dW_s 
 \]
 and therefore
 \begin{equation}\label{oct0}
 	L^a_t(Y_n^x) \le |Y^x_{n,t} -g(x) | + \int_0^t |U_s| \, ds + \biggl| \int_0^t \sgn(Y_{n,s}^x-a)\, V_s \, dW_s \biggr|. 
 \end{equation}
 Using Lemma~\ref{lingrowth} and Proposition~\ref{grep}(i) we get that there exist $c_1,c_2\in (0,\infty)$ such that for all $s\in [0,1]$,
 \begin{equation}\label{oct1}
 	|U_s| \le c_1\bigl( \|g'\|_\infty (1+\|\eul_n^x\|_\infty) + \|g''\|_\infty (1+\|\eul_n^x\|_\infty)^2 \bigr) \le c_2 (1+\|\eul_n^x\|^2_\infty)
 \end{equation}
  and
  \begin{equation}\label{oct2}
 	{ \|V_{s}\|^{2}} \le c_1 \|g'\|^2_\infty (1+\|\eul_n^x\|_\infty)^2  \le c_2 (1+\|\eul_n^x\|^2_\infty).
 \end{equation} 

Using 
the H\"older inequality, the Burkholder-Davis-Gundy inequality, the estimates~\eqref{oct1} and~\eqref{oct2}  and the second estimate in Lemma~\ref{eulprop} we conclude  that there exist $c_1,c_2\in (0,\infty)$ such that for all $x\in \R^d$, all $n\in\N$, all $a\in\R$ and   all $t\in [0,1]$,
\begin{equation}\label{oct3}
	\begin{aligned}
	\EE\bigl[L^a_t(Y_n^x)\bigr] & \le 
	c_1 \int_0^1 \EE [|U_s|] \, ds + c_1 \biggl( \int_0^1 \EE [ { \|V_{s}\|^{2}}] \, ds \biggr)^{1/2}\\ & \le c_2 \bigl(1+\EE\bigl[\|\eul_n^x\|^2_\infty\bigr]\bigr)\le c_3 (1+\|x\|^2). 
	\end{aligned}
\end{equation}
Let $\tilde\eps\in [0,\infty)$.
 By  the occupation time formula, see e.g. \cite[Chap. VI]{RevuzYor2005}, and~\eqref{oct3} we conclude that there exists $c\in (0,\infty)$ such that for all $x\in \R^d$, all $n\in\N$ and all $a\in\R$,
\begin{equation}\label{oct4}
	\begin{aligned}
		\EE\biggl[\int_0^1 \one_{[-\tilde\eps,\tilde\eps]} (Y_{n,t}^x)\, { \|V_{s}\|^{2}} \, dt\biggr] = \int_{\R} \one_{[-\tilde\eps,\tilde\eps]} (a) \EE\bigl[L^a_t(Y_n^x)\bigr]\, da \le c(1+\|x\|^2)\tilde\eps. 
	\end{aligned}
\end{equation}	
   Put 
   \[
   R_t = g'(\eul^x_{n,t})\sigma(\eul^x_{n,t})
   \]
    for every $t\in[0,1]$. 
	Using Proposition~\ref{grep}(i), the Lipschitz continuity of $\sigma$ and  Lemma~\ref{eulprop} we obtain that there exist $c_1,c_2 \in (0,\infty)$ such that for all $x\in \R^d$ and all $n\in\N$,
	\begin{equation} \label{diffbd}
		\begin{aligned}
		&	\int_{0}^{1}\EE\bigl[ { \|R_{t} - V_{t}\|^{2}}\bigr]\, dt\\
			&\qquad = \int_{0}^{1} \EE\bigl[ {\| g'(\eul_{n,t}^{x}) ( \sigma (\eul_{n,t}^{x}) - \sigma (\eul_{n,\utn}^{x})) \|^{2}}\bigr]\, dt  \\
			&\qquad \le { \|g'\|^{2}_{\infty}} \int_{0}^{1}\EE\bigl[ \|\sigma (\eul_{n,t}^{x}) - \sigma (\eul_{n,\utn}^{x}) \|^{{2}}\bigr] \, dt\\
						&\qquad\leq { c_{1}\|g'\|_{\infty}^{2}}\,  \int_{0}^{1} \EE\bigl[ \|\eul_{n,t}^{x} - \eul_{n,\utn}^{x}\|^{2}\bigr] \, dt  \le c_{{2}} (1+\|x\|^{2}) \frac{1}{{n}}.
		\end{aligned}
	\end{equation}
Without loss of generality we may assume that $\tilde\eps \le \eps$. Employing Proposition~\ref{grep}(ii) as well as~\eqref{oct4} and~\eqref{diffbd} we conclude that there exists $c\in (0,\infty)$ such that for all $x\in \R^d$ and all $n\in\N$, 
\begin{equation}\label{oct5}
	\begin{aligned}
&\int_{0}^{1} \PP(\{\widehat{X}_{n,t}^{x} \in \Theta^{\tilde\eps} \}) dt=
\EE\biggl[	\int_{0}^{1} \one_{\{\widehat{X}_{n,t}^{x} \in \Theta^{\tilde\eps} \}}\, dt\biggr]\\ & 
\qquad =
 \frac{1}{\kappa^2} 	\,	\EE\biggl[	\int_{0}^{1} \kappa^2\one_{\{\widehat{X}_{n,t}^{x} \in \Theta^{\tilde\eps} \} } \one_{\{ |g(\eul_{n,t}^x)| < \tilde \eps  \}}   \, dt\biggr] \le \frac{1}{\kappa^2} \,\EE\biggl[\int_0^1 \one_{[-\tilde\eps,\tilde\eps]} (Y_{n,t}^x) { \|R_{t}\|^{2}} \, dt\biggr]	 \\ & \qquad
		 {\le \frac{2}{\kappa^{2}} \EE\biggl[ \int_{0}^{1} \one_{[-\tilde{\eps},\tilde{\eps}]}(Y_{n,t}^{x})\|V_{t} \|^{2} dt \biggr] + \frac{2}{\kappa^{2}} \EE\biggl[\int_{0}^{1} \| R_{t} - V_{t} \|^{2} dt\biggr]}  
		 \le c(1+\|x\|^2)\Bigl(\tilde \eps + \frac{1}{{ n}}\Bigr), 
	\end{aligned}
\end{equation}
which finishes the proof of Lemma~\ref{occtime}.
	\end{proof}

\begin{Rem} \label{remoccup}
In \cite[Theorem 2.7]{LS18} 
and its corrected version~\cite[Theorem 2.7]{LScorr2} 
and in \cite[Theorem 2.8]{NSS19} estimates for the  expected occupation time  of a neighborhood of  a 
 hypersurface  $\Theta$ of positive reach by an It\^{o} process are proven. These estimates are then applied to 
the time-continuous Euler-Maruyama scheme $\eul_n$ and a time-continuous adaptive Euler-Maruyama scheme to prove $L_2$-error rates of at least $1/4-$ and $1/2-$, respectively. See the proof of
 \cite[Theorem 3.1]{LS18, LScorr2} 
and \cite[Lemma 3.3]{NSS19}. Note, however, that these estimates  in fact  cannot be applied in  any of these cases, since neither of the two schemes satisfies the 
respective
conditions on the It\^{o} process  of \cite[Theorem 2.7]{LS18, LScorr2} and \cite[Theorem 2.8]{NSS19}
under the respective assumptions on the coefficients $\mu$ and $\sigma$ in
the corrected version~\cite{LScorr2} of
 \cite{LS18} and in \cite{ NSS19}. 
These estimates can also not be applied to the Euler-Maruyama scheme $\eul_n$ under the assumptions
(A) and (B) of the actual paper.

Indeed, the conditions of \cite[Theorem 2.7]{LS18, LScorr2} (they coincide with the conditions of \cite[Theorem 2.8]{NSS19}) applied to the Euler-Maruyama scheme $\eul_n$, are as follows: 
\begin{itemize}
\item[(1)] there exist $\varepsilon_1\in(0, \reach(\Theta) )$ and $c_1>0$ such that for $\PP$-almost all $\omega\in\Omega$ and all $t\in[0,1]$,
\[
\eul_{n,t}(\omega)\in\Theta^{\varepsilon_1}\quad \Rightarrow\quad \max\bigl(\|\mu(\eul_{n,\utn}(\omega))\|, \|\sigma(\eul_{n,\utn}(\omega))\|\bigr)\leq c_1,
\]
\item[(2)]  there exist $\varepsilon_2\in(0, \reach(\Theta) )$ and  $c_2>0$ such that for $\PP$-almost all $\omega\in\Omega$ and all $t\in[0,1]$,
\begin{equation}\label{cond}
\eul_{n,t}(\omega)\in\Theta^{\varepsilon_2}\quad \Rightarrow\quad \|\normal(\pr_\Theta( \eul_{n,t}(\omega)))^\top \sigma( \eul_{n,\utn}(\omega))\|  \geq c_2.
\end{equation}
\end{itemize}
Due to the boundedness of the coefficients $\mu$ and $\sigma$ in \cite{LS18, LScorr2}, condition (1) is clearly fulfilled. Due to  Assumption 2.1.4 in \cite{LS18, LScorr2} (assumption (A)(ii) in the present paper), \eqref{cond} in condition (2) is fulfilled if $t=\utn$, see Remark \ref{remcond00}. However,
\eqref{cond} in condition (2) does not have to be
fulfilled for all $t\in[0,1]$ in general.

Indeed, consider the SDE \eqref{SDE} with $d=1$, $x_0=1$, $\mu=0$
and $\sigma=x\cdot\one_{[0,1]}(x)+\one_{(1, \infty)}(x)$.  Then $\mu$ and $\sigma$ satisfy  the Assumption 2.1 
in the corrected version~\cite{LScorr2} of
\cite{LS18} 
(and also assumptions
(A) and (B) of the actual paper)
with $\Theta=\{1/2\}$. Moreover, for all $n\in\N$,
	\begin{align*}
		\eul_{n,0}&=1, \quad \eul_{n, 1/n}=1+W_{1/n},\\
		\eul_{n,t}&=1+W_{1/n}+\sigma(1+W_{1/n})\cdot (W_t-W_{1/n}), \quad t\in(1/n, 2/n].
	\end{align*}
Condition (2) implies that 
there exist $\varepsilon_2>0$ and  $c_2>0$ such that for $\PP$-almost all $\omega\in\Omega$ and all $t\in[3/(2n), 2/n]$,
\begin{equation}\label{contr1}
\eul_{n,t}(\omega)\in(0.5-\varepsilon_2, 0.5+\varepsilon_2)\quad \Rightarrow\quad |\sigma( \eul_{n,1/n}(\omega))|\geq c_2.
\end{equation}
This, however, does not hold. Indeed, 
let $\varepsilon_2>0$ and  $c_2>0$,
put $\tilde \varepsilon_2 = \min (1/4,\varepsilon_2)$ and choose $k\in\N$ such that 
$k>\max(\tfrac{1}{c_2}, \tfrac{1}{4\tilde \varepsilon_2}-\tfrac{1}{2})$. Note that $0 <(1/2-\tilde\varepsilon_2)(k+1) < (1/2+\tilde\varepsilon_2)k$ and put
\[
A=\{1+W_{1/n}\in[\tfrac{1}{k+1}, \tfrac{1}{k}]\}\cap \{\forall t\in [3/(2n), 2/n]\colon  1+W_t-W_{1/n}\in ((1/2-\tilde \varepsilon_2)(k+1), (1/2+\tilde \varepsilon_2)k)\}.
\]
We then have $\PP(A)>0$ and for all $\omega\in A$,
\[
\sigma(1+W_{1/n}(\omega)) = 1+W_{1/n}(\omega).
\]
Moreover, for all $\omega\in A$ and all $t\in [3/(2n), 2/n]$,
\[
\eul_{n,t}(\omega) = (1+W_{1/n}(\omega)) (1+W_t(\omega)-W_{1/n}(\omega))   \in(1/2-\tilde \varepsilon_2, 1/2+\tilde \varepsilon_2)\subset (1/2- \varepsilon_2, 1/2+\varepsilon_2),
\]
but
\[
 |\sigma( \eul_{n,1/n}(\omega))|=|1+W_{1/n}(\omega)  | \le 1/k < c_2, 
\]
which contradicts
\eqref{contr1}.

The coefficients  $\mu$ and $\sigma$  satisfy
 Assumption 2.1 in \cite{NSS19} as well.
Moreover, for the SDE under consideration, the adaptive Euler-Maruyama scheme from \cite{NSS19} with the parameter $\delta=1/n$  coincides with the Euler-Maruyama scheme $\eul_n$ on the set $A$ for $t\in[0, 2/n]$ if $n$ is large enough.  Thus, condition (2) is not fulfilled in general also for the adaptive Euler-Maruyama scheme. 

Moreover, also condition (1) is not fulfilled for the adaptive Euler-Maruyama scheme in general under  Assumption 2.1 from \cite{NSS19}. Indeed, consider the SDE \eqref{SDE} with $d=1$, $x_0=1$, $\mu=0$
and $\sigma(x)=x$, $x\in\R$. Then $\mu$ and $\sigma$ satisfy  Assumption 2.1 in \cite{NSS19}
(and also assumptions
(A) and (B) of the actual paper)
 with $\Theta=\{1/2\}$.  Furthermore, for all $n\in\N$,
\begin{align*}
	\eul_{n,0}&=1, \quad \eul_{n, 1/n}=1+W_{1/n},\\
	\eul_{n,t}&=(1+W_{1/n})\cdot (1+W_t-W_{1/n}), \quad t\in(1/n, 2/n].
\end{align*}
Condition (1) implies that 
there exist $\varepsilon_1>0$ and  $c_1>0$ such that for $\PP$-almost all $\omega\in\Omega$ and all $t\in[3/(2n), 2/n]$,
\begin{equation}\label{contr2}
\eul_{n,t}(\omega)\in(0.5-\varepsilon_1, 0.5+\varepsilon_1)\quad \Rightarrow\quad | \eul_{n,1/n}(\omega)|\leq c_1.
\end{equation}
This, however, does not hold. Indeed,
let $\varepsilon_1>0$ and  $c_1>0$,
put $\tilde \varepsilon_1 = \min (1/4,\varepsilon_1)$ and choose $k\in\N$ such that 
$k>\max(c_1, \tfrac{1}{4\tilde \varepsilon_1}-\tfrac{1}{2})$. Note that $0 <(1/2-\tilde\varepsilon_1)/k < (1/2+\tilde\varepsilon_1)/(k+1)$ and put
\[
B=\{1+W_{1/n}\in[k,k+1]\}\cap \{\forall t\in [3/(2n), 2/n]\colon  1+W_t-W_{1/n}\in ((1/2-\tilde \varepsilon_1)/k, (1/2+\tilde \varepsilon_1)/(k+1))\}.
\]
We then have $\PP(B)>0$ and for all $\omega\in B$ and all $t\in [3/(2n), 2/n]$,
\[
\eul_{n,t}(\omega)\in(1/2-\tilde \varepsilon_1, 1/2+\tilde \varepsilon_1) \subset (1/2-\varepsilon_1, 1/2+\varepsilon_1),
\]
but
\[
 |\eul_{n,1/n}(\omega)|=|1+W_{1/n}(\omega)  | \ge k  > c_1,
\]
which  contradicts 
\eqref{contr2}.
Finally observe again that the adaptive Euler-Maruyama scheme  from \cite{NSS19}  with the parameter $\delta=1/n$  coincides with the Euler-Maruyama scheme $\eul_n$ on the set $B$ for $t\in[0, 2/n]$ if $n$ is large enough.
\end{Rem}

 We turn to the main result in this section, which provides an $L_p$-estimate of the total amount of times $t$ 
 that
 $\eul_{n,t}$ and $\eul_{n,\utn}$ stay on 'different sides' of the hypersurface $\Theta$ of potential discontinuities of the drift coefficient $\mu$. 
 
 \begin{Prop} \label{occtime2}
Let $\emptyset\neq\Theta\subset\R^d$ be an orientable $C^2$-hypersurface of positive reach, let $\nor\colon\Theta\to\R^d$ be a normal vector along $\Theta$, assume that there exists an open neighborhood   $U\subset \R^d$  of $\Theta$ such   that $\nor$ can be extended to a $C^1$-function $\nor\colon U\to\R^d$ with bounded derivative on $\Theta$, and assume that  $\mu$, $\sigma$ 
and $\nor$
satisfy (A)(ii),(iv),(v) and (B).
 Then for all $p\in [1,\infty)$ and all $\delta\in (0,1/2)$ there  exists $c \in (0,\infty)$ such that for  all $n \in \N$,
 	\[
 	\EE\biggl[\Bigl|	\int_{0}^{1} \one_{\{d(\eul_{n,\utn}, \Theta)\le\|\eul_{n,t} - \eul_{n,\utn}\| \}}\, dt\Bigr|^p\biggr]^{1/p} \leq \frac{c}{n^{1/2-\delta}}.
 	\]
 \end{Prop}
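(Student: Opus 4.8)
\emph{Setup and decomposition.} Fix $p\in[1,\infty)$ and $\delta\in(0,1/2)$ and put $\eps_n=n^{-1/2+\delta}$, so that $\eps_n+n^{-1/2}\le 2n^{-(1/2-\delta)}$. If $d(\eul_{n,\utn},\Theta)\le\|\eul_{n,t}-\eul_{n,\utn}\|$ and in addition $\|\eul_{n,t}-\eul_{n,\utn}\|\le\eps_n$, then $d(\eul_{n,\utn},\Theta)\le\eps_n$. Hence, writing $S_{n,i}=\sup_{s\in[i/n,(i+1)/n]}\|\eul_{n,s}-\eul_{n,i/n}\|$ and using $\utn=i/n$ for $t\in[i/n,(i+1)/n)$,
\[
\int_0^1\one_{\{d(\eul_{n,\utn},\Theta)\le\|\eul_{n,t}-\eul_{n,\utn}\|\}}\,dt\ \le\ Z_n+\widetilde W_n,
\]
where $Z_n=\tfrac1n\sum_{i=0}^{n-1}\one_{\{S_{n,i}>\eps_n\}}$ and $\widetilde W_n=\tfrac1n\sum_{i=0}^{n-1}\one_{\{d(\eul_{n,i/n},\Theta)\le\eps_n\}}$. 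By Minkowski's inequality it suffices to bound $\EE[Z_n^p]^{1/p}$ and $\EE[\widetilde W_n^p]^{1/p}$ separately by $c\,n^{-(1/2-\delta)}$.

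\emph{The displacement term.} Since $Z_n\in[0,1]$ we have $Z_n^p\le Z_n$, and by Lemma~\ref{eulprop} applied on each interval $[i/n,(i+1)/n]$ together with Markov's inequality, $\PP(S_{n,i}>\eps_n)\le\eps_n^{-q}\,c^q(1+\|x_0\|)^q n^{-q/2}=c^q(1+\|x_0\|)^q n^{-\delta q}$ for every $q\in(0,\infty)$. Choosing $q\ge p(1/2-\delta)/\delta$ yields $\EE[Z_n^p]\le\EE[Z_n]\le c\,n^{-p(1/2-\delta)}$, as required.

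\emph{The occupation term --- the main obstacle.} Here the trivial bound $\widetilde W_n^p\le\widetilde W_n$ is useless for $p>1$: it would only give $\EE[\widetilde W_n^p]^{1/p}=O(n^{-(1/2-\delta)/p})$. Instead I propagate the first--moment occupation-time estimate of Lemma~\ref{occtime} to all moments by a self-bounding recursion driven by the Markov property Lemma~\ref{markov} of $\eul_n$ relative to the grid $\{0,1/n,\dots,1\}$. For $y\in\R^d$ let $\widetilde W_n^{(y)}=\tfrac1n\sum_{i=0}^{n-1}\one_{\{d(\eul^y_{n,i/n},\Theta)\le\eps_n\}}$. First, comparing $\eul^y_{n,i/n}$ with $\eul^y_{n,t}$ on each step (again via Lemma~\ref{eulprop} and Markov's inequality with a high power of $1/n$) and applying Lemma~\ref{occtime} to $\eul^y_n$, one obtains a constant $r_1=r_1(\delta)$ with
\[
\EE[\widetilde W_n^{(y)}]\ \le\ \int_0^1\PP(\eul^y_{n,t}\in\Theta^{3\eps_n})\,dt+\int_0^1\PP(\|\eul^y_{n,t}-\eul^y_{n,\utn}\|>\eps_n)\,dt\ \le\ c(1+\|y\|^{r_1})(\eps_n+n^{-1/2})
\]
for all $y$ and $n$. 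Now fix $p\ge2$ and assume inductively that $\EE[(\widetilde W_n^{(y)})^{p-1}]\le B_{p-1}(1+\|y\|^{r_{p-1}})(\eps_n+n^{-1/2})^{p-1}$ for all $y,n$. Starting from the elementary inequality $(\sum_{i=0}^{n-1}a_i)^p\le p\sum_{i=0}^{n-1}a_i\big(\sum_{k=i}^{n-1}a_k\big)^{p-1}$ for $a_i\ge0$, taking expectations, pulling out the $\mathcal F_{i/n}$--measurable indicator, and computing $\EE\big[\big(\sum_{k=i}^{n-1}\one_{\{d(\eul_{n,k/n},\Theta)\le\eps_n\}}\big)^{p-1}\,\big|\,\mathcal F_{i/n}\big]$ by Lemma~\ref{markov} --- which identifies it, as a function of $\eul_{n,i/n}$, with a quantity $\le n^{p-1}\EE[(\widetilde W_n^{(\cdot)})^{p-1}]$ evaluated at $\eul_{n,i/n}$ --- one arrives at
\[
\EE[\widetilde W_n^p]\ \le\ p\,B_{p-1}(\eps_n+n^{-1/2})^{p-1}\,\EE\big[(1+\|\eul_n\|_\infty^{r_{p-1}})\,\widetilde W_n\big].
\]
By Hölder's inequality with exponents $p/(p-1)$ and $p$ and the uniform moment bound $\sup_n\EE[\|\eul_n\|_\infty^s]<\infty$ from Lemma~\ref{eulprop} (recall that $x_0$ is fixed), the last factor is $\le c\,\EE[\widetilde W_n^p]^{1/p}$; since $\widetilde W_n\le1$ this moment is finite, and rearranging gives $\EE[\widetilde W_n^p]\le B_p(\eps_n+n^{-1/2})^p$ with $B_p$ depending only on $p,\delta,x_0$. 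The same computation carried out for arbitrary $y$ --- keeping track of the polynomial weight $1+\|y\|^{r_{p-1}}$ produced both by Lemma~\ref{occtime} and by the moment bound $\EE[\|\eul^y_n\|_\infty^s]\le c(1+\|y\|)^s$ --- closes the induction with $r_p=\lceil r_{p-1}p/(p-1)\rceil$. Specializing to $y=x_0$ and using $\eps_n+n^{-1/2}\le2n^{-(1/2-\delta)}$ gives $\EE[\widetilde W_n^p]^{1/p}\le c\,n^{-(1/2-\delta)}$, which together with the estimate for $Z_n$ completes the proof. The whole difficulty is concentrated in this recursion: the Markov structure of $\eul_n$ along the grid is exactly what is needed to lift the $O(\eps_n+n^{-1/2})$ first--moment occupation bound to the $p$-th moment without losing powers.
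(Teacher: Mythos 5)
Your proposal is correct, and it reaches the conclusion by a genuinely different moment-lifting mechanism than the paper, although it rests on the same three ingredients (Lemma~\ref{eulprop}, Lemma~\ref{markov}, Lemma~\ref{occtime}) and the same threshold trick with neighborhoods of radius $\asymp n^{-(1/2-\delta)}$. The paper keeps the time-continuous events $A_{n,t}=\{d(\eul_{n,\utn},\Theta)\le\|\eul_{n,t}-\eul_{n,\utn}\|\}$, writes $\EE[(\int_0^1\one_{A_{n,t}}dt)^p]$ as $p!$ times a time-ordered $p$-fold integral of $\PP(A_{n,t_1}\cap\dots\cap A_{n,t_p})$, and iterates the conditional bound $\int_s^1\PP(A\cap A_{n,t})\,dt\le c\,n^{-(1/2-\tilde\delta)}\PP(A)^\rho$ for $A\in\mathcal F_s$ (Lemma~\ref{normbd2}); the exponent $\rho<1$ is lost at each of the $p$ iterations and is recovered only in the limit, by choosing $\rho$ so that $(1-\rho^p)/(1-\rho)\ge p-\eps$ and $\tilde\delta<\delta$. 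You instead discretize to grid-point indicators plus a large-increment event $Z_n$ (killed by high moments of one-step increments), and lift the first-moment occupation bound to $p$-th moments by induction on the moment order, via the inequality $(\sum_i a_i)^p\le p\sum_i a_i(\sum_{k\ge i}a_k)^{p-1}$, the grid Markov property to identify the conditional factor with $\EE[(\widetilde W_n^{(y)})^{p-1}]$ at $y=\eul_{n,i/n}$, and a self-bounding H\"older step in which the unknown $\EE[\widetilde W_n^p]^{1/p}$ reappears on the right and is absorbed. This buys you a cleaner rate bookkeeping (no $\rho\to1$ limit; the loss $\delta$ enters only through $\eps_n$ and Markov's inequality), at the price of having to carry the polynomial weight $1+\|y\|^{r_p}$ in the starting point through the induction — your role for it is exactly the role played by $\PP(A)^\rho$ plus the moment bound in the paper's~\eqref{betbd3}. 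Two presentational points you should make explicit: the induction runs over integer $p$ with base case the first-moment bound, and the statement for arbitrary $p\in[1,\infty)$ then follows from monotonicity of $L_p$-norms (as the paper also notes); and the inductive hypothesis must be formulated for arbitrary initial value $y$ from the outset, since the conditional step invokes it at the random point $\eul_{n,i/n}$ — you do carry this out, but it should precede, not follow, the computation for $x_0$.
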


For the proof of Proposition~\ref{occtime2} we first establish the following auxiliary estimate. For all $t \in [0,1]$ and all $n \in \N$ we put	
\begin{equation}\label{setA}
A_{n,t} = \{ d(\eul_{n,\utn}, \Theta)\le\|\eul_{n,t} - \eul_{n,\utn}\| \}. 
\end{equation}

\begin{Lem}\label{normbd}
Let $\emptyset\neq\Theta\subset\R^d$ be an orientable $C^1$-hypersurface of positive reach
and assume that $\mu$ and $\sigma$ satisfy (A)(iv),(v) and (B).
	Then for all $\delta \in (0,1/2)$ and all $\rho \in (0,1)$ there exists $c \in (0,\infty)$ such that for all $n \in \N$, all $t\in [0,1]$  and all $A \in \mathcal F$,  
		\[
			\PP(A \cap A_{n,t}) \leq \frac{c}{n}\,\PP(A)^{\rho} +  \PP \Bigl(A \cap \Bigl\{d(\eul_{n,t},\Theta) \leq \frac{2}{n^{1/2 - \delta}} \Bigr\}\Bigr). 
	\]
\end{Lem}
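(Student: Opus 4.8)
The plan is to decompose the event $A\cap A_{n,t}$ according to whether the increment $\|\eul_{n,t}-\eul_{n,\utn}\|$ of the time-continuous Euler scheme over the current subinterval is small or large. Fix $\delta\in(0,1/2)$ and $\rho\in(0,1)$, and put $h_n = n^{-(1/2-\delta)}$. On the event $A_{n,t}$ we have $d(\eul_{n,\utn},\Theta)\le\|\eul_{n,t}-\eul_{n,\utn}\|$, so by the triangle inequality for the Lipschitz function $d(\cdot,\Theta)$,
\[
d(\eul_{n,t},\Theta)\le d(\eul_{n,\utn},\Theta)+\|\eul_{n,t}-\eul_{n,\utn}\|\le 2\|\eul_{n,t}-\eul_{n,\utn}\|.
\]
Hence on $A_{n,t}\cap\{\|\eul_{n,t}-\eul_{n,\utn}\|\le h_n\}$ we automatically get $d(\eul_{n,t},\Theta)\le 2h_n$, so this part of the event is contained in $\{d(\eul_{n,t},\Theta)\le 2/n^{1/2-\delta}\}$, which accounts for the second term on the right-hand side of the claimed bound. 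It therefore remains to control $\PP\bigl(A\cap\{\|\eul_{n,t}-\eul_{n,\utn}\|> h_n\}\bigr)$ and show it is at most $\tfrac{c}{n}\PP(A)^{\rho}$.

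For the large-increment term I would bound $\PP(A\cap\{\|\eul_{n,t}-\eul_{n,\utn}\|> h_n\})$ by H\"older's inequality as $\PP(A)^{\rho}\cdot\PP(\|\eul_{n,t}-\eul_{n,\utn}\|> h_n)^{1-\rho}$ (more precisely, write $\one_{A\cap\{\cdot\}}\le \one_A^{\rho}\one_{\{\|\cdot\|>h_n\}}$ and apply H\"older with exponents $1/\rho$ and $1/(1-\rho)$ to the expectation). By the Markov inequality applied with a sufficiently large moment $q$,
\[
\PP\bigl(\|\eul_{n,t}-\eul_{n,\utn}\|> h_n\bigr)\le h_n^{-q}\,\EE\bigl[\|\eul_{n,t}-\eul_{n,\utn}\|^{q}\bigr].
\]
Now Lemma~\ref{eulprop}, applied with $x=x_0$ and with the step-length $\delta = t-\utn\le 1/n$ (so that $\EE[\|\eul_{n,t}-\eul_{n,\utn}\|^q]^{1/q}\le c(1+\|x_0\|)n^{-1/2}$), gives $\EE[\|\eul_{n,t}-\eul_{n,\utn}\|^{q}]\le c_q\, n^{-q/2}$. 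Combining these,
\[
\PP\bigl(\|\eul_{n,t}-\eul_{n,\utn}\|> h_n\bigr)\le c_q\, n^{q/2-\delta q}\, n^{-q/2}=c_q\, n^{-\delta q}.
\]
Raising this to the power $1-\rho$ still leaves a negative power of $n$, and by choosing $q$ large enough that $(1-\rho)\delta q\ge 1/(1-\rho)$, i.e. $q\ge 1/((1-\rho)^2\delta)$, we obtain $\PP(\|\eul_{n,t}-\eul_{n,\utn}\|>h_n)^{1-\rho}\le c/n$, uniformly in $t\in[0,1]$ and $n\in\N$. This yields $\PP(A\cap\{\|\eul_{n,t}-\eul_{n,\utn}\|>h_n\})\le \tfrac{c}{n}\PP(A)^{\rho}$ and completes the argument.

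The only mildly delicate point is the bookkeeping with the exponent $q$: one must fix $q$ as a function of $\rho$ and $\delta$ \emph{before} invoking Lemma~\ref{eulprop}, so that the constant $c$ in the final bound depends on $\delta$ and $\rho$ but not on $n$, $t$ or $A$. Everything else is a routine combination of the triangle inequality for the distance function, H\"older's inequality, Markov's inequality, and the uniform increment moment bound of Lemma~\ref{eulprop}; I do not anticipate any genuine obstacle here. (The substantive work is deferred to Proposition~\ref{occtime2}, where this lemma will be combined with the occupation-time estimate of Lemma~\ref{occtime} and a union/iteration over the grid points.)
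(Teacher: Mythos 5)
Your proposal is correct and follows essentially the same route as the paper: the same split of $A\cap A_{n,t}$ according to whether $\|\eul_{n,t}-\eul_{n,\utn}\|$ exceeds $n^{-(1/2-\delta)}$, the same inclusion $A_{n,t}\subset\{d(\eul_{n,t},\Theta)\le 2\|\eul_{n,t}-\eul_{n,\utn}\|\}$ for the small-increment part, and the same combination of the bound $\PP(A\cap B)\le\PP(A)^{\rho}\PP(B)^{1-\rho}$ with Markov's inequality and the increment moment estimate of Lemma~\ref{eulprop} for the large-increment part. The only cosmetic difference is your choice of moment $q\ge 1/((1-\rho)^2\delta)$, which is slightly larger than the paper's exact choice $p=1/(\delta(1-\rho))$ but equally valid.
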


\begin{proof}

Fix  $\delta \in (0,\frac{1}{2})$ and $\rho \in (0,1)$. 
	First, note that for all $x,y\in\R^d$ with $d(y,\Theta) \le \|x-y\|$ we have $	d(x,\Theta) \le \|x-y\| + d(y,\Theta) \le 2\|x-y\|$, which implies that for all $n \in \N$ and all $t\in [0,1]$,
	\[
A_{n,t}\subset \{d(\eul_{n,t},\Theta) \leq 2 \|\eul_{n,t}-\eul_{n,\utn} \| \}.
	\]
	Hence,  for all $n \in \N$, all $t\in [0,1]$  and all $A \in \mathcal F$,
	\begin{equation} \label{newbd}
			\PP\Bigl(A \cap A_{n,t} \cap \Bigl\{ \| \eul_{n,t}-\eul_{n,\utn}\|     \le \frac{1}{n^{1/2- \delta}}\Bigr\} \Bigr)  \leq \PP\Bigl(A \cap \Bigl\{d(\eul_{n,t},\Theta) \leq \frac{2}{n^{1/2 - \delta}} \Bigr\}\Bigr).
\end{equation}

		By Lemma \ref{eulprop} and the Markov inequality we obtain that for all $p \in [1,\infty)$ there exists  $c \in (0,\infty)$ such that for all $n \in \N$ and $t \in [0,1]$, 
	\begin{equation}\label{newbd2}
		\PP\Bigl(    \|\eul_{n,t}- \eul_{n,\utn}  \|      >  \frac{1}{n^{1/2- \delta}}  \Bigr)  \leq  \EE \bigl[ \|  \eul_{n,t}- \eul_{n,\utn}  \|^p \bigr] \, n^{ (1/2 - \delta)p }  
		 \leq  \frac{c}{n^{\delta p}}.
	\end{equation}
Employing~\eqref{newbd2} with $p=(\delta(1-\rho))^{-1}$ we conclude that there exists  $c \in (0,\infty)$ such that for all $n \in \N$, all $t \in [0,1]$ and all $A \in \mathcal F$,
	\begin{equation} \label{newbd3}
	\begin{aligned}
		&\PP\Bigl(A \cap A_{n,t}\cap \Bigl\{ \| \eul_{n,t}-\eul_{n,\utn} \|     > \frac{1}{n^{1/2- \delta}}\Bigr\} \Bigr) \\
		&\qquad \qquad \le \PP\Bigl(A \cap \Bigl\{ \|\eul_{n,t}-\eul_{n,\utn}\|     > \frac{1}{n^{1/2- \delta}}\Bigr\} \Bigr)\\
		&\qquad \qquad \le  \PP(A)^{\rho} \, \PP\Bigl( \| \eul_{n,t}-\eul_{n,\utn} \|     > \frac{1}{n^{1/2- \delta}} \Bigr)^{1-\rho} \leq  \PP(A)^{\rho} \, \frac{c}{n}.
	\end{aligned}
\end{equation}
	Combining \eqref{newbd} and \eqref{newbd3}  completes the proof of Lemma~\ref{normbd}.
	\end{proof}

Based on Lemma~\ref{normbd} we establish the following estimate, which in particular yields Proposition~\ref{occtime2} in the case $p=1$. 
\begin{Lem}\label{normbd2}
Let $\emptyset\neq\Theta\subset\R^d$ be an orientable $C^2$-hypersurface of positive reach, let $\nor\colon\Theta\to\R^d$ be a normal vector along $\Theta$, assume that there exists an open neighborhood   $U\subset \R^d$  of $\Theta$ such   that $\nor$ can be extended to a $C^1$-function $\nor\colon U\to\R^d$ with bounded derivative on $\Theta$, and assume that  $\mu$, $\sigma$ 
and $\nor$
satisfy (A)(ii),(iv),(v) and (B).
	Then for all $\delta \in (0,1/2)$ and $\rho \in (0,1)$ there exists $c \in (0,\infty)$ such that for all $n \in \N$, all $s\in [0,1]$  and all $A \in \mathcal F_s$,
	\[
	\int_s^1 \PP(A \cap A_{n,t}) \, dt \leq \frac{c}{n^{1/2-\delta}} \,\PP(A)^{\rho}. 
	\]
\end{Lem}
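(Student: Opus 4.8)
The strategy is to combine the pointwise-in-$t$ estimate from Lemma~\ref{normbd} with the occupation time estimate from Lemma~\ref{occtime}, using the Markov-type property of the Euler-Maruyama scheme (Lemma~\ref{markov}) to reduce the conditional occupation time for $A\in\mathcal F_s$ to the unconditional one for the scheme started afresh. First I would fix $\delta\in(0,1/2)$ and $\rho\in(0,1)$, and integrate the bound from Lemma~\ref{normbd} over $t\in[s,1]$. This produces
\[
\int_s^1\PP(A\cap A_{n,t})\,dt \le \frac{c}{n}\,\PP(A)^\rho + \int_s^1 \PP\Bigl(A\cap\Bigl\{d(\eul_{n,t},\Theta)\le \tfrac{2}{n^{1/2-\delta}}\Bigr\}\Bigr)\,dt.
\]
The first term already has the desired shape (indeed it is even smaller, being $O(1/n)$ rather than $O(n^{-(1/2-\delta)})$), so the whole task reduces to bounding the second term by $c\,n^{-(1/2-\delta)}\PP(A)^\rho$.

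For the second term, the event $\{d(\eul_{n,t},\Theta)\le 2 n^{-(1/2-\delta)}\}$ is exactly $\{\eul_{n,t}\in\Theta^{\eps_n}\}$ with $\eps_n = 2 n^{-(1/2-\delta)}$, so by Lemma~\ref{occtime} applied with initial value $x_0$ we get $\int_0^1\PP(\eul_{n,t}\in\Theta^{\eps_n})\,dt \le c(1+\|x_0\|^2)(\eps_n + n^{-1/2}) \le c' n^{-(1/2-\delta)}$. But this only handles the case $A=\Omega$ (and $s=0$); to get the factor $\PP(A)^\rho$ and to allow $A\in\mathcal F_s$, I would split $[s,1]$ at the grid points. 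Writing $j_0 = \lceil ns\rceil$, decompose $\int_s^1 = \int_s^{j_0/n} + \sum_{j=j_0}^{n-1}\int_{j/n}^{(j+1)/n}$. On each block $[j/n,(j+1)/n]$, I would condition on $\mathcal F_{j/n}$: for $A\in\mathcal F_s\subset\mathcal F_{j/n}$,
\[
\int_{j/n}^{(j+1)/n}\PP\Bigl(A\cap\{\eul_{n,t}\in\Theta^{\eps_n}\}\Bigr)\,dt = \EE\Bigl[\one_A\int_{j/n}^{(j+1)/n}\PP\bigl(\eul_{n,t}\in\Theta^{\eps_n}\mid \mathcal F_{j/n}\bigr)\,dt\Bigr],
\]
and by Lemma~\ref{markov} the inner conditional expectation equals a deterministic function of $\eul_{n,j/n}$, namely $h_{n,j}(\eul_{n,j/n})$ where $h_{n,j}(y)=\int_0^{1/n}\PP(\eul^y_{n,r}\in\Theta^{\eps_n})\,dr$. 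Applying Lemma~\ref{occtime} to the scheme $\eul^y_n$ started at $y$ over the interval $[0,1/n]$ gives $h_{n,j}(y)\le c(1+\|y\|^2)(\eps_n + n^{-1/2})$; summing over $j$ and using $\sum_{j}\int_{j/n}^{(j+1)/n} 1\,dr$-type bookkeeping together with the moment bound $\EE[\|\eul_n\|_\infty^4]<\infty$ from Lemma~\ref{eulprop} yields $\int_s^1\PP(A\cap\{\eul_{n,t}\in\Theta^{\eps_n}\})\,dt \le c\,n^{-(1/2-\delta)}\,\EE[\one_A(1+\|\eul_n\|_\infty^2)]$. Finally, Hölder's inequality with exponents $1/\rho$ and $1/(1-\rho)$ gives $\EE[\one_A(1+\|\eul_n\|_\infty^2)] \le \PP(A)^\rho\,\EE[(1+\|\eul_n\|_\infty^2)^{1/(1-\rho)}]^{1-\rho}$, and the last expectation is finite and bounded in $n$ by Lemma~\ref{eulprop}, producing the claimed bound.

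The main obstacle is the bookkeeping in the reduction via the Markov property: one must be careful that the constant $c$ from Lemma~\ref{occtime} (applied on the short interval $[0,1/n]$ to the scheme started at $y$) is genuinely uniform in $n$, $j$ and $y$ — which it is, since Lemma~\ref{occtime} gives a bound of the form $c(1+\|y\|^2)(\eps + n^{-1/2})$ valid for all $n$ — and that the rescaling of the time grid when restarting the scheme at time $j/n$ does not introduce hidden $n$-dependence in the step size. A cleaner alternative that avoids the grid-point splitting entirely: apply Lemma~\ref{occtime} directly on $[0,1]$ but to a version of the bound that keeps the initial randomness, i.e. condition on $\mathcal F_s$ once, use Lemma~\ref{markov} to write $\PP(A\cap\{\eul_{n,t}\in\Theta^{\eps_n}\}) = \EE[\one_A\,\psi_{n}(\eul_{n,\underline s_n}, t)]$ for $t\ge \underline s_n$, and integrate; this is essentially the same computation but organized around a single conditioning. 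Either way, the quantitative input — that the expected occupation time of $\Theta^{\eps}$ is $O(\eps + n^{-1/2})$ — is already supplied by Lemma~\ref{occtime}, so no new probabilistic estimate is needed; the work is purely in threading the $\PP(A)^\rho$ factor through via Hölder and the uniform moment bounds.
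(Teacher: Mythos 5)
Your reduction via Lemma~\ref{normbd} and the final H\"older step are fine, but the core of your main route --- the block decomposition of $[s,1]$ into the intervals $[j/n,(j+1)/n]$ with a separate application of Lemma~\ref{occtime} on each block --- does not give the claimed bound. Lemma~\ref{occtime} applied to the restarted scheme $\eul^y_n$ yields $h_{n,j}(y)=\int_0^{1/n}\PP(\eul^y_{n,r}\in\Theta^{\eps_n})\,dr\le c(1+\|y\|^2)(\eps_n+n^{-1/2})$, and this right-hand side does \emph{not} scale with the block length $1/n$: the lemma only controls the occupation time over the whole horizon, and its $n^{-1/2}$ term is not improved by restricting to a window of length $1/n$. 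Summing your per-block bound over the roughly $n$ blocks therefore produces a bound of order $n(\eps_n+n^{-1/2})\sim n^{1/2+\delta}$, which diverges instead of giving $n^{-(1/2-\delta)}$. (The trivial per-block bound $\PP(A)/n$ is even smaller than your bound for large $n$, and summing it merely gives $\PP(A)$, again with no decay.) The blockwise restart inherently forgets that being close to $\Theta$ at the start of a block is itself rare, so no bookkeeping over $j$ can recover the rate from the inputs you allow yourself; one must invoke Lemma~\ref{occtime} \emph{once}, over the entire remaining horizon after a single conditioning, which is exactly how the paper proceeds.

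Your parenthetical ``cleaner alternative'' is essentially the paper's argument, but as stated it also has a flaw: you propose to write $\PP(A\cap\{\eul_{n,t}\in\Theta^{\eps_n}\})=\EE[\one_A\,\psi_n(\eul_{n,\underline s_n},t)]$, which requires pulling $\one_A$ out of a conditional expectation given $\mathcal F_{\underline s_n}$; since $A\in\mathcal F_s$ and in general $s>\underline s_n$, the indicator $\one_A$ is not $\mathcal F_{\underline s_n}$-measurable, and Lemma~\ref{markov} is only available at grid points, so conditioning ``at $s$'' is not an option either. The fix is the paper's: discard the initial sliver $[s,\underline s_n+1/n]$ (its contribution is at most $\PP(A)/n\le\PP(A)^\rho/n$), condition once on $\mathcal F_{\underline s_n+1/n}$, use Lemma~\ref{markov} at the grid point $\underline s_n+1/n$ to replace the conditional occupation time by $\EE\bigl[\int_0^{1-(\underline s_n+1/n)}\one_{\{d(\eul^x_{n,t},\Theta)\le 2n^{-(1/2-\delta)}\}}\,dt\bigr]$ evaluated at $x=\eul_{n,\underline s_n+1/n}$, bound this by $c(1+\|x\|^2)n^{-(1/2-\delta)}$ via Lemma~\ref{occtime} applied over the full remaining horizon, and conclude with the H\"older/moment step you already have.
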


 \begin{proof}

 The 
 inequality
 trivially holds for $s=1$. Fix  $\delta \in (0,1/2)$ and $\rho \in (0,1)$. By Lemma~\ref{normbd} we obtain that there exists $c\in (0,\infty)$ such that for for all $n \in \N$, all $s \in [0,1)$ and all $A \in \mathcal F_s$,
                \begin{equation} \label{bd1}
                	\begin{aligned}
                	                               \int_{s}^{1} \PP(A \cap A_{n,t}) dt & \leq \frac{\PP(A)}{n}  +  \int_{\underline{s}_{n} + \frac{1}{n}}^{1} \PP(A \cap A_{n,t}) dt\\
                	                            &\le   \frac{c}{n} \PP(A)^{\rho} + \int_{\underline{s}_{n} + \frac{1}{n} }^{1} \PP\Bigl( A\cap \Bigl\{d(\eul_{n, t},\Theta)  \leq \frac{2}{n^{1/2 - \delta}} \Bigr\}\Bigr)\, dt. 
                	                            \end{aligned}
                \end{equation}
                Using Lemma~\ref{markov} we get that for all $n \in \N$, all $s \in [0,1)$ and all $A \in \mathcal F_s$,
                \begin{equation} \label{betbd}
                               \begin{aligned}
                                               &\int_{\underline{s}_{n} + \frac{1}{n}}^{1}  \PP\Bigl( A\cap \Bigl\{d(\eul_{n, t},\Theta)  \leq \frac{2}{n^{1/2 - \delta}} \Bigr\}\Bigr)\, dt \\
                                               &\qquad\qquad = \EE\biggl[ \EE\biggl[ \one_{A} \int_{\underline{s}_{n} + \frac{1}{n}}^{1} \one_{\bigl\{ d(\eul_{n,t},\Theta) \leq \frac{2}{n^{1/2 - \delta}}\bigr\}}\, dt\, \biggr| \mathcal F_{\underline{s}_{n} + \frac{1}{n}}\biggr]\biggr] \\
                                               &\qquad\qquad =\EE\biggl[ \one_{A} \EE\biggl[  \int_{\underline{s}_{n} + \frac{1}{n}}^{1} \one_{\bigl\{ d(\eul_{n,t},\Theta) \leq \frac{2}{n^{1/2 - \delta}}\bigr\}}\, dt \, \biggr| \eul_{n,\underline{s}_{n} + \frac{1}{n}}\biggr]\biggr].
                               \end{aligned}
                \end{equation}
                By Lemma~\ref{markov} and Lemma~\ref{occtime} we furthermore derive that there exists $c\in (0,\infty)$ such that  for all $n \in \N$, all $s \in [0,1)$ and  $\PP^{\eul_{n,\underline{s}_{n} + \frac{1}{n}}}$-almost all $x\in\R^d$,
                \begin{equation}\label{nextx1}
                               \begin{aligned}
                                               \EE\biggl[  \int_{\underline{s}_{n} + \frac{1}{n}}^{1} \one_{\bigl\{ d(\eul_{n,t},\Theta) \leq \frac{2}{n^{1/2 - \delta}}\bigr\}}\, dt \, \biggr| \eul_{n,\underline{s}_{n} + \frac{1}{n}} = x\biggr] & =
                                               \EE  \biggl[  \int_{0}^{1-(\underline{s}_{n} + \frac{1}{n})} \one_{\bigl\{ d(\eul_{n,t}^x,\Theta) \leq \frac{2}{n^{1/2 - \delta}}\bigr\}}\, dt\biggr]  \\
                                               &\leq c (1+\|x\|^{2}) \frac{1}{n^{1/2-\delta}}.
                               \end{aligned}
                \end{equation}
                Inserting~\eqref{nextx1} into~\eqref{betbd} and employing Lemma~\ref{eulprop} we conclude that there exist $c_1,c_2,
                c_3
                \in (0,\infty)$ such that for all $n \in \N$, all $s \in [0,1)$ and all $A \in \mathcal F_s$,
                \begin{equation} \label{betbd3}
                               \begin{aligned}
                                               \int_{\underline{s}_{n} + \frac{1}{n}}^{1}  \PP\Bigl( A\cap \Bigl\{d(\eul_{n, t},\Theta)
                                                 \leq \frac{2}{n^{1/2 - \delta}} \Bigr\}\Bigr)\, dt
                                               &            \le  \frac{c_1}{n^{1/2-\delta}} \EE\bigl[ \one_A\, (1+\|\eul_{n,\underline{s}_{n} + \frac{1}{n}}\|^{2})\bigr]\\
                                                &  \le   \frac{c_2}{n^{1/2-\delta}} \PP(A)^\rho\EE\bigl[  1+\|\eul_{n}\|_\infty^{2/(1-\rho)}\bigr]^{1-\rho} \\
                                                  & \le  \frac{c_3}{n^{1/2-\delta}} \PP(A)^\rho.
                               \end{aligned}
                \end{equation}
                Combining~\eqref{bd1} with~\eqref{betbd3} completes the proof of Lemma~\ref{normbd2}.

\end{proof}

We turn to the proof of Proposition~\ref{occtime2}.

\subsubsection*{Proof of Proposition~\ref{occtime2}} Let $\delta \in (0,1/2)$. Clearly, we may assume that $p\in\N$ and $p\ge 2$. Then, for all $n\in\N$,
\[ 
\EE\biggl[ \Bigl(\int_{0}^{1} \one_{A_{n,t}} \, dt\Bigr)^{p}\biggr]= p! \int_{0}^{1} \int_{t_{1}}^{1}\dots \int_{t_{p-1}}^{1} \PP(A_{n,t_{1}} \cap \dots \cap A_{n,t_{p}}) \, dt_{p} \dots dt_2\, dt_{1}.  
\] 
Let $\tilde\delta \in(0,  \delta)$ 
and $\rho\in (0,1)$. 
Iteratively applying Lemma~\ref{normbd2} $(p-1)$-times with $\tilde\delta$ in place of $\delta$, $s = t_{k}$ and $A= A_{n,t_{1}} \cap \dots \cap A_{n,t_{k}}\in \mathcal F_{t_k}$   for $k = p-1,\dots, 1$ and finally applying Lemma~\ref{normbd2} with $\tilde\delta$ in place of $\delta$, $A=\Omega$ and $s=0$ we conclude that 
 there exist  $c_{1},\dots,c_{p} \in (0,\infty)$ depending only on $\tilde\delta$ and $\rho$ such that for all $n \in \N$,
\begin{equation} \label{eqbd}
	\begin{aligned}
		\EE\biggl[\Bigl(\int_{0}^{1} \one_{A_{n,t}} \, dt\Bigr)^{p}\biggr] & \leq p! \frac{c_{1}}{n^{1/2 - \tilde{\delta}}} \Bigl(\int_{0}^{1} \int_{t_{1}}^{1} \dots \int_{t_{p-2}}^{1} \PP( A_{n,t_{1}} \cap \dots \cap A_{n,t_{p-1}}) \, dt_{p-1} \dots dt_2 \,dt_{1}\Bigr)^{\rho} \\
		&\leq p! \,  \frac{c_{1} \cdots c_{p-1}}{n^{1/2-\tilde{\delta}} n^{(1/2- \tilde{\delta})\rho} \cdots n^{(1/2- \tilde{\delta}) \, \rho^{p-2}}} \Bigl(\int_{0}^{1} \PP(A_{n,t_{1}}) \, dt_{1}\Bigr)^{\rho^{p-1}} \\
		&\leq  p! \, \frac{c_{1} \cdots c_{p} }{n^{(1/2 - \tilde{\delta}) \frac{1-\rho^{p}}{1-\rho} } }.
	\end{aligned}
\end{equation} 

Since $\tilde\delta < \delta<1/2$ there exists $\eps \in (0,1)$ such that $p (1/2 - \delta) \leq (p-\eps)(1/2 - \tilde{\delta} )$. Since $\lim_{\rho\to 1} (1-\rho^{p})/(1-\rho) = p$ there exists $\rho \in (0,1)$ such that $(1-\rho^{p})/(1-\rho) \geq p - \eps$. With this choice of $\rho$ 
in \eqref{eqbd} we finally conclude  that there exists $c > 0$ such that for all $n \in \N$,
\[ 
\EE\biggl[ \Bigl(\int_{0}^{1} \one_{A_{n,t}} \, dt\Bigr)^{p}\biggr] \leq \frac{c}{n^{(1/2 - \tilde{\delta}) \frac{1-\rho^{p}}{1-\rho} } } \leq \frac{c}{n^{p(1/2 - \delta)}},
\] 
which completes the proof of Proposition~\ref{occtime2}. \qed

\subsection{Proof of Theorem~\ref{Thm1}}\label{ProofThm1}
Clearly, we may assume that $p\in [2,\infty)$
and $\delta\in (0,1/2)$. 
Choose a $C^4$-hypersurface $\emptyset\neq\Theta\subset\R^d$ of positive reach according to (A), a function $G\colon\R^d\to\R^d$ according to Proposition~\ref{G}
and for every $i\in\{1,\dots,d\}$  bounded extensions $R_i,S_i\colon \R^d \to \R^{d\times d}$ of the second derivatives of $G_{i}$ and $G^{-1}_i$ on $\R^d\setminus \Theta$, respectively, according to Proposition~\ref{G}(v). Moreover, choose $\eps\in(0,\reach(\Theta))$ and a $C^2$-function $g\colon\R^d\to\R$ according to Proposition~\ref{grep}.

Define $\sigma_G\colon\R^d\to\R^{d\times d}$ and $\mu_G\colon \R^d\to\R^d$ as in Proposition~\ref{G}(iv) and (v), respectively,  let $Y=(Y_t)_{t\in[0,1]}$ be a strong solution 
of the corresponding SDE~\eqref{sdetrans} and for every $n\in\N$ let $\euly_n$ denote   the associated time-continuous Euler-Maruyama scheme, i.e. $\euly_{n,0}=G(x_0)$ and
\[
\euly_{n,t}=\euly_{n,i/n}+\mu_G(\euly_{n,i/n})\, (t-i/n)+\sigma_G(\euly_{n,i/n})\, (W_t-W_{i/n})
\]
for $t\in (i/n,(i+1)/n]$ and $i\in\{0,\ldots,n-1\}$. Since $\mu_{G}$ and $\sigma_G$ are Lipschitz continuous,  there exists $c\in (0,\infty)$ such that for all $n\in\N$,
\begin{equation}\label{eultr1}
	\EE\bigl[ \| \euly_{n}\|_\infty^p\bigr]\le c
\end{equation}
and
\begin{equation}\label{eultr2}
	\EE\bigl[ \|Y-\euly_{n}\|_\infty^p\bigr]^{1/p}\leq \frac{c}{\sqrt{n}}.
\end{equation}
Note further that the Lipschitz continuity of $G$ and  Lemma~\ref{eulprop} imply that there exist $c_1,c_2\in (0,\infty)$ such that for all $n\in\N$,
\begin{equation}\label{pro2} 
	\EE\bigl[ 	\|G\circ \eul_{n}\|_\infty^p\bigr] 
	\leq c_1 \bigl(1+\EE\bigl[\|\eul_{n}\|_\infty^p\bigr]\bigr) \le c_2.
\end{equation}

Recall from the proof of Theorem~\ref{exist} that the process $G^{-1}\circ Y$ is a strong solution of the SDE~\eqref{SDE}. Using the Lipschitz continuity of $G^{-1}$ and~\eqref{eultr2} we may thus conclude that  there exist $c_1,c_2\in (0,\infty)$ such that for all $n\in\N$,
\begin{equation}\label{pro1}
\EE\bigl[\| X-\eul_n\|_\infty^p\bigr]^{1/p} \le c_1 \EE\bigl[\| Y-G\circ \eul_n\|_\infty^p\bigr]^{1/p} \le \frac{c_2}{\sqrt{n}} + c_1 \EE\bigl[\| \euly_n-G\circ \eul_n\|_\infty^p\bigr]^{1/p}.
\end{equation}
It therefore remains to analyze the quantity $\EE\bigl[\| \euly_n-G\circ \eul_n\|_\infty^p\bigr]^{1/p}$. 
For every $n\in\N$ we define a function $u_n\colon[0,1]\to [0,\infty) $ by
\[
u_n(t)=\EE\bigl[\sup_{s\in[0, t]} \|\euly_{n,s}-G(\eul_{n,s})\|^p\bigr]
\]
for every $t\in[0,1]$. 
Note that the functions $u_n$, $n\in\N$, are well-defined and bounded
due to \eqref{eultr1} and \eqref{pro2}. 

Below we show that there exists  $c\in(0, \infty)$  such that for all $n\in\N$ 
and all $t\in[0,1]$,
\begin{equation}\label{G7}
	u_n(t)\leq c\biggl(\frac{1}{n^{p(1/2-\delta)}}+  	\EE\biggl[\Bigl|	\int_{0}^{1}
	 \one_{\{d(\eul_{n,\usn}, \Theta)
	 \le\|\eul_{n,s} - \eul_{n,\usn}\| \}}\, ds\Bigr|^{2p}\biggr]^{1/2}+\int_0^t u_n(s)\, ds\biggr).
\end{equation}
Using Proposition~\ref{occtime2} we conclude from~\eqref{G7} that 
there exists $c\in (0,\infty)$ such that for all $n\in\N$ and all $t\in[0,1]$,
\[
u_n(t)\leq c \Bigl( \frac{1}{n^{p(1/2-\delta)}} +\int_0^t u_n(s)\, ds\Bigr).
\]
By Gronwall's inequality it then follows that there exists $c\in (0,\infty)$ such that for all $n\in\N$,
\begin{equation}\label{p1}
\EE\bigl[\| \euly_n-G\circ \eul_n\|_\infty^p\bigr] =	u_n(1) \le \frac{c}{n^{p(1/2-\delta)}},
\end{equation} 
which jointly with~\eqref{pro1} yields the statement of Theorem~\ref{Thm1}.

It remains to prove~\eqref{G7}. 
Recall that
for all $n\in\N$ and all $t\in[0,1]$,
\[
\eul_{n,t}=x_0+\int_0^t \mu(\eul_{n,\usn})\, ds+\int_0^t \sigma(\eul_{n,\usn})\,dW_s.
\]
Note that $\lambda_d(\Theta)=0$, see Lemma \ref{leb0} in the appendix. We conclude by Proposition~\ref{G}
that $R_i$ is a bounded weak derivative of 
$G_i'$
 for every $i\in\{1,\dots,d\}$.
 Note that 
 $\Theta$ is closed, 
 since $\Theta$ is of positive reach.
Observing 
Proposition~\ref{grep}(iii)
as well as Lemma~\ref{lingrowth} and Lemma~\ref{eulprop}  we thus may apply Theorem~\ref{Ito_new} with $y_0=x_0$, $\alpha_t = \mu(\eul_{n,\utn})$, $\beta_t=   \sigma(\eul_{n,\utn})$ and $\gamma_t =  \sigma(\eul_{n,t})$ for $t\in[0,1]$,  $M=\Theta$, $f=G_i$ and $ f'' = R_i$  
for every $i\in\{1,\dots,d\}${ and $r = 4$} to obtain
that there exists $c\in (0,\infty)$ such that $\PP\text{-a.s.}$ for all $n\in\N$,
\begin{equation}\label{prox1}
		\sup_{t\in[0,1]}\|G(\eul_{n,t}) - Z_{n,t}\| \le c {\biggl( \int_{0}^{1} \| \sigma(\eul_{n,\utn})  \|^{4} dt \biggr)^{1/2}}  \biggl(\int_0^1 \bigl\| { \sigma  (\eul_{n,\utn})-\sigma (\eul_{n,t})}\bigr\|^{{2}}\, dt\biggr)^{{ 1/2}}, 
	\end{equation}
where, for all $n\in\N$, the stochastic process $Z_n = (Z_{n,t})_{t\in[0,1]} $ is given by
\begin{align*}
	Z_{n,t} & =G(x_0)+\int_0^t\Bigl(G'(\eul_{n,s})\cdot \mu(\eul_{n,\usn})+\frac{1}{2}  \bigl(\tr \bigl( R_i(\eul_{n,s})\cdot(\sigma\sigma^\top)(\eul_{n,\usn})\bigr)\bigr)_{1\leq i\leq d}\Bigr)\,ds\\
	& \qquad +\int_0^t G'(\eul_{n,s})\cdot\sigma(\eul_{n,\usn})\,dW_s. 
\end{align*}
Clearly, for all $n\in\N$ and all $t\in[0,1]$,
\begin{align*}
	Z_{n,t}
	& = G(x_0)+\int_0^t \mu_G(G(\eul_{n,\usn}))\, ds
	+ \int_0^t \bigl(G'(\eul_{n,s})-G'(\eul_{n,\usn})\bigr)\cdot \mu(\eul_{n,\usn})\,ds \\
	& \qquad +\int_0^t\sigma_G(G(\eul_{n,\usn}))\,dW_s 
	+  \int_0^t \bigl(G'(\eul_{n,s})-G'(\eul_{n,\usn})\bigr)\cdot \sigma(\eul_{n,\usn})\,dW_s\\
	& \qquad + \frac{1}{2}\cdot \int_0^t \bigl( \tr \bigl( (R_i(\eul_{n,s})-R_i(\eul_{n,\usn}))\cdot (\sigma\sigma^\top)(\eul_{n,\usn})\bigr)\bigr)_{1\leq i\leq d}\,ds. 
\end{align*}
It follows that  there exists $c\in (0,\infty)$ such that $\PP\text{-a.s.}$ for all $n\in\N$ and all $t\in[0,1]$,
\begin{align*}
&\|\euly_{n,t}-G(\eul_{n,t})\| \le\|\euly_{n,t}-Z_{n,t}\|+\|Z_{n,t}-G(\eul_{n,t})\|\\
&\qquad \leq  \sum_{i=1}^3 \|V_{n,i, t}\|  + c  {\biggl( \int_{0}^{1} \| \sigma(\eul_{n,\usn})  \|^{4} ds \biggr)^{1/2}} \biggl(\int_0^1 \bigl\| { \sigma (\eul_{n,s})  - \sigma (\eul_{n,\usn})}\bigr\|^{{2}}\, ds\biggr)^{{1/ 2}},
\end{align*}
where 
\begin{align*}
	V_{n,1,t} & = \int_0^t (\mu_G(G(\eul_{n,\usn}))-  \mu_G(\euly_{n,\usn}))\, ds+ \int_0^t(\sigma_G(G(\eul_{n,\usn}))-  \sigma_G(\euly_{n,\usn}))\,dW_s,\quad \\
	V_{n,2,t}&=\int_0^t \bigl(G'(\eul_{n,s})-G'(\eul_{n,\usn})\bigr)\cdot \mu(\eul_{n,\usn})\,ds 
	+  \int_0^t \bigl(G'(\eul_{n,s})-G'(\eul_{n,\usn})\bigr)\cdot \sigma(\eul_{n,\usn})\,dW_s,\\
	V_{n,3,t} & =  \frac{1}{2}\cdot \int_0^t \bigl( \tr \bigl( (R_i(\eul_{n,s})-R_i(\eul_{n,\usn}))\cdot (\sigma\sigma^\top)(\eul_{n,\usn})\bigr)\bigr)_{1\leq i\leq d}\,ds. 
\end{align*}
Hence, there exists $c\in 
(0,\infty)$
such that for all $n\in\N$ and for all $t\in[0,1]$,
\begin{equation}\label{jj}
	\begin{aligned}
	&u_n(t)\leq c \cdot \biggl( \sum_{i=1}^3 \EE\Bigl[\sup_{s\in[0,t]} \|V_{n,i,s}\|^p\Bigr] \\
	&\qquad\qquad + \EE\biggl[{\biggl(\Bigl( \int_{0}^{1} \| \sigma(\eul_{n,\usn})  \|^{4} ds \Bigr)^{1/2}} \Bigr(\int_0^1   \|{ \sigma (\eul_{n,s})  - \sigma (\eul_{n,\usn})}\|^{{ 2}} \, ds\Bigr)^{{ 1/2}}{\biggr)^{p}}\biggr]\biggr).
	\end{aligned}
\end{equation}

We next estimate the single summands on the right hand side of \eqref{jj}. 
Using the H\"older inequality, the Lipschitz continuity of $\sigma$ as well as Lemma~\ref{lingrowth} and  Lemma~\ref{eulprop} we obtain that there exist $c_1,c_2,c_3 \in (0,\infty)$ such that for all $n\in\N$
\begin{equation} \label{prox2}
\begin{aligned}
	&\EE\biggl[ {\biggl(\Bigl( \int_{0}^{1} \| \sigma(\eul_{n,\usn})  \|^{4} ds \Bigr)^{1/2}} \Bigr(\int_0^1   \| {\sigma  (\eul_{n,s})  - \sigma (\eul_{n,\usn})}\|^{{2}} \, ds\Bigr)^{{ 1/2}}{\biggr)^{p}}\biggr] \\
	& \qquad { \leq c_{1} \EE\biggl[ \biggl( \sup_{s \in [0,1]}(1+\| \eul_{n,s} \|^{2})\Bigl( \int_{0}^{1} \| \eul_{n,s} - \eul_{n,\usn} \|^{2} ds \Bigr)^{\frac{1}{2}}\biggr)^{p}\biggr]} \\
	& \qquad{\leq c_{2} \biggl( 1+ \sup_{k \in \N} \EE\Bigl[ \| \eul_{k} \|_{\infty}^{4p}\Bigr]\biggr)^{1/2} \EE\biggl[ \Bigl( \int_{0}^{1} \| \eul_{n,s} - \eul_{n,\usn} \|^{2p} ds \Bigr) \biggr]^{1/2}} \\
	& \qquad { \leq c_{3} \frac{1}{n^{p/2}}.}
\end{aligned}
\end{equation}	
Using the H\"older inequality, the Burkholder-Davis-Gundy inequality and the
Lipschitz continuity of $\mu_G$ and $\sigma_G$
we obtain that there exists
$c\in(0, \infty)$ such that for all $n\in\N$ and all $t\in[0,1]$,
\begin{equation}\label{t3}
	\EE\Bigl[\sup_{s\in[0,t]} \|V_{n,1,s}\|^p\Bigr]\leq c\cdot \int_0^t \EE\bigl[\|G(\eul_{n,\usn}))-\euly_{n,\usn}\|^p\bigr]\, ds\leq c\cdot \int_0^t u_n(s)\, ds.
\end{equation}
Furthermore, using the H\"older inequality, the Burkholder-Davis-Gundy inequality as well as the Lipschitz continuity of $G'$, see Proposition \ref{G}(ii), and employing Lemma~\ref{lingrowth} as well as  Lemma~\ref{eulprop} we conclude that there exist
$c_1, c_2,c_3\in(0, \infty)$ such that for all $n\in\N$ and all $t\in[0,1]$,
\begin{equation}\label{t4}
	\begin{aligned}
	& 	\EE\bigl[\sup_{s\in[0,t]} \|V_{n,2,s}\|^p\bigr]\\
	&\qquad\qquad \leq c_1 \int_0^t \EE\bigl[\|G'(\eul_{n,s}))-G'(\eul_{n,\usn})\|^p\cdot (\|\mu(\eul_{n,\usn})\|^p+\|\sigma(\eul_{n,\usn})\|^p\bigr)\bigr]\, ds\\
	&	\qquad\qquad \leq c_2 \int_0^t \bigl(\EE\bigl[\|\eul_{n,s}-\eul_{n,\usn}\|^{2p}\bigr]\bigr)^{1/2}\cdot\bigl(1+ \EE\bigl[\|\eul_{n,\usn}\|^{2p}\bigr]\bigr)^{1/2}\, ds\leq \frac{c_3}{n^{p/2}}.
	\end{aligned}
\end{equation}
Next, recall the definition~\eqref{setA} of the sets $A_{n,s}$ and note that, by Proposition~\ref{G}(iii),(v), the mappings $R_i\colon\R^d\to \R^{d\times d}$, $i=1,\dots,d$, are Lipschitz continuous on the set $\R^d\setminus \Theta$ w.r.t. the intrinsic metric $\rho$ on $\R^d\setminus \Theta$.  Note further, that for all $x,y\in\R^d$ with $d(x,\Theta)>\|x-y\|$ one has 
$\{x+\lambda (y-x)\mid \lambda\in [0,1]\}\subset \R^d\setminus \Theta$, which in turn implies that $\rho(x,y) = \|x-y\|$. We can thus 
 conclude  that there exists $c\in (0,\infty)$ such that for all $n\in\N$, all $s\in [0,1]$ and all $i\in\{1,\dots,d\}$,
\begin{equation}\label{prox3}
	\begin{aligned}
\|R_i(\eul_{n,s}) - R_i(\eul_{n,\usn})\|
\one_{\Omega\setminus A_{n,s}} 
\leq c \rho(\eul_{n,s}, \,\eul_{n,\usn})  \one_{\Omega\setminus A_{n,s}}  =  c \|\eul_{n,s} - \eul_{n,\usn}\| \one_{\Omega\setminus A_{n,s}}. 
\end{aligned}
\end{equation}
Using~\eqref{prox3},  Lemma \ref{lingrowth} and the boundedness of the functions $R_1,\dots,R_d$, see Proposition~\ref{G}(v), we obtain that there exist $c_1,c_2,
c_3
\in (0,\infty)$ such that for all $n \in \N$ and all $t\in [0,1]$,
\begin{align*}
		\|V_{n,3,t}\| & \le c_1 \, \int_0^t \bigl\| \bigl( \tr \bigl( (R_i(\eul_{n,s})-R_i(\eul_{n,\usn}))\cdot \sigma\sigma^\top(\eul_{n,\usn})\bigr)\bigr)_{1\leq i\leq d}\bigr\| \,ds\\
		& \le c_2 \int_0^1 \bigl\| \bigl( \|R_i(\eul_{n,s})-R_i(\eul_{n,\usn})\| \cdot \|\sigma(\eul_{n,\usn})\|^2\bigr)_{1\leq i\leq d}\bigr\| \,ds\\
		& \le 
		c_3
		 \bigl(1+\|\eul_{n}\|_\infty^2\bigr) \int_0^1 \bigl( \|\eul_{n,s} - \eul_{n,\usn}\| \one_{\Omega\setminus A_{n,s}} + \one_{A_{n,s}}\bigr)\, ds.
\end{align*}
Hence, by Lemma~\ref{eulprop}, there exist $c_1,c_2,c_3\in (0,\infty)$ such that for all $n\in\N$ and all $t\in [0,1]$,
\begin{equation}\label{prox4}
\begin{aligned}
 	\EE\bigl[\sup_{s\in[0,t]} \|V_{n,3,s}\|^p\bigr] &\le c_1\EE\biggl[ \bigl(1+\|\eul_{n}\|_\infty^{2p}\bigr)\biggl(  \int_0^1  \|\eul_{n,s} - \eul_{n,\usn}\|^p\, ds  + \Bigl(\int_0^1 \one_{A_{n,s}}\, ds\Bigr)^p\biggr] \\
 	& \le c_2 \bigl( 1+ \EE[ \| \eul_n\|_\infty^{4p}]^{1/2}\bigr) \biggl( \Bigl( \int_0^1 \EE\bigl[ \|\eul_{n,s} - \eul_{n,\usn}\|^{2p}  \bigr]\, ds \Bigr)^{1/2} \\
 	&\qquad\qquad\qquad \qquad\qquad\qquad\qquad + \EE \Bigl[ \Bigl( \int_0^1 \one_{A_{n,s} }\, ds \Bigr)^{2p}\Bigr]^{1/2}\biggr)\\
 	&\le c_3 \biggl(\frac{1}{n^{p/2}}+ \EE \Bigl[ \Bigl( \int_0^1 \one_{A_{n,s} }\, ds \Bigr)^{2p}\Bigr]^{1/2}\biggr).
\end{aligned}
\end{equation}
Combining~\eqref{jj} with~\eqref{prox2}, ~\eqref{t3}, ~\eqref{t4} and~\eqref{prox4} yields~\eqref{G7} and hereby completes the proof of Theorem~\ref{Thm1}.

\subsection{Proof of Theorem~\ref{Thm2}}\label{ProofThm2}

Theorem~\ref{Thm2} is proven similarly as Theorem 2 in~\cite{MGY20}. For convenience of the reader we present the proof here.

	Let $p\in [1,\infty)$ and $\delta \in (0,\infty)$. Clearly, for all $n\in\N$, 
	\begin{equation}\label{two0}
		\EE\bigl[\|X-\overline X_n\|_\infty^p\bigr]^{1/p} 
		\le \EE\bigl[\|X-\eul_n\|_\infty^p\bigr]^{1/p} +\EE\bigl[\|\eul_n-\overline X_n\|_\infty^p\bigr]^{1/p}. 
	\end{equation}
	Moreover, by Theorem~\ref{Thm1} there exists $c\in (0,\infty)$ such that for all $n\in\N$,
	\begin{equation}\label{two1}
		\EE\bigl[\|X-\eul_n\|_\infty^p\bigr]^{1/p} \le c/n^{1/2 - \delta}.
	\end{equation}
	
	For every
	$n\in\N$ let $\overline W_n = (\overline W_{n,t})_{t\in[0,1]}$ denote the equidistant piecewise linear interpolation 
	 of the Brownian motion $W$, i.e.
	\[
	\overline W_{n,t} = (n\cdot t-i)\cdot W_{(i+1)/n} + (i+1-n\cdot t)\cdot  W_{i/n}
	\]
	for $t\in [i/n,(i+1)/n]$ 	and $i\in\{0,\ldots,n-1\}$. 
	Then for every $r\in [1,\infty)$
	there exists $c\in (0,\infty)$ such that for all $n\in\N$,
	\begin{equation}\label{two01}
		\EE\bigl[\|W-\overline W_n\|_\infty^r\bigr]^{1/r}\leq  c\sqrt{\ln (n+1)}/\sqrt{n},
	\end{equation}
	see, e.g.~\cite{F92}.

	Note that for all $n\in\N$ and all $t\in[0,1]$, 
	\[
		\|\eul_{n,t} - \overline X_{n,t}\|  = 
		\Bigl\|\sum_{i=0}^{n-1}\sigma(\eul_{n,i/n})\cdot \one_{[i/n,(i+1)/n]}(t)\cdot (W_t- \overline W_{n,t})\Bigr\|
		 \le \|\sigma(\eul_{n})\|_\infty \cdot \|W_t-\overline W_{n,t}\|.
	\]
Employing  Lemma \ref{lingrowth},  Lemma~\ref{eulprop}
and~\eqref{two01}
 we thus conclude that there exist $c_1,c_2\in (0,\infty)$ such that for all $n\in\N$,
	\begin{align*}
		\EE\bigl[\|\eul_n-\overline X_n\|_\infty^p\bigr]^{1/p} &\le c_1  \bigl(1 + \EE\bigl[\|\eul_n\|_\infty^{2p}\bigr]^{1/(2p)}\bigr)\cdot \EE\bigl[\|W-\overline W_n\|_\infty^{2p}\bigr]^{1/{(2p)}}\\
&\le c_2  \sqrt{\ln (n+1)}/\sqrt{n} ,
	\end{align*}
	which jointly with~\eqref{two0} and~\eqref{two1} yields \eqref{l4} and completes the proof of Theorem~\ref{Thm2}. \qed

\section{Examples}\label{Exmpl}
We present a class of coefficients $\mu$ and $\sigma$ satisfying the  conditions (A) and (B), which extends Example 2.6 in \cite{LS18}.

Let $\emptyset\neq\Theta \subset \R^{d}$ be a compact, orientable $C^{4}$-hypersurface with normal vector $\nor$ along $\Theta$. 
Note that in this case $\Theta$ is always of positive reach, see  
\cite[Proposition 14]{T2008}.
Let $n \in \N$ and let $K_{1}, \dots, K_{n}\subset \R^d$ be open and pairwise disjoint sets with
\begin{equation}\label{Exm2}
\bigcup_{i = 1}^{n} K_{i}=\R^{d}\setminus \Theta.
\end{equation}
Let the drift coefficient $\mu \colon \R^{d} \to \R^{d}$ be of the form
\begin{equation}\label{Exm1}
\mu= f_{0} \one_{\Theta} + \sum_{i = 1}^{n} f_{i} \one_{K_{i}},
\end{equation}
where the functions $f_{0},\dots,f_{n} \colon \R^{d} \to \R^{d}$ satisfy
	\begin{itemize}
		\item[(i)] $f_{0}$ is bounded on $\Theta$,
                  \item[(ii)] $f_{i}$ is Lipschitz continuous on $K_i$ for all $i \in \{1,\dots,n \}$,
         	\item[(iii)] there exists an open set $U\subset \R^d$ such that $\Theta\subset U$ and  $f_1, \ldots, f_n$ are $C^{3}$ on $U$. 		
	\end{itemize}
Moreover, let the diffusion coefficient $\sigma \colon \R^{d} \to \R^{d \times d}$
	satisfy
	\begin{itemize}
	  \item[(iv)] $\sigma$ is Lipschitz continuous, 
	  	\item[(v)] there exists an open set $U\subset \R^d$ such that $\Theta\subset U$ and  $\sigma$ is $C^{3}$ on $U$.
	  \item[(vi)] $\nor(x)^{\top}\sigma(x)  \neq 0$ for all $x \in \Theta$. 
	\end{itemize}

	We show that $\mu$ and $\sigma$ satisfy  (A) and (B). Clearly, we only need to prove that (A) is satisfied.
	
	By Lemma~\ref{normalreg0} in the appendix,  the function $\nor$ is $C^{3}$.
	Hence there exists an open set $U\subset \R^d$ with $\Theta \subset U$ and a $C^{3}$-mapping $g\colon U\to \R^d$ such that $\nor = g_{|\Theta}$, see \cite[Remark 1.1]{Fu95}. Since $\Theta$ is compact, all partial derivatives of $g$ are bounded on $\Theta$. Hence, (A)(i) is satisfied.

	Next, we prove 
	that (A)(ii) 	 is satisfied. 
	 By  (iv) and the continuity of $\nor$, the function $g:= \|\nor^{\top} \sigma\|\colon \Theta\to \R$ is continuous. Since $\Theta$ is compact, there exists $x_0\in\Theta$ such that $g(x_0) = \inf_{x\in\Theta}g(x)$. By (vi) we have $g(x_0) > 0$.

	Now, we prove that (A)(iii) is satisfied. Choose $\varepsilon \in (0,\reach(\Theta))$ 
	according to 
	\eqref{remcond0}
	and choose 
$U\subset \Theta^{\varepsilon}$
according to (iii) and (v). By Lemma \ref{connect0} in the appendix there exists an open set 
$V\subset \R^d$ such that $x\in V$ and $V\cap\Theta$ is connected.
Define $\phi \colon \Theta \times \R \to \R^{d}$ by
\[
\phi(y,h) = y+ h \nor(y)
\]  
and put
\[
B_{1} = \phi((V\cap\Theta) \times (0,\varepsilon)), \quad 
B_{2} = \phi((V\cap\Theta) \times (-\varepsilon,0)).
\]
Since $V$ is open there exists $\delta \in (0,\varepsilon)$ such that $B_{\delta}(x)  \subset V$.
Clearly, 
\[
\{y + h \nor(y) \mid y \in B_{\delta}(x)\cap \Theta, \; h \in (0,\delta) \} \subset B_{1}
\]
and
\[
\{y + h \nor(y) \mid y \in B_{\delta}(x)\cap \Theta, \; h \in (-\delta,0) \} \subset B_{2}.
\]
Since $\phi$ is continuous and $(V\cap\Theta) \times (0,\varepsilon)$ as well as $(V\cap\Theta) \times (-\varepsilon,0)$ are connected, the sets $B_{1}$ and $B_{2}$ are also connected. Moreover, by  Lemma~\ref{fed0} in the appendix we have $B_1, B_2 \subset \R^{d}\setminus\Theta=\bigcup_{i = 1}^{n} K_{i}$. Thus there 
exist $i,j \in \{1,\dots,d \}$ such that $B_{1} \subseteq K_{i}$ and $B_{2} \subseteq K_{j}$.
Using (iii) we hence obtain that for all $y \in B_{\delta}(x) \cap \Theta$, the value $\alpha(y)$ is well-defined with
\begin{equation}\label{extra}
	\alpha(y)= \frac{f_{j}(y) - f_{i}(y)}{2 \|\sigma(y)^\top\nor(y)\|^2}=\frac{f_{j}(y) - f_{i}(y)}{2 \|\sigma(y)^\top\nor(\pr_{\Theta}(y))\|^2}.
\end{equation}
By Lemmas \ref{projdist}(i)  and \ref{normalreg0} in the appendix, the function $\nor \circ \pr_{\Theta}$ is $C^{3}$ on 
$\Theta^{\varepsilon}$. 
Thus, using (iii), (v)
and \eqref{remcond0}
 we conclude  
 that the right hand side in \eqref{extra} defines a $C^{3}$-mapping on $U$. This proves that $\alpha$ can be extended to a $C^{3}$-function on $U$.
Moreover, 
since $\Theta$ is compact, $\alpha$ has bounded partial derivatives up to order $3$ on $\Theta$. Thus, (A)(iii) holds.

We turn to the proof of (A)(iv). 
Choose $U$ according to (iii). 
We first prove that there  exists $\varepsilon\in (0,\infty)$ such that 	 		 $\Theta^\varepsilon\subset U$. In fact, assume that 
for every $n\in\N$ there  exists $x_n\in \Theta^{1/n}$ with $x_n\in \R^d\setminus U$. Since $\Theta$ is bounded, the sequence  $(x_n)_{n\in\N}$ is bounded, which implies the existence of $x_0\in \R^d$ and of a subsequence $(x_{n_k})_{k\in\N}$ of $(x_n)_{n\in\N}$ with $\lim_{k\to\infty} x_{n_k} = x_0$. Since $d(x_{n_k},\Theta) \le 1/n$ for every $k\in\N$, we conclude that $x_0\in \Theta$. Since $\R^d\setminus U$ is closed, we conclude that $x_0\in \R^d\setminus U$, which contradicts $\Theta\subset U$.
By (iii) and (iv), the functions $f_{1},\dots,f_{n}$ and $\sigma$ are continuous on the compact set $\cl(\Theta^{\varepsilon/2}) \subset \Theta^{\varepsilon}$ and therefore bounded on $\cl(\Theta^{\varepsilon/2})$. This jointly with (i) yields (A)(iv).

		Finally, we prove that (A)(v) is satisfied.
	Note that for all $i \in \{1,\dots,n\} $, all $x\in K_{i}$ and $y \in \bigcup_{j\not=i} K_{j}$ and every function $\gamma \colon [0,1] \to \R^{d}\setminus \Theta$ with $\gamma(0) = x$ and $\gamma(1) = y$, the set  $\gamma([0,1]) $ is not connected. Hence, $\gamma$ is not continuous. Thus, for all  $x,y \in \R^{d}\setminus \Theta$ with $\rho_{\R^{d} \setminus \Theta}(x,y) < \infty$ we have $x,y \in K_i$ for some $i \in \{1,\dots,n \}$. Therefore, the piecewise Lipschitz continuity of $\mu$ follows from the Lipschitz continuity of $f_{i}$ on $K_{i}$ for $i \in \{1,\dots,n \}$.

\section{Numerical results}\label{Num}
In this section we present numerical simulations for the performance
of the $L_p$-error  
\[
\varepsilon_{p,n}=\bigl(\EE\bigl[\|X_1-\eul_{n,1}\|^p\bigr]\bigr)^{1/p}
\]
of the Euler-Maruyama scheme $\eul_n$ with step-size $1/n$ at the final time point $1$.
We use $\eul_{N,1}$
with $N$ large as a reference estimate of $X_1$ and we approximate the $L_p$-error $\varepsilon_{p,n}$
by the 
corresponding
empirical $p$-th mean 
error
\begin{equation}\label{emperr}
	\widehat \varepsilon_{p,n}= \Bigl(\frac{1}{m} \sum_{i = 1}^{m} \|\eul_{N,1}^{i} - \eul_{n,1}^{i}\|^{p} \Bigr)^{1/p}
\end{equation}
based on $m$ 
Monte Carlo
repetitions
 $(\eul_{N,1}^{1}, \eul_{n,1}^{1}), \ldots, (\eul_{N,1}^{m}, \eul_{n,1}^{m})$ of 
 $(\eul_{N,1}, \eul_{n,1})$.

In the following examples we choose $n = 2^7,2^8,\dots,2^{15}$, $N = 2^{17}$  and $m = 10^6$
unless otherwise 
stated.

\begin{exs}\label{example1}

We  consider a $2$-dimensional  SDE \eqref{SDE} with initial value $x_0=(0,2)^\top$ and coefficients $\mu \colon \R^{2} \to \R^{2}$ and $\sigma\colon \R^{2} \to \R^{2 \times 2}$ given by
\begin{equation}\label{ex1}
	\mu(x)=\begin{cases}
		\begin{pmatrix}
				1 \\
				1 
		\end{pmatrix}-x, & \text { if } \norm{x} < 2 \\
		-x, & \text{ if } \norm{x} \geq 2
	\end{cases}, \qquad\qquad
\sigma(x)= \phi\bigl(\norm{x} - 2\bigr) 
\begin{pmatrix}
	1 & 0 \\
	0 & 1 
\end{pmatrix},
	\end{equation}
where $\phi \colon \R \to \R$ is defined by \eqref{fctphi}. Thus, the drift coefficient $\mu$ is discontinuous 
on the set
$\Theta = \{x \in \R^{2} \mid \norm{x} = 2 \}$ and the diffusion coefficient $\sigma$ vanishes outside  the set $\{x\in\R^d\mid 1<\|x\|<3\}$.

We show that $\mu$ and $\sigma$ satisfy the conditions (A) and (B). 
Note
that $\mu$ is of the form \eqref{Exm1} with 
$n=2$, 
 $K_1$ and $K_2$ given by
\[
	K_{1} = \{x \in \R^{2} \mid \norm{x} < 2 \},\qquad
	K_{2} = \{x \in \R^{2} \mid \norm{x} > 2 \}
\]
and  
$f_0, f_1, f_2\colon\R^2\to\R^2$ given by
\begin{align*}
		f_{0}(x) =f_2(x)=-x, \qquad 
	f_{1} (x)= \begin{pmatrix}
		1 \\
		1
	\end{pmatrix}-x.
\end{align*}
Clearly,  $\Theta$ is a compact $C^{\infty}$-hypersurface,  $K_{1}$ and $K_{2}$ are open and disjoint sets satisfying \eqref{Exm2} and the functions $f_0,f_1$ and $f_2$ fulfill the conditions (i)--(iii) 
from
Section \ref{Exmpl}. Moreover, since  $\phi$ is $C^{3}$, see Lemma \ref{diff2}, and
 the Euclidean norm $\|\cdot\|$ is $C^{\infty}$ on $\R^{d}\setminus \{0\}$, we conclude that $\sigma$ is  $C^{3}$
and hence the condition (v) from Section \ref{Exmpl} holds. Since $\sigma$ has compact support, the condition (iv) from Section \ref{Exmpl} holds as well. 
Finally, 
  $\nor \colon \Theta \to \R^{2}, \, x \mapsto \frac{1}{2}x$, is a normal vector along $\Theta$ and for all $x \in \Theta$ we have
\[
\nor(x)^\top\sigma(x) = 	\frac{1}{2} \phi(0)x^\top\begin{pmatrix}
	1 & 0 \\
	0 & 1 
\end{pmatrix}  = \frac{1}{2} x^\top \neq 0,
\]
 which proves 
 that  the condition (vi) from Section \ref{Exmpl} is satisfied as well.

Figure \ref{fig1} shows, on a double logarithmic scale, the plot of 
 a realization of
 the empirical $L_2$-error $\widehat \varepsilon_{2,n}$  of the Euler-Maruyama scheme $\eul_{n,1}$ versus the number of time-steps $n$. Additionally, the
 resulting
 least-squares regression line and a line with the slope $-0.5$ are plotted. The empirical $L_2$-error rate of the Euler-Maruyama scheme is  $0.52$. \\

\begin{figure}[H]
	\centering
	\caption{ Empirical $L_2$-error vs. number of time steps}
	\makebox[\textwidth]{\includegraphics[scale=0.23]{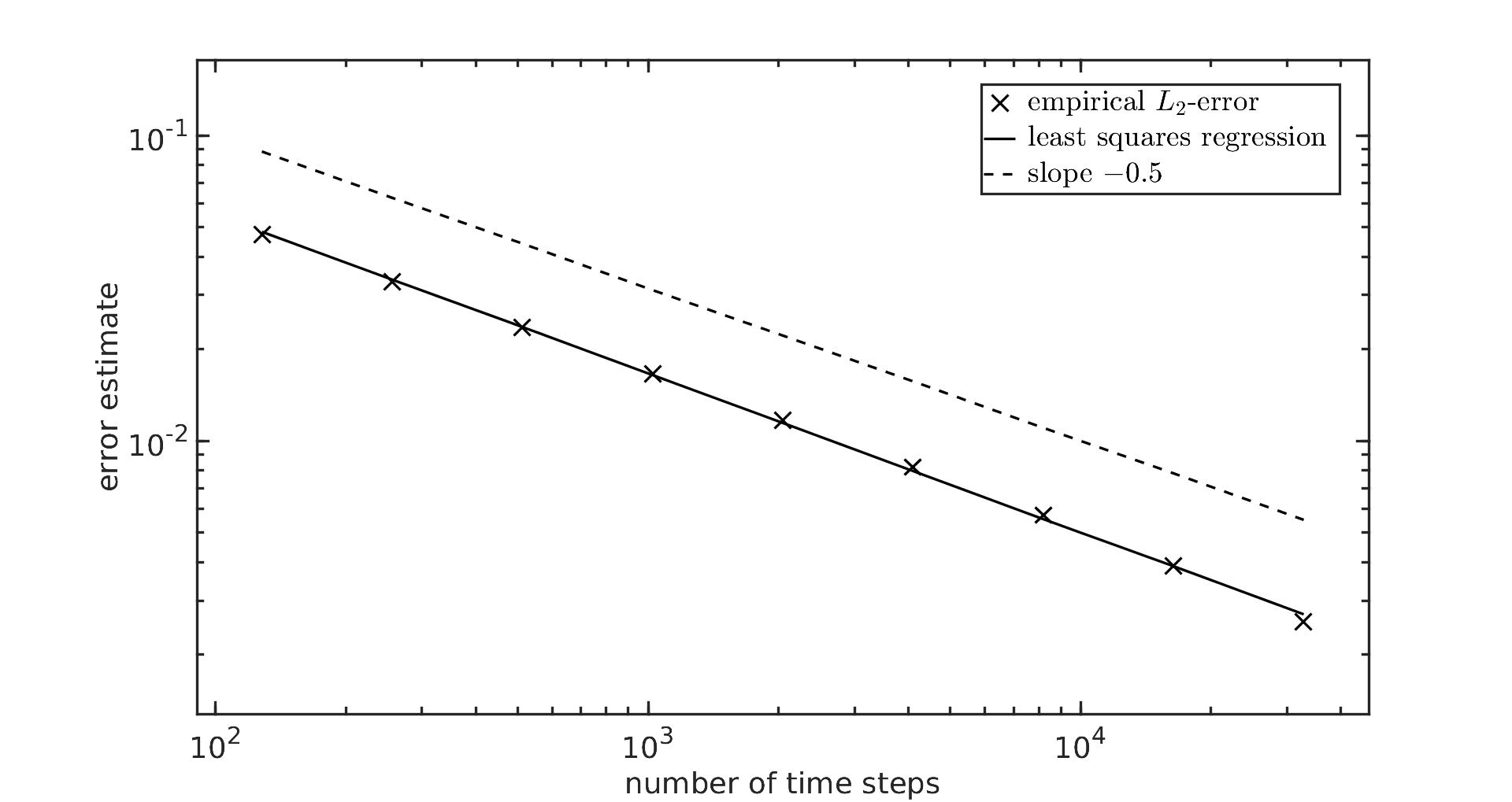}}
	\label{fig1}
\end{figure}

We have furthermore computed empirical $L_p$-error rates for $p=1$, $p=4$ and $p=8$, see Table \ref{tab1}. 
The empirical $L_p$-error rate 
slightly decreases
with increasing $p$, but remains close to $0.5$,
which  provides some numerical evidence for the theoretical finding in Theorem~\ref{Thm1}
that the $L_p$-error of the Euler-Maruyama scheme $\eul_n$ at the final time point $1$ is at least 
$0.5-$.
However, the following example shows that it can also be difficult to provide numerical evidence.

\begin{table}[H]
\caption {Empirical $L_p$-error rates} \label{tab1} 
\begin{center}
\begin{tabular}{|c|c|c|c|c|}
\hline
	$p$ & 1 & 2 & 4 & 8 \\
	\hline 
 & 0.52 & 0.52 & 0.50 &  0.47   \\
	\hline
\end{tabular}
\end{center}
\end{table}

\end{exs}

\begin{exs}\label{example2}
We 
consider  a $2$-dimensional  SDE \eqref{SDE} with initial value $x_0\in\R^2$, drift coefficient  $\mu \colon \R^{2} \to \R^{2}$ given by
	\[
	\mu(x)= \begin{cases}
		\begin{pmatrix}
			a \\
			a \\ 
		\end{pmatrix}, & \text { if } \norm{x} < 2 \\[.6cm]
		\begin{pmatrix}
			b \\
			b \\
		\end{pmatrix} \norm{x}, & \text{ if } \norm{x} \geq 2 
	\end{cases},
	\]
where $a,b \in \R$, and diffusion coefficient  $\sigma$ as in \eqref{ex1}.
As in Example \ref{example1},  the set of points of discontinuity of $\mu$ is given by the 
circular line
$\Theta = \{x \in \R^{2} \mid \norm{x} = 2 \}$
and,
similarly to  Example \ref{example1},
it is easy to see
 that $\mu$ and $\sigma$ satisfy the conditions (A) and (B).

We first choose $a = -3$, $b = 1$ and $x_{0} = (0,2)^\top$, i.e. the SDE \eqref{SDE} starts at time $0$ at a point of discontinuity  of $\mu$. Figure \ref{fig2} shows,
on a double logarithmic scale,
 the plot of
a realization of
 the  empirical $L_2$-error $\widehat \varepsilon_{2,n}$  of the Euler-Maruyama scheme  versus the number of time-steps $n$. The empirical $L_2$-error rate  is  $0.39$ in this case, which is significantly smaller than 
 $0.5$.

\begin{figure}[H]
	\centering
	\caption{Empirical $L_2$-error vs. number of time steps: $a = -3, b = 1, x_{0} = (0,2)^\top$}
	\makebox[\textwidth]{\includegraphics[scale=0.23]{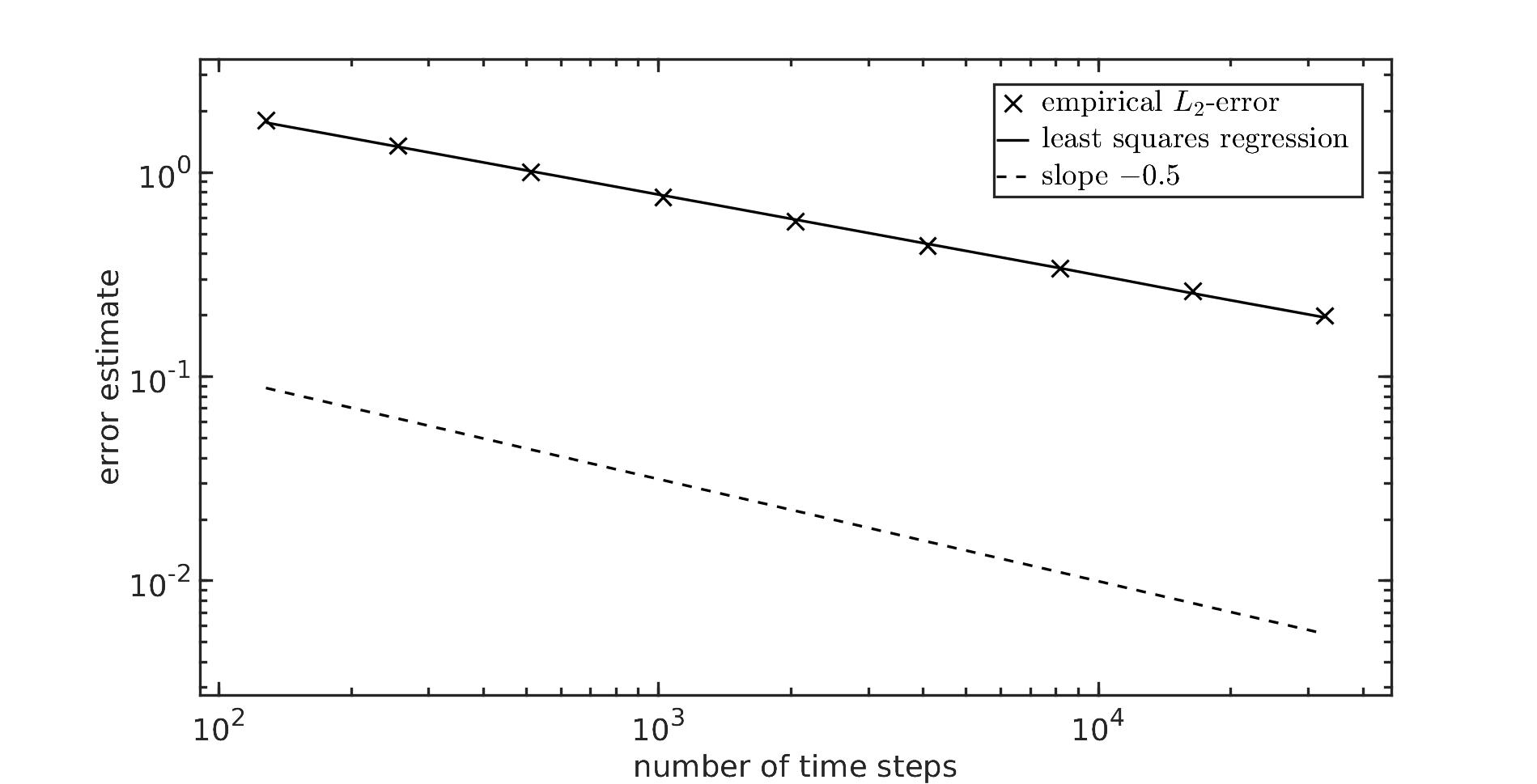}}
	\label{fig2}
\end{figure}
The phenomenon that the empirical $L_2$-error rate of the Euler-Maruyama scheme is significantly smaller than the theoretical one 
 has already been observed in \cite{GLN17} for  SDEs with a discontinuous drift coefficient in the case of $d=1$ and additive noise. More precisely, in the latter setting,
the $L_2$-error rate of the Euler-Maruyama scheme is known to be at least
$0.75-$
if the drift coefficient $\mu$ 
has finitely many jumps at points $x_1<\ldots<x_K$, is bounded and has bounded first and second derivatives on each of the intervals $(x_k, x_{k+1})$, $k=0, \ldots, K$, where $x_0=-\infty$ and $x_{K+1}=\infty$,
see \cite {NS19}.
However, for $\mu=10\sgn$ and $x_0=0$, 
an
empirical $L_2$-error rate  of $0.25$ was observed in 
\cite{GLN17} and for $\mu=-3 \one_{(-\infty, -1.4)}+4 \one_{[1.4, \infty)}$ and $x_0\in\{1, 1.2, 1.25, 1.4, 2\}$,  empirical $L_2$-error rates between  $0.31$ and $0.4$ were observed in 
\cite{GLN17}. Moreover, 
an
empirical $L_2$-error rate 
 significantly smaller than $0.5$ was observed in \cite{LS18} for the Euler-Maruyama scheme for the $2$-dimensional  SDE \eqref{SDE} with drift coefficient $\mu \colon \R^{2} \to \R^{2}$, 
 $(x_1,x_2)^\top \mapsto (3(\one_{[0,\infty)}(x_1)-\one_{(-\infty,0)}(x_1)),1)^{\top}$,
 and diffusion coefficient $\sigma=\text{id}_{\R^2}$.

Similar to Example 1, for the SDE currently under consideration,  the empirical $L_p$-error rate decreases 
with increasing $p$, however, the decay is much stronger than for the SDE studied in Example 1.
We observe an empirical    $L_1$-error rate of $0.67$, see Figure \ref{fig3}, which is  
consistent with 
the
theoretical $L_1$-error rate of at least 
$0.5-$,
 while for $p=4$ and $p=8$ we observe the rates $0.21$ and  $0.12$, respectively, see Table \ref{tab2}.
\begin{figure}[H]
	\centering
	\caption{Empirical $L_1$-error vs. number of time steps: $a = -3, b = 1, x_{0} = (0,2)^\top$}
	\makebox[\textwidth]{\includegraphics[scale=0.23]{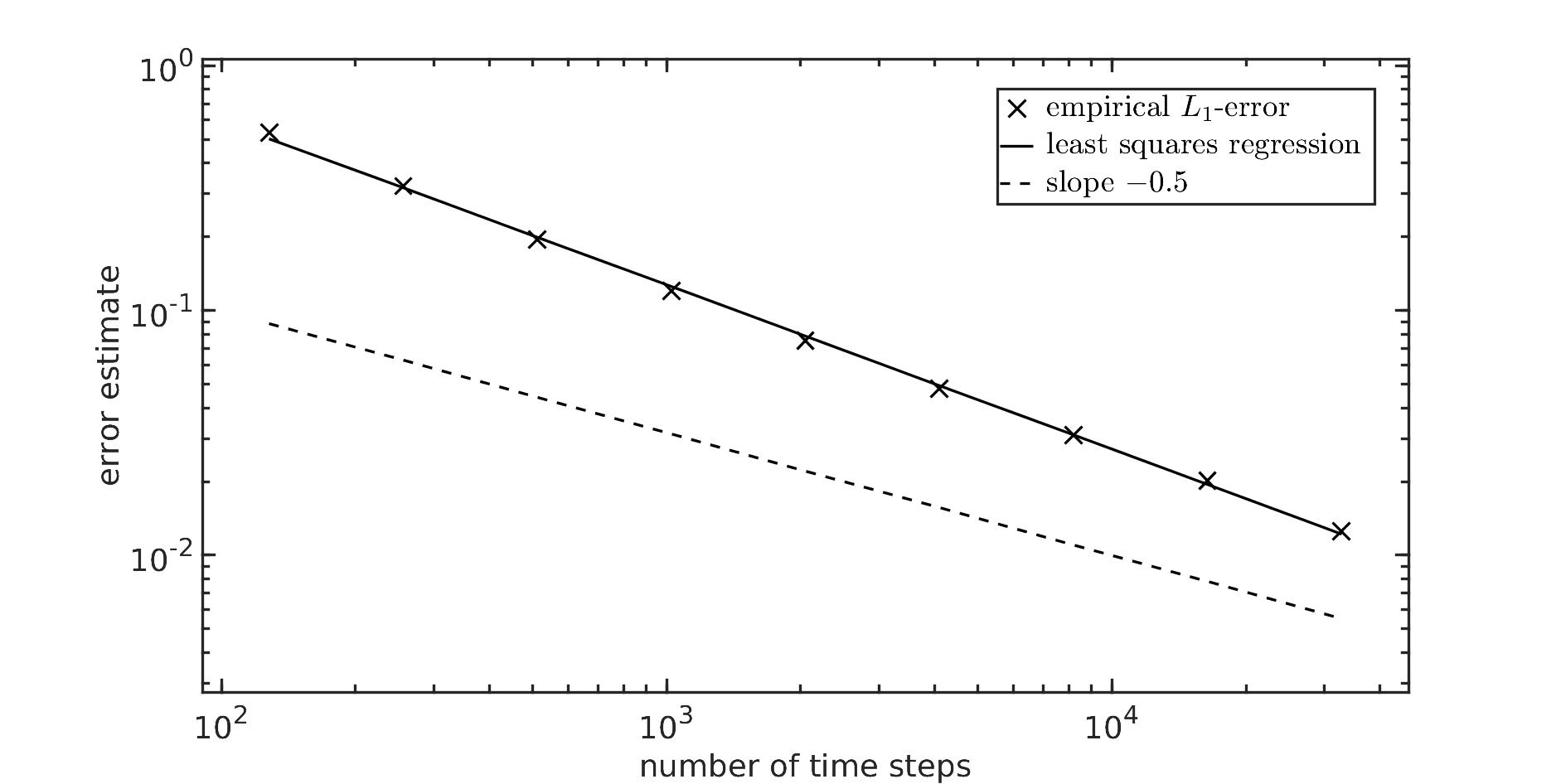}}
	\label{fig3}
\end{figure}

\begin{table}[H]
\caption {Empirical $L_p$-error rates: $a = -3, b = 1, x_{0} = (0,2)^\top$} \label{tab2} 
\begin{center}
\begin{tabular}{|c|c|c|c|c|}
\hline
	$p$ & 1 & 2 & 4 & 8 \\
	\hline
	 &0.67 & 0.39 & 0.21 &  0.12   \\
	\hline
\end{tabular}
\end{center}
\end{table}

  To  exclude a possible negative influence of approximating $X_1$ by the reference estimate $\eul_{N,1}$  on the empirical $L_p$-error rate, 
  we also study, for the current SDE,
  the performance of the $L_p$-norm
  \[
  \delta_{p,n}=\bigl(\EE\bigl[\|\eul_{2n,1}-\eul_{n,1}\|^p\bigr]\bigr)^{1/p}
  \]
  of the difference   of the Euler-Maruyama schemes with step-sizes $1/2n$ and $1/n$ at the final time point $1$. It follows from Theorem~\ref{Thm1} that for every $p\geq 1$ and every $\delta>0$ there exists $c>0$ such that for all $n\in\N$,
  \begin{equation}\label{eulest}
	\delta_{p,n}\leq\frac{c}{n^{1/2-\delta}},
	\end{equation}
  i.e. the sequence $(\delta_{p,n})_{n\in\N}$ converges to $0$ with a rate of at least $0.5-$.
 
  We approximate $\delta_{p,n}$
  by the 
  corresponding
  empirical 
  $p$-th mean  norm
  \[
  	\widehat \delta_{p,n}= \Bigl(\frac{1}{m} \sum_{i = 1}^{m} \|\eul_{2n,1}^{i} - \eul_{n,1}^{i}\|^{p} \Bigr)^{1/p}
  \]
  based on $m$ 
  Monte Carlo
  repetitions
  $(\eul_{2n,1}^{1}, \eul_{n,1}^{1}), \ldots, (\eul_{2n,1}^{m}, \eul_{n,1}^{m})$ of 
  $(\eul_{2n,1}, \eul_{n,1})$. 
  Table \ref{tab6} presents for $p=1$, $p=2$, $p=4$ and $p=8$ an
  estimated rate of convergence of $(\delta_{p,n})_{n\in\N}$ based on
   a realization of the empirical $p$-th mean  norms 	$\widehat\delta_{p,n}$,
   $n=2^7, \ldots, 2^{14}$, with $m=10^6$ Monte Carlo repetitions.
  The observed empirical rates for the $L_p$-norms of the differences of the Euler-Maruyama schemes  do not differ significantly from the respective empirical $L_p$-error rates of the Euler-Maruyama scheme in Table \ref{tab2}. We again observe the phenomenon, that the rates decrease rapidly  with increasing $p$. Even for $m=10^8$ Monte Carlo repetitions, we obtain the same empirical rates as in Table \ref{tab6}.

\begin{table}[H]
	\caption {Empirical rates for $\delta_{p,n}$:
		$a = -3, b = 1, x_{0} = (0,2)^\top$} \label{tab6} 
	\begin{center}
		\begin{tabular}{|c|c|c|c|c|}
			\hline
			$p$ & 1 & 2 & 4 & 8 \\
			\hline
		&0.64 & 0.37& 0.20 &  0.11   \\
			\hline
		\end{tabular}
	\end{center}
\end{table}

 To 
 study this phenomenon more closely,
 we consider the 
 $p$-th 
 power
 \[
  d_{p,n}=\bigl(n^{0.45}\cdot\norm{\eul_{2n,1}- \eul_{n,1}}\bigr)^p
 \]
 of the  difference of the Euler-Maruyama schemes with step-sizes $1/2n$ and $1/n$ at the final time point $1$, scaled by $n^{0.45}$.
 Note that \eqref{eulest} yields that for every $p\geq 1$  and every $\delta>0$ there exists $c>0$ such that for all $n\in\N$,
 \[
 	\EE[d_{p,n}]\leq\frac{c}{n^{0.05p-\delta}}.
 \]
In particular,
 \[
 \lim_{n\to\infty}\EE[d_{p,n}]=0,
 \]
 i.e., the sequence $(d_{p,n})_{n\in\N}$ convergece to $0$ in $L_1$. Hence, it also converges to $0$ in probability.

  Figures \ref{fig1b} and \ref{fig2b} show, on a double logarithmic scale, 
  histograms of $m=10^8$ realizations of $ d_{1,n}$ and $ d_{2,n}$, respectively,  with $n=2^4, \ldots, 2^{14}$ (from top to bottom). The length of the bins on the $x$-axis is $10^{-1}$ and the height of a block equals the relative frequency of the corresponding bin.
Red dots represent the empirical means of the data and blue dots represent the $0,99$-quantiles of the relative frequencies.

The histograms point to an outlier problem.  While the proportion of outliers decreases with increasing $n$ (the $0,99$-quantiles decrease), 
the size of the outliers increases.
See also Figure \ref{fig3b}  for the right  tails of the histograms of realizations of $ d_{2,n}$  cut off 
at the value 
$10^1$. In the latter case,  the length of the bins on the $x$-axis is $10^{1}$.

  	 For $p=1$, the outliers 
  	 do not seem to strongly influence the size of the empirical means 
  	 -- the empirical means
  	 of realizations of 
  	 $d_{1,n}$
  	 decrease with increasing $n$
  	 as expected,
  	 see Figure \ref{fig1b}. However,  for  $p=2$ 
  	 a
  	 significant influence is visible -- the empirical means increase with increasing $n$ and for $n=2^{11}, \ldots,  2^{14}$ they even become larger than the  respective $0,99$-quantiles of the relative frequencies,  see Figure \ref{fig2b} and Figure \ref{fig3b}. For $p=4$ the influence of outliers becomes enormous -- the empirical means explode with increasing $n$ and for $n=2^{11}, \ldots,  2^{14}$ they  become much larger than the  respective $0,99$-quantiles of the relative frequencies, see Figure \ref{fig4b}, which shows  the right  tails of the histograms of $m=10^8$ realizations of $ d_{4,n}$  cut off 
  	 at the value
  	 $10^3$. In the latter case,  the length of the bins on the $x$-axis is 
  	 $10^{3}$. 
 
Thus,  the reason for the phenomenon that the observed empirical rates for $\widehat\delta_{p,n}$ decrease rapidly with increasing $p$ appears to lie in 
an
outlier problem. 
The underlying reason for the outlier problem (e.g. whether  $m$ is too small or whether $n$ is to small) 
is hard to investigate, due to limitations of computing power, and so far remains unclear to us.

 \begin{figure}[H]
 	\centering
 	\caption{Histograms of realizations of $ d_{1,n}$} 
 	\makebox[\textwidth]{\includegraphics[scale=0.52]{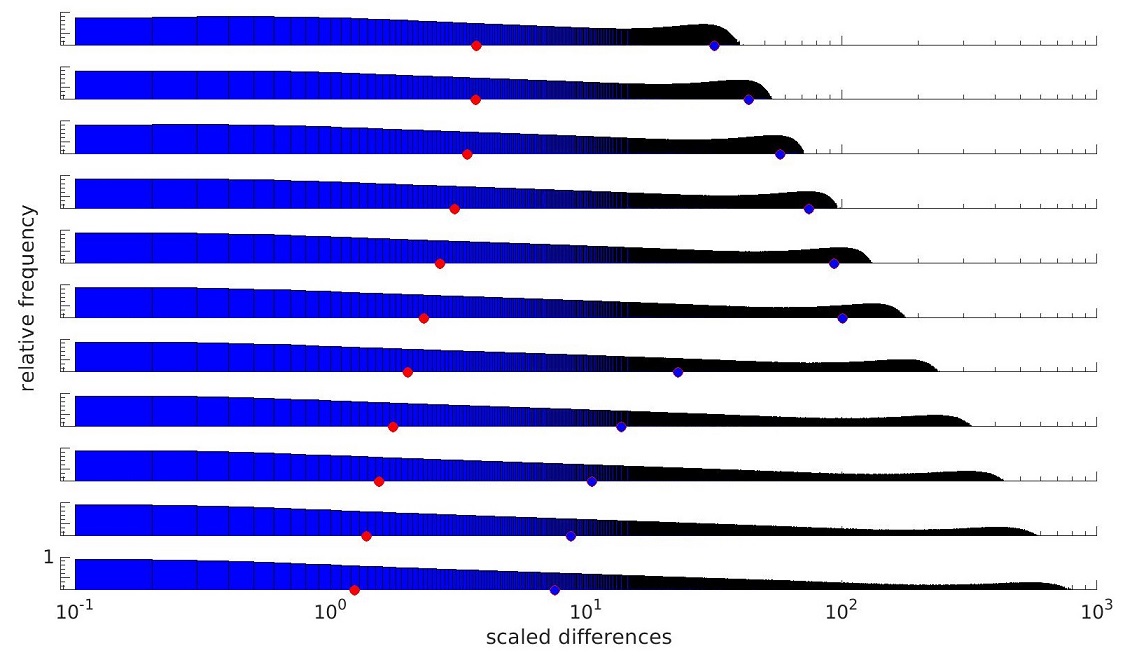}}
 	\label{fig1b}
 \end{figure}
 
 \begin{figure}[H]
 	\centering
 	\caption{Histograms of realizations of $ d_{2,n}$} 
 	\makebox[\textwidth]{\includegraphics[scale=0.52]{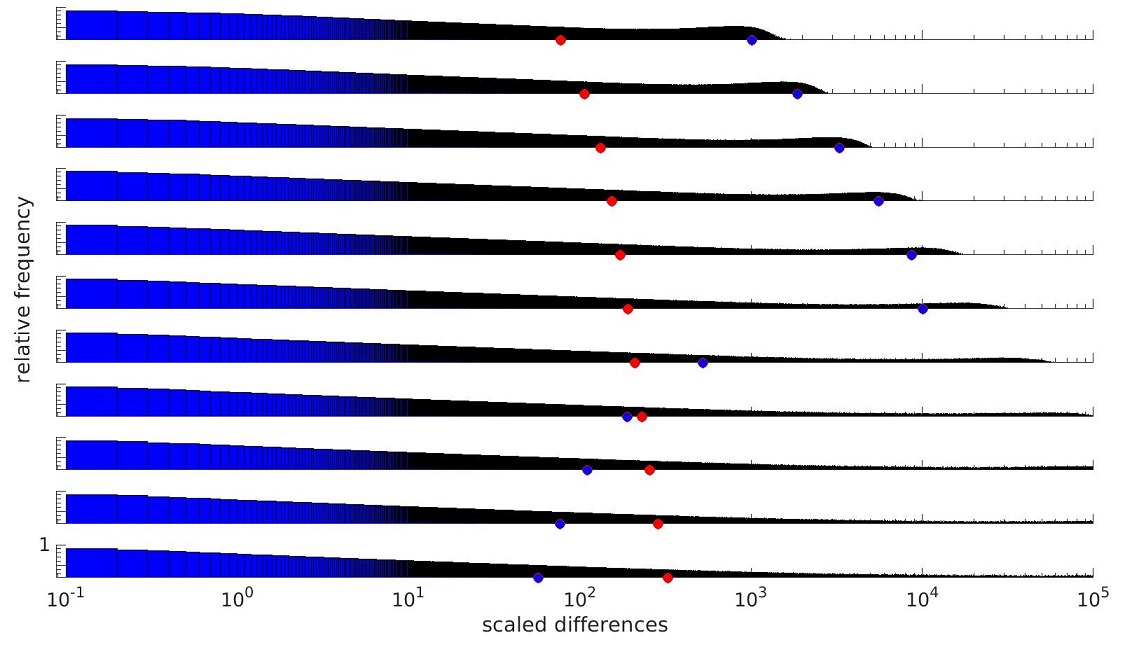}}
 		\label{fig2b}
 \end{figure}
 \vspace{-5cm}
 
 \begin{figure}[H]
 	\centering
 	\caption{Tails of histograms of realizations of $ d_{2,n}$}
 	\makebox[\textwidth]{\includegraphics[scale=0.52]{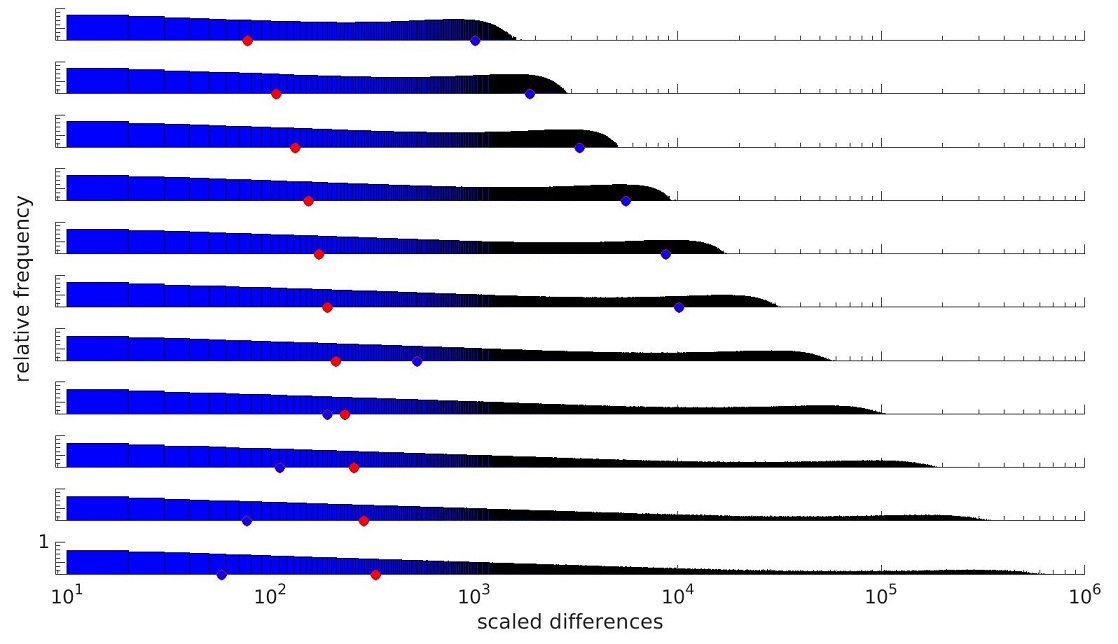}}
 	\label{fig3b}
 \end{figure}
 
 \begin{figure}[H]
 	\centering
 	\caption{Tails of histograms of realizations of $ d_{4,n}$}
 	\makebox[\textwidth]{\includegraphics[scale=0.52]{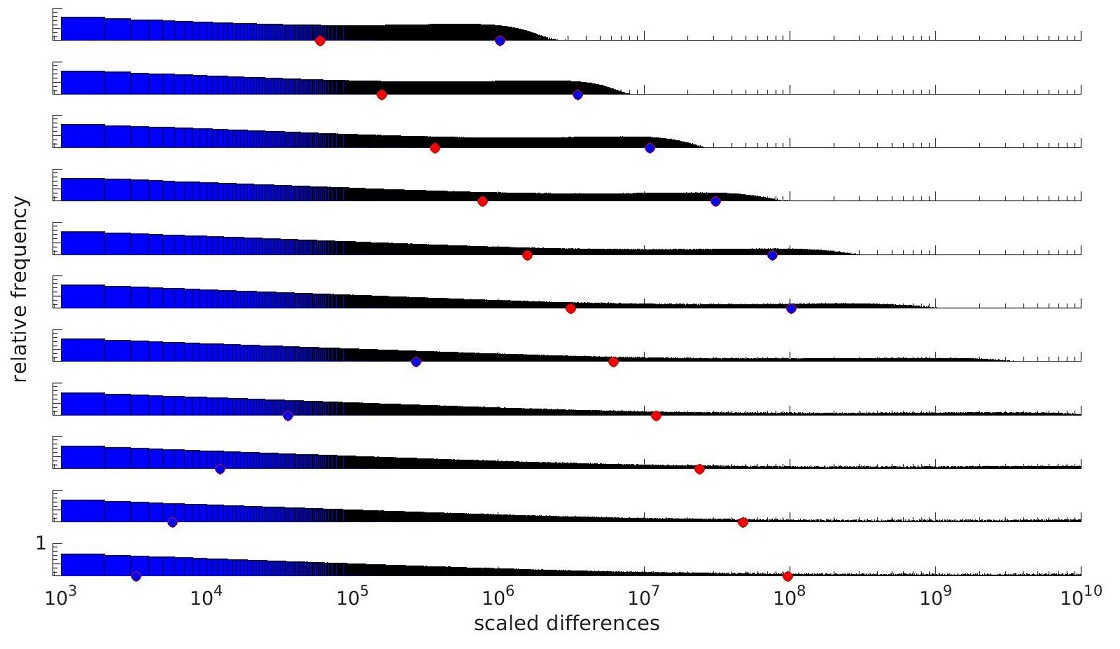}}
 	\label{fig4b}
 \end{figure}
 
For comparison, we present in Figures \ref{fig5b}, \ref {fig6b} and \ref{fig7b} histograms of  $m=10^6$ realizations of the random variables $ d_{1,n}$, $ d_{2,n}$ and  $d_{4,n}$ respectively,  for the SDE from Example \ref{example1} with $n=2^7, \ldots, 2^{14}$ (from top to bottom). The length of the bins on the $x$-axis is $10^{-3}$. 
As before, red dots represent the empirical means of the data and blue dots represent the $0,99$-quantiles of the relative frequencies.

In contrast to the histograms for the SDE from Example \ref{example2}, there are no outliers in the histograms for the SDE from Example \ref{example1}.  For  $p=1$,  $p=2$ and $p=4$, the empirical means decrease with increasing $n$ and do not exceed the   respective $0,99$-quantiles of the relative frequencies.

 \begin{figure}[H]
 	\centering
 	\caption{Histograms of realizations of $ d_{1,n}$: Example 1}
 	\makebox[\textwidth]{\includegraphics[scale=0.52]{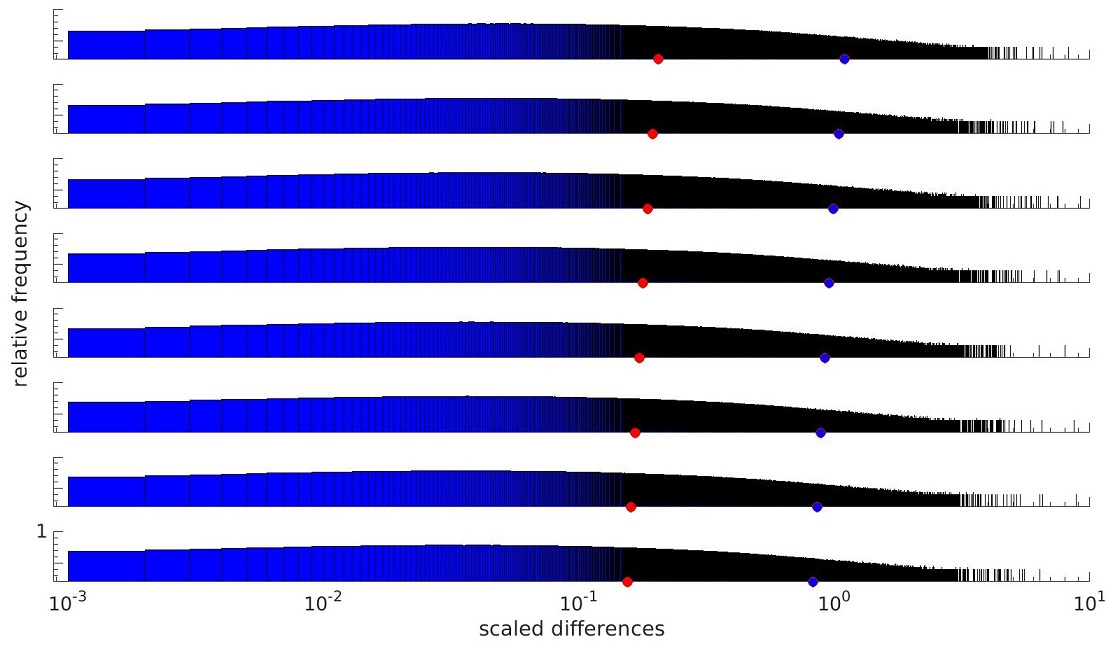}}
 	\label{fig5b}
 \end{figure}
 
 \vspace{-5cm}
 
 \begin{figure}[H]
 	\centering
 	\caption{Histograms of realizations of $ d_{2,n}$: Example 1}
 	\makebox[\textwidth]{\includegraphics[scale=0.52]{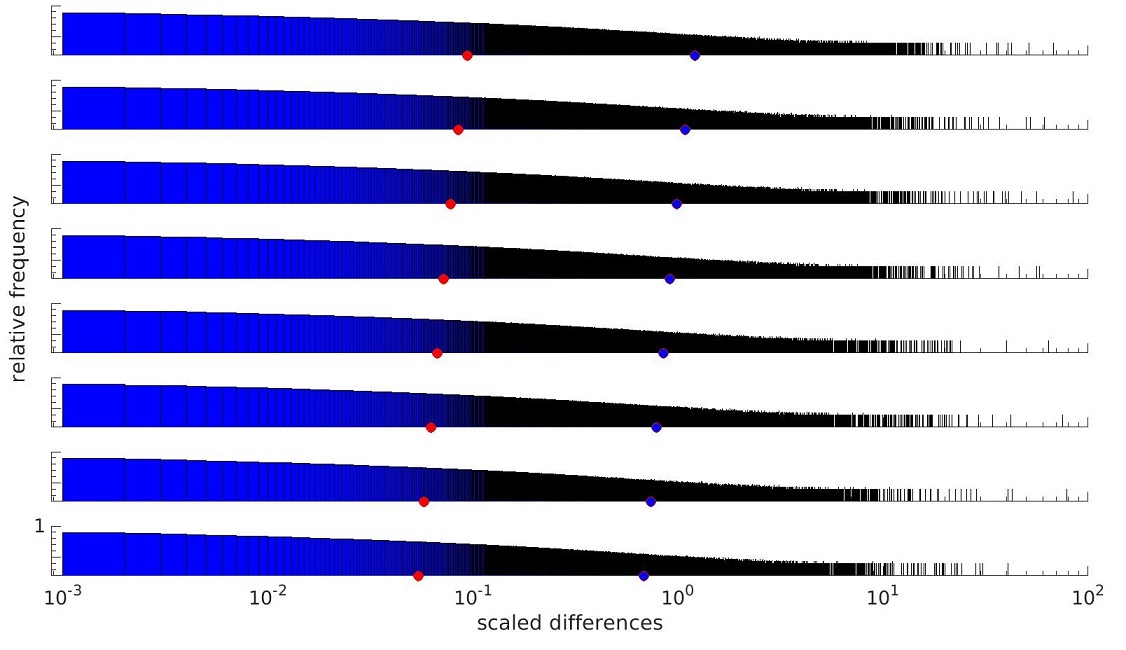}}
 	\label{fig6b}
 \end{figure}

 \begin{figure}[H]
 	\centering
 	\caption{Histograms of realizations of $ d_{4,n}$: Example 1}
 	\makebox[\textwidth]{\includegraphics[scale=0.52]{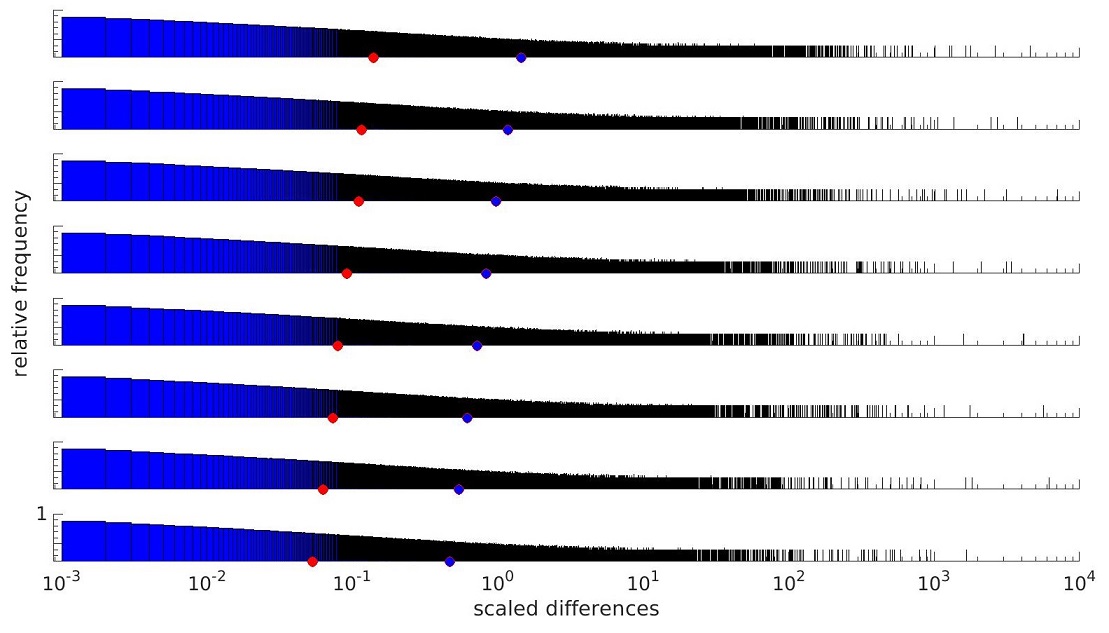}}
 	\label{fig7b}
 \end{figure}

Finally, we present results of numerical experiments for different choices of the parameters $a$ and $b$ 
and
the initial value $x_0$.
Table \ref{tab4} presents empirical $L_p$-error rates
of the Euler-Maruyama scheme
 for  $a = -3, b = 1$ and  $x_{0} \in\bigl{\{}  (0,2.5)^\top, (0,0)^\top\bigr{\}}$. Similar to the case $x_{0} = (0,2)^\top$, see Table \ref{tab2},  we observe a strong decay of the  empirical $L_p$-error rate with increasing $p$ in the case $x_{0} = (0,2.5)^\top$.
In the case $x_{0} = (0,0)^\top$, the empirical $L_p$-error rate decreases slightly with increasing $p$ and remains greater than $0.5$. Note that in the latter case, the distance from the initial value $x_0$ to the set of 
the
points of discontinuity of $\mu$ is significantly larger than in the case $x_{0} = (0,2.5)^\top$.

\begin{table}[H]
\caption {Empirical $L_p$-error rates: $a = -3, b = 1$} \label{tab4} 
\begin{center}
\begin{tabular}{|c|c|c|c|c|}
\hline
	$p$ & 1 & 2 & 4 & 8 \\
	\hline 
	 $x_{0} = (0,2.5)^{\top^{ \phantom{A}}}$& 0.75 & 0.38 & 0.17 &  0.11   \\
	\hline
	 $x_{0} = (0,0)^{\top^{ \phantom{A}}}$& 0.55 & 0.55 & 0.54 &  0.54   \\
	\hline 
\end{tabular}
\end{center}
\end{table}
Table \ref{tab5} presents empirical $L_p$-error rates 
of the Euler-Maruyama scheme
for  $(a,b)\in\{(3,-1),$ $(1,1), (-0.1, 0.1)\}$ and  $x_{0} = (0,2)^\top$.  In each of the three cases, the empirical $L_1$-error rate is consistent with the theoretical $L_1$-error rate of at least 
$0.5-$ 
and the empirical $L_p$-error rate decreases with increasing $p$. However, the decay becomes slower as the 
size $\sqrt{2}|a-2b|$ of the jump of the drift coefficient at the set of the discontinuity points $\Theta$
decreases.

\begin{table}[H]
\caption {Empirical $L_p$-error rates: $x_{0} = (0,2)^\top$} \label{tab5} 
\begin{center}
\begin{tabular}{|c|c|c|c|c|}
\hline
	$p$ & 1 & 2 & 4 & 8 \\
	\hline
	$a=3, b=-1$ & 0.53 & 0.43 & 0.22 &  0.12   \\
	\hline
	$a=1, b=1$ & 0.53 & 0.47 & 0.30 &  0.20   \\
	\hline
	$a=-0.1, b=0.1$ & 0.52 & 0.50 & 0.44 &  0.35  \\
	\hline
\end{tabular}
\end{center}
\end{table}

\end{exs}
\section{Appendix - Basic facts on hypersurfaces}\label{Appendix}

In this section, we present a number of known basic facts on hypersurfaces, distance functions, normal vectors, projections and intrinsic Lipschitz continuity that are mainly used for the proofs in Section~\ref{Proofs}. For the convenience of the reader, we provide a proof of a statement 
if we were not able to find a corresponding, 
directly applicable
reference in the literature.

\begin{Lem}\label{leb0} 
	Let $d\in\N$ and let $\emptyset\not=M \subseteq \R^{d}$ be a $C^{1}$-hypersurface. Then $M$ is 
	a Borel set 
	with $\lambda_d(M) = 0$.
\end{Lem}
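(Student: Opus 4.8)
The plan is to exploit the local structure of a $C^1$-hypersurface: near every point it is the preimage under a $C^1$-diffeomorphism of a relatively open piece of the linear subspace $\R^{d-1}_0$, which is both closed and $\lambda_d$-null in $\R^d$. First I would observe that the open set $W=\bigcup_{x\in M}U_x$, where $(\phi_x,U_x)$ denotes a $C^1$-chart for $M$ at $x$, covers $M$. Since $\R^d$ is second countable, and hence Lindel\"of, $W$ admits a countable subcover, so there exist countably many $C^1$-charts $(\phi_i,U_i)$, $i\in\N$, with $M\subseteq\bigcup_{i\in\N}U_i$ and consequently $M=\bigcup_{i\in\N}(M\cap U_i)$.

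Next I would analyze a single piece $M\cap U_i$. Writing $\phi_i\colon U_i\to V_i$ for the chart, the defining property gives $\phi_i(M\cap U_i)=\R^{d-1}_0\cap V_i$, i.e. $M\cap U_i=\phi_i^{-1}(\R^{d-1}_0\cap V_i)$. Since $\R^{d-1}_0$ is a linear subspace of $\R^d$ it is closed, so $\R^{d-1}_0\cap V_i$ is relatively closed in the open set $V_i$, and as $\phi_i$ is a homeomorphism, $M\cap U_i$ is relatively closed in the open set $U_i$; in particular $M\cap U_i$ is a Borel subset of $\R^d$. Hence $M$, being a countable union of Borel sets, is Borel.

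For the measure assertion I would use that $\R^{d-1}_0$ is a proper affine subspace of $\R^d$ (for $d=1$ it is the single point $\{0\}$), so $\R^{d-1}_0\cap V_i$ is $\lambda_d$-null, and that the inverse chart $\phi_i^{-1}\colon V_i\to U_i$, being $C^1$ on the open set $V_i$, is locally Lipschitz and therefore maps $\lambda_d$-null sets to $\lambda_d$-null sets. Consequently each $M\cap U_i=\phi_i^{-1}(\R^{d-1}_0\cap V_i)$ is $\lambda_d$-null, and therefore so is the countable union $M=\bigcup_{i\in\N}(M\cap U_i)$.

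The only mildly delicate point is the claim that a locally Lipschitz, in particular a $C^1$, map sends Lebesgue-null sets to Lebesgue-null sets; I would reduce this to the globally Lipschitz case by covering $V_i$ by countably many balls on which $\phi_i^{-1}$ is Lipschitz and using that a Lipschitz map scales Lebesgue outer measure by at most a fixed constant power of its Lipschitz constant. Everything else — the Lindel\"of reduction, the closedness of $\R^{d-1}_0$, and $\lambda_d(\R^{d-1}_0)=0$ — is routine.
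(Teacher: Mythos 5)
Your proof is correct and has the same overall architecture as the paper's: a second-countability/Lindel\"of argument yields countably many charts covering $M$, each piece $M\cap U_i=\phi_i^{-1}(\R^{d-1}_0\cap V_i)$ is Borel because it is the preimage of a Borel (in your version, relatively closed) set under a homeomorphism, and the two claims pass to the countable union. The one point where you genuinely diverge is the null-set step: the paper applies the change-of-variables formula to the $C^1$-diffeomorphism $\phi_i^{-1}$, expressing $\lambda_d(M\cap U_i)$ as $\int_{\R^{d-1}_0\cap V_i}|\det((\phi_i^{-1})'(y))|\,\lambda_d(dy)$, which vanishes because the domain of integration is $\lambda_d$-null; you instead use that a locally Lipschitz (in particular $C^1$) map on an open set carries $\lambda_d$-null sets to $\lambda_d$-null sets, reducing to the globally Lipschitz case via a countable cover of $V_i$ by balls on which the derivative is bounded. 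Both are valid. The change-of-variables route is shorter once the diffeomorphism is in hand and needs no auxiliary covering argument; your Lipschitz-image argument is more elementary and slightly more general, since it uses neither injectivity nor the Jacobian and would apply verbatim if the inverse charts were only locally Lipschitz rather than $C^1$.
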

\begin{proof}
		For $x \in M$ let $(\phi_{x},U_{x})$ be a $C^{1}$-chart for $M$
	at $x$, i.e. $\phi_x(U_{x})\subset \R^d$
	is open, $\phi_{x}\colon U_x \to \phi_x(U_{x}) $ is a $C^1$-diffeomorphism and $\phi_x(M \cap U_{x}) = \R_{0}^{d-1}\cap\phi_x(U_{x})$. Since 
	$\R_{0}^{d-1}$ is a Borel set we obtain that $M \cap U_{x}=\phi_{x}^{-1}(\R_{0}^{d-1}\cap\phi_x(U_{x}) )$ is a Borel set. Moreover, by the change of variables formula,
	\[
	\lambda_d(\phi_{x}^{-1}(\R_{0}^{d-1}\cap\phi_x(U_{x}) )) = \int_{\R_{0}^{d-1}\cap\phi_x(U_{x})} \bigl|\text{det}\bigl( (\phi_x^{-1})'(y)\bigr)\bigr|\, \lambda_d(dy),
	\]
	which yields $\lambda_d(\phi_{x}^{-1}(\R_{0}^{d-1}\cap\phi_x(U_{x}) ))= 0$ because $\lambda_d(\R_{0}^{d-1})=0$. 
	
	Finally note that $\R^d$ has a countable basis and therefore every subspace of $\R^d$ is a Lindelöf-space. Since $M = \bigcup_{x \in M} (M \cap U_{x})$ we conclude that there exists a countable subset $\widetilde{M} \subset M$ such that $M=  \bigcup_{x \in \widetilde{M}} \phi_{x}^{-1}(\R_{0}^{d-1}\cap\phi_x(U_{x}) ) $, which completes the proof of the lemma. 
\end{proof}

\begin{Lem} \label{connect0} 

	Let  $d\in\N$, let 
	$\emptyset\not=M \subset \R^{d}$ be a $C^{0}$-hypersurface, let $x \in M$ and let $A\subset \R^d$ be open with $x\in A$. Then there exists an open set $\widetilde A\subset A$ with $x\in \widetilde A$ such that $\widetilde A\cap M$ is connected. 
\end{Lem}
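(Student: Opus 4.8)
\textbf{Proof proposal for Lemma~\ref{connect0}.}

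The plan is to use the definition of a $C^0$-hypersurface directly: by assumption there exists a $C^0$-chart $(\phi, U)$ for $M$ at $x$, i.e.\ open sets $U, V \subset \R^d$ with $x \in U$, a homeomorphism $\phi \colon U \to V$, and $\phi(M \cap U) = \R^{d-1}_0 \cap V$. First I would pass to the open set $U \cap A$, which still contains $x$; replacing $U$ by $U \cap A$ and $V$ by $\phi(U \cap A)$, we may assume from the outset that $U \subset A$. It then suffices to produce an open set $\widetilde A \subset U$ with $x \in \widetilde A$ such that $\widetilde A \cap M$ is connected, since such $\widetilde A$ is automatically a subset of $A$.

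Next I would work on the chart side. The point $\phi(x)$ lies in $\R^{d-1}_0 \cap V$, and $V$ is open in $\R^d$, so there is an open Euclidean ball $B_r(\phi(x)) \subset V$ for some $r > 0$. Set $\widetilde A = \phi^{-1}(B_r(\phi(x)))$; since $\phi$ is a homeomorphism and $B_r(\phi(x))$ is open, $\widetilde A$ is an open subset of $U$ (hence of $A$) containing $x$. Then
\[
\widetilde A \cap M = \phi^{-1}\bigl(B_r(\phi(x))\bigr) \cap M \cap U = \phi^{-1}\bigl(B_r(\phi(x)) \cap \R^{d-1}_0\bigr),
\]
using $\phi(M \cap U) = \R^{d-1}_0 \cap V$ and $B_r(\phi(x)) \subset V$. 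So it remains to observe that $B_r(\phi(x)) \cap \R^{d-1}_0$ is connected and that its homeomorphic image under $\phi^{-1}$ is therefore connected as well.

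The connectedness of $B_r(\phi(x)) \cap \R^{d-1}_0$ is elementary: writing $\phi(x) = (y_0, 0)$ with $y_0 \in \R^{d-1}$ (here I use that $\phi(x) \in \R^{d-1}_0$), the intersection is $\{(y,0) : \|y - y_0\| < r\}$, which is the image of the open ball $B_r(y_0) \subset \R^{d-1}$ under the (linear, hence continuous) inclusion $y \mapsto (y,0)$; an open ball in $\R^{d-1}$ is convex, hence connected, and continuous images of connected sets are connected. (For $d = 1$ one reads $\R^{d-1}_0 = \{0\}$ and the intersection is the one-point set $\{0\}$, which is connected.) Finally, $\phi^{-1}$ is continuous, so $\widetilde A \cap M = \phi^{-1}(B_r(\phi(x)) \cap \R^{d-1}_0)$ is connected, completing the proof. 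There is no real obstacle here; the only point requiring a little care is the bookkeeping that shrinking $U$ to $U \cap A$ preserves the chart property, and the mild case distinction $d = 1$ versus $d \ge 2$ coming from the definition of $\R^{d-1}_0$.
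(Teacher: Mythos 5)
Your proof is correct and follows essentially the same route as the paper's: shrink the chart domain into $A$, pull back a small ball around $\phi(x)$, and use that its intersection with $\R^{d-1}_0$ is convex (hence connected) together with continuity of $\phi^{-1}$. The only cosmetic differences are your explicit $d=1$ remark and phrasing the connectedness via the inclusion of a ball in $\R^{d-1}$ instead of citing convexity of the slice directly.
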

\begin{proof}
	Choose any chart $(\phi, U)$ for $M$ at $x$. Since $A$ is open, we may assume $U \subset A$.
Put $V=\phi(U)$. 
Then $\phi(M\cap U) = \R_{0}^{d-1} \cap V$.
Since $V$ is open, there exists $\varepsilon > 0$ such that $B_{\varepsilon}(\phi(x)) \subset V$. The set $\R_{0}^{d-1} \cap B_{\varepsilon}(\phi(x))$ is convex and therefore connected. By the continuity of $\phi^{-1}$ we conclude that $\phi^{-1}(\R_{0}^{d-1} \cap B_{\varepsilon}(\phi(x)))$ is also connected. 
Put $\widetilde A= \phi^{-1}(B_{\varepsilon}(\phi(x)))$. Clearly, $x\in \widetilde A\subset U\subset A$, and since  $B_{\varepsilon}(\phi(x))$ is open and 
$\phi$ 
is continuous we get that $\widetilde A$ is open. 
Since $\phi^{-1}$ is an injection we furthermore have
	\begin{align*}
		M\cap \widetilde A & =(M\cap U)\cap \phi^{-1}(B_{\varepsilon}(\phi(x))) = \phi^{-1}(\R_{0}^{d-1} \cap V) \cap \phi^{-1}(B_{\varepsilon}(\phi(x))) \\
		& =  \phi^{-1}(\R_{0}^{d-1} \cap V\cap B_{\varepsilon}(\phi(x))) = \phi^{-1}(\R_{0}^{d-1} \cap B_{\varepsilon}(\phi(x))) ,
	\end{align*}
	which completes the proof of the lemma.
\end{proof}

\begin{Lem} \label{tangkern} 
	Let  $d\in \N$, let 
	$\emptyset\not=M \subset \R^{d}$ be a $C^{1}$-hypersurface and let $x \in M$. Let $U\subset \R^{d}$ be open with $x\in U$ and let $f\colon U \to \R$ be a 
$C^1$-function with $M \cap U \subset f^{-1}(\{0 \})$ and  $f'(x) \neq 0$. Then 
	\[
	T_{x}(M) = \text{Ker}(f'(x)).
	\]
\end{Lem}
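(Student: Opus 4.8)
The statement is a standard fact identifying the tangent cone of a $C^1$-hypersurface with the kernel of the differential of any local defining function whose differential is nonzero. Since $\Theta$ here is a $C^1$-hypersurface, $T_x(M)$ is a $(d-1)$-dimensional vector space, and $\mathrm{Ker}(f'(x))$ is also a $(d-1)$-dimensional subspace because $f'(x)\neq 0$ forces $f'(x)\colon\R^d\to\R$ to be surjective, hence of rank $1$. Thus it suffices to prove one inclusion, say $T_x(M)\subset\mathrm{Ker}(f'(x))$, and then conclude equality by comparing dimensions.

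First I would prove $T_x(M)\subset\mathrm{Ker}(f'(x))$. Let $v\in T_x(M)$. By definition there exist $\varepsilon>0$ and a $C^1$-curve $\gamma\colon(-\varepsilon,\varepsilon)\to M$ with $\gamma(0)=x$ and $\gamma'(0)=v$. By continuity of $\gamma$ we may shrink $\varepsilon$ so that $\gamma((-\varepsilon,\varepsilon))\subset U$. Then $\gamma(t)\in M\cap U\subset f^{-1}(\{0\})$ for all $t\in(-\varepsilon,\varepsilon)$, so $f\circ\gamma\equiv 0$ on $(-\varepsilon,\varepsilon)$. Since $f$ is $C^1$ on $U$ and $\gamma$ is $C^1$, the chain rule gives $0=(f\circ\gamma)'(0)=f'(\gamma(0))\gamma'(0)=f'(x)v$, i.e.\ $v\in\mathrm{Ker}(f'(x))$.

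Next I would invoke the dimension count: since $M$ is a $C^1$-hypersurface, it is stated in the excerpt that $T_x(M)$ is a $(d-1)$-dimensional vector space; and $f'(x)\neq 0$ means $f'(x)\in\R^{1\times d}$ has rank $1$, so by rank-nullity $\dim\mathrm{Ker}(f'(x))=d-1$. A subspace of dimension $d-1$ contained in another subspace of dimension $d-1$ must equal it, which yields $T_x(M)=\mathrm{Ker}(f'(x))$.

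I do not anticipate a serious obstacle here; the only mild subtlety is making sure one is entitled to invoke the dimension statement for $T_x(M)$ (which the excerpt already records as a known fact for $C^1$-hypersurfaces) rather than re-deriving it from a chart, and making sure the domain of $\gamma$ is shrunk into $U$ before applying the chain rule. If one preferred a self-contained argument avoiding the cited dimension fact, one could instead work in a chart $(\phi,V)$ for $M$ at $x$, compute $T_x(M)$ explicitly as $(\phi^{-1})'(\phi(x))(\R^{d-1}_0)$ using curves of the form $\phi^{-1}(\phi(x)+t e_i)$, $i=1,\dots,d-1$, and then check these span $\mathrm{Ker}(f'(x))$; but the dimension-count route above is cleaner.
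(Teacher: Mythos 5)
Your proof is correct and follows essentially the same route as the paper: the inclusion $T_x(M)\subset\mathrm{Ker}(f'(x))$ via a $C^1$-curve in $M$ (shrunk into $U$) and the chain rule, followed by the dimension count using that $T_x(M)$ is $(d-1)$-dimensional for a $C^1$-hypersurface and that $f'(x)\neq 0$ gives $\dim\mathrm{Ker}(f'(x))=d-1$. No gaps.
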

\begin{proof}
	First we show $T_{x}(M) \subset \text{Ker}(f'(x))$. Let $v \in T_{x}(M)$. Then there exist $\varepsilon > 0$ and a $C^1$-mapping $\gamma\colon (-\varepsilon, \varepsilon) \to M$ such that $\gamma(0) = x$ and $\gamma'(0) = v$. Since
	$\gamma$ is continuous,
	 $U$ is open and $x\in U$ we may assume that $\gamma((-\varepsilon,\varepsilon))\subset U\cap M$. Since $f(M \cap U) = \{0 \}$ we have $f \circ \gamma = 0$. Thus $0 = (f \circ \gamma)'(0) = f'(\gamma(0)) \gamma'(0) = f'(x) v$, which implies $v\in \text{Ker}(f'(x))$.
	
	Since $M$ is a $C^{1}$-hypersurface, 
	$T_{x}(M)$ is known to be a $(d-1)$-dimensional vector space. 
Furthermore, $f'(x)\neq 0$ implies that 
$\dim(\text{Ker}(f'(x))) = d-1$. 
The latter two facts and $T_{x}(M) \subset \text{Ker}(f'(x))$ imply that
	$T_{x}(M) = \text{Ker}(f'(x))$.
\end{proof}

\begin{Lem} \label{explicit0} 
	Let  $d\in\N$, let 
	$\emptyset\not=M \subset \R^{d}$ be a $C^{1}$-hypersurface, let $U \subset \R^{d}$ be open with $M\subset U$ and let $f\colon U \to \R$ be a $C^1$-function such that $M \subset f^{-1}(\{0\})$ and $ f'(x) \neq 0$ for all $x \in M$. Then 
	\[
	\nor\colon M \to \R^{d}, \, x \mapsto \frac{ f'(x)^{\top}}{\| f'(x)\|} 
	\]
	is a normal vector along $M$.
\end{Lem}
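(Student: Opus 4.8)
The plan is to verify the three defining properties of a normal vector along $M$ from the definition given in Section~\ref{ErrEs}: namely that $\nor$ is continuous, that $\|\nor\|=1$, and that $\langle \nor(x),v\rangle = 0$ for every $x\in M$ and every tangent vector $v$ to $M$ at $x$. The continuity and unit-norm properties are essentially immediate, while the orthogonality property is where Lemma~\ref{tangkern} does the work.

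First I would note that $\nor$ is well-defined: since $f$ is a $C^1$-function on $U$ with $f'(x)\neq 0$ for every $x\in M$, the denominator $\|f'(x)\|$ is strictly positive on $M$, so the map $x\mapsto f'(x)^\top/\|f'(x)\|$ makes sense on $M$. Continuity of $\nor$ on $M$ follows because $f'\colon U\to\R^{1\times d}$ is continuous (as $f$ is $C^1$), the transpose is continuous, the Euclidean norm is continuous, and $x\mapsto \|f'(x)\|$ is bounded away from $0$ on $M$, so the quotient is continuous on $M$. The unit-norm property is trivial: $\|\nor(x)\| = \|f'(x)^\top\|/\|f'(x)\| = 1$ for every $x\in M$, using $\|f'(x)^\top\| = \|f'(x)\|$.

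Next I would establish the orthogonality. Fix $x\in M$ and let $v$ be a tangent vector to $M$ at $x$, i.e. $v\in T_x(M)$. Since $f$ is a $C^1$-function on the open set $U$ with $x\in U$, with $M\cap U\subset f^{-1}(\{0\})$ (this holds because $M\subset f^{-1}(\{0\})$ and $U\subset U$) and $f'(x)\neq 0$, Lemma~\ref{tangkern} applies and gives $T_x(M) = \text{Ker}(f'(x))$. Hence $f'(x)v = 0$. Therefore
\[
\langle \nor(x), v\rangle = \frac{1}{\|f'(x)\|}\, f'(x)\, v = 0,
\]
where I use that for a row vector $f'(x)\in\R^{1\times d}$ one has $\langle f'(x)^\top, v\rangle = f'(x)v$. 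Since $x\in M$ and $v\in T_x(M)$ were arbitrary, this verifies the orthogonality condition, and combining the three properties shows $\nor$ is a normal vector along $M$, completing the proof.

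There is no real obstacle here: the statement is essentially an unpacking of the definition, and the only non-trivial input is the identification of the tangent cone with $\text{Ker}(f'(x))$, which is exactly Lemma~\ref{tangkern} and may be invoked directly. The only point requiring a small amount of care is checking that the hypotheses of Lemma~\ref{tangkern} are met for each $x\in M$ — in particular that $M\cap U$ is contained in $f^{-1}(\{0\})$ (immediate from $M\subset f^{-1}(\{0\})$) and that $f'(x)\neq 0$ (part of the hypothesis) — but these are built into the assumptions of the present lemma.
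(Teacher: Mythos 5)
Your proof is correct and follows essentially the same route as the paper's: verify continuity and unit norm directly from the $C^1$-regularity and nonvanishing of $f'$ on $M$, and obtain orthogonality to all tangent vectors by invoking Lemma~\ref{tangkern} to identify $T_x(M)$ with $\text{Ker}(f'(x))$. Your version merely spells out the hypothesis-checking for Lemma~\ref{tangkern} a bit more explicitly than the paper does.
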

\begin{proof}
	Since $f$ is a $C^{1}$-function, $\nor$ is  continuous. Clearly, $\|\nor(x)\| = 1$ for all $x \in M$. Finally, by Lemma \ref{tangkern} we have $f'(x) v = 0$ for all $x\in M$ and all $v \in T_{x}(M)$, which implies $\langle \nor(x),v\rangle =0$  for all $x\in M$ and all $v \in T_{x}(M)$. Thus $\nor$ is a normal vector along $M$. 
\end{proof}

\begin{Lem} \label{nrep0}
	Let $d\in\N$ and let
 $\emptyset\neq M \subset \R^{d}$ 
	be a $C^{1}$-hypersurface.
	\begin{itemize}
		\item[(i)]  If $\nor\colon M \to \R^{d}$ is a normal vector along $M$ then, for every $x \in M$,
		\[
		T_{x}(M)^{\perp} = \text{span}(\{\nor(x)\}).
		\]
		\item[(ii)] 	If $M$ 
		is of 
		positive reach and $\eps\in (0,\reach(M))$ then,
		for every $x \in M$,
		\[
		T_{x}(M)^{\perp} =\{\lambda v \mid \lambda \geq 0, v \in \R^{d}, \norm{v} = \eps, \text{pr}_{M}(x+v) = x  \}.
		\]
	\end{itemize}
\end{Lem}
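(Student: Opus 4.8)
\textbf{Proof proposal for Lemma~\ref{nrep0}.}

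The plan is to prove both statements by exploiting the characterization of the tangent cone as a $(d-1)$-dimensional vector space and using the definition of the orthogonal projection via nearest points. For part (i), I would argue as follows. Since $M$ is a $C^1$-hypersurface, $T_x(M)$ is a $(d-1)$-dimensional subspace of $\R^d$, hence its orthogonal complement $T_x(M)^\perp$ is $1$-dimensional. A normal vector $\nor$ along $M$ satisfies $\|\nor(x)\|=1$ and $\langle \nor(x),v\rangle = 0$ for every $v\in T_x(M)$, i.e. $\nor(x)\in T_x(M)^\perp\setminus\{0\}$. A nonzero vector in a $1$-dimensional space spans that space, so $T_x(M)^\perp = \operatorname{span}(\{\nor(x)\})$. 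This direction is essentially immediate once the dimension count is invoked.

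For part (ii), fix $x\in M$ and $\eps\in(0,\reach(M))$; write $L$ for the right-hand side. For the inclusion $L\subset T_x(M)^\perp$, take $v\in\R^d$ with $\|v\|=\eps$ and $\pr_M(x+v)=x$; I must show $v\perp T_x(M)$. By definition of $\pr_M$, the point $x$ is the unique nearest point of $M$ to $x+v$, so for any $C^1$-curve $\gamma\colon(-\delta,\delta)\to M$ with $\gamma(0)=x$, the function $t\mapsto \|x+v-\gamma(t)\|^2$ has a minimum at $t=0$; differentiating gives $2\langle x+v-\gamma(0), -\gamma'(0)\rangle = -2\langle v,\gamma'(0)\rangle = 0$, so $v$ is orthogonal to every tangent vector, i.e. $v\in T_x(M)^\perp$. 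Then $\lambda v\in T_x(M)^\perp$ for $\lambda\ge 0$ as well, giving $L\subset T_x(M)^\perp$. For the reverse inclusion $T_x(M)^\perp\subset L$: since $T_x(M)^\perp$ is $1$-dimensional, it suffices to exhibit one nonzero element of $L$ spanning it. Pick any unit vector $u\in T_x(M)^\perp$; then $\eps u$ has norm $\eps$, and because $\eps<\reach(M)$ we have $\Theta^\eps\subset\unp(M)$, so $x+\eps u\in\unp(M)$ and $\pr_M(x+\eps u)$ is well-defined. I would then invoke a standard fact (e.g. Lemma~\ref{fed0} in the appendix, which says that if a vector is orthogonal to the tangent space at a point then that point is the orthogonal projection of the shifted point, provided the shift is within the reach) to conclude $\pr_M(x+\eps u)=x$. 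Hence $\eps u\in L$, and since $\eps u$ is a nonzero element of the $1$-dimensional space $T_x(M)^\perp$, and $L$ is closed under nonnegative scaling, $T_x(M)^\perp = \operatorname{span}(\{\eps u\})\subset L\cup(-L)$; combined with $L\subset T_x(M)^\perp$ and the explicit form of $L$ (which already contains all nonnegative multiples of its elements and, via replacing $u$ by $-u$, also the opposite ray), we get $L = T_x(M)^\perp$.

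The main obstacle I anticipate is the reverse inclusion in part (ii): one needs that every direction orthogonal to $T_x(M)$ is actually \emph{realized} as a projection direction, i.e. that $\pr_M(x+\eps u)=x$ for a unit normal direction $u$. This is exactly where positive reach is used — without it, $x+\eps u$ might have a closer point on $M$ elsewhere, or no unique nearest point at all. I expect the cleanest route is to cite the appendix lemma (Lemma~\ref{fed0}) that gives this implication directly, rather than reproving it; the differentiation-of-distance argument only gives the ``only if'' direction and not the ``if'' direction. A minor point to handle carefully is that the set $L$ as written already includes the factor $\lambda\ge 0$, so once one ray of $T_x(M)^\perp$ is shown to lie in $L$, one should also note that the opposite unit normal $-u$ works equally well, so both rays are captured and $L$ is the full line.
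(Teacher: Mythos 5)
Your part (i) is exactly the paper's argument (dimension count plus the fact that $\nor(x)$ is a nonzero element of the one-dimensional space $T_x(M)^\perp$), and your forward inclusion in part (ii) — differentiating $t\mapsto\|x+v-\gamma(t)\|^2$ at its minimum — is sound and parallels the paper's separate Lemma~\ref{ort0}. The problem is the reverse inclusion. You propose to close it by invoking Lemma~\ref{fed0}, but in this paper Lemma~\ref{fed0} is itself \emph{proved from} Lemma~\ref{nrep0}(ii): its proof applies part (ii) with $\eps=\|u\|$ to produce the vector $v$ with $\pr_M(x+v)=x$ and $\lambda v=u$. So using Lemma~\ref{fed0} here is circular. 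More to the point, the statement you call a ``standard fact'' — that every unit vector $u\in T_x(M)^\perp$ with $\eps<\reach(M)$ satisfies $\pr_M(x+\eps u)=x$ — \emph{is} precisely the nontrivial content of the reverse inclusion; asserting it is assuming what is to be proven. The paper does not reprove this: part (ii) is obtained by citing Federer's result \cite[Theorem~4.8(12)]{F59} on sets of positive reach, which is where the genuinely hard work (relating the tangent cone's orthogonal complement to the directions actually realized by the metric projection) lives.

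If you wanted a self-contained argument instead of the citation, you would have to show directly that for $y=\pr_M(x+\eps u)$ with $y\neq x$ one reaches a contradiction with $\eps<\reach(M)$; this is not a consequence of the uniqueness of nearest points alone and requires the quantitative structure of positive reach (e.g.\ Federer's estimates on how far normals can be tilted), which is why the paper delegates it to \cite{F59}. Your handling of the two rays ($u$ and $-u$) and of the nonnegative scaling in the set on the right-hand side is fine once that key fact is available; the gap is solely that you have no noncircular source for it.
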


\begin{proof}
	Let $x \in M$.
	Since $M$ is a $C^{1}$-hypersurface, the tangent space $T_{x}(M)$ is known to be a $(d-1)$-dimensional vector space.
	Thus $T_{x}(M)^{\perp}$ is one-dimensional. Clearly, $\nor(x) \in T_{x}(M)^{\perp} \setminus \{0 \}$, which implies that $\text{span}(\{\nor(x)\})$ is a one-dimensional subspace of   $T_{x}(M)^{\perp}$. This finishes the proof of part (i) of the lemma.  Part (ii) of the lemma is a consequence of
	\cite[Theorem 4.8 (12)]{F59}.
\end{proof}

\begin{Lem} \label{fed0}
	Let $d\in\N$, let  
	$\emptyset\neq M \subset \R^{d}$ 
	 be a $C^{1}$-hypersurface of positive reach and let
	$x \in M$. If $u \in T_{x}(M)^{\perp}$ 
	and
	$\norm{u} < \reach(M)$ then  $x+u \in \text{Unp}(M)$ and $\text{pr}_{M}(x+u) = x$.
\end{Lem}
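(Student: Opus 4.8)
\textbf{Proof proposal for Lemma~\ref{fed0}.}

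The plan is to reduce the statement to the characterization of $T_x(M)^\perp$ furnished by Lemma~\ref{nrep0}(ii). First I would dispose of the trivial case $u=0$: here $x+u=x\in M$, so $x+u$ has itself as a nearest point, $d(x+u,M)=0$, and uniqueness of the nearest point is immediate since any $y\in M$ with $\|y-x\|=0$ equals $x$; thus $x+u\in\unp(M)$ and $\pr_M(x+u)=x$. So from now on assume $0<\|u\|<\reach(M)$.

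Next I would set $\eps=\|u\|\in(0,\reach(M))$ and apply Lemma~\ref{nrep0}(ii) with this choice of $\eps$. Since $u\in T_x(M)^\perp$ and $\|u\|=\eps$, the set on the right-hand side of Lemma~\ref{nrep0}(ii),
\[
T_x(M)^\perp=\{\lambda v\mid \lambda\ge 0,\ v\in\R^d,\ \|v\|=\eps,\ \pr_M(x+v)=x\},
\]
must contain $u$ through a representation $u=\lambda v$ with $\lambda\ge 0$, $\|v\|=\eps$ and $\pr_M(x+v)=x$. Comparing norms gives $\lambda\eps=\|u\|=\eps$, hence $\lambda=1$ and $v=u$; in particular $\pr_M(x+u)=x$, which already forces $x+u\in\unp(M)$ by definition of $\pr_M$ (the projection is only defined on $\unp(M)$). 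There is one small gap to fill: the right-hand set in Lemma~\ref{nrep0}(ii) is $T_x(M)^\perp$ as an equality of sets, but I should make sure $u$ lies in it in the form $\lambda v$ with the stated constraints; since $u\neq 0$ this is exactly the representation $\lambda=1$, $v=u$, and one has to check that this representation is the one witnessing membership — but any element of the right-hand set with norm $\eps$ must have $\lambda=1$, so there is nothing more to verify.

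Alternatively, if one prefers not to route through Lemma~\ref{nrep0}(ii), one can argue directly from $\reach(M)$: since $\|u\|<\reach(M)$ we have $x+u\in\Theta^{\|u\|+\delta}\subset\unp(M)$ for small $\delta>0$ by definition of reach, so $x+u$ has a unique nearest point $y=\pr_M(x+u)$ with $\|y-(x+u)\|=d(x+u,M)\le\|x-(x+u)\|=\|u\|$; then the standard fact (Federer) that $(x+u)-\pr_M(x+u)\in T_{\pr_M(x+u)}(M)^\perp$ together with the first-variation/normal-cone characterization of nearest points shows $y=x$. I expect the main obstacle to be purely bookkeeping: making sure the cited Lemma~\ref{nrep0}(ii) is applicable with $\eps=\|u\|$ (it requires $\eps\in(0,\reach(M))$, which holds) and correctly extracting $\pr_M(x+u)=x$ from the set equality; there is no hard analysis here, as everything is a consequence of Federer's theory of sets of positive reach already imported in the appendix.
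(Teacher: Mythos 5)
Your proposal is correct and follows essentially the same route as the paper: dispose of $u=0$, apply Lemma~\ref{nrep0}(ii) with $\eps=\|u\|$, compare norms to force $\lambda=1$ and $v=u$, and conclude $\pr_M(x+u)=x$. The only (inessential) difference is that the paper establishes $x+u\in\unp(M)$ explicitly first, by picking $\delta\in(\|u\|,\reach(M))$ so that $x+u\in M^\delta\subset\unp(M)$ — exactly the alternative you sketch — rather than reading it off from the well-definedness of $\pr_M$ in the membership condition.
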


\begin{proof}
	Let 
	$u \in T_{x}(M)^{\perp}$ with $\norm{u} < \reach(M)$. 
	Clearly, the statement of the lemma holds for $u=0$. Assume $u\not=0$.
	Let $\delta\in(\|u\|,\reach(M))$.
	We have
	$x+u\in M^{\delta}$,
	which implies $x+u\in \text{Unp}(M)$.  
	Applying Lemma \ref{nrep0}(ii) with $\eps = \norm{u}$ we conclude that there exist $v \in \R^{d}$ and $\lambda \ge 0$ such that $\norm{v} = \norm{u}$, $\text{pr}_{M}(x+v) = x$ and $\lambda v = u$. We have $\lambda \norm{v} = \norm{u} = \norm{v}$, which implies $\lambda = 1$. Hence,  $u = v$ and $\text{pr}_{M}(x+u) = x$, which finishes the proof of the lemma. 
\end{proof}

\begin{Lem} \label{ort0} 
	Let $d\in\N$ and let 
	$\emptyset\not=M \subset \R^{d}$. Then for every $x \in \text{Unp}(M)$ we have $(x- \text{pr}_{M}(x)) \in T_{ \text{pr}_{M}(x)}(M)^\perp$.
\end{Lem}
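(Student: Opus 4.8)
\textbf{Proof proposal for Lemma \ref{ort0}.}

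The plan is to show that $x - \pr_M(x)$ is orthogonal to every tangent vector to $M$ at $\pr_M(x)$ by exploiting the defining minimality property of the orthogonal projection. Write $y = \pr_M(x)$, so that $y$ is the unique point of $M$ minimising $z \mapsto \|x - z\|$ over $z \in M$, and in particular $\|x-y\| = d(x,M)$. The statement is trivial when $x \in M$, since then $y = x$ and $x - y = 0 \in T_y(M)^\perp$; so assume $x \notin M$, whence $x - y \neq 0$.

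First I would fix an arbitrary tangent vector $v \in T_y(M)$ and, by definition of the tangent cone, pick $\varepsilon \in (0,\infty)$ and a $C^1$-curve $\gamma\colon (-\varepsilon,\varepsilon) \to M$ with $\gamma(0) = y$ and $\gamma'(0) = v$. Then consider the scalar function $g\colon (-\varepsilon,\varepsilon) \to [0,\infty)$ defined by
\[
g(t) = \|x - \gamma(t)\|^2 = \langle x - \gamma(t), x - \gamma(t)\rangle.
\]
Since $\gamma(t) \in M$ for all $t$ and $y = \gamma(0)$ minimises the distance to $x$ over $M$, the function $g$ attains a local (indeed global, over the image of $\gamma$) minimum at $t = 0$. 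Because $\gamma$ is $C^1$, $g$ is differentiable, and its derivative is
\[
g'(t) = -2\langle x - \gamma(t), \gamma'(t)\rangle.
\]
The first-order necessary condition $g'(0) = 0$ then yields $\langle x - y, v\rangle = \langle x - \gamma(0), \gamma'(0)\rangle = 0$.

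Since $v \in T_y(M)$ was arbitrary, this shows $x - y \perp T_y(M)$, i.e. $x - \pr_M(x) \in T_{\pr_M(x)}(M)^\perp$, which is the claim. There is essentially no obstacle here beyond being careful that the minimising property of $\pr_M$ is used correctly and that the differentiation under the inner product is justified by the $C^1$-regularity of $\gamma$; one may note that $T_y(M)^\perp$ is a genuine linear subspace (one-dimensional, when $M$ is a $C^1$-hypersurface, by Lemma \ref{nrep0}(i)), so establishing orthogonality to every such $v$ is exactly what is needed. No hypothesis of positive reach is actually required for this implication — only that $x \in \unp(M)$, which guarantees $\pr_M(x)$ is well-defined.
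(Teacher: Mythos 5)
Your proposal is correct and takes essentially the same route as the paper: both fix $v\in T_{\pr_M(x)}(M)$, take a $C^1$-curve $\gamma$ in $M$ with $\gamma(0)=\pr_M(x)$, $\gamma'(0)=v$, and exploit that the squared distance $t\mapsto\|x-\gamma(t)\|^2$ is minimised at $t=0$. The only difference is presentational — you invoke the first-order condition $g'(0)=0$ directly, whereas the paper unfolds the same fact as a contradiction argument with explicit estimates for the cases $\langle v,x-\pr_M(x)\rangle>0$ and $<0$.
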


\begin{proof}
	Let $x \in \text{Unp}(M)$ and let $v\in T_{ \text{pr}_{M}(x)}(M)$. Choose $\eps\in (0,\infty)$ and a $C^1$-mapping $\gamma\colon (-\eps,\eps) \to M$ such that $\gamma(0)= \pr_{M}(x)$ and $\gamma'(0) =v$. Assume that $\langle v, x-\pr_M(x)\rangle > 0$. Let $\delta\in (0,\langle v, x-\pr_M(x)\rangle)$. Since $\lim_{h\downarrow 0} \frac{\gamma(h) - \gamma(0)}{h} = v$ we conclude that there exists  $h_0\in (0,\eps)$ such that for all $h\in (0,h_0)$,
	\[
	\frac{\|\gamma(h)-\gamma(0)\|^2}{h}  <\delta \quad \text{and}\quad \frac{ \langle\gamma(h)-\gamma(0), x-\pr_M(x)\rangle}{h} > \langle v, x-\pr_M(x)\rangle -\delta/2.
	\]
	Thus, for all $h\in (0,h_0)$, 
	\begin{align*}
		\|x-\gamma(h)\|^2 & = \|x-\pr_M(x) -(\gamma(h)-\gamma(0)) \|^2\\
		& = \|x-\pr_M(x)\|^2 -2\langle\gamma(h)-\gamma(0),x-\pr_M(x)\rangle +  \|\gamma(h)-\gamma(0)\|^2\\
		& <  \|x-\pr_M(x)\|^2 -2h(\langle v, x-\pr_M(x)\rangle -\delta/2) + h\delta\\
		& = \|x-\pr_M(x)\|^2 -2h(\langle v, x-\pr_M(x)\rangle -\delta)
		\\&<  \|x-\pr_M(x)\|^2,
	\end{align*}
	which is a contradiction, since $\gamma(h)\in M$ for all $h\in (0,h_0)$. In a similar way one can show that $(x-\pr_M(x))^\top v < 0$ can not be true, which completes the proof of the lemma. 
\end{proof}

\begin{Lem} \label{unique} 
	Let  $d\in\N$ and let 
	$\emptyset\neq M \subset \R^{d}$ be a connected $C^{1}$-hypersurface. Then $M$ has either no or two normal vectors.
\end{Lem}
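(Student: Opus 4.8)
The plan is to show first that a connected $C^1$-hypersurface $M$ has \emph{at most} two normal vectors, and then that if one normal vector exists, its pointwise negation is also a normal vector, yielding precisely two. The second part is trivial: if $\nor\colon M\to\R^d$ is a normal vector along $M$, then $-\nor$ is continuous, satisfies $\|-\nor(x)\|=1$, and $\langle -\nor(x),v\rangle=0$ for every $x\in M$ and every tangent vector $v$ to $M$ at $x$; moreover $-\nor\neq\nor$ since $\nor$ never vanishes. Hence $-\nor$ is a second normal vector, and it is distinct from $\nor$.

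For the upper bound, let $\nor_1,\nor_2$ be normal vectors along $M$. Fix $x\in M$. Since $M$ is a $C^1$-hypersurface, $T_x(M)$ is a $(d-1)$-dimensional subspace of $\R^d$, so $T_x(M)^\perp$ is one-dimensional. By Lemma~\ref{nrep0}(i) applied to each of $\nor_1$ and $\nor_2$, we have $T_x(M)^\perp=\operatorname{span}(\{\nor_1(x)\})=\operatorname{span}(\{\nor_2(x)\})$, so $\nor_2(x)=\lambda(x)\nor_1(x)$ for some scalar $\lambda(x)\in\R$. Taking norms and using $\|\nor_1(x)\|=\|\nor_2(x)\|=1$ gives $|\lambda(x)|=1$, i.e. $\lambda(x)\in\{+1,-1\}$ for every $x\in M$. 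Thus the function $\lambda=\langle\nor_1,\nor_2\rangle\colon M\to\{+1,-1\}$ is well-defined.

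The key step is a connectedness argument: $\lambda$ is continuous on $M$ (it is the scalar product of the continuous maps $\nor_1,\nor_2$), takes values in the discrete set $\{+1,-1\}$, and $M$ is connected, so $\lambda$ is constant. If $\lambda\equiv 1$ then $\nor_2=\nor_1$; if $\lambda\equiv -1$ then $\nor_2=-\nor_1$. Hence any normal vector along $M$ equals either $\nor_1$ or $-\nor_1$, so there are at most two. Combining the two parts: either $M$ has no normal vector, or it has a normal vector $\nor$ and then exactly the two normal vectors $\nor$ and $-\nor$. I do not expect any serious obstacle here; the only point requiring a little care is invoking Lemma~\ref{nrep0}(i), which needs $M$ to be a $C^1$-hypersurface so that $T_x(M)$ is genuinely $(d-1)$-dimensional — this is exactly the hypothesis assumed, so the argument goes through cleanly.
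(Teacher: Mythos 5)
Your proof is correct and follows essentially the same route as the paper: both use Lemma~\ref{nrep0}(i) to show that two normal vectors are pointwise proportional with factor $\pm 1$, then apply continuity of the scalar product $\langle\nor_1,\nor_2\rangle$ together with connectedness of $M$ to conclude the factor is constant, and observe that $-\nor$ is always a second normal vector when one exists.
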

\begin{proof}
	Assume that  $\nor\colon M\to \R^d$ is a normal vector along $M$. Then $- \nor $ is also a normal vector along $M$.
	Assume that $\tilde \nor \colon M\to \R^d$ is a further normal vector along $M$. Since $\nor$ and $\tilde \nor$ are continuous we obtain by the Cauchy-Schwarz inequality that the mapping
	$S\colon M \to \R, x \mapsto \langle \nor(x), \tilde{\nor}(x)\rangle$ is continuous as well. 
	
	Let $x \in M$. By Lemma \ref{nrep0}(i) we have $\tilde{\nor}(x) \in \text{span}(\nor(x))$. Hence there exists $c \in \R$ such that $\nor(x) = c \,\tilde{\nor}(x)$. Since $\|\nor(x)\| = \|\tilde \nor(x)\|=1$ we obtain $S(x) = c \in \{1, -1 \}$. 
	
	Thus, $S(M) \subset \{1,-1\}$. Since $M$ is connected and $S$ is continuous, $S(M)$ is connected as well. It follows that either $S = 1$ or $S = -1$. This implies that either  $  \tilde{\nor}=\nor$ or $\tilde{\nor}=-\nor$, which finishes the proof of the lemma.
\end{proof}

\begin{Lem} \label{projdist} 
	Let  $d\in\N$, let 
	$k \in \N \cup \{\infty \}$, let $\emptyset\not=M \subset\R^{d}$ be a $C^{k}$-hypersurface of positive reach and let $\eps\in (0,\reach(M))$. Then the following statements hold true.
	\begin{itemize}
		\item[(i)] $\pr_M $ is a $C^{k-1}$-mapping on $M^\eps$.
			\item[(ii)] 	 $d(\cdot,M)$ is a $C^{k}$-function on $M^\eps\setminus M$.
		\item[(iii)] If $k\ge 2$ then for all $x\in M^\eps$ and all $v\in\R^d$,
		\[
		(\pr_{M}'(x))v\in T_{ \text{pr}_{M}(x)}(M).
		\]
		In particular,
		\[
		(x-\pr_M(x))^\top \pr'_M(x) = 0.
		\]
	\end{itemize} 
\end{Lem}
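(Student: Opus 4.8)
\textbf{Proof plan for Lemma~\ref{projdist}.}
The plan is to reduce parts (i) and (ii) to standard facts about the distance function of a set of positive reach, and then derive part (iii) by differentiating the fixed-point relation satisfied by $\pr_M$. For part (i), I would invoke the classical result of Federer (\cite{F59}) combined with the implicit function theorem: on $M^\eps\setminus M$ the squared distance function $d(\cdot,M)^2$ is known to be $C^k$ when $M$ is a $C^k$-hypersurface of positive reach, and $\pr_M(x) = x - \tfrac12\nabla(d(\cdot,M)^2)(x)$, which immediately gives that $\pr_M$ is $C^{k-1}$ on $M^\eps\setminus M$; to cover the points of $M$ itself one works in a local $C^k$-chart for $M$ and writes the nearest-point problem as solving $\langle x - \phi(u), \partial_j\phi(u)\rangle = 0$ for $u$, whose Jacobian in $u$ is nondegenerate for $x$ close enough to $M$ (this is exactly where $\eps < \reach(M)$ is used), so the implicit function theorem yields a $C^{k-1}$ solution $u = u(x)$ and hence $\pr_M = \phi\circ u$ is $C^{k-1}$ on all of $M^\eps$. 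Part (ii) then follows because $d(x,M) = \|x - \pr_M(x)\|$ on $M^\eps$, $\pr_M$ is $C^{k-1}$ with $k-1\ge k-1$, and $\|\cdot\|$ is $C^\infty$ away from $0$, so the composition is $C^{k}$ on $M^\eps\setminus M$ once one checks that the $C^{k-1}$-regularity of $\pr_M$ can be upgraded — here it is cleaner to quote directly that $d(\cdot,M)^2$ is $C^k$ on $M^\eps$ and that $d(\cdot,M) = \sqrt{d(\cdot,M)^2}$ is $C^k$ where it is positive, i.e.\ on $M^\eps\setminus M$.

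For part (iii), assume $k\ge 2$, so by (i) the map $\pr_M$ is $C^{k-1}$, hence at least $C^1$, on $M^\eps$. Fix $x\in M^\eps$ and let $v\in\R^d$. Choose a $C^1$-curve $t\mapsto x(t)$ in $M^\eps$ with $x(0)=x$ and $x'(0)=v$; then $t\mapsto \pr_M(x(t))$ is a $C^1$-curve lying in $M$ with $\pr_M(x(0)) = \pr_M(x)$, and differentiating at $t=0$ gives $(\pr_M'(x))v = \tfrac{d}{dt}\big|_{t=0}\pr_M(x(t)) \in T_{\pr_M(x)}(M)$ directly from the definition of the tangent cone. This establishes the first assertion of (iii). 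For the "in particular" statement, recall from Lemma~\ref{ort0} that $x - \pr_M(x) \in T_{\pr_M(x)}(M)^\perp$ for every $x\in\mathrm{Unp}(M)\supseteq M^\eps$; combining this with the just-proved fact that every column of $\pr_M'(x)$ — i.e.\ every vector $(\pr_M'(x))e_j$ — lies in $T_{\pr_M(x)}(M)$, we get $(x-\pr_M(x))^\top (\pr_M'(x))e_j = 0$ for all $j$, which is exactly $(x-\pr_M(x))^\top \pr_M'(x) = 0$.

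The main obstacle I anticipate is the regularity bookkeeping in parts (i) and (ii): one must be careful about whether one gets $C^{k-1}$ or $C^k$ for $\pr_M$ and $d(\cdot,M)$, and about the behaviour across $M$ versus on $M^\eps\setminus M$ (the distance function is generally only Lipschitz, not differentiable, across $M$, which is why part (ii) is restricted to $M^\eps\setminus M$ while part (i) holds on all of $M^\eps$). The cleanest route is to cite Federer's tubular neighbourhood theorem for sets of positive reach for the regularity of $d(\cdot,M)^2$ and of $\pr_M$, and then treat (iii) as the only genuinely new computation, which is the short curve-differentiation argument above combined with Lemma~\ref{ort0}. No delicate estimate is needed; the content is entirely the differential-geometric structure of the nearest-point projection near a hypersurface of positive reach.
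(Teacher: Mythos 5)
Your proposal is correct and matches the paper's own treatment: parts (i) and (ii) are handled there by citing the standard regularity results for $\pr_M$ and $d(\cdot,M)$ on tubular neighbourhoods of sets of positive reach (Dudek--Holly and Foote), which is exactly the reduction you sketch, and part (iii) is proved by the same curve-composition argument (the paper uses the straight line $t\mapsto x+tv$ in $M^\eps$) followed by Lemma~\ref{ort0} for the orthogonality statement. No gaps.
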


\begin{proof}
	See~\cite[Theorem 1.3]{DH94} for part (i) in the case $k=1$ and \cite[Theorem 4.1]{DH94} for part (i) in the case $k\ge 2$. See  \cite[Theorem 2]{Fo84} for part (ii) in the case $k=1$ and  \cite[Corollary 4.5]{DH94}  for part (ii) in the case $k\ge 2$.
	For the proof of part (iii), assume $k\ge 2$, let $x\in M^\eps$ and let $v\in \R^d$. 
	Assume, without loss of generality, that $v\neq 0$. 
	Since $M^\eps$ is open, there exists $\delta\in (0,\infty)$ such that $B_\delta(x)\subset M^\eps$. Let $r\in (0,\delta/\|v\|)$. Then $x+tv\in B_\delta(x)$ for every $t\in (-r,r)$ and therefore the function $\gamma\colon (-r,r)\to M^\eps$, $t\mapsto x+tv$ is well-defined. Clearly, $\gamma$ is a
	$C^1$-mapping
	with $\gamma(0)=x$ and $\gamma'(0)=v$. By part (i), $(\text{pr}_{M})_{|M^{\epsilon}}\colon M^\eps \to M$ is a 
	$C^1$-mapping
	as well. It follows that $(\text{pr}_{M})_{|M^{\epsilon}}\circ \gamma \colon (-r,r) \to M$ is a 
	$C^1$-mapping
	with 
	 $(\text{pr}_{M})_{|M^{\epsilon}}\circ \gamma(0) = \text{pr}_{M}(x)$ and $((\text{pr}_{M})_{|M^{\epsilon}}\circ \gamma)'(0) = (\text{pr}_{M})_{|M^{\epsilon}}'(\gamma(0)) \gamma'(0) = 	(\pr_{M}'(x))v$. Hence $(\pr_{M}'(x))v\in T_{ \text{pr}_{M}(x)}(M)$. 
	Finally, by Lemma~\ref{ort0} we obtain  $(x- \text{pr}_{M}(x)) \in T_{ \text{pr}_{M}(x)}(M)^\perp$, which completes the proof of the lemma.
\end{proof}

\begin{Lem} \label{sides0}
	Let  $d\in\N$, let  
	$\emptyset\neq M \subset \R^{d}$ be a $C^{1}$-hypersurface of positive reach, let $\nor\colon M \to \R^{d}$ be a normal vector along $M$.
Then for all $\eps \in (0,\reach(M))$, the sets 
	\[Q_{\varepsilon, +} = \{x +\lambda \nor(x) \mid x \in M, \, \lambda \in (0,\eps)\}
	\] 
	and
	\[
	Q_{\varepsilon, -} = \{x+\lambda \nor(x) \mid x \in M,\, \lambda \in (-\eps,0) \}
	\] 
	are open and disjoint and we have $M^{\eps}\setminus M = Q_{\varepsilon, +} \cup Q_{\varepsilon, -}$.
\end{Lem}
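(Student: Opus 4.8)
The plan is to reduce the statement to two pieces: (a) each of $Q_{\eps,+}$, $Q_{\eps,-}$ is open; (b) they are disjoint and together exhaust $M^\eps\setminus M$. The key observation, which I would state first, is the diffeomorphism property of the normal map. Define $F\colon M\times(-\eps,\eps)\to\R^d$ by $F(x,\lambda)=x+\lambda\,\nor(x)$. Since $\eps<\reach(M)$, the map $F$ is injective: if $x+\lambda\nor(x)=y+\mu\nor(y)$ with $|\lambda|,|\mu|<\eps<\reach(M)$, then by Lemma~\ref{fed0} (applied using $\lambda\nor(x)\in T_x(M)^\perp$, which holds by Lemma~\ref{nrep0}(i)) we get $x=\pr_M(x+\lambda\nor(x))=\pr_M(y+\mu\nor(y))=y$, and then $\lambda=\mu$ follows since $\|\nor(x)\|=1$. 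The image of $F$ is exactly $M^\eps$: the inclusion ``$\subset$'' is immediate since $\|F(x,\lambda)-x\|=|\lambda|<\eps$; for ``$\supset$'', given $z\in M^\eps$ put $x=\pr_M(z)$ (which exists and is unique because $M^\eps\subset\unp(M)$), and note $z-x\in T_x(M)^\perp$ by Lemma~\ref{ort0}, hence $z-x=\pm\|z-x\|\nor(x)$ by Lemma~\ref{nrep0}(i), so $z=F(x,\pm\|z-x\|)$ with $\pm\|z-x\|\in(-\eps,\eps)$. The inverse of $F$ is $z\mapsto(\pr_M(z),\langle z-\pr_M(z),\nor(\pr_M(z))\rangle)$, which is continuous on $M^\eps$ by Lemma~\ref{projdist}(i) and continuity of $\nor$; thus $F$ is a homeomorphism of $M\times(-\eps,\eps)$ onto $M^\eps$.

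Given this, disjointness and the decomposition are essentially bookkeeping. We have $Q_{\eps,+}=F(M\times(0,\eps))$ and $Q_{\eps,-}=F(M\times(-\eps,0))$, and since $M\times(0,\eps)$, $M\times(-\eps,0)$, $M\times\{0\}$ partition $M\times(-\eps,\eps)$ and $F$ is a bijection onto $M^\eps$ with $F(M\times\{0\})=M$, we get that $Q_{\eps,+}$ and $Q_{\eps,-}$ are disjoint, disjoint from $M$, and $Q_{\eps,+}\cup Q_{\eps,-}=M^\eps\setminus M$. For openness, since $F$ is a homeomorphism onto the open set $M^\eps$, it is an open map, so the images of the open subsets $M\times(0,\eps)$ and $M\times(-\eps,0)$ of $M\times(-\eps,\eps)$ are open in $M^\eps$, hence open in $\R^d$.

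The one point that needs a little care — and which I expect to be the main obstacle — is justifying that $F$ is an open map purely from it being a continuous bijection with continuous inverse onto $M^\eps$, i.e. genuinely a homeomorphism onto $M^\eps$; the subtlety is that $M$ carries its subspace topology and one must make sure ``open in $M\times(-\eps,\eps)$'' transports correctly. This is handled cleanly by the explicit continuous inverse formula above: writing $G\colon M^\eps\to M\times(-\eps,\eps)$, $G(z)=(\pr_M(z),\langle z-\pr_M(z),\nor(\pr_M(z))\rangle)$, one checks directly $G\circ F=\mathrm{id}$ and $F\circ G=\mathrm{id}$, and $G$ is continuous because $\pr_M$ is continuous on $M^\eps$ (Lemma~\ref{projdist}(i)) and $\nor$ is continuous on $M$. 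Then for any open $V\subset M\times(-\eps,\eps)$ the set $F(V)=G^{-1}(V)$ is open in $M^\eps$, hence in $\R^d$ since $M^\eps$ is open in $\R^d$. Applying this with $V=M\times(0,\eps)$ and $V=M\times(-\eps,0)$ finishes the proof. I would also remark that $M$ being a $C^1$-hypersurface of positive reach is automatically closed (so $M^\eps$ is genuinely open and $\pr_M$ is well-defined on all of $M^\eps$), which the paper already notes.
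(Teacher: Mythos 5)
Your proposal is correct and follows essentially the same route as the paper's proof: both establish that $F(x,\lambda)=x+\lambda\nor(x)$ is a homeomorphism of $M\times(-\eps,\eps)$ onto $M^\eps$ with explicit continuous inverse $z\mapsto(\pr_M(z),\langle z-\pr_M(z),\nor(\pr_M(z))\rangle)$, using Lemmas~\ref{fed0}, \ref{nrep0}(i), \ref{ort0} and \ref{projdist}(i), and then obtain disjointness, the decomposition $M^\eps\setminus M=Q_{\eps,+}\cup Q_{\eps,-}$, and openness exactly as you describe.
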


\begin{proof}
Let $\eps \in (0,\reach(M))$.
	Note that for all $x \in M$ and all $\lambda \in (-\eps,\eps)$ we have $d(x+\lambda \nor(x),M) \leq \norm{x-(x+\lambda \nor(x))} = | \lambda| < \eps$, which implies that the mapping
	\[
	F\colon  M \times (-\eps,\eps) \to M^{\eps}, \quad (x,\lambda) \mapsto x + \lambda \nor(x)
	\]
	is well-defined. 
	
	We  first show that $F$ is a homeomorphism.
	Let $x\in M^{\eps}$. By Lemma~ \ref{ort0} and Lemma~\ref{nrep0}(i) we have $x-\text{pr}_M(x) \in  T_{\text{pr}_M(x)}(M)^{\perp} = \text{span}(\{\nor(\text{pr}_M(x))\})$, and therefore $x-\text{pr}_M(x) = \langle \nor(\text{pr}_M(x)),x-\text{pr}_M(x) \rangle \nor(\text{pr}_M(x))$. As a consequence,
	\[
	| 	\langle \nor(\text{pr}_M(x)),x-\text{pr}_M(x) \rangle   | 
	= \|x-\text{pr}_M(x)\|  = d(x,M) < \eps.
	\]
	We conclude that $(\text{pr}_M(x), \langle \nor(\text{pr}_M(x)),x-\text{pr}_M(x) \rangle ) \in M\times (-\eps,\eps)$ and
	\begin{align*}
		F(\text{pr}_M(x), \langle\nor(\text{pr}_M(x)),x-\text{pr}_M(x)\rangle) & = \text{pr}_M(x) + \langle\nor(\text{pr}_M(x)),x-\text{pr}_M(x)\rangle \nor(\text{pr}_M(x)) \\ & = \text{pr}_M(x) + (x-\text{pr}_M(x)) = x,
	\end{align*}
	which shows that $F$ is surjective. 

Next, consider the mapping
	\[
	G\colon M^{\eps} \to M \times (-\eps,\eps), \quad x \mapsto (\text{pr}_M(x),\langle\nor(\text{pr}_M(x)),x-\text{pr}_M(x)\rangle).
	\]
	Employing Lemma \ref{nrep0}(i)  and Lemma~\ref{fed0} we obtain that for all $(x,\lambda)\in M\times ( -\eps, \eps)$ we have 
$\text{pr}_M(x+\lambda \nor(x))=x$
and hence
	\begin{align*}
		G(F(x,\lambda)) & = (\text{pr}_M(x+\lambda \nor(x)), \langle\nor(\text{pr}_M(x+\lambda \nor(x))),x+\lambda \nor(x) - \text{pr}_M(x+\lambda \nor(x))\rangle)\\ &  = (x,\langle\lambda \nor(x),\nor(x)\rangle) = (x,\lambda).
	\end{align*}
	Thus, $F$ is bijective and $G=F^{-1}$. Since $\text{pr}_M$ is continuous on $M^\eps$, see Lemma~\ref{projdist}(i), and since $\nor$ is continuous by definition we conclude that both $F$ and $F^{-1}$ are continuous as well. Thus,  $F$ is a homeomorphism.

	Clearly, $Q_{\varepsilon, +}=  F(M\times (0,\varepsilon))$ and $Q_{\varepsilon, -} =  F(M\times (-\varepsilon,0) )$. Since $F$ is injective we conclude that $Q_{\varepsilon, +}\cap Q_{\varepsilon, -}=\emptyset$
and
	\begin{align*}
		M^\varepsilon\setminus M & = F(M\times (-\varepsilon,\varepsilon))\setminus F(M\times \{0\}) = F((M\times (-\varepsilon,\varepsilon))\setminus (M\times \{0\})) \\
		& = F((M\times (-\varepsilon,0) )\cup (M\times (0,\varepsilon) )) = F(M\times (-\varepsilon,0) )\cup F(M\times (0,\varepsilon) ) =Q_{\varepsilon, +}\cup Q_{\varepsilon, -}.
	\end{align*}
	Finally, observe that $M \times (-\varepsilon,0)$
and $M \times (0,\varepsilon)$ are open sets in $M\times (-\varepsilon,\varepsilon)$. Since $G=F^{-1}$ is continuous we thus obtain that $Q_{\varepsilon, +}=  G^{-1}(M\times (0,\varepsilon))$ and $Q_{\varepsilon, -} = G^{-1}(M\times (-\varepsilon,0) )$ are open sets in $M^\varepsilon$. Since $M^\varepsilon$ is open in $\R^d$, we conclude that both $Q_{\varepsilon, +}$ and $Q_{\varepsilon, -}$ are open in $\R^d$ as well. This completes the proof of the lemma.
\end{proof}

\begin{Lem} \label{normalreg0}
	Let $d\in\N$, let 
$k \in \N \cup \{\infty \}$, let $\emptyset\not=M \subset\R^{d}$ be a $C^{k}$-hypersurface and let $\nor\colon M\to \R^d$ be a normal vector along $M$. Then $\nor$ is a $C^{k-1}$-function. Moreover, if $k\ge 2$ then for all $x \in M$ and all $v \in T_{x}(M)$ we have $\nor'(x)v \in T_{x}(M)$.
\end{Lem}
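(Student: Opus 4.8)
The plan is to reduce the statement to a local computation in a chart and then use the characterization of the tangent space via the kernel of a defining function, already available as Lemma~\ref{tangkern} and Lemma~\ref{explicit0}. First I would fix $x_0\in M$ and choose a $C^k$-chart $(\phi,U)$ for $M$ at $x_0$, so $\phi(M\cap U)=\R^{d-1}_0\cap\phi(U)$. Writing $\phi=(\phi_1,\dots,\phi_d)$, the last component $\phi_d\colon U\to\R$ is a $C^k$-function with $M\cap U\subset\phi_d^{-1}(\{0\})$ and $\phi_d'(y)\neq 0$ for all $y\in U$ (since $\phi$ is a diffeomorphism, its Jacobian is everywhere invertible, so no row vanishes). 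By Lemma~\ref{explicit0} applied to $f=\phi_d$ on $U$, the map $y\mapsto \phi_d'(y)^\top/\|\phi_d'(y)\|$ is a normal vector along $M\cap U$, and this map is $C^{k-1}$ because $\phi_d$ is $C^k$ and $\phi_d'$ is nowhere zero (so dividing by $\|\phi_d'\|$ preserves $C^{k-1}$ regularity).

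The next step is to pass from this local normal vector to the given global $\nor$. On the connected components of $M\cap U$, Lemma~\ref{unique} (or rather its proof: two normal vectors on a connected $C^1$-hypersurface differ by a global sign) shows that $\nor$ coincides on each component with $\pm\phi_d'^\top/\|\phi_d'\|$. In either case $\nor$ agrees locally near $x_0$ with a $C^{k-1}$-function, hence $\nor$ is $C^{k-1}$ in a neighborhood of $x_0$; since $x_0\in M$ was arbitrary, $\nor$ is a $C^{k-1}$-function on all of $M$. One has to be slightly careful that $M\cap U$ may have several components and that the sign can differ between them, but on each component the sign is locally constant, which is all that is needed for local $C^{k-1}$ regularity at $x_0$.

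For the second assertion, assume $k\ge 2$, fix $x\in M$ and $v\in T_x(M)$, and choose $\varepsilon>0$ and a $C^1$-curve $\gamma\colon(-\varepsilon,\varepsilon)\to M$ with $\gamma(0)=x$, $\gamma'(0)=v$; shrinking $\varepsilon$ we may assume $\gamma$ takes values in $M\cap U$ for the chart above, and since $\nor$ is $C^{k-1}$ with $k-1\ge 1$, the composition $\nor\circ\gamma$ is $C^1$. From $\|\nor(\gamma(t))\|^2=1$ for all $t$, differentiating at $t=0$ gives $\langle \nor(x),\nor'(x)v\rangle=0$, i.e. $\nor'(x)v\perp \nor(x)$. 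By Lemma~\ref{nrep0}(i), $T_x(M)^\perp=\operatorname{span}(\{\nor(x)\})$, hence $T_x(M)=\{\nor(x)\}^\perp$, and therefore $\nor'(x)v\in T_x(M)$, which finishes the proof.

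The only genuinely delicate point is the regularity bookkeeping: making sure that $\phi_d$ being $C^k$ really yields a $C^{k-1}$ unit normal (the normalization $w\mapsto w/\|w\|$ is $C^\infty$ away from $0$, so no loss beyond the one derivative already lost in passing to $\phi_d'$), and that the local sign ambiguity between $\nor$ and $\pm\phi_d'^\top/\|\phi_d'\|$ does not destroy this — but since the sign is locally constant on $M\cap U$ this is harmless. Everything else is a direct application of the already-established Lemmas~\ref{tangkern}, \ref{explicit0}, \ref{nrep0}, and \ref{unique}.
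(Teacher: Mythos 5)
Your argument is correct and follows essentially the same route as the paper: locally represent a normal vector as $\pm\phi_d'^\top/\|\phi_d'\|$ via Lemma~\ref{explicit0}, identify it with $\nor$ up to sign via Lemma~\ref{unique}, and obtain the tangency statement by differentiating $\|\nor\circ\gamma\|^2=1$ and invoking Lemma~\ref{nrep0}(i). The only cosmetic difference is that the paper first shrinks the chart so that $M\cap U$ is connected (Lemma~\ref{connect0}) before applying Lemma~\ref{unique}, whereas you apply it on the connected component of $x_0$ in $M\cap U$, which is an open neighborhood of $x_0$ in $M$ by local connectedness, so both versions are fine.
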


\begin{proof}
	Let $x \in M$ and choose a chart $(\phi, U)$ for $M$ at $x$. By Lemma \ref{connect0} we may assume that $M \cap U$ is connected. Clearly, 
	$M\cap U$ is a $C^k$-hypersurface. Since $\phi$ is a $C^{1}$-diffeomorphism we have $ \phi_{d}'(x) \neq 0$ for all $x \in U$. Moreover, $M \cap U \subset \phi_{d}^{-1}(\{0 \})$. By Lemma \ref{explicit0}
we may therefore conclude that the mapping 
$\nu\colon  M\cap U \to \R^{d}, x\mapsto  \frac{ \phi_{d}'^{\top}(x)}{\| \phi_{d}'(x)\|}$ 
is a normal vector along $M \cap U$. Clearly,
	 $\nor_{|M\cap U}$ is a normal vector along $M\cap U$ as well. By Lemma \ref{unique}
	we thus have 
$\nor_{|M\cap U} = \nu$ or $\nor_{|M\cap U} = -\nu$. Note that $\nu$ and $-\nu$ are $C^{k-1}$-functions since $\phi_d$ is a $C^k$-function with $ \phi_{d}'(x) \neq 0$ for all $x \in U$. This completes the proof of the first statement of the lemma. 
	
	Next, let $k\ge 2$, let $x \in M$ and let $v \in T_{x}(M)$. Then $\nor$ is a $C^1$-function and there exist $\varepsilon > 0$ and a $C^1$-mapping $\gamma\colon (-\varepsilon,\varepsilon) \to M$ such that $\gamma(0)=x$ and $\gamma'(0)=v$. Using  the fact that $\|\nor\circ \gamma\| = 1$ we obtain that for all $t\in  (-\varepsilon,\varepsilon)$,
	\[
	0 = (\|\nor \circ\gamma\|^{2})'(t) = 2 \nor(\gamma(t))^{\top}\nor'(\gamma(t))\gamma'(t).
	\]
	For $t = 0$ we get $0 = 2\nor(x)^{\top}(\nor'(x)v)$. Hence, $\nor'(x)v\in \text{span}\{\nor(x) \}^\perp$. By Lemma \ref{nrep0}(i) we have $\text{span}\{\nor(x) \}=T_{x}(M)^{\perp}$, which finishes the proof of the lemma.
\end{proof}

\begin{Lem} \label{pdiffbd0}	Let $d\in\N$, let $k \in \N$ with $ k\ge 2$, let $\emptyset\not=M \subset \R^{d}$ be a $C^{k}$-hypersurface of positive reach and let $\nor\colon M \to \R^{d}$ be a normal vector along $M$ such that for all $\ell\in\{1, \ldots, k-1\}$,
\[
	\sup_{x \in M}  \norm{\nor^{(\ell)}(x)}_\ell < \infty.
	\]
	Then, for all $\varepsilon \in (0,\reach(M))$ and all $\ell\in\{1, \ldots, k-1\}$,
	\begin{equation}\label{Lproj}
	\sup_{x \in M^{\varepsilon}}\norm{\pr_{M}^{(\ell)}(x)}_\ell < \infty.
	\end{equation}
\end{Lem}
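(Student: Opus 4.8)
The idea is to obtain an explicit formula for the orthogonal projection on a neighbourhood of $M$ and to differentiate it, tracking that all derivatives of order up to $k-1$ stay bounded. By Lemma~\ref{sides0} (or rather the map $F$ constructed in its proof), for $\varepsilon\in(0,\reach(M))$ every $x\in M^{\varepsilon}$ is uniquely written as $x=\pr_M(x)+\langle\nor(\pr_M(x)),x-\pr_M(x)\rangle\,\nor(\pr_M(x))$, and the inverse map $G=F^{-1}\colon M^{\varepsilon}\to M\times(-\varepsilon,\varepsilon)$ is given by $G(x)=(\pr_M(x),\,\langle\nor(\pr_M(x)),x-\pr_M(x)\rangle)$. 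Writing $x=p+t\nor(p)$ with $p=\pr_M(x)$, $t\in(-\varepsilon,\varepsilon)$, one has the implicit relation
\[
x=p+t\,\nor(p),\qquad p\in M,\ t\in(-\varepsilon,\varepsilon).
\]
The plan is to use the implicit function theorem on this relation, together with the given bounds on $\nor$ and its derivatives, to express the derivatives of $p=\pr_M(x)$ with respect to $x$ by formulas that can be bounded uniformly on $M^{\varepsilon}$.

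\textbf{Key steps.} First I would fix a local $C^k$-parametrisation: near a point $p_0\in M$, choose a $C^k$-chart and obtain a $C^k$-embedding $\psi\colon V\to M$ with $V\subset\R^{d-1}$ open. Then $x=\psi(u)+t\,\nor(\psi(u))$ defines a $C^{k-1}$-map $\Psi\colon V\times(-\varepsilon,\varepsilon)\to\R^d$ (since $\nor$ is only $C^{k-1}$ by Lemma~\ref{normalreg0}), and by Lemma~\ref{fed0} this $\Psi$ agrees with $F$, hence is a $C^{k-1}$-diffeomorphism onto its image for $\varepsilon<\reach(M)$; its Jacobian is invertible everywhere on $M^{\varepsilon}$. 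Inverting, $(u,t)=\Psi^{-1}(x)$ is $C^{k-1}$, and $\pr_M(x)=\psi(u(x))$. The derivative of $\Psi$ at $(u,t)$ is the matrix whose columns are $\partial_{u_j}\psi+t\,\partial_{u_j}(\nor\circ\psi)$ and $\nor(\psi(u))$; crucially, because $\nor(\psi(u))$ is a unit normal and $\partial_{u_j}(\nor\circ\psi)\in T_{\psi(u)}(M)$ by Lemma~\ref{normalreg0}, the Jacobian determinant equals (up to the Gram factor of $\psi$) a product of the form $\prod_{i=1}^{d-1}(1-t\kappa_i)$ with $|\kappa_i|$ bounded by the sup of $\|\nor'\|$ on $M$; for $\varepsilon$ small enough this is bounded below away from $0$, uniformly. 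Second, I would differentiate the identity $\Psi(\Psi^{-1}(x))=x$ repeatedly: the $\ell$-th derivative of $\Psi^{-1}$ is a universal rational expression in the derivatives of $\Psi$ up to order $\ell$ and in $(\det D\Psi)^{-1}$ (Faà di Bruno / the inverse function formula iterated). Since the derivatives of $\Psi$ up to order $k-1$ are controlled by the derivatives of $\psi$ (which are bounded on the relevant compact coordinate patches — here the local nature is not a problem because the final bound \eqref{Lproj} need only hold after we have reduced to a uniform statement, see below) and by the assumed bounds $\sup_{x\in M}\|\nor^{(\ell)}(x)\|_\ell<\infty$ for $\ell\le k-1$, we get $\sup\|(\Psi^{-1})^{(\ell)}\|<\infty$ for $\ell\le k-1$, and hence the same for $\pr_M=\psi\circ u$ on $M^{\varepsilon}$.

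\textbf{Making the bound uniform / the main obstacle.} The delicate point is that the above is a priori local in a chart, whereas \eqref{Lproj} is a global sup over $M^{\varepsilon}$, and $M$ need not be compact. The way around this is to avoid charts for the quantitative part and work directly with the normal map. Concretely, differentiate the relation $x=p+t\,\nor(p)$ with $p=\pr_M(x)$, $t=\langle\nor(p),x-p\rangle$ globally: $I_d=\pr_M'(x)+t\,(\nor\circ\pr_M)'(x)+\nor(\pr_M(x))\otimes t'(x)$. Using $\pr_M'(x)v\in T_{\pr_M(x)}(M)$ and $(x-\pr_M(x))^\top\pr_M'(x)=0$ from Lemma~\ref{projdist}(iii), together with $\nor'(p)v\in T_p(M)$ for tangential $v$ (Lemma~\ref{normalreg0}), one solves for $\pr_M'(x)$ in the form $\pr_M'(x)=\big(I_d+t\,A(p)\big)^{-1}\,P_{T_p(M)}$ where $A(p)$ is (an extension of) the shape operator built from $\nor'(p)$ and $P_{T_p(M)}=I_d-\nor(p)\nor(p)^\top$ is the tangential projection. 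For $\varepsilon<1/(1+\sup_M\|\nor'\|)$ the Neumann series for $(I_d+tA(p))^{-1}$ converges with a uniform bound, giving $\ell=1$. For higher $\ell$, differentiate this closed-form identity repeatedly: each further derivative produces terms that are polynomial in $(I_d+tA)^{-1}$, in derivatives of $\nor$ composed with $\pr_M$ up to order $\le k-1$ (bounded by hypothesis, since $\|\nor^{(\ell)}\|_\ell$ is bounded on $M$), in $|t|\le\varepsilon$, and in lower-order derivatives of $\pr_M$ (bounded by induction); the boundedness of $\nor\circ\pr_M$ and its derivatives on $M^{\varepsilon}$ follows from the chain rule once the bounds on $\pr_M$ of lower order are in hand, closing the induction. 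The main obstacle is thus the bookkeeping of this iterated differentiation — organising the recursion so that at each order $\ell\le k-1$ one only ever needs $\nor^{(j)}$ for $j\le\ell\le k-1$ (consistent with $\nor$ being merely $C^{k-1}$) and lower-order $\pr_M$-bounds already established; once the scheme is set up, each individual estimate is a routine product-rule bound combined with the uniform Neumann-series bound on $(I_d+tA(p))^{-1}$.
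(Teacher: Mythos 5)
Your global strategy (differentiate the identity $x=\pr_M(x)+t(x)\,\nor(\pr_M(x))$, obtain $I_d-\nor(p)\nor(p)^\top=(I_d+t\,\nor'(p))\pr_M'(x)$, invert, and induct on the order of differentiation) is sound in spirit; in fact the identity you re-derive is exactly Lemma~\ref{pdiff01} of the paper, i.e. Theorem~C of the cited source. Note, however, that the paper does not prove Lemma~\ref{pdiffbd0} itself: it simply cites \cite[Corollary 3]{LS2021}, so your sketch is an attempt at a self-contained argument rather than a reconstruction of an in-paper proof.

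There is a genuine quantitative gap: the lemma asserts the bound for \emph{every} $\varepsilon\in(0,\reach(M))$, while your inversion step rests on a Neumann series for $(I_d+tA(p))^{-1}$ that converges only when $\varepsilon\sup_{p\in M}\|\nor'(p)\|<1$, i.e. for $\varepsilon<1/(1+\sup_M\|\nor'\|)$. This threshold can be far smaller than $\reach(M)$, and the missing range cannot be recovered from the small-$\varepsilon$ case: smoothness of $\pr_M$ on $M^{\varepsilon}$ (Lemma~\ref{projdist}(i)) gives only local bounds, and since $M$ need not be compact, finiteness of the supremum over the thin tube says nothing about points with $d(x,M)$ between your threshold and $\varepsilon$. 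The paper moreover uses the lemma on tubes whose width is a fixed fraction of the reach (e.g. on $\Theta^{4\eps}$ in the proof of Proposition~\ref{grep}), so the restriction is not harmless. To close the gap one needs the quantitative link between reach and curvature rather than the crude bound by $\sup_M\|\nor'\|$: for a $C^2$-hypersurface of reach $r$ the shape operator (the action of $\nor'(p)$ on $T_p(M)$, which is all that enters, since $\pr_M'(x)$ maps into $T_p(M)$) has spectrum bounded by $1/r$, equivalently Federer's estimate that $\pr_M$ is Lipschitz on $M^{\varepsilon}$ with constant $r/(r-\varepsilon)$; this makes $I_d+tA(p)$ invertible with an inverse bounded uniformly for $|t|\le\varepsilon<r$, after which your induction (whose bookkeeping is otherwise consistent: order $\ell$ of $\pr_M$ only calls on $\nor^{(j)}$, $j\le\ell\le k-1$, and on lower-order $\pr_M$-bounds) goes through on the whole range of $\varepsilon$.
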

\begin{proof}
See~\cite[Corollary 3]{LS2021}. We add that there is a typo in the formulation of Corollary 3 in~\cite{LS2021}. 
The bound \eqref{Lproj} is proven for $\ell=1$ as well,
see (3) in the proof of the latter result.
\end{proof}

\begin{Lem} \label{pdiff01} 
	Let  $d\in\N$, let 
$\emptyset\neq M \subset \R^{d}$ be a $C^{2}$-hypersurface of positive reach, let $\eps \in (0,\reach(M))$ and let $\nor\colon M\to \R^{d}$ be a normal vector along $M$. Then for all $x \in M^{\eps}$ we have
	\[
	I_{d} - \nor(\pr_{M}(x))\nor(\pr_{M}(x))^{\top} = \bigl(I_{d} + \langle x-\pr_{M}(x),\nor(\pr_{M}(x))\rangle \nor'(\pr_{M}(x)) \bigr)\pr_{M}'(x).
	\]
	In particular, for all $x \in M$ we have
	$\pr_{M}'(x) = I_{d} - \nor(x)\nor(x)^{\top}$.
\end{Lem}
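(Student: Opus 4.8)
\textbf{Proof plan for Lemma~\ref{pdiff01}.}

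The plan is to differentiate the basic identity $x = \pr_M(x) + \big(x-\pr_M(x)\big)$ after first expressing the normal component explicitly. For $x\in M^\eps$, Lemma~\ref{ort0} gives $x-\pr_M(x)\in T_{\pr_M(x)}(M)^\perp$, and Lemma~\ref{nrep0}(i) identifies this orthogonal complement with $\mathrm{span}\{\nor(\pr_M(x))\}$; hence
\[
x-\pr_M(x) = \langle x-\pr_M(x),\nor(\pr_M(x))\rangle\,\nor(\pr_M(x)).
\]
Write $h(x)=\langle x-\pr_M(x),\nor(\pr_M(x))\rangle$, so that $x = \pr_M(x) + h(x)\,\nor(\pr_M(x))$ on $M^\eps$. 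Since $M$ is $C^2$ of positive reach, Lemma~\ref{projdist}(i) makes $\pr_M$ a $C^1$-map on $M^\eps$, Lemma~\ref{normalreg0} makes $\nor$ a $C^1$-map, hence $\nor\circ\pr_M$ is $C^1$ on $M^\eps$ and so is $h$; differentiating the displayed identity is therefore legitimate.

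First I would differentiate. Applying the product rule to $x \mapsto \pr_M(x) + h(x)\,\nor(\pr_M(x))$ yields, for every $x\in M^\eps$,
\[
I_d = \pr_M'(x) + \nor(\pr_M(x))\,h'(x) + h(x)\,\big(\nor\circ\pr_M\big)'(x)
    = \pr_M'(x) + \nor(\pr_M(x))\,h'(x) + h(x)\,\nor'(\pr_M(x))\,\pr_M'(x),
\]
where I used the chain rule $\big(\nor\circ\pr_M\big)'(x)=\nor'(\pr_M(x))\,\pr_M'(x)$. Grouping the two terms containing $\pr_M'(x)$ gives
\[
I_d - \nor(\pr_M(x))\,h'(x) = \big(I_d + h(x)\,\nor'(\pr_M(x))\big)\,\pr_M'(x).
\]
Since $h(x)=\langle x-\pr_M(x),\nor(\pr_M(x))\rangle$, the claimed identity will follow once I show $h'(x) = \nor(\pr_M(x))^\top$, i.e. that the first term on the left is exactly $\nor(\pr_M(x))\nor(\pr_M(x))^\top$.

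To compute $h'$, the cleanest route is to note $h(x) = \langle x,\nor(\pr_M(x))\rangle - \langle\pr_M(x),\nor(\pr_M(x))\rangle$ and differentiate both pieces; alternatively, differentiate $h(x)^2 = \|x-\pr_M(x)\|^2$. I expect the main technical point to be handling the derivative of $\nor\circ\pr_M$: by the chain rule and Lemma~\ref{projdist}(iii) the vector $\pr_M'(x)v$ lies in $T_{\pr_M(x)}(M)$ for every $v$, and by Lemma~\ref{normalreg0} we have $\nor'(\pr_M(x))w\in T_{\pr_M(x)}(M)$ whenever $w\in T_{\pr_M(x)}(M)$; consequently every column of $\big(\nor\circ\pr_M\big)'(x)$ is tangent at $\pr_M(x)$, hence orthogonal to $\nor(\pr_M(x))$, so the contribution of $h(x)\big(\nor\circ\pr_M\big)'(x)$ disappears when paired against $\nor(\pr_M(x))^\top$. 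Combining this with $\nor(\pr_M(x))^\top\pr_M'(x) = -(x-\pr_M(x))^\top\pr_M'(x)/\,h(x)$... — more directly, differentiating $h(x)=\langle x-\pr_M(x),\nor(\pr_M(x))\rangle$ and using $(x-\pr_M(x))^\top\pr_M'(x)=0$ from Lemma~\ref{projdist}(iii) together with the orthogonality just noted leaves only the term $\nor(\pr_M(x))^\top(I_d-\pr_M'(x)) \cdot \pr_M'(x)$-free piece, giving $h'(x)=\nor(\pr_M(x))^\top$. Substituting this into the grouped identity yields the first assertion. For the ``in particular'' statement, set $x\in M$: then $\pr_M(x)=x$ and $h(x)=0$, so the identity collapses to $\pr_M'(x) = I_d - \nor(x)\nor(x)^\top$, which is exactly the claim. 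The one place to be careful is to justify that $h$ is differentiable and that the chain-rule manipulations are valid on all of $M^\eps$, which is covered by the $C^1$-regularity of $\pr_M$ and $\nor$ recalled above; the tangency/orthogonality bookkeeping for $\nor'$ and $\pr_M'$ is the heart of the computation.
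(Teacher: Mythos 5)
Your argument is correct, but note that the paper itself does not prove this lemma at all — its ``proof'' is a one-line citation of an external result (\cite[Theorem C]{LS2021}) — whereas you give a self-contained derivation from the paper's own appendix lemmas. Your route is the natural one: write $x=\pr_M(x)+h(x)\,\nor(\pr_M(x))$ with $h(x)=\langle x-\pr_M(x),\nor(\pr_M(x))\rangle$ (Lemmas~\ref{ort0} and~\ref{nrep0}(i)), differentiate using the $C^1$-regularity of $\pr_M$ on $M^\eps$ (Lemma~\ref{projdist}(i)) and of $\nor$ (Lemma~\ref{normalreg0}), and then do the orthogonality bookkeeping. Two small points to tighten when writing this up. First, the computation of $h'$: the clean statement is $h'(x)=\nor(\pr_M(x))^\top\bigl(I_d-\pr_M'(x)\bigr)+(x-\pr_M(x))^\top\nor'(\pr_M(x))\pr_M'(x)$; the second summand vanishes because the columns of $\nor'(\pr_M(x))\pr_M'(x)$ are tangent (Lemma~\ref{projdist}(iii) plus the second part of Lemma~\ref{normalreg0}, which is where $C^2$ enters) while $x-\pr_M(x)$ is normal, and $\nor(\pr_M(x))^\top\pr_M'(x)=0$ should be argued directly from tangency of $\pr_M'(x)v$ rather than by dividing $(x-\pr_M(x))^\top\pr_M'(x)=0$ by $h(x)$, since $h(x)=0$ on $M$ — your text briefly starts down the division route before abandoning it, and that fragment should be removed. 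Second, since $\nor$ is defined only on $M$, the chain rule for $\nor\circ\pr_M$ should be justified via a local $C^1$ extension of $\nor$; only the action of $\nor'(\pr_M(x))$ on tangent vectors enters (because it is always composed with $\pr_M'(x)$), so the resulting identity is independent of the extension, which is exactly the sense in which the lemma's statement is well-posed. With these clarifications your proof is complete and arguably a useful addition, since the paper leaves this verification to the literature.
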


\begin{proof}
	See~\cite[Theorem C]{LS2021}.
	\end{proof}

\begin{Lem} \label{Lipconnec0}
	Let  $d\in\N$, let
$\emptyset\neq A \subset \R^{d}$ and let $\rho_A$ be the intrinsic metric for $A$. Then   $\|x-y\| \leq \rho_A(x,y)$ for all $x,y \in A$ and $\|x-y\| = \rho_A(x,y)$ for all $x,y \in A$ with $\overline{x,y}\subset A$. In particular, if $A$ is convex then $\rho_A$ coincides with the Euclidean distance.
\end{Lem}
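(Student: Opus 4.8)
\textbf{Plan of proof for Lemma~\ref{Lipconnec0}.}

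The plan is to prove the three assertions in order: first the lower bound $\|x-y\|\le\rho_A(x,y)$ for all $x,y\in A$; then the reverse inequality $\rho_A(x,y)\le\|x-y\|$ under the hypothesis $\overline{x,y}\subset A$; and finally the statement for convex $A$, which is an immediate consequence of the first two. The key elementary facts I would invoke are that the Euclidean norm satisfies the triangle inequality, and that the curve length functional $l$ dominates the distance between the endpoints of a curve.

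For the lower bound, fix $x,y\in A$ and let $\gamma\colon[0,1]\to A$ be any continuous curve with $\gamma(0)=x$ and $\gamma(1)=y$. Taking the partition $0=t_0<t_1=1$ in the definition of $l(\gamma)$ already gives $l(\gamma)\ge\|\gamma(1)-\gamma(0)\|=\|x-y\|$. Taking the infimum over all admissible curves yields $\rho_A(x,y)\ge\|x-y\|$. (If no such curve exists, then $\rho_A(x,y)=\inf\emptyset=\infty\ge\|x-y\|$ trivially.) This direction holds for arbitrary nonempty $A$ and needs no further hypothesis.

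For the reverse inequality, assume $\overline{x,y}\subset A$ and consider the affine curve $\gamma\colon[0,1]\to A$, $\gamma(t)=(1-t)x+ty$, which is well-defined precisely because $\overline{x,y}\subset A$. For any partition $0\le t_0<\dots<t_n\le1$ one computes $\sum_{k=1}^n\|\gamma(t_k)-\gamma(t_{k-1})\|=\sum_{k=1}^n(t_k-t_{k-1})\|y-x\|=(t_n-t_0)\|x-y\|\le\|x-y\|$, so $l(\gamma)\le\|x-y\|$; hence $\rho_A(x,y)\le l(\gamma)\le\|x-y\|$. Combining with the lower bound gives equality whenever $\overline{x,y}\subset A$. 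The final clause about convex $A$ is then immediate: convexity means $\overline{x,y}\subset A$ for every pair $x,y\in A$, so $\rho_A=\|\cdot-\cdot\|$ on $A\times A$.

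I do not anticipate a genuine obstacle here — the statement is a routine unwinding of the definitions of $l$ and $\rho_A$. The only point requiring a modicum of care is the bookkeeping in the reverse direction: one must verify that the affine parametrization has length exactly $\|x-y\|$ (and in particular that it is finite), and one should remember to treat the degenerate case $x=y$ separately if desired, though the formulas above already cover it. No results from elsewhere in the paper are needed.
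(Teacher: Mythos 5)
Your proposal is correct and follows exactly the paper's argument: the lower bound comes from $l(\gamma)\ge\|\gamma(0)-\gamma(1)\|$ for any admissible curve, and the upper bound under $\overline{x,y}\subset A$ comes from the straight-line parametrization, whose length you (correctly, and slightly more explicitly than the paper) compute to be $\|x-y\|$. Nothing is missing.
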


\begin{proof}
	Let $x,y \in A$ and let $\gamma\colon [0,1] \to A$ be continuous  with $\gamma(0)= x$ and $\gamma(1) = y$. Then
	\[
	l(\gamma)  \geq \|\gamma(0) - \gamma(1)\| = \|x-y\|. 
	\] 
	Hence $\|x-y\| \leq \rho_A(x,y)$. Next, assume that $\overline{x,y}\subset A$ and consider the function $\gamma\colon [0,1] \to A, \lambda \mapsto (1-\lambda)x + \lambda y$. Clearly, $\gamma$ is continuous with $\gamma(0)=x$ and $\gamma(1)=y$. Hence $\rho_A(x,y) \le l(\gamma) = \|x-y\|$. 
	Thus $\rho_A(x,y) = \|x-y\|$ in this case.
\end{proof}

\begin{Lem}\label{lipimpl0} 
	Let $d, m \in \N$, 
	let $\emptyset \neq A \subset \R^{d}$ and let $f\colon A\to \R^{m}$ be a function.
	\begin{itemize}
		\item [(i)] If $f$ is Lipschitz continuous then $f$ is intrinsic Lipschitz continuous.
		\item[(ii)] If $A$ is convex then $f$ is intrinsic Lipschitz continuous with intrinsic Lipschitz constant $L$ if and only if $f$ is Lipschitz continuous with Lipschitz constant $L$.
		\item[(iii)] If $A$ is open and $f$ is  intrinsic Lipschitz continuous then $f$ is locally Lipschitz continuous and, in particular, $f$ is continuous.
	\end{itemize}  
\end{Lem}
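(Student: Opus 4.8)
\textbf{Proof proposal for Lemma~\ref{lipimpl0}.}
The plan is to prove the three parts in order, each being a short deduction from the definitions of Lipschitz continuity, intrinsic Lipschitz continuity and the intrinsic metric $\rho_A$, together with Lemma~\ref{Lipconnec0}. For part (i), suppose $f$ is Lipschitz continuous with constant $L$, and let $x,y\in A$. For any continuous curve $\gamma\colon[0,1]\to A$ with $\gamma(0)=x$, $\gamma(1)=y$ and any partition $0\le t_0<\dots<t_n\le 1$, the Lipschitz bound gives $\sum_{k=1}^n\|f(\gamma(t_k))-f(\gamma(t_{k-1}))\|\le L\sum_{k=1}^n\|\gamma(t_k)-\gamma(t_{k-1})\|\le L\,l(\gamma)$; since $f\circ\gamma$ itself is a continuous curve from $f(x)$ to $f(y)$ this shows $\|f(x)-f(y)\|\le l(f\circ\gamma)\le L\,l(\gamma)$. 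Taking the infimum over all admissible $\gamma$ yields $\|f(x)-f(y)\|\le L\,\rho_A(x,y)$, so $f$ is intrinsic Lipschitz continuous with the same constant $L$.

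For part (ii), assume $A$ is convex. By part (i), Lipschitz continuity with constant $L$ implies intrinsic Lipschitz continuity with constant $L$. Conversely, if $f$ is intrinsic Lipschitz continuous with constant $L$, then for $x,y\in A$ the segment $\overline{x,y}$ lies in $A$ by convexity, so Lemma~\ref{Lipconnec0} gives $\rho_A(x,y)=\|x-y\|$, whence $\|f(x)-f(y)\|\le L\rho_A(x,y)=L\|x-y\|$; thus $f$ is Lipschitz continuous with constant $L$.

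For part (iii), assume $A$ is open and $f$ is intrinsic Lipschitz continuous with constant $L$. Fix $x_0\in A$ and choose $r>0$ with $B_r(x_0)\subset A$. The ball $B_r(x_0)$ is convex, so for any $x,y\in B_r(x_0)$ we again have $\overline{x,y}\subset B_r(x_0)\subset A$, and Lemma~\ref{Lipconnec0} gives $\rho_A(x,y)\le \rho_{B_r(x_0)}(x,y)=\|x-y\|$; hence $\|f(x)-f(y)\|\le L\rho_A(x,y)\le L\|x-y\|$, i.e.\ $f$ restricted to $B_r(x_0)$ is Lipschitz continuous. Therefore $f$ is locally Lipschitz continuous on $A$, and in particular continuous. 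I do not expect any genuine obstacle here; the only point requiring a little care is to notice in parts (ii) and (iii) that $\rho_A$ is bounded above by the intrinsic metric of the convex subset (segment or ball) one is working in, which is exactly the content of the second statement of Lemma~\ref{Lipconnec0}, so that the straight-line curve is admissible for computing $\rho_A$.
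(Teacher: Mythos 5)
Your proposal is correct and follows essentially the same route as the paper, which simply observes that the lemma is an immediate consequence of Lemma~\ref{Lipconnec0}: all three parts rest on the comparison $\|x-y\|\le\rho_A(x,y)$ together with the equality $\rho_A(x,y)=\|x-y\|$ when $\overline{x,y}\subset A$, exactly as you use. Your detour in part (i) through $l(f\circ\gamma)$ could be shortened to $\|f(x)-f(y)\|\le L\|x-y\|\le L\rho_A(x,y)$, but this is only a cosmetic difference.
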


\begin{proof}
	The lemma is an immediate consequence of Lemma \ref{Lipconnec0}.
\end{proof}

\begin{Lem} \label{condfullf0} 
	Let $d \in \N$ and let
	 $\emptyset \not=M \subset \R^{d}$ be 
	 closed and a $C^{1}$-hypersurface.
	 Then for all $x,y \in \R^{d}$ and all $\varepsilon > 0$ there exists a continuous function $\gamma\colon [0,1]\to \R^{d}$ such that $\gamma(0) = x$, $\gamma(1) = y$, $l(\gamma) < \|x-y\| + \varepsilon$ and $|\gamma([0,1]) \cap M|< \infty$.
\end{Lem}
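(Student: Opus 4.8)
The plan is to construct the desired path $\gamma$ as a concatenation of finitely many straight line segments that together form a polygonal path from $x$ to $y$ whose total length only slightly exceeds $\|x-y\|$ and which meets $M$ in only finitely many points. The underlying geometric idea is that a generic straight line in $\R^d$ (with $d\ge 2$) intersects a $C^1$-hypersurface only in isolated points, because locally $M$ is the zero set of a $C^1$-function with non-vanishing gradient, so the restriction of that function to a line is $C^1$ with non-vanishing derivative at each zero — hence the zeros are isolated; since $M$ is closed, the intersection with any compact segment is finite. The difficulty is that the straight segment $\overline{x,y}$ itself may fail to be generic (it could lie inside $M$ or be tangent along a positive-length subset), so we cannot simply take $\gamma$ to parametrize $\overline{x,y}$.

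First I would dispose of the trivial case $d=1$, where $M$ is a (relatively open, hence by closedness also closed, thus clopen) subset of $\R$ that is a $C^1$-hypersurface, meaning locally it looks like $\{0\}$; such an $M$ is discrete and closed, hence finite intersection with any segment is automatic, and one simply takes $\gamma$ to be the affine parametrization of $\overline{x,y}$. For $d\ge 2$, if $x=y$ the constant path works (its image is the single point $x$, which meets $M$ in at most one point), so assume $x\ne y$. The key step is a perturbation argument: pick a point $z$ in the open ball $B_{\varepsilon/4}(\tfrac{x+y}{2})$ and consider the two-segment polygonal path through $x$, $z$, $y$. Its length is $\|x-z\|+\|z-y\| \le \|x-y\| + 2\|z-\tfrac{x+y}{2}\| < \|x-y\|+\varepsilon$ for any such $z$. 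It remains to choose $z$ so that both segments $\overline{x,z}$ and $\overline{z,y}$ meet $M$ in only finitely many points.

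For that, I would show that for each fixed endpoint $p\in\{x,y\}$, the set of directions $v\in S^{d-1}$ such that the ray from $p$ in direction $v$ meets $M$ in a non-discrete set (within some compact range of parameters) has measure zero on the sphere; equivalently, the set of ``bad'' points $z$ in a small ball is Lebesgue-null. The cleanest route: cover the relevant compact region by finitely many chart domains $U_i$ on which $M\cap U_i = g_i^{-1}(0)$ for a $C^1$ function $g_i$ with $\nabla g_i\ne 0$; on each $U_i$, for all but a null set of lines, $g_i$ restricted to the line is non-degenerate at its zeros (this follows from Sard-type / transversality reasoning, or more elementarily from the implicit function theorem applied to the map $(z,t)\mapsto g_i(p+t(z-p))$ and Fubini), so the intersection with $M\cap U_i$ is discrete, hence finite on a compact subsegment; away from the chart domains $M$ does not intersect the segments. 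Taking a $z$ avoiding the finite union of null sets for both endpoints $x,y$ (and inside $B_{\varepsilon/4}$ of the midpoint) then yields a polygonal path $x\to z\to y$ with $l(\gamma)<\|x-y\|+\varepsilon$ and $|\gamma([0,1])\cap M|<\infty$. The main obstacle is making the ``generic line meets $M$ finitely often'' claim rigorous in a self-contained way: one must handle the patching of charts, the compactness needed to pass from ``discrete'' to ``finite'', and the measure-zero bookkeeping so that a single good $z$ can be chosen simultaneously for both segments; I expect the transversality/Fubini argument on each chart to be the technical heart of the proof.
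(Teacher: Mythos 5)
The paper does not actually prove this lemma: it is quoted from the literature (the proof given in Section~\ref{Appendix} is just the citation \cite[Lemma 31]{LeSt2022}), so your self-contained argument is necessarily a different route. Your strategy --- replace $\overline{x,y}$ by a two-segment polygonal detour through a generic point $z$ near the midpoint, and show that for almost every $z$ both segments meet $M$ only finitely often --- is sound, and the ingredients you name do suffice: on each chart one can take $g_i=\phi_{i,d}$, so $M\cap U_i=g_i^{-1}(0)$ with nonvanishing gradient; the map $F(z,t)=g_i(p+t(z-p))$ is $C^1$, $0$ is a regular value of $F$ on $\{t>0\}$, and Sard's theorem for $C^1$ maps between $d$-dimensional manifolds (applied to the projection of $F^{-1}(0)\cap\{t>0\}$ onto the $z$-variable) is legitimate, so the measure-zero bookkeeping and the length estimate $\|x-z\|+\|z-y\|\le\|x-y\|+2\|z-\tfrac{x+y}{2}\|$ go through. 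What your sketch buys is independence from \cite{LeSt2022}; what it costs is precisely the technical work you flag (finitely many charts covering $M\cap K$ for a compact $K$ containing all candidate segments, and the passage from ``isolated zeros'' to ``finitely many'' via closedness of $M$: an accumulation point of intersection points would lie in $M$, hence in some chart, contradicting transversality there).

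Two points need explicit care beyond what you wrote. First, the degenerate case $t=0$ when the endpoint $p\in\{x,y\}$ lies on $M$: there $\nabla_zF(z,0)=0$, so the parametric transversality argument does not apply at $t=0$, and a priori the zeros with $t>0$, though isolated, could accumulate at the endpoint (think of the graph of $t^2\sin(1/t)$ approached along its tangent line). This is repaired by additionally excluding the null set (a hyperplane in $z$) where $\nabla g_i(p)\cdot(z-p)=0$; then $h(t)=g_i(p+t(z-p))$ has $h(0)=0$, $h'(0)\neq 0$, so there are no zeros for small $t>0$. Second, your parenthetical in the case $d=1$ (``relatively open, hence clopen'') is incorrect --- a nonempty clopen subset of $\R$ would be all of $\R$ --- but harmless, since the correct statement you actually use is that a $0$-dimensional hypersurface is discrete, and discrete plus closed gives finiteness on compact segments. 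With these repairs your argument is a valid alternative proof of the lemma.
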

\begin{proof}
	See~\cite[Lemma 31]{LeSt2022}.
\end{proof}

\begin{Lem}\label{top1}
	Let  $d\in \N$, let 
	$\emptyset \not=A\subset\R^d$
	be open and let $K\subset A$ be compact. Then there exists $\eps\in (0,\infty)$ such that $K^\eps \subset A$.
\end{Lem}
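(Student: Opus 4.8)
The plan is to argue by contradiction, extracting a convergent subsequence from a hypothetical sequence of points that lie in successively smaller neighbourhoods of $K$ but outside $A$. This is the standard "tube lemma" type argument and it only uses compactness of $K$, openness of $A$, and the containment $K \subset A$.

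First I would suppose, for contradiction, that for every $n \in \N$ the set $K^{1/n}$ is not contained in $A$. Then for each $n$ there exists a point $x_n \in K^{1/n} \setminus A$. By definition of $K^{1/n}$ we have $d(x_n, K) < 1/n$, so there is a point $y_n \in K$ with $\|x_n - y_n\| < 1/n$. Since $K$ is compact, the sequence $(y_n)_{n\in\N}$ has a subsequence $(y_{n_k})_{k\in\N}$ converging to some $y_0 \in K$. Because $\|x_{n_k} - y_{n_k}\| < 1/n_k \to 0$, the subsequence $(x_{n_k})_{k\in\N}$ also converges to $y_0$.

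Next I would use that $A$ is open: since $y_0 \in K \subset A$, there exists $r \in (0,\infty)$ with $B_r(y_0) \subset A$. As $x_{n_k} \to y_0$, for all sufficiently large $k$ we have $x_{n_k} \in B_r(y_0) \subset A$, contradicting $x_{n_k} \notin A$. Hence there must exist $n \in \N$ with $K^{1/n} \subset A$, and taking $\eps = 1/n$ proves the lemma.

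There is essentially no real obstacle here; the only mild subtlety is the degenerate case $K = \emptyset$, for which $K^\eps = \emptyset \subset A$ holds for any $\eps > 0$ (indeed any $\eps$), so the statement is trivial; one should also note $K \neq \emptyset$ is implicitly allowed by $\emptyset \neq A$ together with $K \subset A$ only if $K$ is assumed nonempty, but the argument above does not need $K \neq \emptyset$ once the empty case is dispatched separately. The compactness of $K$ is exactly what makes the subsequence extraction work, and it is genuinely needed (the statement fails for $K = A = \R^d \setminus \{0\}$ with $K$ merely closed in $A$ but not compact, say via points approaching $0$).
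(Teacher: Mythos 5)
Your proof is correct and follows essentially the same route as the paper: a contradiction argument with a sequence $x_n\in K^{1/n}\setminus A$ and a convergent subsequence obtained from compactness. The only cosmetic difference is that you extract the subsequence from nearby points $y_n\in K$ and invoke openness of $A$ at the limit, whereas the paper applies Bolzano--Weierstrass to $(x_n)$ itself and uses closedness of $K$ and of $\R^d\setminus A$; both are the same argument in substance.
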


\begin{proof}
	Assume, in contrary, that, for every $n\in\N$, there exists $x_n\in K^{1/n} \setminus A$. Since $K$ is bounded, the sequence $(x_n)_{n\in \N}$ is bounded. Hence, there exists $x_0\in\R^d$ and a subsequence $(x_{n_k})_{k\in\N}$ such that $\lim_{k\to \infty} x_{n_k} = x_0$. Since $x_{n_k}\in \R^d\setminus A$ for every $k\in\N$ and $\R^d\setminus A$ is closed, we conclude that $x_0\in \R^d\setminus A$. On the other hand, we have $d(x_0,K) \le \|x_0 - x_{n_k}\| + d(x_{n_k} ,K) \le  \|x_0 - x_{n_k}\| + 1/n_k$ for every $k\in\N$, which implies $d(x_0,K) =0$. Since $K$ is closed, we conclude $x_0\in K$, which contradicts  $x_0\in \R^d\setminus A$.
\end{proof}

\begin{Lem} \label{parting0} 
	Let $d\in \N$, 
	let
	$\emptyset\neq A \subset \R^{d}$ be open and let $\gamma\colon [0,1] \to A$ be continuous. Then there exists
	$n_0 \in \N$ such that for every $n\geq n_0$,
	\[
	\bigcup_{i = 1}^{n} \overline{\gamma((i-1)/n),\gamma(i/n)} \subset A. 
	\]
\end{Lem}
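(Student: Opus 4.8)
\textbf{Proof proposal for Lemma~\ref{parting0}.}

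The plan is to exploit uniform continuity of $\gamma$ together with compactness of its image to find a single radius $r$ that works, and then choose $n_0$ so that consecutive sample points, as well as the whole segment between them, lie in a ball of radius $r$ contained in $A$. First I would note that $K := \gamma([0,1])$ is a compact subset of the open set $A$, so by Lemma~\ref{top1} there exists $\eps \in (0,\infty)$ with $K^{\eps} \subset A$. Next, since $\gamma$ is continuous on the compact interval $[0,1]$, it is uniformly continuous, so there is $\delta \in (0,\infty)$ such that $\|\gamma(s) - \gamma(t)\| < \eps/2$ whenever $|s-t| \le \delta$. Choose $n_0 \in \N$ with $1/n_0 \le \delta$.

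Now fix $n \ge n_0$ and $i \in \{1,\dots,n\}$. For every $\lambda \in [0,1]$ the point $z := \lambda\,\gamma((i-1)/n) + (1-\lambda)\,\gamma(i/n)$ lies on the segment $\overline{\gamma((i-1)/n),\gamma(i/n)}$, and since $1/n \le \delta$ we have $\|\gamma((i-1)/n) - \gamma(i/n)\| < \eps/2$; hence $\|z - \gamma(i/n)\| \le \|\gamma((i-1)/n) - \gamma(i/n)\| < \eps/2 < \eps$. Because $\gamma(i/n) \in K$, this gives $d(z,K) \le \|z-\gamma(i/n)\| < \eps$, so $z \in K^{\eps} \subset A$. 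As $i$ and $\lambda$ were arbitrary, $\bigcup_{i=1}^{n} \overline{\gamma((i-1)/n),\gamma(i/n)} \subset A$, which is the claim.

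There is no real obstacle here; the only mild subtlety is the logical ordering—one must pin down $\eps$ (from compactness of the image inside $A$) before invoking uniform continuity to get the modulus $\delta$, and then convert $\delta$ into a threshold $n_0$ on the mesh size. The estimate on the segment uses only the triangle inequality and the fact that a point of a segment of length $< \eps/2$ is within $\eps/2$ of either endpoint, so nothing beyond Lemma~\ref{top1} and elementary uniform continuity is needed.
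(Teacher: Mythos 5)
Your proof is correct and follows essentially the same route as the paper: compactness of $\gamma([0,1])$ plus Lemma~\ref{top1} to get an $\eps$-neighbourhood inside $A$, then uniform continuity to force consecutive sample points (and hence the whole connecting segment, by a convexity/triangle estimate) into that neighbourhood. The only cosmetic difference is that you measure the distance from a segment point to the endpoint $\gamma(i/n)$, while the paper measures it to $\gamma(t)$ for the corresponding parameter $t$; both yield the same conclusion.
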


\begin{proof}
	Since $\gamma$ is continuous, the set $\gamma([0,1])$ is a compact subset of $A$. Hence we obtain by Lemma~\ref{top1} the existence of $\eps\in (0,\infty)$ such that $(\gamma([0,1]))^\eps \subset A$. Moreover, $\gamma$ is uniformly continuous on $[0,1]$, and therefore there exists 
	$n_0 \in \N$ such that for every $n\geq n_0$,
	every $i\in \{1,\dots,n\}$ and every $t\in [(i-1)/n,i/n]$, 
	\begin{align*}
		&	\|\gamma(t) - (nt-(i-1)) \gamma(i/n) - (i- nt)\gamma((i-1)/n)\| \\
		& \qquad\qquad \le	\|\gamma(t) - \gamma(i/n)\| + \| \gamma(t)- \gamma((i-1)/n)\| < \eps.
	\end{align*}
	Thus, $\overline{\gamma((i-1)/n),\gamma(i/n)} \subset  (\gamma([0,1]))^\eps$, which completes the proof of the lemma.
\end{proof}

\begin{Lem} \label{piecewtolip0}  
	Let  $d, m,k \in \N$, 
	let $\emptyset \not=M \subset \R^{d}$,
	let $f\colon  \R^{d} \to \R^{k\times m}$ be continuous
	on $\R^d$
 as well as 
	intrinsic Lipschitz continuous on $\R^d\setminus M$
	and assume that  for all $x,y \in \R^{d}$ and all $\varepsilon > 0$ there exists a continuous function $\gamma\colon [0,1]\to \R^{d}$ such that $\gamma(0) = x$, $\gamma(1) = y$, $l(\gamma) < \|x-y\| + \varepsilon$ and $|\gamma([0,1]) \cap M| < \infty$.
	Then $f$ is Lipschitz continuous
	on $\R^d$.
\end{Lem}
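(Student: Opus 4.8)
The plan is to fix an intrinsic Lipschitz constant $L$ for $f$ on $\R^d\setminus M$ and to show that $\|f(x)-f(y)\|\le L\|x-y\|$ for all $x,y\in\R^d$; the continuity of $f$ on all of $\R^d$ together with a density/limiting argument will then take care of pairs of points where one of them lies in $M$. So first I would take arbitrary $x,y\in\R^d$ and an arbitrary $\varepsilon>0$, and invoke the hypothesis to obtain a continuous curve $\gamma\colon[0,1]\to\R^d$ with $\gamma(0)=x$, $\gamma(1)=y$, $l(\gamma)<\|x-y\|+\varepsilon$, and $\gamma([0,1])\cap M$ finite, say $\gamma([0,1])\cap M=\{\gamma(t_1),\dots,\gamma(t_r)\}$ with $0\le t_1<\dots<t_r\le 1$ (these are finitely many parameter values after possibly collapsing coincidences; the number of parameter values hitting $M$ need not be finite a priori, but the image is finite, which is what we use).

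The main step is a covering argument on the curve. For each point $\gamma(t)\notin M$ we have, since $\R^d\setminus M$ is open and $f$ is intrinsic Lipschitz there, hence locally Lipschitz by Lemma~\ref{lipimpl0}(iii), a small open ball around $\gamma(t)$ contained in $\R^d\setminus M$ on which $f$ is genuinely Lipschitz with constant $L$ (using convexity of balls and Lemma~\ref{lipimpl0}(ii) with the same constant). Away from the finitely many exceptional points we cover the compact curve; near each exceptional point $\gamma(t_j)$ we use continuity of $f$ at $\gamma(t_j)$ to make the oscillation of $f$ there as small as we like. More precisely: given $\eta>0$, choose around each $\gamma(t_j)$ a parameter interval $(a_j,b_j)\ni t_j$ so small that $\|f(\gamma(s))-f(\gamma(t_j))\|<\eta$ for $s\in[a_j,b_j]$; on the complementary compact set of parameters, $\gamma$ stays in $\R^d\setminus M$, so by compactness and the local Lipschitz property there is a partition $0=s_0<s_1<\dots<s_N=1$ refining the $a_j,b_j$ such that on each subinterval not straddling an exceptional point the segment $\overline{\gamma(s_{i-1}),\gamma(s_i)}$ (for a fine enough partition, using Lemma~\ref{parting0}) lies in $\R^d\setminus M$ and $\|f(\gamma(s_i))-f(\gamma(s_{i-1}))\|\le L\|\gamma(s_i)-\gamma(s_{i-1})\|$. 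Summing the telescoping differences,
\[
\|f(x)-f(y)\|\le \sum_{i=1}^N \|f(\gamma(s_i))-f(\gamma(s_{i-1}))\| \le L\, l(\gamma) + 2r\eta \le L(\|x-y\|+\varepsilon)+2r\eta,
\]
where the $2r\eta$ bounds the contribution of the (at most $r$) subintervals around exceptional points via the triangle inequality through $f(\gamma(t_j))$. Letting $\eta\downarrow 0$ and then $\varepsilon\downarrow 0$ gives $\|f(x)-f(y)\|\le L\|x-y\|$.

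The delicate point — and the one I would be most careful about — is the bookkeeping in the covering/partition step: one must simultaneously (i) keep each chosen segment $\overline{\gamma(s_{i-1}),\gamma(s_i)}$ inside $\R^d\setminus M$ except for the few straddling the exceptional points, (ii) keep the sum of segment lengths bounded by $l(\gamma)$ (which holds automatically by the definition of $l(\gamma)$ as a supremum over partitions), and (iii) absorb the exceptional segments into an error term that vanishes. Using Lemma~\ref{parting0} applied to the open set $\R^d\setminus M$ and to appropriate sub-curves of $\gamma$ between consecutive exceptional points handles (i); a uniform continuity argument for $\gamma$ on the compact complement of the small exceptional windows, combined with the local Lipschitz constant $L$ being uniform (it is the intrinsic constant, reused via convex balls), handles the estimate on non-exceptional segments. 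Since $\varepsilon$ and $\eta$ were arbitrary, the inequality holds for all $x,y\in\R^d$, which is the assertion. One small subtlety: the statement only asserts the image $\gamma([0,1])\cap M$ is finite, not the preimage; but that is exactly enough, because we only need finitely many parameter windows $(a_j,b_j)$, one per point of $M$ met by $\gamma$, to cover $\gamma^{-1}(M)$ up to a compact set on which $\gamma$ avoids $M$.
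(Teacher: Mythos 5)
Your overall strategy is the same as the paper's: take the near-geodesic curve $\gamma$ from the hypothesis, telescope $f$ along a partition of $[0,1]$, bound the pieces that avoid $M$ by the intrinsic Lipschitz constant $L$ so that their total contribution is at most $L\,l(\gamma)<L(\|x-y\|+\eps)$, absorb the pieces near the finitely many exceptional points using continuity of $f$, and let the error parameters tend to zero. The paper's execution is leaner, though: it partitions only at the exceptional parameters and, on each subinterval $[t_{k-1},t_k]$, passes to $[t_{k-1}+h,t_k-h]$, notes that $\gamma$ restricted to this interval is itself a continuous path in $\R^d\setminus M$, so that $\|f(\gamma(t_k-h))-f(\gamma(t_{k-1}+h))\|\le L\,\rho_{\R^d\setminus M}(\gamma(t_{k-1}+h),\gamma(t_k-h))\le L\,l(\gamma_{|[t_{k-1},t_k]})$ follows directly from the definition of the intrinsic metric, and then lets $h\downarrow 0$ using continuity of $f\circ\gamma$. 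This avoids your detour through balls, straight segments, Lemma~\ref{parting0} and the extra $\eta$-windows, and no $2r\eta$ error term is needed.

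There is one step in your write-up that does not hold as stated, namely your resolution of the image-versus-preimage subtlety. Choosing a single parameter window $(a_j,b_j)$ around one parameter $t_j$ with $\gamma(t_j)=p_j$ does not cover $\gamma^{-1}(p_j)$ in general: the curve may return to $p_j$ at other, possibly infinitely many, parameter values, so on the complementary compact parameter set $\gamma$ need not stay in $\R^d\setminus M$, and the segments you produce there via Lemma~\ref{parting0} need not lie in $\R^d\setminus M$. The obvious patch (cover the compact set $\gamma^{-1}(p_j)$ by finitely many parameter intervals mapped into a small ball around $p_j$) does not immediately close the gap either, because the number of such intervals depends on $\eta$ and the resulting error need not vanish as $\eta\downarrow 0$. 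To be fair, the paper's own proof makes the same tacit reduction: it chooses finitely many parameters $t_0<\dots<t_K$ containing every parameter at which $\gamma$ meets $M$, which is only possible when the preimage is finite, whereas the hypothesis only guarantees finiteness of the image $\gamma([0,1])\cap M$. A clean repair compatible with both arguments is an induction on $r=|\gamma([0,1])\cap M|$: with $a=\min\gamma^{-1}(M)$, $p=\gamma(a)$ and $b=\max\gamma^{-1}(p)$, split $\|f(\gamma(1))-f(\gamma(0))\|$ at $a$ and $b$; the middle term vanishes because $f\circ\gamma$ equals $f(p)$ at both parameters, the initial piece avoids $M$ and is handled by the $h\downarrow 0$ argument, and $\gamma_{|[b+h,1]}$ meets $M$ in at most $r-1$ points, so the induction hypothesis applies and one again lets $h\downarrow 0$. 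With this (or a comparable) argument supplied, your proof goes through and yields the same bound $\|f(x)-f(y)\|\le L(\|x-y\|+\eps)$ as in the paper.
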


Lemma~\ref{piecewtolip0} is proven in~\cite[Lemma 3.6]{LS17}, see, however
Remark~\ref{SL1}(i).
For the convenience of the reader we  present a proof of Lemma~\ref{piecewtolip0}.
\begin{proof}
	Let $L\in (0,\infty)$ be an intrinsic Lipschitz constant for $f$ on $\R^d\setminus M$, let $x,y\in \R^d$ and let $\eps>0$. By assumption, there exists a continuous function $\gamma\colon [0,1]\to \R^{d}$ such that $\gamma(0) = x$, $\gamma(1) = y$, $l(\gamma) < \|x-y\| + \varepsilon$ and $|\gamma([0,1]) \cap M| < \infty$.
	
	Let $K\in\N$ and let $0=t_0 < \dots < t_K = 1$ such that $\gamma([0,1]) \cap M\subset \{t_0,\dots, t_K\}$. Since $f\circ \gamma$ is continuous we obtain
	\begin{equation}\label{f1}
	\|f(x) - f(y)\| \le \sum_{k=1}^K \| f(\gamma(t_k)) -f(\gamma(t_{k-1}))\| = \lim_{h\downarrow 0}\sum_{k=1}^K \| f(\gamma(t_k-h)) -f(\gamma(t_{k-1}+h))\|.
 	\end{equation}
 Let $k\in \{1,\dots,K\}$ and $h>0$ such that $t_{k-1} + h < t_k -h $. Then $\gamma([t_{k-1}+h,t_k-h]) \subset \R^d\setminus M$ and we obtain
 \begin{equation}\label{f2}
 	\begin{aligned}
 \| f(\gamma(t_k-h)) -f(\gamma(t_{k-1}+h))\| &  \le L\rho_{\R^d\setminus M} (\gamma(t_{k-1}+h), \gamma(t_k-h))\\
  & \le L l(\gamma_{| [t_{k-1}+h, t_k-h]} ) \le L l(\gamma_{| [t_{k-1}, t_k]} ).
 \end{aligned}
\end{equation}
 Combining \eqref{f1} and \eqref{f2} we obtain
\[
	\|f(x) - f(y)\| \le L \sum_{k=1}^K l(\gamma_{| [t_{k-1}, t_k]} ) = L l(\gamma) <  L (\|x-y\| + \eps).
\]
Letting $\eps $ tend to zero completes the proof of the lemma.
\end{proof}

\begin{Lem} \label{diffintr0} 
	Let $d,m\in \N$, let
	$\emptyset\neq A \subset \R^{d}$ be open and let $f\colon  A \to \R^{m}$ be differentiable with $\|f'\|_{\infty} < \infty$. Then $f$ is intrinsic Lipschitz continuous with intrinsic Lipschitz constant $\|f'\|_{\infty}$.
\end{Lem}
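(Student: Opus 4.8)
\textbf{Proof proposal for Lemma~\ref{diffintr0}.}

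The plan is to exploit the fact that an open set is locally path-connected, so that any two points $x,y\in A$ can be joined by a polygonal path staying in $A$, and then to control the increment of $f$ along each segment of such a path by the mean value inequality, whose constant is the supremum $\|f'\|_\infty$ of the operator norm of the derivative.

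First I would recall the definition of the intrinsic metric $\rho_A$ on $A$ in terms of lengths of continuous curves joining two points, and fix $x,y\in A$ together with an arbitrary continuous $\gamma\colon[0,1]\to A$ with $\gamma(0)=x$ and $\gamma(1)=y$; it suffices to show $\|f(x)-f(y)\|\le \|f'\|_\infty\, l(\gamma)$, since taking the infimum over all such $\gamma$ then gives $\|f(x)-f(y)\|\le\|f'\|_\infty\,\rho_A(x,y)$. By Lemma~\ref{parting0} there is $n_0\in\N$ such that for every $n\ge n_0$ the union of segments $\bigcup_{i=1}^n \overline{\gamma((i-1)/n),\gamma(i/n)}$ is contained in $A$. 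Fix such an $n$ and write $x_i=\gamma(i/n)$ for $i=0,\dots,n$, so that each segment $\overline{x_{i-1},x_i}$ lies in $A$.

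Next, for each $i$ I would apply the standard mean value inequality to the $C^1$ (indeed merely differentiable with bounded derivative) function $t\mapsto f(x_{i-1}+t(x_i-x_{i-1}))$ on $[0,1]$, using that its derivative $f'(x_{i-1}+t(x_i-x_{i-1}))(x_i-x_{i-1})$ has norm at most $\|f'\|_\infty\|x_i-x_{i-1}\|$ (with $\|f'(z)\|$ interpreted as the operator norm, bounded by the Frobenius norm appearing in $\|f'\|_\infty$, or working throughout with the same matrix norm). This yields $\|f(x_i)-f(x_{i-1})\|\le \|f'\|_\infty\|x_i-x_{i-1}\|$. Summing over $i$ and using the triangle inequality gives
\[
\|f(x)-f(y)\|\le \sum_{i=1}^n \|f(x_i)-f(x_{i-1})\|\le \|f'\|_\infty\sum_{i=1}^n\|\gamma(i/n)-\gamma((i-1)/n)\|\le \|f'\|_\infty\, l(\gamma),
\]
the last inequality by the very definition of $l(\gamma)$ as a supremum over partitions. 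Taking the infimum over $\gamma$ completes the proof.

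There is essentially no serious obstacle here; the only point requiring a little care is the interplay between the vector-valued mean value inequality and the choice of matrix norm implicit in $\|f'\|_\infty$, but since all norms on the finite-dimensional space $\R^{m\times d}$ are equivalent and the statement is about the Frobenius norm used in the paper, one simply notes $\|Av\|\le\|A\|\,\|v\|$ for the Frobenius norm $\|A\|$ and Euclidean $\|v\|$, which holds by Cauchy--Schwarz applied row-by-row. The appeal to Lemma~\ref{parting0} is what guarantees the polygonal path stays inside $A$, so that the mean value inequality may legitimately be applied on each segment.
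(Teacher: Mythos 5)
Your proposal is correct and follows essentially the same route as the paper: reduce to an arbitrary continuous curve $\gamma$ in $A$ (the case $\rho_A(x,y)=\infty$ being trivial), invoke Lemma~\ref{parting0} to obtain a polygonal chain of segments contained in $A$, apply the mean value inequality on each segment, and bound the resulting sum by $l(\gamma)$ before taking the infimum. The extra remark on the Frobenius versus operator norm is a fine point of care but does not change the argument.
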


Lemma~\ref{diffintr0} is  proven in~\cite[Lemma 3.8]{LS17}, see, however, Remark~\ref{SL1}(ii).
For the convenience of the reader we present a proof of Lemma~\ref{diffintr0}.

\begin{proof}
Let $x,y \in A$. Clearly, we may assume $\rho_A(x,y) <\infty$. Then there exists a continuous function $\gamma\colon[0,1] \to A$ with $\gamma(0) = x$ and $\gamma(1) = y$. By Lemma \ref{parting0} there exist $n\in\N$ and $0 = t_{0} < t_{1} <  \dots < t_{n} = 1$ such that  $\overline{\gamma(t_{i-1}),\gamma(t_{i})} \subset A$ for all $i \in \{ 1, \dots, n \}$. Hence, by the mean value theorem, 
\begin{align*} 
\|f(y)-f(x)\| & \leq \sum_{i = 1}^{n} \|f(\gamma(t_{i}))- f(\gamma(t_{i-1}))\| \\
& \leq 			\sum_{i = 1}^{n}\sup_{x\in \overline{\gamma(t_{i-1}),\gamma(t_{i})}}\|f'(x)\| \|\gamma(t_{i}) - \gamma(t_{i-1}) \| \\
&   \leq \|f'\|_{\infty} \sum_{i = 1}^{n} \|\gamma(t_{i}) - \gamma(t_{i-1}) \| \\
& \leq  \norm{f'}_{\infty} \sum_{i = 1}^{n} l(\gamma_{|[t_{i-1}, t_{i}]}) = 
\|f'\|_{\infty}\, l(\gamma).\qedhere
\end{align*}
\end{proof}

\begin{Lem} \label{comp0}
	Let $d,m \in \N$, let $\emptyset\neq B \subset \R^{d}$ and $\emptyset\neq A \subset \R^{m}$ be open, let $g\colon  B \to A$
	be intrinsic Lipschitz continuous with intrinsic Lipschitz constant $L_{g}$, let $f \colon  A \to \R^{m}$ be intrinsic Lipschitz continuous with intrinsic Lipschitz constant $L_{f}$.
	Then $f \circ g\colon  B \to \R^{m}$ is intrinsic Lipschitz continuous with intrinsic Lipschitz constant $L_{f} L_{g}$.
\end{Lem}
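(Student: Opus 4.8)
The plan is to prove Lemma~\ref{comp0} directly from the definition of intrinsic Lipschitz continuity together with the basic behaviour of curve length under composition with a Lipschitz map, mimicking the classical argument for ordinary Lipschitz functions but carried out in the intrinsic metrics $\rho_B$ and $\rho_A$.

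First I would fix $x,y\in B$ and, since $\rho_B$ may take the value $\infty$, observe that the claimed inequality $\|f(g(x))-f(g(y))\|\le L_fL_g\,\rho_B(x,y)$ is trivial unless $\rho_B(x,y)<\infty$, so assume the latter. The key intermediate step is to show that $\rho_A(g(x),g(y))\le L_g\,\rho_B(x,y)$. To see this, let $\gamma\colon[0,1]\to B$ be any continuous curve with $\gamma(0)=x$, $\gamma(1)=y$; then $g\circ\gamma\colon[0,1]\to A$ is a continuous curve joining $g(x)$ and $g(y)$ (continuity of $g$ follows from Lemma~\ref{lipimpl0}(iii), since $A$ is open), and for any partition $0\le t_0<\dots<t_n\le 1$ we have
\[
\sum_{k=1}^n \|g(\gamma(t_k))-g(\gamma(t_{k-1}))\|\le L_g\sum_{k=1}^n \rho_B(\gamma(t_k),\gamma(t_{k-1}))\le L_g\sum_{k=1}^n l\bigl(\gamma_{|[t_{k-1},t_k]}\bigr)=L_g\,l(\gamma),
\]
where the first inequality uses that $g$ is intrinsic Lipschitz on $B$ and the second uses that $\rho_B(\gamma(t_{k-1}),\gamma(t_k))\le l(\gamma_{|[t_{k-1},t_k]})$ since the restricted curve joins these two points. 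Taking the supremum over partitions gives $l(g\circ\gamma)\le L_g\,l(\gamma)$, and then taking the infimum over all admissible $\gamma$ yields $\rho_A(g(x),g(y))\le L_g\,\rho_B(x,y)$; in particular $\rho_A(g(x),g(y))<\infty$.

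Finally I would simply combine this with the intrinsic Lipschitz property of $f$ on $A$:
\[
\|f(g(x))-f(g(y))\|\le L_f\,\rho_A(g(x),g(y))\le L_fL_g\,\rho_B(x,y),
\]
which is exactly the assertion, with $L_fL_g$ as intrinsic Lipschitz constant. Since $x,y\in B$ were arbitrary, this completes the proof.

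There is no real obstacle here; the only point that needs slight care is the handling of the possibly infinite values of $\rho_B$ and $\rho_A$ (which is dispatched at the outset by reducing to the case $\rho_B(x,y)<\infty$) and the fact that one must estimate $l(g\circ\gamma)$ via finite partitions of $\gamma$ itself rather than of $g\circ\gamma$, so that the intrinsic Lipschitz bound on $g$ can be applied along subarcs of $\gamma$ whose length is controlled. Everything else is a routine sup/inf manipulation.
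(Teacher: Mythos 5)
Your proof is correct, but it reaches the conclusion by a somewhat different route than the paper. You first establish the clean intermediate fact that $g$ contracts intrinsic metrics, $\rho_A(g(x),g(y))\le L_g\,\rho_B(x,y)$, by bounding the length of the image curve: for an arbitrary partition of an arbitrary connecting curve $\gamma$ in $B$ you estimate $\sum_k\|g(\gamma(t_k))-g(\gamma(t_{k-1}))\|\le L_g\sum_k l(\gamma_{|[t_{k-1},t_k]})\le L_g\,l(\gamma)$, take the supremum to get $l(g\circ\gamma)\le L_g\,l(\gamma)$, and then take the infimum over $\gamma$; composing with the intrinsic Lipschitz bound for $f$ finishes the argument. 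The paper instead fixes one curve $\gamma$, applies Lemma~\ref{parting0} to the continuous image curve $g\circ\gamma$ to produce a partition whose consecutive image points are joined by straight segments lying in the open set $A$, and then uses Lemma~\ref{Lipconnec0} to identify $\rho_A$ with the Euclidean distance on those pairs before invoking the intrinsic Lipschitz property of $g$. Your version is more elementary and slightly more general: it avoids Lemmas~\ref{parting0} and~\ref{Lipconnec0} entirely and never uses openness of $A$ (openness of the domain of $g$ enters only through Lemma~\ref{lipimpl0}(iii) to guarantee that $g\circ\gamma$ is a continuous curve), and the intermediate inequality $\rho_A(g(x),g(y))\le L_g\,\rho_B(x,y)$ is a reusable statement in its own right; the paper's approach buys a shorter chain of displayed estimates at the price of relying on the geometric chord-selection lemma. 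Two cosmetic remarks: continuity of $g$ requires openness of its domain $B$, not of $A$ as you wrote (harmless here, since $B$ is assumed open), and your equality $\sum_k l(\gamma_{|[t_{k-1},t_k]})=l(\gamma)$ holds for partitions with $t_0=0$, $t_n=1$, while for partitions with $0\le t_0$ and $t_n\le 1$ one should write ``$\le$''; the estimate you need is unaffected.
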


Lemma~\ref{comp0} is  proven in~\cite[Lemma 3.9]{LS17}, see, however, Remark~\ref{SL1}(iii). 
For the convenience of the reader we  present a proof of Lemma~\ref{comp0}.

\begin{proof}
	Let $\rho_{B}$ be the intrinsic metric for $B$ and let $\rho_{A}$ be the intrinsic metric for $A$. Let $x,y \in B$. Clearly, we may assume $\rho_B(x,y) <\infty$. Then there exists a continuous function  $\gamma\colon [0,1] \to B$  with $\gamma(0) = x$ and $\gamma(1) = y$. Lemma \ref{lipimpl0}(iii) implies that $g \circ \gamma\colon  [0,1] \to A$ is  continuous.
	 We therefore obtain by Lemma \ref{parting0}  that there exist 
		 $n\in\N$ and
	 $0 = t_{0} < t_{1} < \dots < t_{n} = 1$ such that for all $i \in \{ 1,\dots, n\}$ we have $\overline{g(\gamma(t_{i-1})),g(\gamma(t_{i}))} \subset A$. Employing  Lemma \ref{Lipconnec0} we conclude that
	\begin{align*} 
		\|(f \circ g)(x) - (f \circ g)(y)\| &  \leq L_{f} \rho_{A}(g(x), g(y) )\leq L_{f} \sum_{i = 1}^{n} \rho_{A}(g(\gamma(t_{i-1})),g(\gamma(t_{i})))  \\
		& = L_{f} \sum_{i = 1}^{n} \|g(\gamma(t_{i})) -g(\gamma(t_{i-1}))\|
		\leq L_{f} L_{g} \sum_{i = 1}^{n} \rho_{B}(\gamma(t_{i-1}),\gamma(t_{i})) \\&\leq L_{f} L_{g} \sum_{i= 1}^{n} l(\gamma_{|[t_{i-1},t_{i}]}) = L_{f}L_{g} l(\gamma),
	\end{align*}
which completes the proof of the lemma.
\end{proof}

\begin{Rem}\label{SL1} We comment on the proofs of Lemma 3.6, Lemma 3.8 and Lemma 3.9 in~\cite{LS17} corresponding to Lemma~\ref{piecewtolip0}, Lemma~\ref{diffintr0} and Lemma~\ref{comp0}, respectively. We use the notation from~\cite{LS17}. 
	\begin{itemize}
		\item[(i)] In the proof of Lemma 3.6 in~\cite{LS17}, the case distinction is not complete: since $f$ is assumed to be intrinsic Lipschitz continuous on $\R^d\setminus \Theta$, the inequality $\|f(x)-f(y)\| \le L\rho(x,y) $ holds only for $x,y\in \R^d\setminus \Theta$ but not for all $x,y\in\R^d$ as stated. Furthermore, $\rho(x,y)$ is only defined for  $x,y\in \R^d\setminus \Theta$ but not for all $x,y\in\R^d$.
		\item[(ii)]  In the proof of Lemma 3.8 in~\cite{LS17}, for $x,y\in A$, a continuous curve $\gamma\colon[0,1]\to A$ is considered, which connects $x$ and $y$. It is stated that, {\it without loss of generality}, there exist $n\in\N$ and points $0  =t_0 < t_1 < \dots < t_n =1$ such that every line segment $s(\gamma(t_{k-1}),\gamma(t_k))$ is in $A$. 
		It seems to us that the latter fact is not straightforward but needs an argument like Lemma~\ref{parting0}, which is applicable because  $A$ is open. 
		\item[(iii)] In the proof of Lemma 3.9 in~\cite{LS17}, the inequality (correcting for obvious typos)
		\[
		\sum_{k=1}^n \|f\circ g(\gamma(t_k))-f\circ g(\gamma(t_{k-1}))\| \le L_f \sum_{k=1}^n \|g(\gamma(t_k))-g(\gamma(t_{k-1}))\| 
		\]
		is wrong, because $f$ is only assumed to be intrinsic Lipschitz continuous on $A$. One can only state that 
		\[
		\|f\circ g(\gamma(t_k))-f\circ g(\gamma(t_{k-1}))\| \le L_f \rho(g(\gamma(t_k)),g(\gamma(t_{k-1})))
		\] 
		for $k=1,\dots,n$. 
 The subsequent inequality
		\[
	 L_f \sum_{k=1}^n \|g(\gamma(t_k))-g(\gamma(t_{k-1}))\| \le L_f \rho (g(x),g(y))	
		\]
		is wrong as well. 
		A simple counterexample can be constructed by taking $n=2$ and $g$ such that $g(x)=g(y)\not=g(\gamma(t_1))$. 
	\end{itemize}
\end{Rem}

\begin{Lem} \label{productnew0}
	Let $d,m,k,\ell\in \N$, let $\emptyset\neq A,B,
	C
	 \subset \R^{d}$ with $A,B\subset C$, let $f\colon C \to \R^{m \times k}$ and $g\colon C \to \R^{k\times\ell}$ be intrinsic Lipschitz continuous on $A$ and bounded on $B$, and let $f$ be constant on $C\setminus B$. Then $f  g\colon C\to\R^{m\times \ell}$ is intrinsic Lipschitz continuous on $A$.
\end{Lem}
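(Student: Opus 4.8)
The plan is to establish the intrinsic Lipschitz estimate $\|(fg)(a)-(fg)(b)\|\le L\,\rho_A(a,b)$ directly for arbitrary $a,b\in A$ (no connectedness or openness of $A$ is needed, since the estimate is vacuous when $\rho_A(a,b)=\infty$), distinguishing cases according to whether $a$ or $b$ lies in $B$. The starting point is the standard product decomposition
\[
f(a)g(a)-f(b)g(b) = f(a)\bigl(g(a)-g(b)\bigr)+\bigl(f(a)-f(b)\bigr)g(b)
\]
(or the one obtained by swapping $a$ and $b$), combined with submultiplicativity of the Frobenius norm $\|PQ\|\le\|P\|\,\|Q\|$, which is the matrix norm fixed in Section~\ref{Notation}. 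Using the intrinsic Lipschitz estimates for $f$ and $g$ on $A$ (with constants $L_f$ and $L_g$), the displayed identity bounds $\|f(a)g(a)-f(b)g(b)\|$ by $\|f(a)\|\,L_g\,\rho_A(a,b)+L_f\,\rho_A(a,b)\,\|g(b)\|$, so the only quantities still to control are $\|f(a)\|$ and $\|g(b)\|$.

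For $\|f(a)\|$ there is nothing to do: since $f$ is constant on $C\setminus B$ and bounded on $B$, it is bounded on all of $C\supseteq A$ by some $M_f$, and its constant value $c_0$ on $C\setminus B$ satisfies $\|c_0\|\le M_f$. For $g$ we only have a bound $M_g$ on $B$, so the decomposition above is useful exactly when $b\in B$; by the symmetric decomposition (and $\|(fg)(a)-(fg)(b)\|=\|(fg)(b)-(fg)(a)\|$) it is equally useful when $a\in B$. This settles the case $\{a,b\}\cap B\neq\emptyset$, with bound $(M_fL_g+L_fM_g)\,\rho_A(a,b)$. In the remaining case $a,b\in C\setminus B$ one has $f(a)=f(b)=c_0$, so $f(a)g(a)-f(b)g(b)=c_0\bigl(g(a)-g(b)\bigr)$ and hence $\|f(a)g(a)-f(b)g(b)\|\le\|c_0\|\,L_g\,\rho_A(a,b)\le M_fL_g\,\rho_A(a,b)$, which is immediate. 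Combining the two cases yields that $fg$ is intrinsic Lipschitz continuous on $A$ with intrinsic Lipschitz constant $M_fL_g+L_fM_g$. (If $C\setminus B=\emptyset$ the second case is vacuous, and $c_0$ is never needed, so no separate treatment is required.)

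I do not expect a real obstacle here: the argument is a short case distinction once one notices that in the product rule the "pivot" can always be chosen so that the only potentially unbounded factor, namely $g$, is evaluated at the point lying in $B$, and that the hypothesis "$f$ is constant on $C\setminus B$" is precisely what makes $f$ globally bounded and makes the leftover case $a,b\notin B$ collapse to a one-line estimate. The only points to get right are bookkeeping: applying Frobenius submultiplicativity, using the symmetry reduction to assume $b\in B$, and keeping the two (resp. three) cases cleanly separated.
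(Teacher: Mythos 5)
Your proof is correct and follows essentially the same route as the paper's: note that $f$ is globally bounded (bounded on $B$, constant on $C\setminus B$), split into the case where one of the two points lies in $B$ (using the product decomposition with $g$ evaluated at that point, so only $\|g\|_{\infty,B}$ and $\|f\|_\infty$ are needed) and the case where both lie outside $B$ (where constancy of $f$ reduces everything to the intrinsic Lipschitz estimate for $g$). The bookkeeping with the symmetric decomposition and the resulting constant matches the paper's argument.
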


\begin{proof}
	Note that $\|f\|_\infty <\infty$ since $f$ is bounded on $B$ and constant on $C\setminus B$. 
	Let $\rho_A$  denote the intrinsic metric for $A$ and let $K\in (0,\infty)$ be an intrinsic Lipschitz constant for  $f$ and for $g$ on $A$. Let $x,y \in A$. First, assume that 
	$x \in B$ or $y \in B$. Without loss of generality we assume that  $x \in B$. We then have
	\begin{align*}
		\|(fg)(x) - (fg)(y)\| &  \leq \|f(x) - f(y)\| \|g(x)\| + \|f(y)\| \|g(x)-g(y)\| \\
		&  \leq K (\|g\|_{\infty,B}  + \|f\|_\infty)\rho_A(x,y).
	\end{align*}
	Next, assume that $x,y \in A\setminus B$. In this case we have $f(x)=f(y)$, and therefore 
	\[
	\|(fg)(x) - (fg)(y)\| \leq \|f(x)\| \|g(x)-g(y)\| \leq K  \|f\|_\infty \rho_A(x,y),
	\]
	which finishes the proof of the lemma.
\end{proof}

\begin{Lem} \label{a0} 
	Let $d\in\N$, let $\emptyset\neq M \subset \R^{d}$ be a $C^{1}$-hypersurface 
	of positive reach
	and let $\nor\colon M \to \R^{d}$ be a normal vector along $M$. Let $f\colon \R^{d} \to \R^{d}$ be piecewise Lipschitz continuous with exceptional set $M$ and
	assume that for all $x \in M$, the limit $\lim_{h \to 0} f(x+h\nor(x))$  exists and coincides with $f(x)$. Then $f$ is continuous.
\end{Lem}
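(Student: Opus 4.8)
\textbf{Proof proposal for Lemma~\ref{a0}.}

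The plan is to fix an arbitrary point $x_0\in\R^d$ and verify sequential continuity of $f$ at $x_0$, distinguishing the case $x_0\in\R^d\setminus M$ from the case $x_0\in M$. If $x_0\in\R^d\setminus M$, then since $M$ is of positive reach it is closed, so $\R^d\setminus M$ is open; by Lemma~\ref{lipimpl0}(iii) the intrinsic Lipschitz continuity of $f$ on the open set $\R^d\setminus M$ gives local Lipschitz continuity, hence continuity, at $x_0$. So the only real work is at a point $x_0\in M$.

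So let $x_0\in M$ and let $(x_n)_{n\in\N}$ be a sequence in $\R^d$ with $x_n\to x_0$; I want $f(x_n)\to f(x_0)$. Fix $\eps\in(0,\reach(M))$. For $n$ large we have $x_n\in M^\eps$, and by Lemma~\ref{sides0} (with the decomposition $M^\eps\setminus M = Q_{\eps,+}\cup Q_{\eps,-}$) each such $x_n$ lies in exactly one of the sets $M$, $Q_{\eps,+}$, $Q_{\eps,-}$. Pass to subsequences according to which of the three alternatives occurs infinitely often; it suffices to show that along each such subsequence $f(x_n)\to f(x_0)$. Along the subsequence with $x_n\in M$ for all $n$, one uses the representation from Lemma~\ref{sides0}: write $x_n = \pr_M(x_n)$, and note $\pr_M(x_n)\to x_0$ by continuity of $\pr_M$ on $M^\eps$ (Lemma~\ref{projdist}(i)) together with $\pr_M(x_0)=x_0$; then I would like to conclude $f(\pr_M(x_n))\to f(x_0)$, but here I only know $f$ is continuous \emph{along the normal} at $x_0$, not on $M$ itself — this is the delicate point, addressed below. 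Along a subsequence with $x_n\in Q_{\eps,+}$ (the case $Q_{\eps,-}$ is symmetric), write $x_n = \pr_M(x_n) + \langle x_n-\pr_M(x_n),\nor(\pr_M(x_n))\rangle\,\nor(\pr_M(x_n))$, set $y_n=\pr_M(x_n)\in M$ and $h_n = \langle x_n-y_n,\nor(y_n)\rangle\in(0,\eps)$, so that $x_n = y_n + h_n\nor(y_n)$ with $y_n\to x_0$ and $h_n=d(x_n,M)\to 0$.

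The main obstacle — and the place where the hypothesis must be used carefully — is to upgrade the one-point, one-direction limit assumption into convergence of $f(y_n + h_n\nor(y_n))$ to $f(x_0)$ when $y_n$ moves inside $M$ and simultaneously $h_n\downarrow 0$. The idea is to compare $f(y_n+h_n\nor(y_n))$ with $f(x_0 + h_n\nor(x_0))$ using the intrinsic Lipschitz bound on $\R^d\setminus M$: the two points $y_n+h_n\nor(y_n)$ and $x_0+h_n\nor(x_0)$ can be joined by a short path staying in $Q_{\eps,+}\subset\R^d\setminus M$ (for instance, first move $x_0$ to $y_n$ along a short curve in $M$, lift it to the curve $t\mapsto \gamma(t)+h_n\nor(\gamma(t))$ in $Q_{\eps,+}$, which has length controlled by $\rho_M(x_0,y_n)$ times a constant depending on $\sup_M\|\nor\|$ and on $h_n<\eps$; here one uses that $M$ is a $C^1$-hypersurface so $\nor$ is continuous and that $\rho_M(x_0,y_n)\to 0$ as $y_n\to x_0$, which follows from Lemma~\ref{connect0} applied in a chart), giving $\|f(y_n+h_n\nor(y_n)) - f(x_0+h_n\nor(x_0))\| \le L\cdot C\cdot\rho_M(x_0,y_n)\to 0$ where $L$ is the intrinsic Lipschitz constant of $f$ on $\R^d\setminus M$. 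Since $f(x_0+h_n\nor(x_0))\to f(x_0)$ by hypothesis, the triangle inequality yields $f(x_n)\to f(x_0)$ along this subsequence. Finally the same comparison argument handles the subsequence $x_n\in M$: compare $f(y_n)=f(\pr_M(x_n))$ with $f(y_n + h\nor(y_n))$ for a small fixed auxiliary $h>0$ and with $f(x_0+h\nor(x_0))$, then let $h\downarrow 0$; more cleanly, one shows $f$ is continuous when restricted to $M$ by the same path-lifting estimate, and combines it with the assumed normal-direction continuity. Assembling the three subsequential limits proves $f(x_n)\to f(x_0)$, so $f$ is continuous at $x_0$, completing the proof.
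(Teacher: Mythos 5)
The overall architecture (reduce to points of $M$, split the approaching sequence according to $M$, $Q_{\eps,+}$, $Q_{\eps,-}$, write off-surface points as $y_n+h_n\nor(y_n)$, and compare with the normal limit at $x_0$) is reasonable and close in spirit to the paper's proof. However, your central estimate has a genuine gap. You bound $\|f(y_n+h_n\nor(y_n))-f(x_0+h_n\nor(x_0))\|$ by $L$ times the length of the lifted curve $t\mapsto\gamma(t)+h_n\nor(\gamma(t))$, and you claim this length is controlled by $\rho_M(x_0,y_n)$ times a constant, invoking only that $\nor$ is continuous. Under the hypotheses of the lemma, $M$ is merely a $C^1$-hypersurface, so Lemma~\ref{normalreg0} gives no more than continuity of $\nor$; the length of the lift is at best $l(\gamma)+h_n\,l(\nor\circ\gamma)$, and $l(\nor\circ\gamma)$ need not be finite, let alone comparable to $\rho_M(x_0,y_n)$, when $\nor$ is only continuous. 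So the claimed bound $L\,C\,\rho_M(x_0,y_n)$ is unjustified. (One could try to rescue it by the nontrivial fact that positive reach together with $C^1$ forces the normal field to be Lipschitz with constant $1/\reach(M)$, but you neither state nor prove this, and it is not among the paper's appendix lemmas.) The same unproved estimate is what your sketch leans on for the subsequence lying in $M$, where you only gesture at "the same path-lifting estimate". A secondary soft spot is the claim $\rho_M(x_0,y_n)\to 0$, which you attribute to Lemma~\ref{connect0}; that lemma only gives a connected chart neighborhood, and comparability of the intrinsic metric of $M$ with the Euclidean metric near $x_0$ needs an extra (short, but real) argument via the local $C^1$-chart.

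The paper's proof avoids this difficulty entirely: instead of lifting curves in $M$, it connects the relevant points by straight line segments and shows these segments avoid $M$ by a positive-reach ball argument. Concretely, for $x_k\in M$ it chooses $h_k=2(\|x-x_k\|+\eps\|\nor(x)-\nor(x_k)\|)$ and observes that both $x+h_k\nor(x)$ and $x_k+h_k\nor(x_k)$ lie in $B_\eps(x+\eps\nor(x))$, which is contained in $\R^d\setminus M$ by Lemma~\ref{fed0}; then Lemma~\ref{Lipconnec0} identifies the intrinsic distance with the Euclidean one along the segment, yielding a bound $L(\|x-x_k\|+h_k\|\nor(x)-\nor(x_k)\|)$ that needs only continuity of $\nor$. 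A similar segment-in-a-ball argument along the normal at $x_k$ (respectively at $\pr_M(x_k)$, using Lemma~\ref{projdist}(i) for the case $x_k\in M^\eps\setminus M$) handles the remaining comparisons. To make your approach sound you would either have to replace the lifted curve by such straight segments, or first prove a Lipschitz bound for $\nor$ on $M$ from the positive-reach hypothesis.
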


\begin{proof} 
	Note that $M$ is closed because $\reach(M) >0$. 
	Thus, by Lemma \ref{lipimpl0}(iii),    $f$ is continuous on the open set $\R^{d} \setminus M$.
	It remains to show that $f$ is continuous at every $x\in M$.
	
	Let $\eps\in (0,\reach(M))$, let $x\in M$ and let $(x_k)_{k\in\N}$ be a sequence in $\R^d$ with $\lim_{k\to\infty} x_k = x$. We show that $\lim_{k\to\infty} f(x_k) = f(x)$. 
	Without loss of generality
	we may assume that  $(x_k)_{k\in\N}$ is either a sequence in $M\setminus\{x\}$ or a sequence in  $M^\eps\setminus M$.  
	
	Assume first that
	$(x_k)_{k\in\N}\subset M\setminus \{x\}$.
	For  $k\in \N$ put
	\[
	h_k = 2(\|x-x_k\| + \eps\|\nor(x)-\nor(x_k)\| )
	 \in (0,\infty).
	\]
Since $\nor$ is continuous we have $\lim_{k\to\infty} h_k = 0$. Therefore,
	\begin{equation}\label{eqq1}
		\lim_{k\to\infty} f(x+ h_k\nor(x)) = f(x).
	\end{equation}	
        Below we prove that 
    \begin{equation}\label{eqq2}
    	\lim_{k\to\infty} 	(f(x+ h_k\nor(x) ) - f(x_k+ h_k\nor(x_k))) = 0
     \end{equation}
    as well as
    \begin{equation}\label{eqq3}
    	\lim_{k\to\infty} 	(f(x_k+ h_k\nor(x_k) ) - f(x_k)) = 0.
     \end{equation}
    Combining~\eqref{eqq1} to \eqref{eqq3} yields $\lim_{k\to\infty} f(x_k) = f(x)$.
    
    We next show \eqref{eqq2} and \ref{eqq3}.
    Let $k\in\N$.
	Without loss of generality
	we may assume that $h_k <\eps$.
	We have
	\[
	\|x+ h_k\nor(x) - x-\eps \nor (x)\| = |h_k-\eps| = \eps -h_k < \eps
	\]
	as well as 
	\[
	\|x_k+ h_k\nor(x_k) - x-\eps \nor (x)\| \le \|x-x_k\| + \eps\|\nor(x)-\nor(x_k)\| +  |h_k-\eps| = \eps -h_k/2  < \eps.
	\]
	Hence, $x+ h_k\nor(x), x_k+ h_k\nor(x_k)\in B_\eps(x+\epsilon\nor(x))$. Using Lemma~\ref{fed0} we obtain that
	\[
	\overline{x+ h_k\nor(x), x_k+ h_k\nor(x_k)}\subset B_\eps(x+\eps\nor(x))\subset \R^d\setminus M.
	\]
	Applying Lemma~\ref{Lipconnec0} we thus conclude that 
	\begin{align*}
		\|f(x+ h_k\nor(x) ) - f(x_k+ h_k\nor(x_k))\| & 
		\le L\rho_{\R^{d} \setminus M} (x+ h_k\nor(x),x_k+ h_k\nor(x_k))\\
			& = L\|x+ h_k\nor(x)-x_k- h_k\nor(x_k)\| \\
		& \le L(\|x-x_k\| + h_k\|\nor(x)-\nor(x_k)\|),
	\end{align*}
	where $L$ is an intrinsic Lipschitz constant for 
	$f$ on $\R^d\setminus M$.
	This yields \eqref{eqq2}.
	For the proof of \eqref{eqq3}, observe that 
	there exists $m_0\in\N$ such that $h_m < h_k$ for all $m\ge m_0$. 
	Hence, for all $m\ge m_0$ we have
	\[
	\|x_k+ h_m\nor(x_k) - x_k-h_k\nor (x_k)\| = h_k - h_m  < h_k.
	\]
	Using  Lemma~\ref{fed0} we obtain that for all $m\ge m_0$,
	\[
	\overline{x_k + h_m\nor(x_k), x_k + h_k\nor(x_k)}\subset B_{h_k}(x_k+h_k\nor(x_k)) \subset \R^d\setminus M.
	\]
	Applying Lemma~\ref{Lipconnec0} we thus conclude that for all $m\ge m_0$,
	\begin{align*}
		\|f(x_k+ h_m\nor(x_k) ) - f(x_k+ h_k\nor(x_k))\| & \le L\rho_{\R^{d} \setminus M} (x_k+ h_m\nor(x_k),x_k+ h_k\nor(x_k))\\
			& = L\|x_k+ h_m\nor(x_k)-x_k- h_k\nor(x_k)\|\\
		&< h_k  . 
	\end{align*}
	Since $f(x_k) = \lim_{m\to\infty} f(x_k + h_m\nor(x_k))$ we obtain
	\begin{equation*}
		\|f(x_k+ h_k\nor(x_k) ) - f(x_k)\| = \lim_{m\to\infty}\| f(x_k+ h_k\nor(x_k) ) -  f(x_k + h_m\nor(x_k))\| \le h_k,
	\end{equation*}
	which implies
	\eqref{eqq3}.
	
	Next, assume that $(x_k)_{k\in\N}\subset M^\eps \setminus M$.
		 Since $\text{pr}_M$ is continuous on $M^\eps$, see Lemma~\ref{projdist}(i), we obtain that  $\lim_{k\to\infty} \|\text{pr}_M(x_k) - x\|  = \lim_{k\to\infty} \|\text{pr}_M(x_k) - \text{pr}_M(x)\| = 0$. It 
		 follows
		 from the first case that
	\begin{equation}\label{eqq4}
		\lim_{k\to\infty} 	\|f(\text{pr}_M(x_k) ) - f(x)\| = 0.
	\end{equation}
	We next show that
	\begin{equation}\label{eqq5}
		\lim_{k\to\infty} 	\|f(\text{pr}_M(x_k) ) - f(x_k)\| = 0,
	\end{equation}
    which jointly with ~\eqref{eqq4}  yields $\lim_{k\to\infty} f(x_k) = f(x)$.
    
    For the proof of \eqref{eqq5}, observe first that
	  Lemma~\ref{nrep0}(i) and Lemma~\ref{ort0} imply that for every $k\in \N$ we have
	\[
	x_k =  \text{pr}_M(x_k) + \lambda_k\nor(\text{pr}_M(x_k)),
	\]
	where $\lambda_k\in\R$ satisfies $0< |\lambda_k| = \|x_k - \text{pr}_M(x_k) \| \le \|x_k-x\|$. 	As a consequence, $\lim_{k\to\infty} \lambda_k = 0$. 
	Let $k\in\N$. For $m\in\N$  put 
	\[
	\widetilde \lambda_m = \begin{cases} |\lambda_m|,& \text{ if }\lambda_k >0, \\ 
		-|\lambda_m|,& \text{ if }\lambda_k <0, \end{cases}
	\]
	and choose $m_0\in\N$ such that $|\widetilde \lambda_m| < |\lambda_k|$ for all $m\ge m_0$.  Then for all $m\ge m_0$ we have
	\[
	\| \text{pr}_M(x_k) + \widetilde \lambda_m \nor( \text{pr}_M(x_k)) -  \text{pr}_M(x_k)- \lambda_k\nor( \text{pr}_M(x_k))\| = |\widetilde \lambda_m - \lambda_k| = |\lambda_k| - |\lambda_m| < |\lambda_k|.
	\]
	Using Lemma~\ref{fed0} we conclude that for all $m\ge m_0$,
	\[ \overline{ \text{pr}_M(x_k) + \widetilde \lambda_m \nor( \text{pr}_M(x_k)),  \text{pr}_M(x_k)+ \lambda_k\nor( \text{pr}_M(x_k)) }\subset B_{|\lambda_k|}(\text{pr}_M(x_k)+ \lambda_k\nor( \text{pr}_M(x_k)))\subset  \R^d\setminus M.
	\]
	Employing Lemma~\ref{Lipconnec0} we obtain that for all $m\ge m_0$,  
	\begin{align*}
		& \|f(  \text{pr}_M(x_k) + \widetilde \lambda_m \nor( \text{pr}_M(x_k)) ) - f(  \text{pr}_M(x_k) +  \lambda_k \nor( \text{pr}_M(x_k)) )\|\\
		& \qquad \qquad \le  L\rho_{\R^{d} \setminus M} (\text{pr}_M(x_k) + \widetilde \lambda_m \nor( \text{pr}_M(x_k)) ,  \text{pr}_M(x_k) +  \lambda_k \nor( \text{pr}_M(x_k)) )\\
		& \qquad\qquad = L \|\text{pr}_M(x_k) + \widetilde \lambda_m \nor( \text{pr}_M(x_k))  -  \text{pr}_M(x_k) -  \lambda_k \nor( \text{pr}_M(x_k)) \| \\
		 & \qquad\qquad <L|\lambda_k|.
	\end{align*}
	Since $f(\text{pr}_M(x_k)) = \lim_{m\to\infty} f(  \text{pr}_M(x_k) + \widetilde \lambda_m \nor( \text{pr}_M(x_k)) ) $ we conclude that
	\begin{align*}
		\|f(\text{pr}_M(x_k) ) - f(x_k)\| & = 	\|f(\text{pr}_M(x_k) ) - f(\text{pr}_M(x_k) + \lambda_k\nor(\text{pr}_M(x_k))\| \\
		& =	    	\lim_{m\to\infty}\| f(  \text{pr}_M(x_k) + \widetilde \lambda_m \nor( \text{pr}_M(x_k)))  - f(  \text{pr}_M(x_k) +  \lambda_k \nor( \text{pr}_M(x_k)) )   \| \\	
& \le L|\lambda_k|,
	\end{align*}  
	which implies \eqref{eqq5} and completes the proof of the lemma.
\end{proof}

\bibliographystyle{acm}
\bibliography{bibfile}

\end{document}